\documentclass{amsart}[12pt]
\usepackage{amsthm}
\usepackage{mathrsfs}
\usepackage{amsfonts}
\usepackage{comment}
\usepackage{amssymb}
\usepackage{amsmath}
\usepackage{xcolor}
\usepackage{eufrak}
\usepackage{comment}
\usepackage[colorlinks = true, citecolor = blue]{hyperref}
\usepackage{mathtools}
\usepackage{eucal}
\usepackage{tikz-cd}
\usepackage{pdflscape}
\usepackage{multirow}
\usepackage[margin=1.25in]{geometry}
\usepackage{cite}
\usepackage{tikz}
\usepackage{float}
\usepackage{ltablex}

\numberwithin{equation}{section}
\numberwithin{figure}{section}
\numberwithin{table}{section}
\usepackage{breqn}
\allowdisplaybreaks
\usepackage[OT2,T1]{fontenc}
\usepackage{cleveref}
\usepackage[shortlabels]{enumitem}
\usepackage{stmaryrd}
\newtheorem{theorem}{Theorem}

\newtheorem{Theorem}{Theorem}[section]

\newtheorem{corollary}[Theorem]{Corollary}

\newtheorem{lemma}[Theorem]{Lemma}
\newtheorem{proposition}[Theorem]{Proposition}

\theoremstyle{definition}
\newtheorem{Example}[Theorem]{Example}

\newtheorem*{definition}{Definition}
\theoremstyle{remark}
\newtheorem{remark}[Theorem]{Remark}


\DeclareMathOperator{\Aut}{Aut}

\DeclareMathOperator{\sm}{sm}
\DeclareMathOperator{\rank}{rank}
\DeclareMathOperator{\wee}{sm}
\DeclareMathOperator{\med}{med}
\DeclareMathOperator{\giant}{big}
\DeclareMathOperator{\Spec}{Spec}
\DeclareMathOperator{\ev}{ev}
\DeclareMathOperator{\aff}{aff}
\DeclareMathOperator{\irr}{irr}

\newcommand{\Q}{\mathbb{Q}}

\newcommand{\C}{\mathbb{C}}
\newcommand{\Z}{\mathbb{Z}}
\newcommand{\R}{\mathbb{R}}
\newcommand{\F}{\mathbb{F}}
\newcommand{\A}{\mathbb{A}}

\renewcommand{\P}{\mathbb{P}}
\renewcommand{\a}{\textbf{a}}

\renewcommand{\c}{\textbf{c}}

\newcommand{\wt}[1]{\widetilde{#1}}
\newcolumntype{Y}{>{\centering\arraybackslash}X}
\newcommand{\denomtimes}{\\ \times}

\newcommand\longvdots[1]{\raisebox{1em}{\rotatebox{-90}{\hbox to #1 {\tiny\dotfill}}}}

\setcounter{MaxMatrixCols}{20}


\title{On the proportion of locally soluble superelliptic curves}

\author{Lea Beneish}
\address{Lea Beneish\\
Department of Mathematics\\
University of California, Berkeley\\
970 Evans Hall\\
Berkeley, CA 94720\\
United States}
\email{leabeneish@math.berkeley.edu}

\author{Christopher Keyes} 
\address{Christopher Keyes \\
	Department of Mathematics\\
	Emory University\\
	400 Dowman Drive\\
    Atlanta, GA 30322\\
    United States}
\email{christopher.keyes@emory.edu}

\begin{document}

\begin{abstract} We investigate the proportion of superelliptic curves that have a $\mathbb{Q}_p$ point for every place $p$ of $\mathbb{Q}$. We show that this proportion is positive and given by the product of local densities, we provide lower bounds for this proportion in general, and for superelliptic curves of the form $y^3 = f(x,z)$ for an integral  binary form $f$ of degree 6, we determine this proportion to be 96.94\%. More precisely, we give explicit rational functions in $p$ for the proportion of such curves over $\Z_p$ having a $\mathbb{Q}_p$-point.
\end{abstract}

\maketitle 

\section{Introduction}

In 1983, Faltings proved that if $C$ is a curve of genus $g > 1$ over $\mathbb{Q}$, then the set of $\mathbb{Q}$-rational points of $C$, $C(\mathbb{Q})$, is finite \cite{faltings}. The questions of counting and classifying the $\mathbb{Q}$-rational points of a given curve and the study of how $C(\mathbb{Q})$ varies as $C$ varies in families are areas of active work. For example, there has been recent work on sparsity of rational points on hyperelliptic and superelliptic curves by Ellenberg--Hast, Poonen--Stoll, Shankar--Wang, and Stoll \cite{ellenberghast,poonenstollhec,shankarwang,stolluniformbounds}. See also \cite{Chabauty, Coleman, Kim_2005, Kim_2009, McPoonen} for work on the Chabauty--Coleman method and its generalization, the Chabauty--Kim method. In studying the $\mathbb{Q}$-rational points of a curve it is often useful to examine the $\mathbb{Q}_{p}$-rational points of the curve, for $p$ a place of $\mathbb{Q}$.

In particular, one can ask when a curve is \textit{locally soluble}, that is, if the curve has a point over $\mathbb{Q}_{p}$ for every place $p$ of $\mathbb{Q}$ (including the infinite place, $\Q_{\infty} = \R$). Poonen--Stoll \cite{poonenstoll_short, poonenstoll_long} using the sieve of Ekedahl \cite{Ekedahl} have shown that this proportion is positive in the case of hyperelliptic curves. Bright--Browning--Loughran \cite{BBL} have generalized this method to certain families of varieties over number fields. Bhargava--Cremona--Fisher \cite{BCF_plane_cubic, BCF_genus_one} determined the proportion of locally soluble plane cubics ($\approx75.96\%$) and genus one curves ($\approx 97.3\%$) by expressing the local densities as rational functions of $p$ (in forthcoming work, they compute this quantity for hyperelliptic curves of genus $g>1$). In higher dimensions, Bright--Browning--Loughran, Fisher--Ho--Park and Poonen--Voloch have studied local solubility of various hypersurfaces \cite{BBL, fisherhopark, poonenvoloch}. Further, Browning \cite{Browning} studied certain cubic hypersurfaces in $\P^3$, giving explicit rational functions for the local densities to show that nearly all ($\approx 99\%$) are locally soluble, and moreover proving that a positive proportion of such surfaces have global points. 

In recent years, there have been several works studying the arithmetic of superelliptic curves such as \cite{Vishal, Vishalthesis, ellenberghast, beneishkeyesfields}, including work of Watson \cite{watson} on the failure of the Hasse principle in twist families of superelliptic curves. In this article, we study the proportion of locally soluble superelliptic curves. 

For a positive integer $m \geq 2$, a \textit{superelliptic curve $C_f/\Q$ of exponent $m$} is a projective curve with affine equation
\begin{equation}\label{eq:sec}
	C_f \colon y^m = f(x) = \sum_{i=0}^d c_ix^i.
\end{equation}
Such a curve possesses a cyclic degree $m$ map to the projective line $\P^1$ defined over $\Q$, sending a point $(x,y) \mapsto x$. The genus of such a curve $C_f$ over an algebraically closed field $k$ is given by 
\begin{equation}
\label{eq:genus}
	 g(C_f)=\frac{1}{2} \left( m(|B|-2)- \sum\limits_{\alpha \in B} (m, r_{\alpha})\right)+1, 
\end{equation}
where we denote by $B$ the set of branch points of the map to $\mathbb{P}^1$ and we denote by $r_{\alpha}$ the order of $\alpha$ as a root of $f(x)$. The value $r_{\infty}$ is analogously defined and we use that $(m, r_\infty) = (m, \deg(f))$. When $m \mid d$, equivalent to the superelliptic map $C_f \to \P^1$ being unramified at infinity, $C_f$ embeds as a closed subsvariety into the weighted projective space $\mathbb{P}\left(1,\frac{d}{m},1\right)$ as  the vanishing set of
\begin{equation}
\label{eq:projectiveeqn} 
	F(x,y,z) =  y^m-f(x,z)= y^m-\sum_{i=0}^d c_ix^iz^{d-i}.
\end{equation}

We study the local solubility of these curves in three ways. First, by showing that the proportion of locally soluble superelliptic curves is positive and given by a product of local factors. Next, we prove explicit lower bounds for the proportion of locally soluble superelliptic curves in terms of an Euler product depending on $m$. Finally, we give an explicit rational function for the local factors, in the case of superelliptic curves with $m=3$ and $d=6$.

We now define the proportion of locally soluble superelliptic curves (i.e.\ the probability that a superelliptic curve is locally soluble). Given a superelliptic curve $C_f$ of exponent $m$ and degree $d$ divisible by $m$ as in \eqref{eq:projectiveeqn}, we define its height $h(C_f)$ to be the height of its defining polynomial $h(f):=\max \{ |c_0|, \dots, |c_d| \} $, the maximum of the absolute values of the coefficients. Then we define the proportion as 
\begin{equation}\label{eq:def_density} 
\rho_{m,d}=\lim\limits_{B\to \infty} \dfrac{\#\{ C_f\mid C_f \text{ is locally soluble and  }h(C_f)\leq B \} }{\#\{ C_f\mid h(C_f) \leq B \} }, 
\end{equation}
where here $C_f$ ranges over superelliptic curves of exponent $m$ and degree $d$. We refer to $\rho_{m,d}$ as the \textit{adelic density} of equations of locally soluble such curves.

 To define the corresponding local densities, let $p$ be a prime and $\mu_p$ be a Haar measure on the additive group $\Z_p^{d+1}$, normalized such that $\mu_p \left(\Z_p^{d+1} \right) = 1$.  We define
\begin{equation}
\label{eq:def_local_density}
	\rho_{m,d}(p) = \mu_p \left( \left\{ (c_0, \ldots, c_d) \in \Z_p^{d+1} \mid y^m = c_dx^d + \cdots + c_0z^d \text{ has a } \Q_p\text{-point} \right\}\right).
\end{equation}
For the place at infinity, we let $\mu_\infty$ denote the usual Euclidean measure on $\R^{d+1}$ and set 
\begin{equation}
\label{eq:def_local_density_infty}
	\rho_{m,d}(\infty) = \frac{1}{2^{d+1}} \mu_\infty \left( \left\{ (c_0, \ldots, c_d) \in [-1,1]^{d+1} \mid y^m = c_dx^d + \cdots + c_0z^d \text{ has an } \R\text{-point} \right\}\right).
\end{equation}

Our first result shows that the proportion of locally soluble superelliptic curves of exponent $m$ and degree $d$, $\rho_{m,d}$ is positive using the methods of Poonen--Stoll and Bright--Browning--Loughran \cite{BBL,poonenstoll_short, poonenstoll_long} and further that the adelic density $\rho_{m,d}$ can be computed as a product of the local densities.

\begin{theorem}\label{thm:pos_prop}
	Fix integers $m \geq 2$ and $d$ divisible by $m$, such that $(m,d) \neq (2,2)$. Then we have $\rho_{m,d} > 0$, i.e.\ a positive proportion of superelliptic curves over $\mathbb{Q}$ of the form \eqref{eq:projectiveeqn} are everywhere locally soluble.
	
	Moreover, the adelic density may be computed as the product of local densities,
	\[\rho_{m,d} = \rho_{m,d}(\infty) \prod_{p} \rho_{m,d}(p).\]
\end{theorem}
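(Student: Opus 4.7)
The plan is to adapt the Ekedahl sieve of Poonen--Stoll \cite{poonenstoll_long} in the form formulated by Bright--Browning--Loughran \cite{BBL}. The statement comprises two assertions --- positivity and the product formula --- which I would deduce from (i) strict positivity of each local density, and (ii) a uniform-in-$p$ tail estimate allowing the adelic limit to decouple into local factors.

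For (i) at the infinite place: if $m$ is odd then $\rho_{m,d}(\infty) = 1$ since $y = f(x,1)^{1/m}$ is always real-valued; if $m$ is even, the half-space $\{c_d > 0\}$ already admits the $\R$-point $[1:c_d^{1/m}:0]$, so $\rho_{m,d}(\infty) \geq 1/2$. At a finite prime $p$, it is enough to exhibit one $\bar{f} \in \F_p^{d+1}$ such that $C_{\bar{f}}$ has a smooth $\F_p$-point $P_0$: then Hensel's lemma lifts $P_0$ to a $\Q_p$-point for every $f \equiv \bar{f} \pmod{p}$, giving $\rho_{m,d}(p) \geq p^{-(d+1)}$. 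For $p \nmid dm$ the choice $\bar{f}(x,z) = x^d - z^d$ with $P_0 = [1:0:1]$ works; the remaining finitely many primes can be handled by an ad hoc explicit lift.

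For (ii), I would first observe that $L_p := \{f \in \Z_p^{d+1} : C_f(\Q_p) \neq \emptyset\}$ is $\mu_p$-measurable, since its interior contains every $f$ admitting a smooth $\Q_p$-point (by Hensel), and the remaining part of $L_p$ is contained in the $\mu_p$-null discriminant locus. Next I would invoke the Ekedahl-style sieve, whose main analytic input is the uniform estimate
\[
\mu_p\bigl(\{f \in \Z_p^{d+1} : C_f \text{ has no smooth } \F_p\text{-point}\}\bigr) = O\bigl(p^{-2}\bigr),
\]
with implied constant independent of $p$. This ensures that for any $\varepsilon > 0$ one may choose $M$ so that the density of integer $f$ with $h(f) \leq B$ failing to have a $\Q_p$-point for \emph{some} prime $p > M$ is at most $\varepsilon$; combined with the evaluation of each $\rho_{m,d}(p)$ as a limit of cylinder-set volumes mod $p^k$, this yields the product formula, and together with (i) gives $\rho_{m,d} > 0$.

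\emph{Main obstacle.} The critical technical point is the uniform codimension-two bound underlying the $O(p^{-2})$ estimate: the locus in $\A^{d+1}_{\F_p}$ parametrising $f$ for which $C_f$ has no smooth $\F_p$-point must have codimension at least two, uniformly in $p$. This is precisely where the exclusion $(m,d) \neq (2,2)$ enters --- in the excluded case $C_f$ is a conic and the codimension is only one. Outside this degenerate case, one analyses the incidence scheme $\{(f, P) : P \in C_f^{\mathrm{sing}}\}$ inside $\A^{d+1} \times \P(1, d/m, 1)$: imposing singularity at a fixed $P$ is a codimension-two condition on $f$, and the base has bounded dimension, so projecting back to $\A^{d+1}$ yields the required codimension. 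A Lang--Weil point count on this locus furnishes the uniform bound, and the assembly into the product formula then proceeds by the standard sieve argument of \cite{poonenstoll_long, BBL}.
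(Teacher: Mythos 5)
Your overall architecture (Ekedahl sieve in the Poonen--Stoll/Bright--Browning--Loughran form, positivity of each local factor, and a uniform tail estimate at large primes) is the same strategy the paper uses, and the local positivity step and the reduction of the product formula to a codimension-two statement are fine in outline. But the step you yourself flag as the crux is handled incorrectly. The locus you analyse --- the image of the incidence scheme $\{(f,P) : P \in C_f^{\mathrm{sing}}\}$ --- is not the right ``bad'' locus, and it does not have codimension two. Imposing a singularity of $C_f$ at a \emph{fixed} $P$ (equivalently, a multiple root of $f$ at a fixed $[x_0:z_0]$) is indeed two conditions on the coefficients, but the point $P$ moves in a one-dimensional family, so the projection to $\A^{d+1}$ is the discriminant hypersurface, of codimension exactly one for every $(m,d)$. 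Your dimension count therefore yields $O(p^{-1})$, not $O(p^{-2})$, and the sieve does not close. Moreover, ``$C_f$ has a singular point'' is not the condition that obstructs solubility: a geometrically integral curve with a node still has $p+1-2g\sqrt{p}-O(1)$ smooth $\F_p$-points (apply Weil to the normalization, as in the proof of Proposition \ref{prop:bound_p=1_large}), so for $p$ large such $f$ are harmless.

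The correct bad locus, and the place where $(m,d)\neq(2,2)$ genuinely enters, is the locus where $C_{\bar f}$ fails to be geometrically integral, i.e.\ where $\bar f = a h^q$ for some prime $q \mid m$ (Lemma \ref{lem:m-th_power_lemma}); only for such $\bar f$ can the curve mod $p$ have no smooth point once $p$ is large. That locus is the image of the $q$-th power map on forms, which is strictly contained in the discriminant hypersurface and hence has codimension at least two except when $(d,q)=(2,2)$ (Lemma \ref{lem:pol_mth_power_codim}); in the excluded case the square locus \emph{is} the discriminant locus and the codimension drops to one, which is why conics must be excluded. With this substitution your sieve argument can be repaired, and it then essentially reproduces the paper's proof, which packages the same inputs (geometric integrality of fibers over codimension-one points, nonemptiness of $X(\mathbf{A}_k)$, and the real scaling condition) into the hypotheses of Theorem \ref{thm:BBL}. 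One further small point: for the product formula you also need the archimedean set $\Psi$ to have boundary of measure zero (the paper checks this via the discriminant), which your write-up does not address, though it is minor compared with the codimension issue.
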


\begin{remark} A version of this theorem holds over number fields as well as over $\mathbb{Q}$; for the statement (and proof) of this theorem in full generality see Corollary \ref{cor:pos_prop_NF}.
\end{remark}

For the remainder of the paper we focus on bounding and computing the local densities $\rho_{m,d}(p)$ for the finite primes $p$ of $\Q$, using these to compute or bound the adelic densities $\rho_{m,d}$. Given $m,d$, we find explicit lower bounds for $\rho_{m,d}(p)$ in Propositions \ref{prop:bound_p=1_large}, \ref{prop:bound_pneq1}, \ref{prop:bound_matrix}, and \ref{prop:bound_p|m}, and an upper bound for $\rho_{m,d}(2)$ in Lemma \ref{lem:upper_bound_p=2}. By Theorem \ref{thm:pos_prop}, this yields upper and lower bounds for $\rho_{m,d}$; see Corollary \ref{cor:lower_bound} and Examples \ref{ex:m=3} and \ref{ex:m=5}. 

These bounds are sufficient to then bound the limiting behavior of $\rho_{m,d}$ for fixed $m$ as $d \to \infty$; see Corollaries \ref{cor:liminf_uniform}, \ref{cor:limit_rho}, and \ref{cor:limsup_uniform}. We summarize these below for prime $m$.

\begin{theorem}\label{thm:lower_bound_limit}
	Fix a prime $m$ and suppose $m \mid d$. The limiting behavior of $\rho_{m,d}$ as $d\to\infty$ may be described by 
	\[\liminf_{d \to \infty} \rho_{m,d} \geq \left(1 - \frac{1}{m^{m+1}}\right) \prod_{p \equiv 1 (m)} \left( 1 - \left(1-\frac{p-1}{mp}\right)^{p+1} \right) \prod_{p \not\equiv 0,1 (m)} \left(1 - \frac{1}{p^{2(p+1)}}\right).\]		
	When $m > 2$, we have the following numerical estimates, uniform in $m$:
		\[\liminf_{d \to \infty} \rho_{m,d} \geq 0.83511\]
		and
		\[\limsup_{d \to \infty} \rho_{m,d} \leq 1 - \frac{1}{2^9} \approx 0.99804.\]
\end{theorem}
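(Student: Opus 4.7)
The plan is to invoke Theorem~\ref{thm:pos_prop} to express $\rho_{m,d}$ as a product of local densities and then combine the prime-by-prime lower bounds established earlier. By Theorem~\ref{thm:pos_prop},
\[
\rho_{m,d} \;=\; \rho_{m,d}(\infty) \prod_{p} \rho_{m,d}(p),
\]
so the task is to bound each factor. Since $m$ is prime, the finite primes split into three disjoint classes: $p = m$, $p \equiv 1 \pmod m$, and $p \not\equiv 0, 1 \pmod m$. To each class I would apply the appropriate proposition: Proposition~\ref{prop:bound_p|m} for $p = m$, giving $\rho_{m,d}(m) \geq 1 - m^{-(m+1)}$; Proposition~\ref{prop:bound_p=1_large} for $p \equiv 1 \pmod m$, giving $\rho_{m,d}(p) \geq 1 - (1 - (p-1)/(mp))^{p+1}$ once $d$ is large enough relative to $p$; and Proposition~\ref{prop:bound_pneq1} (or Proposition~\ref{prop:bound_matrix}) for the remaining primes, giving $\rho_{m,d}(p) \geq 1 - p^{-2(p+1)}$. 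Multiplying over all primes and passing to the $\liminf$ as $d \to \infty$ produces the stated Euler product; legitimacy of the interchange is straightforward since the bounds in the latter two classes are eventually independent of $d$.

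For the uniform lower bound when $m > 2$, the archimedean factor plays no role: because $m$ is odd, for any real coefficients $(c_0,\dots,c_d)$ the map $y \mapsto y^m$ is surjective on $\R$, hence $y^m = f(x,1)$ always has a real solution, and $\rho_{m,d}(\infty) = 1$. I would then evaluate the Euler product numerically as a function of the prime $m$. The factors indexed by $p \not\equiv 0,1 \pmod m$ are extremely close to $1$ (the $p$-th factor differs from $1$ by at most $p^{-2(p+1)}$), so the tail beyond any moderate prime is negligible. One explicitly computes the bound for $m = 3, 5, 7, \dots$ up to some cutoff, confirms the minimum occurs at small $m$ (most likely $m = 3$), and verifies the value $\geq 0.83511$. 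Rigorous control of the tail, uniformly in $m$, is the crux here: one needs an estimate such as $\prod_{p > N}(1 - p^{-2(p+1)}) \prod_{p > N,\, p \equiv 1(m)}(1-(1-(p-1)/(mp))^{p+1}) \geq 1 - \varepsilon$ that is independent of $m$, after which only finitely many primes $m$ must be checked by direct computation.

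For the $\limsup$ upper bound, apply Lemma~\ref{lem:upper_bound_p=2}, which gives an upper bound $\rho_{m,d}(2) \leq 1 - 2^{-9}$ valid for all odd prime $m$ and all sufficiently large $d$. Since every local density is at most $1$, we have $\rho_{m,d} \leq \rho_{m,d}(2)$, and the bound follows immediately.

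I expect the main obstacle to lie in the uniform lower bound: controlling the Euler product as a function of $m$ requires a uniform tail estimate together with a finite numerical verification, and in particular one must ensure that the $p \equiv 1 \pmod m$ factors — whose indexing set depends on $m$ — do not conspire to shrink the product below $0.83511$ for any prime $m > 2$. The other two steps are essentially bookkeeping, consisting of piecing together the local bounds and passing to the limit.
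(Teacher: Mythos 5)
Your overall architecture is the same as the paper's: Theorem \ref{thm:pos_prop} plus the prime-by-prime lower bounds, and Lemma \ref{lem:upper_bound_p=2} together with $\rho_{m,d}\le\rho_{m,d}(2)$ for the $\limsup$, which is complete as you state it. The first gap is in passing to the limit for the Euler product. For fixed $d$, the bound $\rho_{m,d}(p)\ge 1-\left(1-\frac{p-1}{mp}\right)^{p+1}$ (which, incidentally, is Proposition \ref{prop:bound_matrix}, not Proposition \ref{prop:bound_p=1_large}) is only available when $p<d$; for the infinitely many primes $p\equiv 1\pmod m$ with $d\le p<(m-1)^2(d-2)^2$ the matrix bound only gives the weaker exponent $d+1$, and Proposition \ref{prop:bound_p=1_large} only takes over for $p\ge (m-1)^2(d-2)^2$. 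Saying the bounds are ``eventually independent of $d$'' prime by prime does not justify interchanging $\liminf_{d}$ with the infinite product over $p$: at every fixed $d$ one must show that the aggregate contribution of this intermediate range of primes tends to $1$, which is exactly what the paper's Lemma \ref{lem:limits} does by comparing the polynomially-in-$d$ many primes below $(m-1)^2(d-2)^2$ against the exponentially small deficiency $\left(\frac{m}{m+1}\right)^{d+1}$ of each factor. That counting argument is the real content of the limit step and is absent from your proposal (though it is fixable with the propositions you already cite).

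The more serious problem is your strategy for the uniform bound $0.83511$. You propose a tail estimate over $p>N$ with $N$ independent of $m$, followed by a finite check of small primes $m$; this cannot work. For any prime $m>N$, every prime $p\equiv 1\pmod m$ exceeds $N$, so the entire product $\prod_{p\equiv 1 (m)}\left(1-\left(1-\frac{p-1}{mp}\right)^{p+1}\right)$ sits inside your ``tail,'' and that product is bounded away from $1$ uniformly in $m$: a single factor at a prime $p$ near $2m$ is already about $1-e^{-2}\approx 0.86$. Hence no fixed-cutoff tail bound of the form $1-\varepsilon$ exists, and checking finitely many $m$ says nothing about the infinitely many large $m$, for which the dominant loss escapes any fixed cutoff. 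The paper's Corollary \ref{cor:liminf_uniform} instead uses an $m$-dependent cutoff $A=20m$: the primes $p\equiv 1\pmod m$ with $p\le 20m$ are of the form $2km+1$ with $1\le k\le 9$; discarding $k=3,6,9$ and using that each factor decreases to $1-e^{-2k}$ as $m\to\infty$ yields the uniform head bound $\prod_{k=1,2,4,5,7,8}\left(1-e^{-2k}\right)\approx 0.84850$, while \eqref{eq:tail_A} with $A=20m$ and \eqref{eq:tail_B} with $B=10$ control the remaining factors, and $m=3,5,7$ are handled by direct computation. Some argument of this $m$-dependent kind is required; as written, your plan for this part would fail rather than merely being ``the crux.''
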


\begin{remark}\label{rem:composites}
	When the exponent $m$ is composite, the methods of bounding $\rho_{m,d}(p)$ from below discussed in Section \ref{sec:lower_bounds} still apply, but the resulting expressions for a lower bound of $\rho_{m,d}$ may be less compact. See Example \ref{ex:m=4} for a discussion in the case of $m=4$. 
\end{remark}

After giving several examples of lower bounds for various pairs $m,d$, we employ methods similar to those of Bhargava--Cremona--Fisher to compute exact formulas for the local densities $\rho_{3,6}(p)$ for $p$ sufficiently large, the first case of superelliptic curves not already addressed by \cite{BCF_plane_cubic}, \cite{BCF_genus_one}, or their forthcoming paper on hyperelliptic curves of higher genus.

 We give this local density as a rational function in $p$ depending on the residue class of $p$ modulo $3$ (assuming $p$ is sufficiently large). We compute lower bounds for the solubility when $p$ is small using a brute force search, allowing us to give an approximate proportion of locally soluble $m=3$, $d=6$ superelliptic curves.

\begin{theorem}\label{thm:exact_rho_3_6}
	For superelliptic curves of the form \eqref{eq:projectiveeqn} with $m=3$ and $d=6$, the exact value of $\rho_{3,6}$ is about 96.94\%.
	
	Moreover, there exist rational functions $R_1(t)$ and $R_2(t)$ such that the local density $\rho_{3,6}(p)$ is given by
	\[\rho_{3,6}(p) = \begin{cases} R_1(p), & p \equiv 1 \pmod{3} \text{ and } p > 43\\
	R_2(p), & p \equiv 2 \pmod{3} \text{ and } p > 2.\end{cases}\]
	The explicit formula is given in \eqref{eq:rho}. The asymptotic behavior of $R_1(t)$ and $R_2(t)$ is described by
	\begin{align*}
		1- R_1(t) & \sim \frac{2}{3}t^{-4},\\
		1- R_2(t) & \sim \frac{53}{144} t^{-7}.
	\end{align*}
	
\end{theorem}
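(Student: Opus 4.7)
The plan is to invoke Theorem \ref{thm:pos_prop} to reduce the computation of $\rho_{3,6}$ to its local factors, $\rho_{3,6}=\rho_{3,6}(\infty)\prod_p \rho_{3,6}(p)$. Since $m=3$ is odd, every real number has a real cube root, so for any $f\in\R[x,z]_6$ and any $[x_0:z_0]\in\P^1(\R)$ the triple $\bigl(x_0,\sqrt[3]{f(x_0,z_0)},z_0\bigr)$ is a real point of $C_f$; hence $\rho_{3,6}(\infty)=1$. The remaining task is to determine each $\rho_{3,6}(p)$ and multiply.

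For each prime $p$, I would adapt the Bhargava--Cremona--Fisher method used for plane cubics and genus-one curves in \cite{BCF_plane_cubic,BCF_genus_one}, stratifying $\Z_p^7$ by the mod-$p$ reduction $\overline{f}\in\F_p[x,z]_6$ and, when needed, by deeper $p$-adic digits. The basic tool is Hensel's lemma: a $\Q_p$-point on $C_f$ exists whenever $\overline{f}(P)\in (\F_p^*)^3$ for some $P\in\P^1(\F_p)$, or whenever $\overline{f}$ has a simple root in $\P^1(\F_p)$. The complementary ``bad'' locus must be further decomposed, with a recursion in $p$-adic depth that terminates once the coefficient vector reveals itself to be either soluble or definitively insoluble. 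The structure of $\F_p^*/(\F_p^*)^3$ makes the analysis depend sharply on $p\bmod 3$.

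When $p\equiv 2\pmod 3$, cubing is bijective on $\F_p^*$, so the Hensel criterion fails only when $\overline{f}$ vanishes on every point of $\P^1(\F_p)$; for $p\geq 7$ this forces $\overline{f}=0$, and writing $f=pf_1$ gives the companion problem for $y^3=pf_1$, whose solubility requires $v_p(f_1(x_0,z_0))\equiv 2\pmod 3$ at some rational point. Iterating this recursion yields the rational function $R_2(p)$, whose leading defect $1-R_2(p)\sim \tfrac{53}{144}p^{-7}$ reflects the codimension-$7$ cost of $\overline{f}=0$ together with the correctly weighted recursive contribution. When $p\equiv 1\pmod 3$ the situation is more intricate: the dominant bad stratum is the codimension-$4$ locus $\{\overline{f}=c h^3: c\in\F_p^*\setminus(\F_p^*)^3,\ h\in\F_p[x,z]_2\}$, on which every value $\overline{f}(P)$ is either zero or a non-cube, so solubility can only arise from lifting the at most two roots of $h$; the count of such reductions, $\tfrac{2}{3}(p^3-1)$, supplies the leading $\tfrac{2}{3}p^{-4}$ in $1-R_1(p)$, while lower-dimensional bad strata (forms of shape $\overline{f}=c h^2 g$ or $\overline{f}=c h^6$ with non-cube $c$, and so on) contribute lower-order rational corrections.

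Finally, for the finitely many primes at which the generic analysis breaks down (here $p=2,3$, and the primes $p\equiv1\pmod3$ with $p\leq 43$), I would compute $\rho_{3,6}(p)$ or accurate bounds by direct enumeration of coefficient vectors modulo $p^N$ for $N$ large enough to certify solubility or insolubility via Hensel. Combining all the local contributions with $\rho_{3,6}(\infty)=1$ via Theorem \ref{thm:pos_prop} yields the stated value $\rho_{3,6}\approx 0.9694$. The main obstacle will be the $p\equiv 1\pmod 3$ case: carrying out the full stratification of bad configurations, executing the recursion into deeper $p$-adic precision, and verifying that the rational function $R_1(p)$ genuinely stabilizes for all $p>43$ (with the bound $43$ emerging from the point counts on the auxiliary varieties that parameterize each bad stratum). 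A secondary difficulty is the handling of $p=3$, where the naive form of Hensel's lemma fails on account of wild ramification in the cubing map, and one must lift to higher precision before solubility can be decided.
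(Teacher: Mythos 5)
Your plan is essentially the paper's proof: reduce to $\rho_{3,6}=\rho_{3,6}(\infty)\prod_p\rho_{3,6}(p)$ with $\rho_{3,6}(\infty)=1$ via Theorem \ref{thm:pos_prop}, stratify by the factorization type of the mod-$p$ reduction, lift via Hensel's lemma with a recursion in $p$-adic depth in the style of Bhargava--Cremona--Fisher, and treat the finitely many exceptional primes by computer; your identification of the dominant bad strata (the locus $\overline{f}=ch^3$ with $c$ a non-cube when $p\equiv 1\pmod 3$, and $\overline{f}\equiv 0$ with rootless $f_1$ when $p\equiv 2\pmod 3$) matches the paper's sources of the $\tfrac{2}{3}p^{-4}$ and $\tfrac{53}{144}p^{-7}$ asymptotics. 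The paper carries out the depth recursion by deriving self-referential linear relations among the densities (e.g.\ $\sigma_5$ in terms of $\rho(p)$ itself, $\lambda$ in terms of $\rho^*$) and, at the small primes, combines mod-$p$ enumeration with explicit lifting lemmas rather than enumerating to depth $p^N$, but this is the same method in substance.
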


\begin{remark} The proof, given in Section \ref{sec:m=3_d=6}, involves relating $\rho_{3,6}$ to several quantities. These relations were implemented in Sage \cite{sagemath} to solve for the explicit formula. The Sage Notebook used for these calculations can be \href{https://github.com/c-keyes/Density-of-locally-soluble-SECs/blob/bd6a8b39ea8c63bf8e7a847063c70998d01ee8aa/SEC_rho36_23Aug21.ipynb}{found here}.
\end{remark}

\begin{remark} In contrast to the work of Bhargava--Cremona--Fisher, where the local density at a prime $p$ for a genus one curve is given by a degree-9 rational function of $p$ \cite{BCF_genus_one} and the that of a plane cubic curve is given by a degree-12 rational function of $p$ \cite{BCF_plane_cubic}, the local density of our superelliptic curves $\rho_{3,6}(p)$ is a degree-57 rational function of $p$. This situation produced considerably more cases to check. Moreover, one can see that the number of such cases increases quickly in both $m$ and $d$, and certain independence arguments we make do not hold for $d\geq 8$; see Remark \ref{rem:lifting_double_roots}. For this reason, we restricted our attention to $m=3, d=6$ superelliptic curves for the exact expression.
\end{remark}

This paper is organized as follows. Section \ref{sec:pos_prop} contains the proof of a more general version of Theorem \ref{thm:pos_prop}, that the proportion of locally soluble superelliptic curves over any number field $k$ is positive. Section \ref{sec:lower_bounds} contains the proof of Theorem \ref{thm:lower_bound_limit},  the lower bounds for the proportion of locally soluble superelliptic curves with exponent $m$ and degree $d$ with $m\mid d$ and several examples of lower bounds for $\rho_{m,d}(p)$ for specific pairs $m,d$. This is contrasted in Section \ref{sec:upper_bounds} with a discussion of upper bounds for the local densities, leading to a general upper bound for $\rho_{m,d}$. Section \ref{sec:m=3_d=6} contains the proof of the exact formula for the local densities $\rho_{3,6}(p)$. Appendix \ref{appx:code} contains an explanation of a computational approach to bounding the local densities $\rho_{m,d}(p)$ for small primes. Appendix \ref{appx:counting_forms} contains the number of degree $2\leq d\leq 6$ binary forms $f(x,z)$ over $\mathbb{F}_p$ having the different possible factorization types. Appendix \ref{appx:explicit_formulas} contains the explicit expressions for the rational functions from Section \ref{sec:m=3_d=6}. 

\subsection*{Acknowledgments} The authors are grateful to Manjul Bhargava, Henri Darmon, Jackson Morrow, and David Zureick-Brown for helpful conversations. The authors would also like to thank Tim Browning, Jackson Morrow, Jeremy Rouse, Lori Watson, and David Zureick-Brown for their valuable feedback on an earlier draft. In addition, we thank an anonymous referee for their thorough reading of the manuscript and thoughtful suggestions that improved the paper.

\section{The proportion is positive}
\label{sec:pos_prop}

In proving Theorem \ref{thm:pos_prop}, we can in fact produce a more general statement about superelliptic curves over number fields. For the remainder of this section, let $m \geq 2$ be an integer and $d$ be a multiple of $m$. Let $k/\Q$ be an algebraic number field, with $\mathcal{O}_k$ denoting the ring of integers, $k_v$ denoting the $v$-adic completion at a place $v$, and $\mathbf{A}_k$ denoting the ring of adeles.

\begin{definition}\label{def:locally_soluble}
	A scheme $X/k$ is \textbf{everywhere locally soluble} if $X(k_v) \neq \emptyset$ for all places $v$ of $k$. 
\end{definition}

If $X$ is proper over $k$, then the adelic points of $X$ are the product of the $k_v$-points,
\[X(\mathbf{A}_k) = \prod_{v} X(k_v).\]
In this case we have that $X$ is everywhere locally soluble if and only if $X(\mathbf{A}_k) \neq \emptyset$. Note that a superelliptic curve $C_f/k$ is projective, and therefore proper over $k$. 

To define the density of superelliptic curves $C_f$ of the form \eqref{eq:projectiveeqn} with integral coefficients $(c_i)_{i=0}^d = \c \in \mathcal{O}_k^{d+1}$ which are locally soluble, we will need a suitable way to take limits, which specializes to the usual density over $\Q$. Let $k_\infty = \mathcal{O}_k \otimes_\Z \R$ and take $\Psi \subset k_\infty^{d+1}$ to be a bounded subset of positive measure whose boundary has measure zero, $\mu_\infty(\partial \Psi) = 0$. One can then take a limit to define the density
\begin{equation}\label{eq:density_number_field}
	\rho_{m,d,k, \Psi} = \lim_{B \to \infty} \frac{\# \left \lbrace \c \in \mathcal{O}_k^{d+1} \cap B \Psi^{d+1} \mid C_f(\mathbf{A}_k) \neq \emptyset \right \rbrace}{\# \left \lbrace \c \in \mathcal{O}_k^{d+1} \cap B \Psi^{d+1} \right \rbrace}.
\end{equation}
Note that in the case of $k = \Q$, we have $k_\infty = \R$ and may choose $\Psi = [-1,1]$, so that $\rho_{m,d,\Q, \Psi}$ takes the form
\[\rho_{m,d,\Q, [-1,1]} = \lim_{B \to \infty} \frac{ \# \left \lbrace \c \in \Z^{d+1} \cap [-B,B]^{d+1} \mid C_f(\mathbf{A}_k) \neq \emptyset \right \rbrace}{\# \left\lbrace \c \in \Z^{d+1} \cap [-B, B]^{d+1} \right \rbrace},\]
which agrees with \eqref{eq:def_density} upon observing $h(C_f) \leq B$ precisely when $\c \in [-B, B]^{d+1}$. Note also that this definition depends on the choice of $\Psi$; for example taking $k=\R$ and $\Psi = [0,1]^{d+1}$ instead would produce a different answer. Asking for $\Psi$ to be convex and symmetric in $k_\infty$ is likely desirable.

To extend the definitions of the local densities, for a finite place $v \nmid \infty$, we have
\[\rho_{m,d,k}(v) = \mu_v \left( \left\{ \c \in (\mathcal{O}_k)_v^{d+1} \mid y^m = c_dx^d + \cdots + c_0z^d \text{ has a } k_v\text{-point} \right\} \right),\]
where $\mu_v$ is a normalized Haar measure on $(\mathcal{O}_k)_v^{d+1}$, thus extending \eqref{eq:def_local_density}. At the infinite places, we take
\begin{align*}
	\rho_{m,d,k,\Psi}(\infty) &= \frac{\mu_\infty\left( \left\{ \c \in \Psi \mid y^m = c_dx^d + \cdots + c_0z^d \text{ has a } k_\infty\text{-point} \right\}\right)}{\mu_\infty(\Psi)}
\end{align*}
This could further be broken down into a product of local densities for $v \mid \infty$, but it is not necessary for our analysis.

We can use ideas of Bright--Browning--Loughran \cite{BBL} to show that $\rho_{m,d,k, \Psi}$ exists, is nonzero, and is computable via a product of local densities. This was already known for hyperelliptic curves over $\Q$ (i.e.\ the $\rho_{2,d,\Q, [-1,1]}$ case) by work of Poonen and Stoll \cite{poonenstoll_short}. In particular, we will need the following result, which is a slight weakening of \cite[Theorem 1.4]{BBL}.

\begin{Theorem}[see {\cite[Theorem 1.4]{BBL}}]
\label{thm:BBL}
	Let $k$ be a number field and $\pi \colon X \to \A^n$ a dominant quasiprojective $k$-morphism with geometrically integral generic fiber, and let $X_P$ denote the fiber of $\pi$ over a point $P \in \A^n_k$. Assume further that
	\begin{enumerate}[label = (\roman*)]
		\item the fiber of $\pi$ above each codimension 1 point of $\A^n$ is geometrically integral,
		\item $X(\mathbf{A}_k) \neq \emptyset$,
		\item for each real place $v$ of $k$, we have $B \pi(X(k_v)) \subseteq \pi(X(k_v))$ for all $B \geq 1$.
	\end{enumerate}
	Let $\Psi' \subset k_\infty^n$ be a bounded subset of positive measure lying in $\pi(X(k_\infty))$ whose boundary has measure zero. Then the limit
	\begin{equation}\label{eq:bbl_limit}
		\lim_{B \to \infty} \frac{ \# \left \lbrace P \in \mathcal{O}_k^n \cap B\Psi' \mid X_P(\mathbf{A}_k) \neq \emptyset \right\rbrace}{ \#\left \lbrace P \in \mathcal{O}_k^n \cap B\Psi' \right\rbrace} \end{equation}
	exists, is nonzero, and is equal to a product of local densities, $\prod_{v \nmid \infty} \mu_v\left( \left\{ P \in (\mathcal{O}_k)_v^n \mid X_P(\Q_p) \neq \emptyset \right\} \right)$.
\end{Theorem}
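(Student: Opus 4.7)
The plan is to interpret the numerator of \eqref{eq:bbl_limit} through a place-by-place sieve, and to match the resulting limit with the product $\prod_v \sigma_v$ of local densities $\sigma_v := \mu_v(\{P \in (\mathcal{O}_k)_v^n : X_P(k_v) \neq \emptyset\})$ at each finite place, together with the analogous archimedean factor over $\Psi'$. The technical engine will be Ekedahl's geometric sieve, applied to control the contribution of primes of large norm.

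First I would analyze $\sigma_v$ for all but finitely many $v$. Since the generic fiber of $\pi$ is geometrically integral, spreading out yields an integral model over $\mathcal{O}_k[1/M]$ for some integer $M$, and hypothesis (i) upgrades this so that the fibers above every codimension-$1$ point remain geometrically integral; consequently the complement of the geometrically integral locus in $\A^n_{\mathcal{O}_k[1/M]}$ has codimension at least $2$. By the Lang--Weil estimate, whenever the reduction $\overline{P} \in \A^n(\F_v)$ avoids this bad locus, the fiber $X_{\overline{P}}$ is geometrically integral and satisfies $\#X_{\overline{P}}(\F_v) = q_v^{\dim X_\eta}(1 + O(q_v^{-1/2}))$, which is positive for $q_v$ large; a smooth-point Hensel lift then produces a $k_v$-point of $X_P$. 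This yields $\sigma_v = 1 - O(q_v^{-2})$ outside a finite set of places, so the infinite product $\prod_v \sigma_v$ converges absolutely.

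The central obstacle is uniformly controlling the contribution of primes of large norm, which is exactly what Ekedahl's sieve addresses. Applied to the codimension-$2$ bad locus, it gives
\[\lim_{N \to \infty} \limsup_{B \to \infty} \frac{\#\{P \in \mathcal{O}_k^n \cap B\Psi' : \overline{P} \bmod v \text{ is bad for some } v \text{ with } q_v > N\}}{\#\{P \in \mathcal{O}_k^n \cap B\Psi'\}} = 0.\]
Consequently, the only non-negligible contribution to local insolubility comes from places of norm at most $N$, whose proportion is captured by a finite-product lattice point count modulo $\prod_{q_v \leq N} v$. Passing to $N \to \infty$ recovers the claimed product of local densities.

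Finally, hypothesis (ii) guarantees each $\sigma_v > 0$, so the product is nonzero; the archimedean factor is positive because $\Psi' \subset \pi(X(k_\infty))$ has positive measure, and hypothesis (iii) ensures this factor stabilizes in $B$ by preserving the local solubility condition under real scaling. The two genuinely hard steps are the Lang--Weil-type estimate on $\sigma_v$, which crucially uses condition (i) to push bad reduction into codimension $\geq 2$, and the Ekedahl sieve bound above; both are standard in this circle of ideas but form the technical heart of any self-contained proof.
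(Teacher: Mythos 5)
You should first note a mismatch of a different kind: the paper does not prove Theorem \ref{thm:BBL} at all. It is imported, as a slight weakening, from Bright--Browning--Loughran \cite[Theorem 1.4]{BBL}; the paper's own work consists only of verifying its hypotheses for the superelliptic family (Lemmas \ref{lem:m-th_power_lemma}, \ref{lem:properties_of_pi}, \ref{lem:pol_mth_power_codim} and Corollary \ref{cor:pos_prop_NF}). So there is no internal proof to compare against; what you have written is an outline of the strategy of the cited source (descending from Ekedahl and Poonen--Stoll), and at that level the architecture is right: hypothesis (i) pushes the non-geometrically-integral locus into codimension $\geq 2$, Lang--Weil plus a smooth-point Hensel lift gives $1 - \sigma_v = O(q_v^{-2})$ at places of large norm, and a geometric sieve handles the tail uniformly, leaving a finite product of local densities.

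As a proof, however, several load-bearing steps are asserted rather than established. (a) The Ekedahl-type sieve must be proved over $\mathcal{O}_k$, for lattice points of $\mathcal{O}_k^n$ in the homothetically expanding region $B\Psi'$; this is one of the main technical contributions of \cite{BBL}, not an off-the-shelf statement, and the companion finite-level step (your ``finite-product lattice point count'' modulo the places of norm $\leq N$) requires a Lipschitz/Davenport-type equidistribution argument in $B\Psi'$ with congruence conditions --- which is precisely where the hypothesis $\mu_\infty(\partial \Psi') = 0$ enters, a hypothesis your sketch never uses. (b) The claim that hypothesis (ii) ``guarantees each $\sigma_v > 0$'' is not immediate: a $k_v$-point of the total space $X$ need not lie on a fiber admitting a smooth $k_v$-point, nor over a $v$-integral parameter, so concluding that $\left\{ P \in (\mathcal{O}_k)_v^n \mid X_P(k_v) \neq \emptyset \right\}$ has positive measure at the finitely many small places requires a genuine argument (it is part of what \cite{BBL} prove), not a one-line consequence of $X(\mathbf{A}_k) \neq \emptyset$. (c) Hypothesis (iii) does more than make the archimedean factor ``stabilize'': combined with $\Psi' \subseteq \pi(X(k_\infty))$ it ensures that every parameter counted is soluble at all archimedean places for every $B \geq 1$, which is why the limit can be written as a product over the finite places only. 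None of this undermines the strategy --- it is the strategy of \cite{BBL} --- but as written your proposal is a roadmap to the cited proof rather than a self-contained one.
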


We now translate our problem into this language. Consider the affine space $\A^{d+1}_k = \Spec k[c_0, \ldots, c_d]$ and let $\mathcal{P}_k = \P_k\left(1,\frac{d}{m},1\right)$ be the weighted projective space into which curves of the form $C_f$ naturally embed, with coordinates $[x : y : z]$. Thus the vanishing set of $F(x,y,z) = \eqref{eq:projectiveeqn}$ gives a variety $X  \subset \A^{d+1}_k \times \mathcal{P}_k$. We have a natural map $\pi \colon X \to \A^{d+1}_k$, where the fiber $X_P$ over a $k$-point $P \in \A^{d+1}_k(k)$ corresponds to a specialization $C_f$, where the coefficients of $f$ are given by the coordinates of $P$.

If $\Psi \subset k_\infty^{d+1}$ is a bounded subset of positive measure with $\mu_\infty(\partial\Psi) =0$, we take $\Psi' = \Psi \cap \pi(X(k_\infty))$; that is, the polynomials $f$ with coefficients given in $\Psi$ such that $C_f$ has points over all archimedean completions of $k$. Thus if Theorem \ref{thm:BBL} holds, we have
\begin{align*}
	\rho_{m,d,k,\Psi} &= \lim_{B \to \infty} \frac{ \# \left \lbrace P \in \mathcal{O}_k^{d+1} \cap B\Psi' \mid X_P(\mathbf{A}_k) \neq \emptyset \right\rbrace}{ \#\left \lbrace P \in \mathcal{O}_k^{d+1} \cap B\Psi' \right\rbrace} \cdot \frac{\#\left \lbrace P \in \mathcal{O}_k^{d+1} \cap B\Psi' \right\rbrace}{\#\left \lbrace P \in \mathcal{O}_k^{d+1} \cap B\Psi \right\rbrace}\\
	&= \rho_{m,d,k,\Psi}(\infty) \prod_{v \nmid \infty} \rho_{m,d,k}(v),
\end{align*}
as the ratio of the lattice points contained in $B\Psi'$ to $B\Psi$ approaches $\mu_\infty(\Psi')/\mu_\infty(\Psi)$ as $B \to \infty$. To apply Theorem \ref{thm:BBL}, we need to prove that $\pi$ and $\Psi'$ has the desired properties and verify (i), (ii), and (iii). We begin by characterizing when varieties cut out by equations of the form \eqref{eq:projectiveeqn} are geometrically integral.

\begin{lemma}
\label{lem:m-th_power_lemma}
    Fix $m \geq 2$ and $d$ divisible by $m$. Let $k$ be any base field. Suppose $f(x,z) \in k[x,z]$ is homogeneous of degree $d$ and and take $C_f$ the closed subvariety of $\P_k\left(1, \frac{d}{m}, 1\right)$ cut out by \eqref{eq:projectiveeqn}. The following are equivalent.
    \begin{enumerate}[label = (\alph*)]
        \item $f \neq ah^q$ for all prime divisors $q \mid m$, $a \in k$, and homogeneous degree $d/q$ polynomials $h(x,z) \in k[x,z]$.
        \item $f \neq g^q$ for all prime divisors $q \mid m$ and homogeneous degree $d/q$ polynomials $g(x,z) \in \overline{k}[x,z]$.
        \item $C_f$ is geometrically integral.
    \end{enumerate}
\end{lemma}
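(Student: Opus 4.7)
My plan is to prove the three equivalences by establishing (b) $\Rightarrow$ (a), (a) $\Rightarrow$ (b), (c) $\Rightarrow$ (b), and (b) $\Rightarrow$ (c); the first three are short, and (b) $\Rightarrow$ (c) is the substantive step. First, (b) $\Rightarrow$ (a) is immediate, since for any $b \in \overline{k}$ with $b^q = a$ one has $f = a h^q = (bh)^q$. For (c) $\Rightarrow$ (b) I would argue by contrapositive: writing $m = q m'$, the defining polynomial factors as $y^m - g^q = \prod_{\zeta^q = 1}(y^{m'} - \zeta g)$ when $\operatorname{char}(k) \neq q$, giving $q \geq 2$ distinct factors so that $C_f$ is geometrically reducible; in characteristic $q$, Frobenius instead gives $y^m - g^q = (y^{m'} - g)^q$, so $C_f$ is geometrically nonreduced. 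Either way $C_f$ fails to be geometrically integral.

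For (a) $\Rightarrow$ (b), I would apply Galois descent. Suppose $f = g^q$ with $g \in \overline{k}[x,z]$; comparing top-degree components shows $g$ is automatically homogeneous of degree $d/q$. For each $\sigma \in \Gal(\overline{k}/k)$, the identity $\sigma(g)^q = g^q$ forces $\sigma(g)/g =: \zeta_\sigma \in \mu_q(\overline{k})$ (roots of unity in $\overline{k}(x,z)$ all lie in $\overline{k}$ since $\overline{k}$ is algebraically closed), and $\sigma \mapsto \zeta_\sigma$ is a 1-cocycle valued in $\overline{k}^*$. By Hilbert 90 there is $\alpha \in \overline{k}^*$ with $\zeta_\sigma = \sigma(\alpha)/\alpha$, so $h := g/\alpha$ is Galois-invariant and hence lies in $k[x,z]$; then $f = \alpha^q h^q$ with $a := \alpha^q = f/h^q \in k$, violating (a). In characteristic $q$ this degenerates harmlessly, since $\mu_q(\overline{k})$ is trivial and $g$ is already Galois-invariant.

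For (b) $\Rightarrow$ (c), it suffices to show $y^m - f(x,z)$ is irreducible in $\overline{k}[x,y,z]$. Viewed as a monic polynomial in $y$ over the UFD $\overline{k}[x,z]$, Gauss's lemma identifies irreducibility in $\overline{k}[x,y,z]$ with irreducibility in $\overline{k}(x,z)[y]$. The Vahlen--Capelli theorem states that $y^m - a$ over a field is reducible precisely when $a$ is a $q$-th power for some prime $q \mid m$, or when $4 \mid m$ and $a = -4 b^4$ for some $b$; the latter exception is absorbed because $-4$ is either zero (in characteristic $2$) or a fourth power in $\overline{k}$. Since $\overline{k}[x,z]$ is a UFD, $q$-th powers in $\overline{k}(x,z)$ coincide with $q$-th powers in $\overline{k}[x,z]$, and any such $q$-th root is automatically homogeneous of degree $d/q$. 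Hence (b) gives irreducibility of $y^m - f$ in $\overline{k}[x,y,z]$, so $\overline{k}[x,y,z]/(y^m - f)$ is a graded integral domain and $C_f = \operatorname{Proj}\bigl(\overline{k}[x,y,z]/(y^m - f)\bigr)$ is geometrically integral.

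The main obstacle is the (b) $\Rightarrow$ (c) direction, where one must invoke Vahlen--Capelli carefully and absorb its $4 \mid m$ exceptional case using the algebraic closedness of $\overline{k}$, then translate irreducibility of $y^m - f$ as a polynomial in $\overline{k}[x,y,z]$ into geometric integrality of the weighted-projective scheme $C_f$. The hypothesis $m \mid d$ enters only to ensure that the exponent $d/q$ is a nonnegative integer, so that the statements of (a) and (b) are well-posed.
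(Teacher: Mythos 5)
Your proposal is correct, and both substantive implications are handled by routes genuinely different from the paper's. For (a) $\Rightarrow$ (b) the paper argues by hand: writing $g = a_0g_0$ with $g_0$ normalized to have leading coefficient $1$, it shows inductively, coefficient by coefficient, that $g_0 \in k[x,z]$, treating $\mathrm{char}(k) = q$ separately via the Frobenius; your Galois-descent argument (the cocycle $\sigma \mapsto \sigma(g)/g \in \mu_q(\overline{k})$ killed by Hilbert 90) reaches the same conclusion more conceptually, at the small cost of checking the cocycle is continuous (it is, since it factors through the finite extension generated by the coefficients of $g$ and $\mu_q$) and of identifying the fixed field of $\Aut(\overline{k}/k)$ with $k$ — which, exactly like the paper's appeal to bijectivity of the $q$-th power map on $k^\times$ in characteristic $q$, tacitly assumes $k$ perfect; neither argument covers imperfect fields in characteristic $q$, but this caveat is shared with the paper and is harmless for the fields to which the lemma is applied. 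For (b) $\Rightarrow$ (c) the paper works patch by patch: it shows the affine pieces $y^m = f(x,1)$ and $y^m = f(1,z)$ are geometrically integral by passing to the generic fibre over $\A^1$ and proving the coordinate ring injects into the field $\overline{k}(x)[y]/(y^m - f(x,1))$, then deduces geometric connectedness from global sections; you instead prove irreducibility of $y^m - f$ in $\overline{k}[x,y,z]$ directly via Gauss's lemma and Vahlen--Capelli, absorbing the $4 \mid m$, $a = -4b^4$ exception because $-4$ is a fourth power in $\overline{k}$, and conclude that $\mathrm{Proj}$ of the resulting graded domain is integral. Your route is more global and in fact more careful at the one point where the paper implicitly invokes the same irreducibility criterion (the assertion that $\overline{k}(x)[y]/(y^m - f(x,1))$ is a field ``by hypothesis (b)'') without mentioning the exceptional case. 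Your (c) $\Rightarrow$ (b) is essentially the paper's (c) $\Rightarrow$ (a): the same root-of-unity factorization, plus the Frobenius factorization giving non-reducedness in characteristic $q$.
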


\begin{proof}
    For the (a) $\implies$ (b) direction, we prove the contrapositive. Suppose $f = g^q$ for some prime divisor $q \mid m$ and $g \in \overline{k}[x,z]$. We will find $a \in k$ and $h \in k[x,z]$ such that $f = ah^q$. For the moment, let us further assume that the characteristic of $k$ is prime to $q$.
    
    Write $g = a_0g_0$ where $a_0 \in \overline{k}$ and $g_0$ has leading coefficient 1 (i.e.\ the highest power of $x$ appears with coefficient 1). Then we have
	\[f = a_0^qg_0^q = a_0^q\left(b_{d/q}x^{d/q} + b_{d/q-1}x^{d/q-1}z + \cdots + b_0z^{d/q}\right)^q.\]
	Assume for convenience that $b_{d/q} = 1$; if not, it must be zero by our construction, and the proof proceeds identically, starting with the first nonzero value of $b_{d/q-i}$. The leading term of $f$ is $a_0^qx^d$, so $a_0^q \in k$. Set $a = a_0^q$. Now we examine the $x^{d-1}z$ term: it is $aqb_{d/q-1}x^{d-1}z$, so using the fact that $a$ and $q$ are units in $k$ (here we use $\mathrm{char}(k) \nmid q$), we see that $b_{d/q-1} \in k$. 
	
	Proceeding inductively, we show $b_{d/q-i} \in k$ for all $0 \leq i \leq d/q$. The $x^{d-i}z^i$ term of $f$ looks like 
	\[a(\ldots + qb_{d/q-i})x^{d-i}z^i,\]
	where the omitted terms consist only of $b_{d/q-j}$ for $j < i$, and hence are already known to be in $k$ by the induction hypothesis. Thus we conclude $b_{d/q-i} \in k$, showing that $g_0 \in k[x,z]$. Setting $h=g_0$, we have written $f = ah^q$ for $h$ defined over $k$.
	
	Suppose now that $\mathrm{char}(k) = q$ and write $g = b_{d/q}'x^{d/q} + \cdots + b_0'z^{d/q}$. We have
	\[f=g^q = (b_{d/q}')^qx^{d} + \cdots + (b_0')^qz^{d}.\]
	Hence $(b_i')^q \in k$ for all $i$. Moreover, the $q$-th power map $k^\times \to k^\times$ is an isomorphism, so we take $a = 1$ and $h = b_{d/q}x^{d/q} + \cdots + b_0z^{d/q}$ for the unique $b_i \in k$ such that $(b_i')^q = b_i^q$, finding that $f = ah^q$.
    
    To prove (b) $\implies$ (c), we claim it suffices to show that the standard open affine pieces $U\colon y^m = f(x,1)$ and $U' \colon y^m = f(1,z)$, obtained by pulling back the map $C_f \to \P^1$ along the standard affine patches, are geometrically integral. In particular, this implies that the stalks of the structure sheaf of $C_f$ (over $\overline{k}$) are integral domains. A straightforward computation shows that this sheaf has only the constants as its global sections, hence $C_f$ is geometrically connected. Since $C_f$ is Noetherian and nonempty, geometric connectedness and integral stalks suffice to ensure $C_f$ is geometrically integral (see e.g.\ \cite[Exercise 5.3.C]{vakil}).
    
    We now argue that $U = \Spec \overline{k}[x,y]/(y^m - f(x,1))$ is geometrically integral by exploiting its map to $\A^1 = \Spec \overline{k}[x]$. The same argument applies to $U'$. We compute the generic fiber of this map to be the spectrum of $\overline{k}(x) \otimes_{\overline{k}[x]} \overline{k}[x,y]/(y^m - f(x,1)) \simeq \overline{k}(x)[y]/(y^m - f(x,1))$. This is a field by our hypothesis (b); if not, $f(x,1) = g_0(x)^q$ for some prime $q \mid m$, and we have 
    \[f(x,z) = z^df(x/z,1) = z^dg_0(x/z)^q = \left(z^{d/q}g_0(x/z)\right)^q,\]
    violating (b).
    
    Finally, we verify that the natural map of rings $\overline{k}[x,y]/(y^m - f(x,1)) \to \overline{k}(x)[y]/(y^m - f(x,1))$ is injective. Taking $g(x,y)$ in the kernel of this map and assuming its degree in $y$ is less than $m$, we have
    \[g(x,y) = \sum_{0 \leq i < m} g_i(x) y^i = \left(\sum_{j \geq 0} h_j(x)y^j\right)(y^m - f(x,1)),\]
    where $h_i \in \overline{k}(x)$ for all $i$. Expanding the right hand side, we see $g_i(x) = -h_i(x)f(x,1)$ for $0 \leq i < m$ and $h_j(x) = h_{j+m}(x)f(x,1)$ for all $j$. Since $h_j(x) = 0$ for all $j \gg 0$, we must have $h_j = 0$ for all $j$, hence $g(x,y) = 0$. Thus $\overline{k}[x,y]/(y^m - f(x,1))$ injects into a field, and therefore must be an integral domain, making $U$ geometrically integral.
    
    The (c) $\implies$ (a) direction follows from the observation that if $f = ah^q$ then over $\overline{k}$ we have the nontrivial factorization of \eqref{eq:projectiveeqn}
    \[y^m - f(x,z) = \prod_{1 \leq i \leq q} \left(y^{m/q} - \zeta^i \alpha h(x,z)\right),\]
    for $\zeta$ a primitive $q$-th root of unity and $\alpha \in \overline{k}$ satisfying $\alpha^q = a$.
\end{proof}

\begin{lemma}\label{lem:properties_of_pi}
	Let $\pi \colon X \to \A^{d+1}$, as above, be considered as a morphism of $k$-varieties. Then $\pi$ is dominant, projective, and has geometrically integral generic fiber.
\end{lemma}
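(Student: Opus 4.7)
My plan is to verify the three claims in turn, with only the generic-fiber claim requiring substantive work. For projectivity, I observe that $X$ sits as a closed subscheme of $\A^{d+1}_k \times_k \mathcal{P}_k$, cut out by the weighted-homogeneous equation $F(x,y,z) = y^m - f(x,z)$ of weighted degree $d$ (since $y$ has weight $d/m$). As $\mathcal{P}_k$ is projective over $k$, the second projection $\A^{d+1}_k \times_k \mathcal{P}_k \to \A^{d+1}_k$ is projective, and so is its restriction $\pi$. Dominance then follows from the stronger fact that $\pi$ is surjective on $\overline{k}$-points: for any $\mathbf{c} \in \A^{d+1}(\overline{k})$, choosing $y_0 \in \overline{k}$ with $y_0^m = c_0$ produces a point $[0 : y_0 : 1]$ on the fiber $C_f$.

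The main step is geometric integrality of the generic fiber. Let $\eta$ denote the generic point of $\A^{d+1}_k$ with residue field $K = k(c_0, \ldots, c_d)$; then $X_\eta$ is $C_f$ over $K$ for the universal form $f(x,z) = \sum_{i=0}^d c_i x^i z^{d-i}$. By Lemma \ref{lem:m-th_power_lemma}, it suffices to check that for each prime $q \mid m$, this $f$ is not of the form $a h^q$ for any $a \in K$ and any homogeneous $h \in K[x,z]$ of degree $d/q$. I would establish this by a dimension count over $\overline{k}$: the locus of such $f$ in $\A^{d+1}_{\overline{k}}$ is the image of the polynomial map
\[\Phi_q \colon \A^1_{\overline{k}} \times \A^{d/q+1}_{\overline{k}} \longrightarrow \A^{d+1}_{\overline{k}}, \qquad (a, h) \longmapsto \text{coefficients of } a h^q.\]
Since $\Phi_q(a,h) = \Phi_q(\lambda^q a, \lambda^{-1}h)$ for every $\lambda \in \overline{k}^\times$, every fiber of $\Phi_q$ is at least one-dimensional, so the closure of its image has dimension at most $(d/q + 2) - 1 = d/q + 1 \leq d/2 + 1 < d+1$. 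This is a proper closed subvariety of $\A^{d+1}_{\overline{k}}$ and therefore cannot contain $\eta$, completing the argument after invoking Lemma \ref{lem:m-th_power_lemma}.

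The only subtlety worth flagging is the careful accounting for the one-parameter scaling ambiguity in $\Phi_q$: without quotienting by it, the naive dimension count gives only $\dim \leq d/q + 2 = d+1$ in the edge case $(q,d) = (2,2)$, so exploiting the scaling action is what keeps the bound strict uniformly in $(q,d)$. I do not anticipate any other serious obstacle, since the projectivity and dominance claims are immediate from the construction.
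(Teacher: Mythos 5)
Your proof is correct, but it handles the two substantive points differently from the paper. For geometric integrality of the generic fiber, the paper also invokes Lemma \ref{lem:m-th_power_lemma} over $K = k(c_0,\ldots,c_d)$, but then disposes of the hypothesis in one line: the generic form is squarefree because its discriminant is a nonzero polynomial in the $c_i$, hence nonzero in $K$, and a squarefree form cannot be a $q$-th power. You instead bound the dimension of the image of the map $(a,h) \mapsto ah^q$, using the scaling action $(a,h)\mapsto(\lambda^q a, \lambda^{-1}h)$ to force every fiber to be positive-dimensional, so the locus of forms of that shape is a proper closed subset missing the generic point. Your route is a codimension-$\geq 1$ cousin of the paper's Lemma \ref{lem:pol_mth_power_codim} (which gets codimension $\geq 2$ via primality of the discriminant ideal, and must exclude $(d,q)=(2,2)$), and your scaling trick is exactly what makes the weaker bound hold uniformly, consistent with the fact that the present lemma needs no $(m,d)\neq(2,2)$ hypothesis; the paper's discriminant argument is shorter, while yours avoids knowing anything about the discriminant. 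For dominance, the paper reads it off from the generic fiber being nonempty (the generic point of $X$ maps to that of $\A^{d+1}_k$), whereas you exhibit the point $[0:\sqrt[m]{c_0}:1]$ on every geometric fiber, which even gives surjectivity; both are fine, and your projectivity argument is the paper's verbatim. One small bookkeeping point, not a gap: you phrase the dimension count over $\overline{k}$ while the relevant point $\eta$ lives in $\A^{d+1}_k$ and the putative factorization $f=ah^q$ is over $K$, which does not contain $\overline{k}$; either run the argument with $\Phi_q$ defined over $k$ (its scheme-theoretic image is a closed $k$-subvariety of dimension $\leq d/q+1$, so it misses $\eta$), or base change everything to $\overline{K}$ and use condition (b) of Lemma \ref{lem:m-th_power_lemma} together with the generic point of $\A^{d+1}_{\overline{k}}$.
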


\begin{proof}
	The generic fiber of $\pi$ is the curve given by \eqref{eq:projectiveeqn}, but viewed as a closed subscheme of $\mathcal{P}_{k(c_0, \ldots, c_d)}$. This is geometrically integral by an application of Lemma \ref{lem:m-th_power_lemma} over the base field $k(c_0, \ldots, c_d)$ because the generic degree $d$ polynomial is not an $q$-th power over the algebraic closure of $k(c_0, \ldots, c_d)$ for any prime divisor $q \mid m$ (in fact it is squarefree, since the discriminant of such a polynomial is nonzero in $k(c_0, \ldots, c_d)$). Interpreting this (or rather $\Spec$ of the function field) as the generic point of $X$, we see that the generic point of $X$ maps to that of $\A^{d+1}_k$, so $\pi$ is dominant . For quasiprojectivity, we note that since $\mathcal{P}_k \to \Spec k$ is projective and projectivity is preserved under base change, we have $\A^{d+1}_k \times \mathcal{P}_k \to \A^{d+1}_k$ is projective. Closed embeddings are projective and projectivity is closed under composition, giving that $X \to \A^{d+1}_k \times \mathcal{P}_k \to \A^{d+1}_k$ is projective.
\end{proof}

In order to justify (i) of Theorem \ref{thm:BBL}, we first need to understand the $q$-th power map on polynomials for a prime $q$. Viewing $\A^{d+1}$ as the space of polynomials of degree up to $d$, we define a map
\begin{align*}
	\phi \colon \A^{d/q + 1} &\to \A^{d+1}\\
	g(x) &\mapsto g(x)^q
\end{align*}
for $q$ any prime dividing $d$. By identifying a polynomial $g(x) = a_{d/q}x^{d/q} + \cdots + a_0$ with the prime ideal $(a_0 - t_0, \ldots, a_{d/q} - t_{d/q}) \in \Spec \overline{k}[t_0, \ldots, t_{d/q}]$ and studying the coefficients of $g(x)^q$, one can produce the equations for $\phi$.

What we need from this is a lower bound on the codimension of the image of this map, a fact which is proven for the $q=2$ case in \cite[Lemma 3]{poonenstoll_short}, so long as $d > 2$. 

\begin{lemma}
\label{lem:pol_mth_power_codim}
	For a positive integer $d$ and a prime $q \mid d$, let $\phi \colon \A^{d/q+1}_k \to \A^{d+1}_k$ be the $q$-th power map described above. Let $V \subseteq \A^{d+1}_k$ be the scheme theoretic image of $\phi$. So long as $(d,q) \neq (2,2)$, $V$ has codimension at least 2. 
\end{lemma}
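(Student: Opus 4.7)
The plan is to bound the codimension of $V$ purely by a dimension count, since $V$ is the scheme-theoretic image of $\phi$ and so cannot exceed the source in dimension. The source $\A^{d/q+1}_k$ has dimension $d/q+1$, while the target $\A^{d+1}_k$ has dimension $d+1$. First I would verify that $\phi$ is in fact a morphism of $k$-schemes, which is immediate because the coefficients of $g(x)^q$ are expressed as polynomials (in particular, as specific multinomial expressions involving multinomial coefficients) in the coefficients of $g(x)$.

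Next I would appeal to the standard fact that for a morphism of finite type $\phi \colon X \to Y$ between noetherian schemes, the scheme-theoretic image $V = \overline{\phi(X)}$ satisfies $\dim V \leq \dim X$. Applying this to our situation gives
\[\dim V \leq \frac{d}{q} + 1, \qquad \text{codim}_{\A^{d+1}} V \;\geq\; (d+1) - \left(\frac{d}{q} + 1\right) = \frac{d(q-1)}{q}.\]
It then remains to check that $d(q-1)/q \geq 2$ in every allowed case. This reduces to the inequality $d \geq 2q/(q-1) = 2 + 2/(q-1)$. I would split into cases: if $q=2$ the bound requires $d \geq 4$, which holds since $q \mid d$, $d$ is a positive integer, and we have excluded $(d,q)=(2,2)$. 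If $q \geq 3$, then $q \mid d$ forces $d \geq q \geq 3$, and $d(q-1)/q \geq d \cdot 2/3 \geq 2$, with equality only in the boundary case $(d,q) = (3,3)$, which still satisfies the required inequality.

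There is no real obstacle here: the heart of the argument is the crude dimension inequality $\dim V \leq \dim X$, and the excluded case $(d,q) = (2,2)$ is precisely the one where this bound degenerates to codimension $1$ (the conic $c_1^2 = 4c_0c_2$ in $\A^3$). This mirrors the analogous statement in \cite[Lemma 3]{poonenstoll_short}, but is even simpler since we do not need to distinguish the image scheme-theoretically beyond its dimension.
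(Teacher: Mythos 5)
Your proof is correct, but it takes a genuinely different route from the paper. You argue by a pure dimension count: the source $\A^{d/q+1}_k$ is irreducible, so the scheme-theoretic image $V$ is irreducible and the induced map onto it is dominant, whence $\dim V \leq d/q+1$ and $\mathrm{codim}\, V \geq d - d/q = d(q-1)/q$; the elementary numerical check then shows this is $\geq 2$ in every case with $(d,q) \neq (2,2)$. The paper instead argues through the discriminant: every polynomial in the image is inseparable, so the ideal $I$ of $V$ contains $(\Delta)$, and since for $(d,q) \neq (2,2)$ there exist inseparable polynomials that are not $q$-th powers, the containment $(\Delta) \subsetneq I$ is strict; primality of $(\Delta)$ (cited from Gelfand--Kapranov--Zelevinsky) then gives the chain $(0) \subsetneq (\Delta) \subsetneq I$ and hence codimension at least $2$. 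Your argument is arguably more elementary (it needs only the standard fact that a dominant finite-type morphism of irreducible $k$-schemes cannot increase dimension, rather than the primality of the discriminant ideal) and it yields the stronger bound $\mathrm{codim}\, V \geq d(q-1)/q$, which grows with $d$; the paper's argument yields only codimension $2$ but gives the finer geometric information that $V$ sits properly inside the irreducible discriminant hypersurface, which is what makes the excluded case $(d,q)=(2,2)$ transparent there. Both arguments work in all characteristics, and your identification of the $(2,2)$ case as the one where the bound degenerates to codimension $1$ matches the paper's observation that only there is $(\Delta) = I$.
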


\begin{proof}
    Let $I$ be the ideal of $V$ and $\Delta$ the discriminant of a degree $d$ polynomial (viewed as a function on $\A^{d+1}_k$). Since $I$ corresponds to the functions vanishing on $V$, and an element of the image of the $q$-th power map is necessarily not separable, we have $(\Delta) \subseteq I$. Moreover, only in the case $q=2$ and $d=2$ is $(\Delta) = I$. A degree two polynomial is a perfect square if and only if it is nonseparable, but for all $(d,q) \neq (2,2)$, there exist degree $d$ polynomials which are neither separable nor perfect $q$-th powers. As $(\Delta)$ is prime (see e.g.\ \cite[Example 1.4]{GKZ_discriminants}), we have the chain $(0) \subsetneq (\Delta) \subsetneq I$, making the codimension of $V$ at least 2.
\end{proof}

\begin{corollary}\label{cor:pos_prop_NF}
	Suppose $d>2$. Let $k$ be a number field and $\pi \colon X \to \A^{d+1}_k$, as above, be considered as a morphism of $k$-varieties. Suppose $\Psi \subseteq k_\infty^{d+1}$ is a bounded subset such that $\Psi' = \Psi \cap \pi(X(k_\infty))$ has positive measure with $\mu_\infty(\partial \Psi') = 0$. Then $\rho_{m,d,k,\Psi}$, as defined in \eqref{eq:density_number_field}, exists, is nonzero, and is equal to a product of local densities. 
\end{corollary}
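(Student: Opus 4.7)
The plan is to apply Theorem \ref{thm:BBL} to the morphism $\pi \colon X \to \A^{d+1}_k$ with the region $\Psi' = \Psi \cap \pi(X(k_\infty))$, and then to relate the resulting limit to $\rho_{m,d,k,\Psi}$. Lemma \ref{lem:properties_of_pi} already shows that $\pi$ is dominant, projective (hence quasiprojective), and has geometrically integral generic fiber, so only the three numbered hypotheses of Theorem \ref{thm:BBL} remain to be verified. Hypothesis (ii) is immediate from the specialization $f(x,z) = x^d + z^d$, i.e.\ $\c = (1,0,\ldots,0,1) \in \mathcal{O}_k^{d+1}$, whose curve $C_f$ contains the $k$-rational point $[1:1:0]$ and hence supplies an adelic (indeed, global) point of $X$. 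For hypothesis (iii), fix a real place $v$ and $B \geq 1$; if $[x:y:z] \in C_f(k_v)$ realizes $\c \in \pi(X(k_v))$, then $[x : B^{1/m} y : z] \in C_{Bf}(k_v)$ with $B^{1/m}$ the positive real $m$-th root, so $B\c \in \pi(X(k_v))$.

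The principal task is hypothesis (i), that every codimension $1$ fiber of $\pi$ be geometrically integral. The fiber of $\pi$ over $f \in \A^{d+1}_k$ is precisely $C_f$, so by the equivalence (a) $\Leftrightarrow$ (c) in Lemma \ref{lem:m-th_power_lemma}, the locus $B_{\mathrm{bad}} \subset \A^{d+1}_k$ along which $C_f$ fails to be geometrically integral is the union, over primes $q \mid m$, of the scheme-theoretic image $V_q$ of the $q$-th power map $\phi_q \colon \A^{d/q+1}_k \to \A^{d+1}_k$. Since $m \mid d$ and $d > 2$, we have $(d,q) \neq (2,2)$ for every prime $q \mid m$, so Lemma \ref{lem:pol_mth_power_codim} yields $\mathrm{codim}\, V_q \geq 2$. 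In particular no codimension $1$ point of $\A^{d+1}_k$ lies in $B_{\mathrm{bad}}$, confirming (i). This is the main obstacle, and is precisely what the codimension bound of Lemma \ref{lem:pol_mth_power_codim} was established to overcome.

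With (i)--(iii) in hand, Theorem \ref{thm:BBL} applied to $\pi$ and $\Psi'$ shows that
\[
\lim_{B \to \infty} \frac{\#\{\c \in \mathcal{O}_k^{d+1} \cap B\Psi' : C_f(\mathbf{A}_k) \neq \emptyset\}}{\#\{\c \in \mathcal{O}_k^{d+1} \cap B\Psi'\}}
\]
exists, is nonzero, and equals $\prod_{v \nmid \infty} \rho_{m,d,k}(v)$. Since an everywhere locally soluble $\c \in B\Psi$ must in particular be archimedean soluble, such a $\c$ automatically lies in $B\Psi'$; therefore
\[
\rho_{m,d,k,\Psi} = \lim_{B \to \infty} \frac{\#\{\c \in \mathcal{O}_k^{d+1} \cap B\Psi' : C_f(\mathbf{A}_k) \neq \emptyset\}}{\#\{\c \in \mathcal{O}_k^{d+1} \cap B\Psi'\}} \cdot \lim_{B \to \infty} \frac{\#\{\c \in \mathcal{O}_k^{d+1} \cap B\Psi'\}}{\#\{\c \in \mathcal{O}_k^{d+1} \cap B\Psi\}}.
\]
The first factor is the Theorem \ref{thm:BBL} limit, and the second converges to $\mu_\infty(\Psi')/\mu_\infty(\Psi) = \rho_{m,d,k,\Psi}(\infty)$ by standard lattice point counting for bounded regions with negligible boundary. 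Assembling these yields the product formula $\rho_{m,d,k,\Psi} = \rho_{m,d,k,\Psi}(\infty) \prod_{v \nmid \infty} \rho_{m,d,k}(v)$, which is nonzero because each factor is.
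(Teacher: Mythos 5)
Your proof is correct and follows essentially the same route as the paper: verify hypotheses (i)--(iii) of Theorem \ref{thm:BBL} using Lemmas \ref{lem:properties_of_pi}, \ref{lem:m-th_power_lemma}, and \ref{lem:pol_mth_power_codim} (the codimension-$\geq 2$ bound on the $q$-th power loci being the key point for (i)), then combine the resulting limit with the archimedean ratio $\mu_\infty(\Psi')/\mu_\infty(\Psi)$ exactly as in the paper's setup. The only differences are cosmetic, e.g.\ your witness $y^m = x^d + z^d$ with point $[1:1:0]$ for hypothesis (ii) in place of the paper's $y^m = x^d + xz^{d-1}$ with $[0:0:1]$.
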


\begin{proof}
	By Lemma \ref{lem:properties_of_pi}, we have that $\pi$ is dominant, projective, and has geometrically integral generic fiber. It remains to show that the hypotheses (i), (ii), and (iii) of Theorem \ref{thm:BBL} apply. 
	
	Let $P \in \A^{d+1}_k$ be a codimension one point. By Lemma \ref{lem:m-th_power_lemma}, the fiber $X_P$ is geometrically integral precisely when $P \notin V_q$ for some prime $q \mid m$, where $V_q$ is the scheme-theoretic image of the $q$-th power map described above. By Lemma \ref{lem:pol_mth_power_codim}, each such $V_q$ has codimension at least 2 and thus cannot contain $P$. Put informally, it takes more than just one algebraic relation on the coefficients to force $f(x,z)$ to be an $q$-th power for some $q \mid m$. Thus (i) is satisfied.
		
	For (ii), we have $X(\mathbf{A}_k) \neq \emptyset$ because $X(k) \neq \emptyset$. That is, there exist superelliptic curves $C_f$ with $k$-rational points. For example, one can take
	\[C_f\colon y^m = x^d + xz^{d-1},\]
	which has a $k$-rational point at $[0:0:1]$.
	
	Finally, to see that for real places $v$, $\pi(X(k_v))$ is closed under the action of $\R_{\geq 1}$, simply note that positive $m$-th roots are in $\R$, so if $C_f$ has a $k_v$ point $[x:y:z]$ then $C_{Bf}$ has the $k_v$ point $[x:\sqrt[m]{B} y : z]$. 
\end{proof}

To prove Theorem \ref{thm:pos_prop}, we need only specialize $k=\Q$ and find an appropriate $\Psi \subseteq \pi(X(\R))$ satisfying the desired properties in Corollary \ref{cor:pos_prop_NF}, such that the limit \eqref{eq:bbl_limit} computes the limit in Theorem \ref{thm:pos_prop}.

\begin{proof}[Proof of Theorem \ref{thm:pos_prop}]
	When $k = \Q$ we have $k_\infty = \R$, so we set $\Psi = [-1,1]^{d+1} \cap \pi(X(\R))$, which may be viewed as the set of homogeneous polynomials $f(x,z)$ of degree $d$ with real coefficients of absolute value at most 1 such that $C_f$ has a real point. This subset is clearly bounded, and has positive measure since it contains the set $\left\lbrace \c \in [-1,1]^{d+1} \mid 0 \leq c_0 \leq 1\right\rbrace$, whose measure is half that of the unit cube. To see why, we merely recognize that if $c_0 \geq 0$ then $C_f$ has an $\R$-point $[0:\sqrt[m]{c_0}:1]$. 
		
	To check $\mu_\infty(\partial\Psi) = 0$, with respect to the Euclidean measure $\mu_\infty$ on $[-1,1]^{d+1}$, we use the evaluation map, $\ev_{[x:z]}$ for a point $[x:z] \in \P^1_\R$. This map takes $\ev_{[x:z]}(\c) = f(x,z)$, where $f$ is the degree $d$ binary form in $\R[x,z]$ defined by $\c$. We observe
	\[\Psi = \left( \bigcup_{[x:z] \in \P^1_\R} \ev_{[x:z]}^{-1}\left( (0, \infty) \right) \right) \cup \left \lbrace \c \in [-1,1] \mid f(x,z) = 0 \text{ for some } [x:z] \in \P^1_\R\right \rbrace.\]
	As $\ev_{[x:z]}$ is continuous (in fact it is linear), the union $\cup_{[x:z] \in \P^1_\R} \ev_{[x:z]}^{-1}\left( (0, \infty) \right) $ is open, and hence $\partial \Psi$ is contained in the set
	\[\left \lbrace \c \in [-1,1] \mid f(x,z) \leq 0 \text{ for all } [x:z] \in \P^1_\R \text{ and } f(x,z) = 0 \text{ for some}\right \rbrace.\]
	To be in this set, it is necessary for each such root of $f(x,z) = 0$ to have even multiplicity, and in particular $\c$ is contained in the vanishing set of the discriminant polynomial, which has measure zero.
	
	Thus the limit \eqref{eq:bbl_limit} computes $\frac{\rho_{m,d}}{\rho_{m,d}(\infty)} = \prod_{p \nmid \infty} \rho_{m,d}(p)$ as a product of local densities $\rho_{m,d}(p) = \mu_p\left( \pi(X(\Q_p)) \right)$, completing the proof of Theorem \ref{thm:pos_prop}.
\end{proof}

We conclude this section by making some observations about $\rho_{m,d,k}(\infty)$. If $k$ is totally complex, i.e.\ it has no real places and only complex places, then we have that $\rho(\infty) = 1$. Using the fact that $\C$ is algebraically closed, any choice of $f(x,z)$ with coefficients $k_\infty$ will have a solution $[x:y:z]$ for any choice of $[x:z] \in \P^1_{k_\infty}$.

Whenever $m$ is odd, we have $\rho_{m,d,k}(\infty) = 1$, because real numbers always have an $m$-th root in this case. Geometrically, we observe that $\pi$ is surjective on $k_\infty$-points, $\pi(X(k_\infty)) = \A^{d+1}(k_\infty)$. Conversely, if $m$ is even, $\rho_{m,d,k}(\infty)$ depends only on $d$, and not on $m$. In particular, $\rho_{m,d,\Q}(\infty)$ is equal to the proportion of (real) polynomials $f(x,z)$ which take a positive value somewhere. 

One can easily determine
\[\rho_{m,d,\Q,[-1,1]}(\infty) \geq \frac{3}{4}\]
by observing that $c_d \geq 0$ or $c_0 \geq 0$ is sufficient to ensure the existence of a real point, but at present no analytic approach for computing it is known. Bhargava--Cremona--Fisher \cite[Proposition 3.1]{BCF_genus_one} determined
\[0.873914 \leq \rho_{m,4,\Q}(\infty) \leq 0.874196\]
for even $m$ using a rigorous numerical approach. As observed in \cite{BCF_genus_one}, one could also use a Monte Carlo method to sample the coefficient space to estimate $\rho(\infty)$. See Example \ref{ex:m=4} for such approximations of $\rho_{4,d}(\infty)$ when $4 \leq d \leq 20$.

\section{Lower bounds for the proportion}\label{sec:lower_bounds}

In this section, we give a closed form lower bound for the density $\rho_{m,d}$, albeit one containing infinite products over primes, using only a na\"ive form Hensel's lemma, which allows us to lift roots of equations over $\F_p$ to ones over $\Z_p$. We state what we will need below.

\begin{Theorem}[Hensel's lemma]
\label{thm:hensel}
	Let $F(t) \in \Z_p[t]$ be a polynomial and $\overline{F}(t) \in \F_p[t]$ its reduction modulo $p$. Use $\overline{F}'(t)$ to denote the formal derivative with respect to $t$. If there exists $\overline{t_0} \in \F_p$ such that
	\[\overline{F}(\overline{t_0}) = 0 \quad \text{and} \quad \overline{F}'(\overline{t_0}) \neq 0,\]
	then there exists a lift $t_0 \in \Z_p$ such that $F(t_0) = 0$ and the reduction of $t_0$ modulo $p$ is $\overline{t_0}$.
	
	More generally, if there exists $t_1 \in \Z_p$ such that
	\begin{equation}\label{eq:Hensel_improved}
		v\left(F(t_1)\right) > 2 v\left(F'(t_1)\right),\end{equation}
	where $v$ denotes the $p$-adic valuation, then there exists $t_0 \in \Z_p$ such that $F(t_0) = 0$ and $v(t_0 - t_1) \geq v(F(t_1)) - 2v(F'(t_1))$.
\end{Theorem}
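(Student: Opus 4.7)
The plan is to prove the second (stronger) statement by Newton's iteration and recover the first as an immediate consequence. For the reduction: given $\overline{t_0} \in \F_p$ with $\overline{F}(\overline{t_0}) = 0$ and $\overline{F}'(\overline{t_0}) \neq 0$, any lift $t_1 \in \Z_p$ of $\overline{t_0}$ satisfies $v(F(t_1)) \geq 1 > 0 = 2v(F'(t_1))$, so the hypothesis of the stronger statement holds. The root $t_0$ produced will satisfy $v(t_0 - t_1) \geq 1$, so its reduction modulo $p$ agrees with $\overline{t_0}$.

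For the stronger statement, I would define the sequence $t_{n+1} = t_n - F(t_n)/F'(t_n)$ starting from $t_1$ and analyze it using the polynomial Taylor expansions
\[F(t+h) = F(t) + F'(t) h + G(t,h) h^2, \qquad F'(t+h) = F'(t) + H(t,h) h,\]
where $G, H \in \Z_p[t,h]$ are determined by $F$ (these exist for any polynomial over $\Z_p$). Writing $h_n = -F(t_n)/F'(t_n)$ and $e_n = v(F(t_n)) - 2v(F'(t_n))$, substitution yields
\[F(t_{n+1}) = G(t_n, h_n) \cdot \frac{F(t_n)^2}{F'(t_n)^2}, \qquad F'(t_{n+1}) = F'(t_n) + H(t_n, h_n) h_n.\]
The core of the proof is a simultaneous induction on $n$ establishing that $h_n \in \Z_p$ (so $t_{n+1} \in \Z_p$), that $v(F'(t_n)) = v(F'(t_1))$, and that $e_n \geq 2^{n-1} e_1$. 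The first identity gives $v(F(t_{n+1})) \geq 2\bigl(v(F(t_n)) - v(F'(t_n))\bigr)$, from which $e_{n+1} \geq 2 e_n$. For the stability of $v(F'(t_n))$, observe that $v(h_n) = e_n + v(F'(t_n)) > v(F'(t_n))$ strictly, using the induction hypothesis $e_n > 0$, so the correction term $H(t_n,h_n) h_n$ has strictly larger valuation than $F'(t_n)$ and the sum retains the valuation of $F'(t_n)$.

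Once the induction is in place, $v(t_{n+1} - t_n) = v(h_n) \geq e_n + v(F'(t_1))$ grows without bound, so $\{t_n\}$ is Cauchy in $\Z_p$ and converges to some $t_0 \in \Z_p$. Continuity of $F$ gives $F(t_0) = \lim_n F(t_n) = 0$, and the ultrametric inequality yields $v(t_0 - t_1) \geq \min_{n \geq 1} v(h_n) = v(h_1) \geq e_1$, which is precisely the stated bound. The main obstacle I anticipate is the circular appearance of $v(F'(t_n))$ in the induction: the quadratic improvement in $e_n$ is only available once one knows $F'$ is stable under Newton steps, while that stability in turn requires the inductive bound $e_n > 0$. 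The strict inequality $v(F(t_1)) > 2 v(F'(t_1))$ in the hypothesis is exactly what seeds this induction.
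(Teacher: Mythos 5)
Your Newton-iteration argument is correct: the reduction of the simple-root case to \eqref{eq:Hensel_improved}, the simultaneous induction giving $h_n \in \Z_p$, the stability $v(F'(t_n)) = v(F'(t_1))$, and the doubling $e_{n+1} \geq 2e_n$ all check out, and your final bound $v(t_0 - t_1) \geq v(F(t_1)) - v(F'(t_1))$ is in fact slightly stronger than the one stated. The paper itself offers no proof of Theorem \ref{thm:hensel}, deferring to standard references (Lang, Milne), whose proofs proceed by essentially this same successive-approximation argument, so your proposal matches the intended route.
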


Theorem \ref{thm:hensel} is a standard result in algebraic number theory and in fact holds for any complete discrete valuation ring in lieu of $\Z_p$; see e.g.\ \cite[\S II.2 Proposition 2]{lang} or \cite[Proposition 7.31, Theorems 7.32, 7.33]{milneANT}.

	Let $S$ be a subset of the set of binary degree $d$ forms over $\F_p$. The translation invariance of the Haar measure $\mu_p$ implies that the measure of the set of degree $d$ forms over $\Z_p$ which reduce modulo $p$ to an element of $S$ is equal to the ratio of $\#S$ to the (finite) number of binary degree $d$ forms over $\F_p$,
	\[\mu_p \left( \left\{ f(x,z) \in \Z_p[x,z] \mid f \text{ deg. } d \text{ homog.},\ \overline{f} \in S \right\}\right) = \frac{\#S}{\#\left\{ \overline{f}(x,z) \in \F_p[x,z] \mid \overline{f} \text{ deg. } d \text{ homog.}\right\}}.\]
	In particular, this means that when conditions on the reduction $\overline{f}(x,z)$ guarantee $C_f$ to have a $\Q_p$-point, we can count the number of forms over $\F_p$ satisfying these conditions to give a lower estimate of $\rho_{m,d}(p)$.	
	
	In the case of this section, we use the first statement of Hensel's lemma to give sufficient conditions on $\overline{f}(x,z)$ for $C_f$ to have a $\Q_p$-point, with $F(x,y,z) = \eqref{eq:projectiveeqn}$, with one of $x$, $y$, or $z$ taking the place of the lifting variable $t$. This relatively simple approximation strategy yields lower bounds for $\rho_{m,d}$, as demonstrated for $(m,d) = (3,6)$ and $(5,5)$ in Examples \ref{ex:m=3} and \ref{ex:m=5}. Moreover, they give clues as to the limiting behavior of the density for fixed $m$ as $d \to \infty$; see Corollary \ref{cor:limit_rho} and Example \ref{ex:lim_infs}.

\subsection{Lower bounds for local densities \texorpdfstring{$\rho_{m,d}(p)$}{rho_md(p)}}

Fix a prime exponent $m$ and a degree $d$ divisible by $m$. Recall that the genus of a superelliptic curve $C_f \colon y^m = f(x,z)$ is given by \eqref{eq:genus}. If $f$ is separable of degree $d$, this becomes $g = \frac{(m-1)(d-2)}{2}$.

When $C$ is a smooth curve and $p$ is sufficiently large, the Weil conjectures imply that $C(\F_p) \to \infty$ as $p$ grows large among primes of good reduction. $C$ has bad reduction at only finitely many primes, so this together with Hensel's lemma shows that $C$ is soluble over $\Q_p$ for almost all primes $p$.

To make this effective, we can use the Hasse--Weil bound, which states that
\begin{equation}\label{eq:hasse-weil}
	\left|\#C(\F_p) - (p+1) \right| \leq 2g\sqrt{p}.
\end{equation}
This can be improved further,
\begin{equation}\label{eq:hasse-weil_improvement}
	\left|\#C(\F_p) - (p+1) \right| \leq g\lfloor2\sqrt{p}\rfloor
\end{equation}
see \cite[\S 4.7.2.2]{serre_NXp}. This implies that for the superelliptic curve $C_f$, if $f(x,z)$ is separable over $\F_p$ then whenever
\[p+1-g\lfloor 2\sqrt{p} \rfloor > 0,\]
we have $\#C_f(\F_p) > 0$. Taking $p > 4g^2 - 1 = (m-1)^2(d-2)^2 - 1$ is sufficient in the case that $m \mid d$. This leads us to the following proposition.

\begin{proposition}\label{prop:bound_p=1_large}
	Suppose $m$ is prime and $d$ is divisible by $m$. For all primes $p > (m-1)^2(d-2)^2-1$ we have the lower bound
	\[\rho_{m,d}(p) \geq 1 - p^{\frac{-d(m-1)}{m}}.\]
\end{proposition}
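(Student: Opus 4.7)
My plan is to identify a large set $G$ of ``good'' reductions $\overline f \in \F_p[x,z]$ that force $C_{\overline f}$ to have a smooth $\F_p$-point liftable by Theorem \ref{thm:hensel}, then compute $|G|$ exactly to obtain the claimed proportion via the translation invariance of the Haar measure $\mu_p$.

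Take $G$ to consist of those degree $d$ homogeneous binary forms $\overline f$ that are \emph{not} of the form $a h(x,z)^m$ for any $a \in \F_p$ and any homogeneous $h \in \F_p[x,z]$ of degree $d/m$. By Lemma \ref{lem:m-th_power_lemma} (applied with $k = \F_p$ and $q = m$, since $m$ is prime), each $\overline f \in G$ defines a geometrically integral curve $C_{\overline f}$ over $\F_p$. To count $|G^c|$, I study the map $(a, h) \mapsto a h^m$ from $\F_p \times \{\text{degree } d/m \text{ forms}\}$ into the degree $d$ forms: for each nonzero $f = a_0 h_0^m$ in the image, a unique factorization argument forces any preimage $(a, h)$ with $a, h \neq 0$ to be of the form $(a_0/\lambda^m, \lambda h_0)$ for some $\lambda \in \F_p^\times$, yielding exactly $p - 1$ preimages. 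Thus the image has cardinality $p^{d/m+1}$ (including the zero form), and hence $|G|/p^{d+1} = 1 - p^{-d(m-1)/m}$.

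For each $\overline f \in G$ and $p > (m-1)^2(d-2)^2 - 1$, I produce a smooth $\F_p$-point on $C_{\overline f}$. Its normalization $\widetilde{C}_f$ is a smooth projective geometrically integral curve of geometric genus $g' \leq g := (m-1)(d-2)/2$, by the genus formula \eqref{eq:genus}; the improved Hasse-Weil bound \eqref{eq:hasse-weil_improvement} then yields
\[
\#\widetilde{C}_f(\F_p) \;\geq\; p + 1 - g \lfloor 2\sqrt{p} \rfloor \;>\; 0,
\]
the positivity being equivalent to the hypothesis $p > 4g^2 - 1$. The normalization map is an isomorphism over the smooth locus of $C_{\overline f}$, and the $\F_p$-preimages of the singularities are controlled by $\sum_{P} \gcd(m, r_P) \leq d$ (summed over multiple roots $\alpha_P$ of $\overline f$ of multiplicity $r_P$). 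Consequently at least one $\F_p$-point of $\widetilde{C}_f$ maps to a smooth point of $C_{\overline f}$, at which one of the partials $\partial F/\partial x$, $\partial F/\partial y$, $\partial F/\partial z$ is nonzero modulo $p$. Theorem \ref{thm:hensel} then lifts this to a $\Q_p$-point of $C_f$.

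The hardest step is ensuring the smooth $\F_p$-point actually survives at the stated threshold: the Hasse-Weil count can be as small as $1$ just above $p = 4g^2 - 1$, so the singular preimage correction must be absorbed by a more refined analysis. The cleanest way to do this uses the relation $\pi - g' = \sum_P \delta_P$ together with the known delta invariants $\delta_P = \tfrac{1}{2}\bigl((m-1)(r_P-1) + \gcd(m, r_P) - 1\bigr)$ of the superelliptic singularity $y^m = u\, x^{r_P}$: each drop in $g'$ below $g$ contributes an additional $\lfloor 2\sqrt p\rfloor$ to the Hasse-Weil lower bound on $\widetilde C_f$, which outpaces the corresponding singular preimage count once $p$ exceeds the stated threshold.
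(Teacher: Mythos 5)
Your proposal is correct and follows essentially the same route as the paper: count the forms $ah^m$ to get the density $1-p^{-d(m-1)/m}$, invoke Lemma \ref{lem:m-th_power_lemma} for geometric integrality, and apply the Hasse--Weil bound to the normalization, checking that the genus drop at each singularity outweighs the lost singular preimages. The only difference is bookkeeping in that last step: you use the exact delta invariants and branch counts $\gcd(m,r_P)$ of the singularities $y^m = u\,x^{r_P}$, whereas the paper works with the multiplicity bound $\delta_P \geq \tfrac{1}{2}r_P(r_P-1)$ and at most $r_P$ preimages --- both versions of the trade-off close the argument for all $p$ in the stated range.
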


\begin{proof}
	Let $\overline{f}$ denote the reduction of $f$ modulo $p$. By Lemma \ref{lem:m-th_power_lemma}, If $\overline{f} \neq ah^m$ for $a \in \F_p$ and $h \in \F_p[x,z]$, then the curve over $\F_p$ given by $y^m = \overline{f}(x,z)$ is geometrically integral, hence the reduction of $C_f$ modulo $p$ is geometrically integral. A straightforward count shows that there are $p^{\frac{d}{m} + 1}$ homogeneous polynomials $f(x,z) = ah(x,z)^m \in \F_p[x,z]$ of degree $d$, or equivalently, that the fraction of $f$ which are \textit{not} $m$-th powers modulo $p$ is given in the statement.
	
	It remains to prove that if $\overline{f}$ is not an $m$-th power modulo $p$, then its reduction modulo $p$ has a smooth point. If $\overline{C_f}$ is smooth, then the size assumption on $p$ and the bound \eqref{eq:hasse-weil} ensure that $\overline{C_f}(\F_p) \neq \emptyset$, and the smoothness allows us to lift to a point in $C_f(\Q_p)$ via Hensel's lemma (Theorem \ref{thm:hensel}).
	
	If $\overline{C_f}$ is not smooth, then we must normalize. Denote the normalization by $\wt{C_f}$. The genus of $\wt{C_f}$ is $g - \sum_{P \text{ sing}} \frac{1}{2}r_P(r_P-1)$, where $r_P$ denotes the multiplicity at $P$ (see e.g.\ \cite[Ch.\ V,\ Example 3.9.2]{hartshorne}). When $P$ is signular we have $r_P \geq 2$, allowing us to compute
	\begin{align*}
		\#\overline{C_f}^{\sm}(\F_p) &= \#\wt{C_f} - \sum_{P \text{ sing}} r_P \\
		&\geq p+1 - 2\left(g - \sum_{P \text{ sing}} \frac{1}{2}r_P(r_P-1)\right)\sqrt{p} - \sum_{P \text{ sing}} r_P & (\text{by \eqref{eq:hasse-weil}})\\
		&= p+1 - 2g\sqrt{p} + \sum_{P \text{ sing}} \Big(r_P(r_P - 1)\sqrt{p} - r_P\Big) \\
		&>p+1 - 2g\sqrt{p} + \sum_{P \text{ sing}} r_P(r_P - 2) & (\sqrt{p} > 1) \\
		&\geq p+1 - 2g\sqrt{p}.
	\end{align*}
	This quantity is positive by our assumption on the size of $p$.
\end{proof}

The argument of Proposition \ref{prop:bound_p=1_large} can be extended to the case of $m$ composite by considering the prime divisors of $m$ and proceeding via inclusion-exclusion.

\begin{corollary}\label{prop:bound_large_m_comp}
	Fix positive integers $m,d$ such that $m \mid d$ and let $\omega$ denote the number of distinct prime divisors of $m$. Set $g_0$ to be the genus of $C_f$ when $f$ is separable of degree $d$, given by \eqref{eq:genus}. For all primes $p$ such that $p+1-g_0\lfloor 2\sqrt{p}\rfloor >0$, we have the lower bound
	\[\rho_{m,d}(p) \geq 1 - \sum_{q \mid m} p^{\frac{-d(q-1)}{q}} + \sum_{\substack{q_1, q_2 \mid m\\q_1 \neq q_2}} p^{\frac{-d(q_1q_2-1)}{q_1q_2}} - \cdots + (-1)^{\omega} \sum_{\substack{q_1, \ldots, q_\omega \mid m \\ q_i \text{ distinct}}} p^{\frac{-d((q_1 \cdots q_\omega)-1)}{q_1 \cdots q_\omega}}.\]
\end{corollary}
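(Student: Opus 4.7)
The plan is to adapt the argument of Proposition \ref{prop:bound_p=1_large} by applying inclusion-exclusion over the distinct prime divisors $q_1, \ldots, q_\omega$ of $m$. For a positive integer $n \mid d$, set
\[A_n := \bigl\{ ah^n \in \F_p[x,z] : a \in \F_p,\ h \in \F_p[x,z]\text{ homogeneous of degree } d/n \bigr\}.\]
For any nonempty subset $S \subseteq \{q_1, \ldots, q_\omega\}$, write $n_S := \prod_{q \in S} q$; since $S$ consists of distinct primes dividing $m$, we have $n_S \mid m \mid d$, so $A_{n_S}$ is well-defined. By the equivalence (a)$\Leftrightarrow$(c) of Lemma \ref{lem:m-th_power_lemma}, the reduction $\overline{C_f}$ is geometrically integral over $\F_p$ precisely when $\overline{f} \notin \bigcup_{i=1}^{\omega} A_{q_i}$.

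First I would recycle the parametrization count from Proposition \ref{prop:bound_p=1_large} to obtain $\#A_n = p^{d/n+1}$ for every such $n$. Next, I would identify the $j$-fold intersection $A_{q_{i_1}} \cap \cdots \cap A_{q_{i_j}}$ with $A_{n_S}$ where $S = \{q_{i_1}, \ldots, q_{i_j}\}$. The key observation is that, because $\F_p[x,z]$ is a unique factorization domain, a nonzero form lies in $A_q$ for a prime $q$ precisely when every irreducible factor appears with multiplicity divisible by $q$; the zero form lies in every $A_q$ trivially. Imposing this criterion for each $q \in S$ and invoking pairwise coprimality forces every such multiplicity to be divisible by $n_S$, and conversely every form with multiplicities divisible by $n_S$ lies in the intersection. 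Inclusion-exclusion then gives
\[\#\bigcup_{i=1}^{\omega} A_{q_i} \;=\; \sum_{\emptyset \neq S \subseteq \{q_1, \ldots, q_\omega\}} (-1)^{|S|+1}\, p^{d/n_S+1},\]
and dividing by the total count $p^{d+1}$ shows that the proportion of bad $\overline{f}$ equals $\sum_{\emptyset \neq S} (-1)^{|S|+1} p^{-d(n_S-1)/n_S}$. Subtracting from $1$ recovers the alternating expression in the statement.

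Finally, for any $\overline{f}$ in the complement of $\bigcup_i A_{q_i}$, the reduction $\overline{C_f}$ is geometrically integral, and so the Hasse-Weil estimate \eqref{eq:hasse-weil_improvement} combined with the hypothesis $p+1 - g_0\lfloor 2\sqrt{p}\rfloor > 0$ and the normalization/smoothness analysis from Proposition \ref{prop:bound_p=1_large} produces a smooth $\F_p$-point on $\overline{C_f}$, which Hensel's lemma (Theorem \ref{thm:hensel}) lifts to a $\Q_p$-point on $C_f$. The main subtle step is the intersection computation: one must argue directly in the UFD $\F_p[x,z]$ rather than invoking Lemma \ref{lem:m-th_power_lemma} with a composite exponent, and verify that the scalar unit in the factorization can be absorbed so that the resulting $n_S$-th power structure is defined over $\F_p$ (not merely over $\overline{\F_p}$). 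Once this is in place, the corollary follows by layering the inclusion-exclusion bookkeeping onto the proof of Proposition \ref{prop:bound_p=1_large}.
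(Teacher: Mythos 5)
Your proposal is correct and follows exactly the route the paper intends: the paper gives no written-out proof, only the remark that the corollary follows from Proposition \ref{prop:bound_p=1_large} by inclusion--exclusion over the prime divisors of $m$, which is precisely what you carry out. The details you supply --- the count $\#A_n = p^{d/n+1}$, the UFD argument identifying $A_{q_{i_1}} \cap \cdots \cap A_{q_{i_j}}$ with $A_{n_S}$ (with the scalar absorbed over $\F_p$, consistent with Lemma \ref{lem:m-th_power_lemma}), and the reuse of the Hasse--Weil/normalization/Hensel step --- are exactly the bookkeeping the paper leaves implicit.
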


%

Consider now the case of primes $p$ with $\gcd(p-1,m) = 1$. If $m$ is prime, this is equivalent to $p \not\equiv 1 \pmod{m}$. Here the $m^{\text{th}}$ power map $\F_p \to \F_p$ is an isomorphism of rings, and in particular is surjective. Suppose $[x_0: z_0] \in \P^1_{\F_p}$ such that $f(x_0, z_0) \not\equiv 0 \pmod{p}$. Then we may apply Hensel's lemma to the polynomial (in $y$) $y^m = f(x_0, z_0)$ for any lift of $x_0, z_0$ to $\Z_p$, giving rise to a local solution.

\begin{proposition}\label{prop:bound_pneq1}
	Fix integers $m$ and $d$ divisible by $m$ such that $\gcd(p-1,m) = 1$ and $p \nmid m$.
	\begin{enumerate}[label = (\alph*)]
		\item If $p > \frac{d-1}{2}$ we have the lower bound
		\[\rho_{m,d}(p) \geq 1 - \frac{1}{p^{d+1}}.\]
		\item If $p \leq \frac{d}{2} - 1$ we have the lower bound
		\[\rho_{m,d}(p) \geq 1 - \frac{1}{p^{2(p+1)}}.\]
	\end{enumerate}
\end{proposition}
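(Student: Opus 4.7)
The plan is to use Hensel's lemma (Theorem \ref{thm:hensel}) to reduce local solubility to the existence of a smooth $\F_p$-point on the reduction $\overline{C_f}$, and then to estimate the Haar measure of the ``bad'' reductions via a direct count of forms in $\F_p[x,z]$. Since $\gcd(p-1,m)=1$ and $p \nmid m$, the $m$-th power map $\F_p \to \F_p$ is a bijection, so for any $[\bar x_0 : \bar z_0] \in \P^1(\F_p)$ with $\bar f(\bar x_0,\bar z_0) \neq 0$ there is a unique $\bar y_0 \in \F_p^\times$ with $\bar y_0^m = \bar f(\bar x_0,\bar z_0)$. The resulting $\F_p$-point of $\overline{C_f}$ is smooth because $\partial F/\partial y = m\bar y_0^{m-1} \not\equiv 0 \pmod{p}$, and Hensel lifts it to a $\Q_p$-point. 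Alternatively, if $[\bar x_0 : \bar z_0]$ is a simple $\F_p$-root of $\bar f$, then $[\bar x_0:0:\bar z_0]$ is smooth (either $\partial \bar f/\partial x$ or $\partial \bar f/\partial z$ is nonzero at $(\bar x_0,\bar z_0)$), and Hensel again lifts.

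Consequently, $C_f(\Q_p) = \emptyset$ can fail only when $\bar f$ is \emph{bad}, meaning $\bar f$ vanishes at every point of $\P^1(\F_p)$ \emph{and} every $\F_p$-root of $\bar f$ has multiplicity at least $2$. By translation invariance of $\mu_p$, it will suffice to bound
\[
\rho_{m,d}(p) \;\geq\; 1 - \frac{N_{\mathrm{bad}}}{p^{d+1}},
\]
where $N_{\mathrm{bad}}$ is the number of homogeneous degree-$d$ forms in $\F_p[x,z]$ satisfying both conditions.

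For case (a), the hypothesis $p > (d-1)/2$ gives $d < 2p+2$. A nonzero homogeneous form of degree $d$ has exactly $d$ roots in $\P^1_{\overline{\F_p}}$ counted with multiplicity, so requiring all $p+1$ points of $\P^1(\F_p)$ to be roots of multiplicity $\geq 2$ forces $d \geq 2(p+1)$, a contradiction. Thus the only bad form is $\bar f \equiv 0$, giving $N_{\mathrm{bad}} = 1$ and the stated bound $1 - p^{-(d+1)}$. For case (b), with $d \geq 2p+2$, I would introduce the homogeneous polynomial
\[
L(x,z) \;=\; xz^p - x^p z \;=\; xz\bigl(z^{p-1} - x^{p-1}\bigr),
\]
of degree $p+1$, which vanishes simply at each point of $\P^1(\F_p)$ (a short check using Fermat). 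A bad form is then precisely one divisible by $L^2$, so $\bar f = L^2 g$ with $g \in \F_p[x,z]$ homogeneous of degree $d - 2p - 2$, giving $N_{\mathrm{bad}} = p^{d-2p-1}$ and the bound $1 - p^{-2(p+1)}$.

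The main obstacle is pinning down the bad set correctly. One must verify both that the smooth-point dichotomy above is exhaustive (every missed Hensel lift corresponds to a form that is zero on $\P^1(\F_p)$ and has only multiple roots there), and that $L$ genuinely cuts out $\P^1(\F_p)$ with simple multiplicities so that $L^2 \mid \bar f$ is the right algebraic condition. Once these are in place, the counts are immediate.
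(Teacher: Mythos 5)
Your proposal is correct and follows essentially the same route as the paper: reduce to Hensel-liftable points via the $m$-th power bijection on $\F_p$ (using $p\nmid m$) and via simple roots of $\overline{f}$, so that the only obstruction is $\overline{f}$ vanishing doubly at every point of $\P^1(\F_p)$, which is impossible for degree reasons in case (a) and is counted by divisibility by the square of the degree-$(p+1)$ form cutting out $\P^1(\F_p)$ in case (b). Your $L(x,z)=xz^p-x^pz$ is just the paper's product $x(x-z)\cdots(x-(p-1)z)z$ up to a scalar, so the counts agree.
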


\begin{proof}
	If $\overline{f}$ takes a nonzero value in $\F_p$ at any $[x_0 : z_0]$, then the above discussion can be used to lift an $\F_p$-solution to $y^m = f(x_0, z_0)$ to $\Q_p$. Also, if $\overline{f}$ has a simple root $(x_0, z_0)$ in $\F_p$ then we can use Hensel's lemma on one of $x$ or $z$ to lift it to a $\Q_p$-solution $y^m = f(x,z)$ with $p \mid y$. The only case not immediately dealt with by Hensel's lemma is if $\overline{f}$ has a double root at every $[x_0 : z_0]$.
	
	If $\overline{f}$ is nonzero modulo $p$ and $p > \frac{d}{2}-1$, then this cannot happen solely for degree reasons. Having a double root at each value is equivalent to the degree $2(p+1)$ polynomial $x^2(x-1)^2 \cdots (x-(p-1))^2z^2$ dividing the degree $d$ polynomial $\overline{f}$. This is not possible for $p > \frac{d}{2}-1$, so the only case not addressed by Hensel's lemma is if $f \equiv 0 \pmod{p}$, giving the lower bound in (a).
	
	To verify (b), we have
	\begin{equation}\label{eq:lots_of_double_roots}
		\overline{f}(x,z) = g(x,z)x^2(x-z)^2 \cdots (x-(p-1)z)^2z^2
	\end{equation}
	for some degree $d - 2(p+1)$ form $g(x,z)$. There are $p^{d - 2(p+1) + 1}$ such choices of $g$, so the proportion of forms $f$ for which $\overline{f}$ is not as in \eqref{eq:lots_of_double_roots} is given in (b).
\end{proof}	

We can also make use of some basic linear algebra to obtain lower estimates for $\rho_{m,d}(p)$, by viewing the evaluation of $\overline{f}$ as a matrix-vector product. This turns out to be useful, especially for primes $p \equiv 1 \pmod{m}$ that are too small for Proposition \ref{prop:bound_p=1_large} to apply.

As usual write $f(x,z) = c_dx^d + c_{d-1}x^{d-1}z + \cdots + c_1xz^{d-1} + c_0z^d$ and denote by $A$ the $(p+1) \times (d+1)$ matrix \[A = \begin{pmatrix}
	1 & 0 & \cdots & 0 & 0 \\
	1 & 1 & \cdots & 1 & 1 \\
	1 & 2 & \cdots & 2^{d-1} & 2^d \\
	&& \vdots && \\
	1 & (p-1) & \cdots & (p-1)^{d-1} & (p-1)^d \\
	0 & 0 & \cdots & 0 & 1
\end{pmatrix}\]
with entries in $\F_p$. We can simultaneously evaluate $f(x_0, z_0)$ at all $[x_0 : z_0]$ in $\P^1_{\F_p}$ by taking the product
\[A\begin{pmatrix}
	c_0 \\ c_1 \\ \vdots \\ c_{d-1} \\ c_d
\end{pmatrix} = \begin{pmatrix}
	\overline{f}(0,1) \\ \overline{f}(1,1) \\ \vdots \\ \overline{f}(p-1,1) \\ \overline{f}(1,0)
\end{pmatrix}.\]
We use this relationship to find lower bounds for the number of $\overline{f}(x,z) \in \F_p[x,z]$ of degree $d$ with at least one $m$-th power value in $\F_p$. 

\begin{lemma}\label{lem:A_is_invertible}
	For the matrix $A$ above, we have
	\[\dim_{\F_p} \ker A = \begin{cases} d-p, & p < d \\ 0, & p \geq d \end{cases}\]
	which is equivalent to
	\[\rank A = \begin{cases} p + 1, & p < d \\ d+1, & p \geq d. \end{cases} \]
\end{lemma}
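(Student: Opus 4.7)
The statement is a linear algebra claim about the rank of $A$, and my plan is to split $A$ into its first $p$ rows (corresponding to evaluating a binary form at the affine points $[0:1], [1:1], \ldots, [p-1:1]$) and its last row (evaluating at $[1:0]$), analyzing each block and then showing the last row is independent of the first block in the appropriate range.

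For the first block, I would identify the $p \times (d+1)$ submatrix with the evaluation map $E \colon \F_p^{d+1} \to \F_p^p$ sending $(c_0, \ldots, c_d) \mapsto (g(0), g(1), \ldots, g(p-1))$ for $g(x) = c_0 + c_1 x + \cdots + c_d x^d$. Two standard facts pin down its rank: when $d < p$, a nonzero polynomial of degree at most $d$ cannot vanish at all $p$ elements of $\F_p$, so $E$ is injective and the first $p$ rows already have rank $d+1$; when $d \geq p-1$, Lagrange interpolation shows $E$ is surjective, so the rank is exactly $p$. In particular, if $p > d$ the first $p$ rows already give $A$ full column rank $d+1$, so $\dim \ker A = 0$.

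For $p \leq d$ the first $p$ rows have rank $p$, and the remaining task is to show the last row $(0, \ldots, 0, 1)$ is independent from them. I would argue by contradiction: if $\sum_{t=0}^{p-1} \alpha_t (1, t, \ldots, t^d) = (0, \ldots, 0, 1)$, reading off the first $d$ coordinates forces $\sum_t \alpha_t t^j = 0$ for $0 \leq j \leq d-1$, while the last coordinate demands $\sum_t \alpha_t t^d = 1$. Fermat's little theorem ($t^p = t$ for all $t \in \F_p$) lets me reduce: for $d \geq p$ there is $d' \in \{1, \ldots, p-1\}$ with $d' \equiv d \pmod{p-1}$ and $t^d = t^{d'}$ for every $t \in \F_p$. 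Since $d' \leq p - 1 \leq d - 1$, the vanishing constraint at $j = d'$ is already among those established, forcing $\sum_t \alpha_t t^d = 0$ and contradicting the last coordinate.

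Combining the pieces, for $p \leq d$ the last row adds one independent direction to the row span, giving $\rank A = p + 1$ and thus $\dim \ker A = (d+1) - (p+1) = d - p$, while for $p > d$ the matrix has full column rank and trivial kernel. I don't anticipate a major obstacle here; the only mildly delicate point is the reduction $t^d = t^{d'}$, which needs to be stated uniformly for all $t \in \F_p$ (it holds for $t = 0$ because $d, d' \geq 1$ make both sides zero) and with $d'$ chosen in $\{1, \ldots, p-1\}$ so that $d' < d$ and the relevant evaluation constraint really is available.
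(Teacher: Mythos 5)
Your proof is correct, but it takes a genuinely different route from the paper's. The paper argues directly on the kernel: a vector $\c \in \ker A$ is the coefficient vector of a binary form $\overline{f}(x,z)$ of degree $d$ vanishing at every point of $\P^1_{\F_p}$, which is equivalent to divisibility by the degree-$(p+1)$ form $x(x-z)\cdots(x-(p-1)z)z$; no nonzero such $\overline{f}$ exists when $p \geq d$, while for $p < d$ the quotients $g(x,z)$ form a space of dimension $d-p$, giving the kernel dimension in one stroke (and the rank by $\rank A = d+1-\dim\ker A$). You instead compute the rank by splitting off the row at infinity: Vandermonde/Lagrange facts give the rank of the affine block ($d+1$ when $p>d$, and $p$ when $p\leq d$), and a power-sum argument via Fermat's little theorem ($t^d = t^{d'}$ for all $t \in \F_p$, with $d' \equiv d \pmod{p-1}$ and $1 \leq d' \leq p-1 \leq d-1$) shows the infinity row lies outside the span of the affine rows when $p \leq d$. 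Both arguments are complete; note that the boundary case $p=d$, which your split files under $p\leq d$, agrees with the lemma's $p\geq d$ case since $d-p=0$ and $p+1=d+1$ there. What each buys: the paper's divisibility argument is shorter, treats all $p+1$ points of $\P^1_{\F_p}$ symmetrically, and exhibits the kernel explicitly as the multiples of $x(x-z)\cdots(x-(p-1)z)z$ (the same divide-and-count device used elsewhere in Section \ref{sec:lower_bounds}); your version is a self-contained elementary linear-algebra computation whose only delicate point --- the uniform identity $t^d=t^{d'}$, including $t=0$ and the case $d\equiv 0 \pmod{p-1}$ via the choice $d'\in\{1,\dots,p-1\}$ --- you handle correctly.
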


\begin{proof} Suppose $\c \in \ker A$, where $\c$ is viewed as an element of $\F_p^{d+1}$. Then the corresponding degree $d$ binary form $\overline{f}(x,z)$ has roots at all $[x:z] \in \P^1_{\F_p}$, and equivalently the degree $p+1$ form $x(x-z) \cdots (x-(p-1)z) z$ divides $\overline{f}$. If $p \geq d$ this is a contradiction unless $\c = \mathbf{0}$. If $p < d$, then we write
\[\overline{f}(x,z) = g(x,z)x(x-z) \cdots (x-(p-1)z) z\]
for some degree $d-p-1$ form $g(x,z)$. There are $p^{d-p}$ choices of such $g$, hence in this case the kernel of $A$ has dimension $d-p$.
The rank of $A$ is given by $d+1 - \dim \ker A$.
\end{proof}

\begin{proposition}\label{prop:bound_matrix}
	Fix positive integers $m$ and $d$. For a prime $p \nmid m$ let $\Phi(p) = \#(\F_p^\times)^m / \# \F_p$ be the fraction of elements of $\F_p$ that are nonzero $m$-th powers. Let $r$ denote the rank of $A$. Then we have
	\[\rho(p) \geq 1 - (1 - \Phi(p))^r.\]
	By Lemma \ref{lem:A_is_invertible} we have
	\[\rho(p) \geq \begin{cases} 1 - (1 - \Phi(p))^{p+1},& p < d \\ 1 - (1 - \Phi(p))^{d+1},& p \geq d \end{cases}.\]
\end{proposition}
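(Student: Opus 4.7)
The plan is to combine the existing Hensel's lemma argument (used in Proposition \ref{prop:bound_pneq1}) with a purely linear-algebraic count. The observation driving the bound is that a $\Q_p$-point on $C_f$ is guaranteed as soon as there exists some $[x_0:z_0] \in \P^1_{\F_p}$ such that $\overline{f}(x_0, z_0)$ is a nonzero $m$-th power in $\F_p$: lifting $x_0, z_0$ arbitrarily to $\Z_p$ and applying Theorem \ref{thm:hensel} to $F(y) = y^m - f(x_0, z_0)$ works, since $p \nmid m$ ensures $F'(y) = my^{m-1}$ is a unit at any unit root, and the hypothesis that the value is a nonzero $m$-th power provides such a unit root modulo $p$. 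So it suffices to lower bound the proportion of $\c \in \F_p^{d+1}$ for which at least one coordinate of $A\c$ is a nonzero $m$-th power.

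Next I would pick $r$ rows of $A$ that form a basis of its row space; these correspond to $r$ specific points $P_1, \ldots, P_r \in \P^1_{\F_p}$ (chosen from among $[0:1]$, $[1:0]$, and $[a:1]$ for $a \in \F_p$). The resulting $r \times (d+1)$ submatrix $A'$ has full rank $r$, so the linear map
\[
\c \longmapsto \bigl(\overline{f}(P_1), \ldots, \overline{f}(P_r)\bigr) = A'\c
\]
is surjective onto $\F_p^r$, and each fiber has the same size $p^{d+1-r}$. Thus if $\c$ is drawn uniformly from $\F_p^{d+1}$ (equivalently, $f \in \Z_p[x,z]$ is drawn according to normalized Haar measure and reduced modulo $p$), the tuple $(\overline{f}(P_1), \ldots, \overline{f}(P_r))$ is uniformly distributed on $\F_p^r$. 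In particular the $r$ coordinate values are mutually independent and each is uniform on $\F_p$.

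The probability that any single uniform element of $\F_p$ fails to be a nonzero $m$-th power is exactly $1 - \Phi(p)$, so by independence the probability that \emph{none} of the $r$ values $\overline{f}(P_i)$ is a nonzero $m$-th power is $(1 - \Phi(p))^r$. Translating back through the Hensel argument of the first paragraph, and using the correspondence between mod-$p$ proportions of coefficient vectors and $\mu_p$-measures of lifts noted at the start of Section \ref{sec:lower_bounds}, one obtains
\[
\rho_{m,d}(p) \;\geq\; 1 - (1 - \Phi(p))^r.
\]
The specialization to the two cases $p < d$ and $p \geq d$ is then immediate from the rank computation in Lemma \ref{lem:A_is_invertible}.

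There is no serious obstacle here; the only point that requires care is justifying that the joint distribution of the values $\overline{f}(P_i)$ on a basis of rows is genuinely uniform on $\F_p^r$, rather than on a proper subset. This follows from the standard fact that a surjective $\F_p$-linear map pushes the uniform distribution forward to the uniform distribution, applied to the full-rank submatrix $A'$.
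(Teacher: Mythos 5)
Your proposal is correct and follows essentially the same route as the paper: both arguments reduce to the fact that, because $A$ has rank $r$, the $r$ values $\overline{f}(P_i)$ at suitably chosen points of $\P^1_{\F_p}$ are jointly uniform on $\F_p^r$, so the chance that none is a nonzero $m$-th power is $(1-\Phi(p))^r$, and any such value Hensel-lifts to a $\Q_p$-point since $p \nmid m$. The only difference is in how the independence is justified — you select $r$ linearly independent rows and use that a surjective $\F_p$-linear map pushes the uniform distribution forward to the uniform distribution, while the paper column-reduces $A$ and runs a sequential conditioning argument over the pivot entries to reach the same conclusion.
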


\begin{proof} 
	We may perform column reduction on $A$ by multiplying on the right by $UP$, where $U$ is an invertible upper triangular matrix and $P$ is a permutation matrix, if appropriate. This gives $A' = AUP$ of rank $r$. Note that the image of $A'$ coincides with that of $A$, so in particular an entry of $A \c$ is in $\left(\F_p^\times\right)^m$ whenever the corresponding entry of $A' (UP)^{-1}\c$ is.
	
	We argue that  the lower bound holds as follows. The first row of $A'$ thus reveals that the proportion of $f$ for which $f(0,1) = c_0$ is in $\left(\F_p^\times\right)^m$ is $\Phi(p)$. For a fixed $c_0$, we let $c_1$ vary and use the second row of $A'$ to see that $f(1,1)$ ranges over all $\F_p$, and the proportion for which $f(1,1) \in \left(\F_p^\times\right)^m$ is again $\Phi(p)$. 
	
	Continuing in this fashion, we see that for any fixed $c_0, \ldots, c_{i-1}$ with $i \leq r-1$, $f(i,1)$ (or $f(1,0)$ if $i=d$) is given by the $(i+1)$-th row of $A'$ containing a pivot (note that this may not coincide with the $(i+1)$-th row). Hence the $(i+1)$-th (pivot) entry is in $\left(\F_p^\times\right)^m$ for $\Phi(p)$ of the possible $c_i$. Thus letting $\c$ vary, the proportion for which at least one of the first $r$ entries of $A \c = A'\c$ is in $\left(\F_p^\times\right)^m$ is 
	\[\Phi(p) + \left( 1 - \Phi(p)\right)\Big( \Phi(p) + \left( 1 - \Phi(p)\right) \Big( \Phi(p) + \left( 1 - \Phi(p)\right) \Big( \cdots \Big) \Big) \Big) = 1 - (1 - \Phi(p) )^r.\] 
	Since $p \nmid m$, one such value $\overline{f}(x,z) \in \left(\F_p^\times \right)^m$ is sufficient to lift via Hensel's lemma to a $\Q_p$-point of $C_f$. This yields the result, and Lemma \ref{lem:A_is_invertible} may be used to determine the rank $r$.
\end{proof}

\begin{remark}\label{rem:crudeness}
	This bound is somewhat crude in the sense that we are ignoring a great deal of possible liftable points, especially when $p \geq d$ is very large. By only using $d+1$ of the $p+1$ points $\overline{f}(x,z)$, we are able to compute an explicit lower bound, but it is quite likely that one of the points we ignore also lifts. This will be sufficient for us to prove results about $\rho_{m,d}$ in the limit as $d \to \infty$, e.g.\ Corollary \ref{cor:limit_rho}. For any fixed $(m,d)$ one can use a brute force computer search to obtain much better estimates; see Examples \ref{ex:m=3} and \ref{ex:m=5}.
\end{remark}

\begin{remark}
\label{rem:no_simple_roots}
	Another way to refine the proof of Proposition \ref{prop:bound_matrix} is to consider points $[x:z]$ where $\overline{f}(x,z) = 0$. These lift whenever the partial derivative $\overline{f}_x(x,z)$ or $\overline{f}_z(x,z)$ is nonzero. By formulating a matrix similar to $A$ and applying the same column reduction used to obtain $A'$, one can give a lower estimate that improves on Proposition \ref{prop:bound_matrix} for $p \leq 7$. However, this method adds considerable effort and (what the authors believe is) unnecessary confusion, while providing marginal improvement for finitely many primes, so we elect to omit this.
\end{remark}

Finally, let us consider the case of primes $p$ dividing $m$. These require some special attention, because the strategy of lifting using Hensel's lemma on $y^m = f(x_0, z_0)$ as in Proposition \ref{prop:bound_pneq1} fails when $p \mid m$, as the partial derivative with respect to $y$ vanishes. 

However, when $m = p$ or more generally $\gcd(p-1,m) = 1$, the $m$-th power map is an isomorphism of $\F_p$. In this case, for any point $[x_0 : z_0]$, there exists $y_0 \in \F_p$ such that $y_0^m \equiv f(x_0, z_0) \pmod{p}$. This means that for each $[x_0 : z_0]$, we need only check whether or not Hensel's lemma applies to lifting via $x$ or $z$, allowing us to obtain a point of $C(\Q_p)$.

\begin{proposition}\label{prop:bound_p|m}
	Fix positive integers $m$ and $d$ divisible by $m$. Suppose $p$ is a prime dividing $m$ with $\gcd(p-1,m) = 1$.  Then we have the lower bound
	\[\rho_{m,d}(p) \geq \begin{cases} 1 - \frac{1}{p^{p-1}}, &d=p \\
									   1 - \frac{1}{p^p}, & d=2p \\
									   1 - \frac{1}{p^{p+1}} & d > 2p.\end{cases}\]
\end{proposition}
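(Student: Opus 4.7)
The plan is to use Hensel's lemma to identify a bad set $B \subseteq \F_p[x,z]_d$ of reductions $\overline{f}$ that prevent local lifting, then count it via linear algebra. Because $\gcd(p-1, m) = 1$, the $m$-th power map is a bijection on $\F_p$, so every $[\alpha : \beta] \in \P^1(\F_p)$ produces an $\F_p$-point $[\alpha : y_0 : \beta]$ on $\overline{C_f}$ for some $y_0 \in \F_p$. Such a point lifts to a $\Q_p$-point via Theorem \ref{thm:hensel} applied to the one-variable polynomial $f(x, \tilde{\beta}) - \tilde{y_0}^m$ (or $f(\tilde{\alpha}, z) - \tilde{y_0}^m$), provided the point is smooth on $\overline{C_f}$. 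Since $p \mid m$ kills the $y$-derivative $m y^{m-1}$ in $\F_p$, smoothness reduces to $f_x(\alpha, \beta) \neq 0$ or $f_z(\alpha, \beta) \neq 0$. Hence $C_f$ is locally soluble unless $\overline{f}$ belongs to
\[ B = \{\overline{f} \in \F_p[x, z]_d : f_x(\alpha, \beta) = f_z(\alpha, \beta) = 0 \text{ for all } [\alpha : \beta] \in \P^1(\F_p)\}.\]

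Euler's identity $xf_x + zf_z = df$, which vanishes in characteristic $p$ since $p \mid d$, consolidates the conditions defining $B$. At $[\alpha : 1]$ with $\alpha \neq 0$, Euler forces $f_x = 0 \iff f_z = 0$; at $[0:1]$ it automatically kills $f_z$, leaving the single condition $f_x(0, 1) = c_1 = 0$; at $[1:0]$ it automatically kills $f_x$, leaving the single condition $f_z(1, 0) = c_{d-1} = 0$. Hence $\overline{f} \in B$ if and only if $(x^p - x) \mid f_x(x, 1)$ in $\F_p[x]$ (a system of $p$ linear conditions on the coefficients of $\overline{f}$), together with $c_{d - 1} = 0$.

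It remains to determine the rank of these $p + 1$ conditions acting on the free coefficients $\{c_i : 1 \leq i \leq d,\ p \nmid i\}$ (the remaining $c_i$ play no role in $f_x$ or $f_z$ in characteristic $p$). Reducing $f_x(x, 1) = \sum_{p \nmid i} i c_i x^{i - 1}$ modulo $x^p - x$ groups these free coefficients by the residue of $i - 1$ modulo $p - 1$: the coefficient of $x^k$ in the reduction involves those $c_i$ with $i \equiv k + 1 \pmod{p - 1}$ (and $i \geq 2$ when $k \geq 1$, $i = 1$ when $k = 0$). When $d = p$, $\deg f_x(x, 1) \leq p - 2$, so its vanishing at all $p$ elements of $\F_p$ forces $c_1 = \cdots = c_{p - 1} = 0$; this automatically makes $f_z \equiv 0$, giving $p - 1$ conditions, $|B| = p^2$, and proportion $1/p^{p-1}$. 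When $d = 2p$, the residue class $2p - 1 \equiv 1 \pmod{p-1}$ contains only one free $c_i$ with $i \geq 2$, namely $c_{2p - 1}$, so the condition $c_{d - 1} = 0$ coincides with the $x^{p - 1}$-coefficient condition from $f_x$, yielding $p$ independent conditions and proportion $1/p^p$. When $d > 2p$, the residue class of $d - 1 \pmod{p - 1}$ contains at least one further free coefficient besides $c_{d - 1}$ (for instance $c_{d - 2p + 1}$, which lies in the same residue class and satisfies $p \nmid (d - 2p + 1)$), making the functional $c_{d - 1}^*$ independent of the $p$ conditions from $f_x$; this yields $p + 1$ independent conditions and proportion $1/p^{p + 1}$.

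The main obstacle is the linear (in)dependence verification, which rests on the observation that each free coefficient $c_i$ with $i \geq 2$ appears in exactly one of the $p$ conditions from $f_x$, namely the $x^k$-coefficient condition with $k = ((i - 2) \bmod (p - 1)) + 1$. Consequently, any linear combination of these $p$ conditions that expresses $c_{d-1}^*$ must be a nonzero multiple of the single condition containing $c_{d-1}$; when $d > 2p$, the additional terms in that condition cannot be cancelled by the other conditions, ruling out the combination and establishing independence, while in the boundary cases $d = p$ and $d = 2p$ exactly the right coincidences reduce the rank by the appropriate amounts.
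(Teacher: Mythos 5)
Your proposal is correct and follows essentially the same route as the paper: with $\gcd(p-1,m)=1$ the $m$-th power map is surjective on $\F_p$, so solubility reduces to Hensel-lifting in $x$ or $z$, the bad locus is exactly $(x^p-x)\mid f_x(x,1)$ together with $c_{d-1}\equiv 0$, and the three cases $d=p$, $d=2p$, $d>2p$ come from the rank of the resulting linear conditions, yielding the same bad counts $p^2$, $p^{p+1}$, $p^{d-p}$. The only difference is bookkeeping: you organize the count by reducing $f_x(x,1)$ modulo $x^p-x$ and using the disjoint supports of the coefficient conditions (with Euler's identity disposing of the $f_z$ conditions), whereas the paper parametrizes the bad forms by writing $f'(x)=x(x-1)\cdots(x-(p-1))h(x)$ and bounds the admissible $h$ via an explicit matrix of relations.
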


\begin{proof}
	We begin by by evaluating $f$ at the point at infinity, $f(1,0) = c_d$. We know that there exists $y_0 \in \F_p$ such that $y_0^m = c_d$, since the $m$-th power map is an isomorphism in this case, so the polynomial $f(1,z) - y_0^m = c_d + c_{d-1}z + \cdots + c_0z^d - y_0^m$ has a solution at $z=0$. The derivative with respect to $z$ is $c_{d-1} + 2c_{d-2}z + \cdots + dc_0z^{d-1}$, which is nonzero at $z=0$ if and only if $c_{d-1} \neq 0 \pmod{p}$. If this is the case, then Hensel's lemma applies and $f$ has a $\Q_p$-point. 
	
	We have  seen that $p \mid c_{d-1}$ is a necessary condition for the point at infinity not to lift, so we now study the affine points $[x : 1]$. Again, for any $x_0$, there exists $y_0$ solving $y_0^m \equiv f(x_0, 1) \pmod{p}$, and this solution lifts via Hensel's lemma if the derivative $f'(x_0) = dc_dx_0^{d-1} + \cdots + c_1 \not\equiv 0 \pmod{p}$. Thus for $\overline{f}$ not to have any liftable points we need $p \mid c_{d-1}$ and $x(x-1) \cdots (x-(p-1)) \mid f'(x)$.
	
	We have $\deg f'(x)= d-2$ since $p \mid d$ forces the $x^{d-1}$ term to vanish. Let $h(x) = \sum_{i=0}^{d-2-p} a_ix^i$ be a polynomial, such that $x(x-1) \cdots (x-(p-1))h(x)$ has degree $d-2$. We count the number of $h$ that produce 
	\[x(x-1) \cdots (x-(p-1))h(x) = f'(x).\]
	If $d=p$, then this is only possible if $h \equiv f' \equiv 0 \pmod{p}$.
	
	Notice that for all integers $0 < k < d/p$, the $x^{pk-1}$ term of $f'(x)$ vanishes modulo $p$. This determines $ d/p - 1$ independent linear conditions on the coefficients $a_i$, so there are at most $p^{d-2-p-d/p + 2} = p^{d-p-d/p}$ choices of $h(x)$. We see this by observing that the leading term of $x(x-1) \cdots (x-(p-1))$ is $x^p$, while the trailing term is $(p-1)!x \equiv -x \pmod{p}$ by Wilson's theorem. We also impose the condition that $c_{d-1} = 0$, i.e.\ the linear condition that $a_{d-p-2} = 0$. These conditions on the $a_i$'s are summarized in the following matrix
	\[\begin{pmatrix}
		* & \cdots& *& -1 & 0 & 0 & \cdots & 0 & 0 & \cdots &0 & 0 & \cdots & 0 & 0  \\
		0 & \cdots& 0& 0  & 1 & * & \cdots & * & -1& \cdots&0 & 0 & \cdots & 0 & 0 \\
		&&&&&&\vdots \\
		0 & \cdots& 0& 0 & 0 & 0 & \cdots & 0 & 0 & \cdots & 1 & * & \cdots & * & -1 \\
		0 & \cdots& 0& 0 & 0 & 0 & \cdots & 0 & 0 & \cdots & 0 & 0 & \cdots & 0 & 1
	\end{pmatrix},\]	
	for which we require $\a = (a_0, \ldots, a_{d-p-2})^T$ to be in the kernel. When this matrix of relations has full rank, each condition is independent.
	 
	It is clear that the first $d/p -1$ rows are independent. The final row is assured to be independent from the others so long as $d/p -1 \geq 2$, i.e.\ $d > 2p$. This is sharp, as illustrated by the case of $p=3$ and $d=6$, when we find the rank of the $2 \times 2$ matrix above is one.	 
	
	Suppose $f_1, f_2$ have $f_1' = f_2' = x(x-1)\cdots(x-(p-1))h(x)$. Then all coefficients are equal except those of $x^{pk}$ for $0 \leq k \leq d/p$. Thus for each $h(x)$ there are $p^{d/p + 1}$ polynomials $f$ for which $f' =  x(x-1)\cdots(x-(p-1))h$. This brings the total number of possible such $f$ to be
	\begin{align*}
		p^{2} & \text{ when } d= p, \\
		p^{p + 1} & \text{ when } d=2p, \\
		p^{d-p} & \text{ when } d > 2p. 
	\end{align*}
	Upon dividing by the total number of forms, $p^{d+1}$, we obtain a lower bound for the proportion of $f$ whose reduction modulo $p$ has at least one Hensel-liftable point, given in the proposition. 
\end{proof}

\subsection{Lower bounds for the adelic density \texorpdfstring{$\rho_{m,d}$}{rho_md}}

Assembling together Propositions \ref{prop:bound_p=1_large}, \ref{prop:bound_pneq1}, \ref{prop:bound_matrix}, and \ref{prop:bound_p|m}, we give a lower bound for $\rho$ which is explicitly computable, at least in principle, when the exponent $m$ is a prime.

\begin{corollary}\label{cor:lower_bound}
	Let $m$ be an odd prime and $d$ an integer divisible by $m$. Define
	\begin{align*}
		L_0(m,d) &= \begin{cases}1 - \frac{1}{m^{m-1}}, & d=m \\ 1 - \frac{1}{m^m}, & d = 2m \\ 1-\frac{1}{m^{m+1}}, & d > 2m,\end{cases} \\		
		L_1^{\wee}(m,d) &= \prod_{\substack{ p \equiv 1 (m)\\ p < d}} \left( 1 - \left(1 - \frac{p-1}{mp}\right)^{p+1} \right) ,\\		
		L_1^{\med}(m,d) &= \prod_{\substack{ p \equiv 1 (m)\\ d < p < (m-1)^2(d-2)^2}} \left( 1 - \left(1 - \frac{p-1}{mp}\right)^{d+1} \right) ,\\
		L_1^{\giant}(m,d) &= 	\prod_{\substack{ p \equiv 1 (m)\\ p \geq (m-1)^2(d-2)^2}} \left( 1- \frac{1}{p^{\frac{d(m-1)}{m}}} \right),\\
		L_{\neq 1}^{\wee}(m,d)	&= \prod_{\substack{p \not\equiv 0,1 (m) \\ p \leq \frac{d}{2}-1}} \left(1 - \frac{1}{p^{2(p+1)}}\right),\\
		L_{\neq 1}^{\giant}(m,d) &= \prod_{\substack{p \not\equiv 0,1 (m) \\ p > \frac{d}{2}-1}} \left( 1 - \frac{1}{p^{d+1}} \right).
	\end{align*}
	Then we have a computable lower bound
	\[\rho_{m,d} \geq L_0(m,d)L_1^{\wee}(m,d)L_1^{\med}(m,d)L_1^{\giant}(m,d)L_{\neq 1}^{\wee}(m,d)L_{\neq 1}^{\giant}(m,d).\]
\end{corollary}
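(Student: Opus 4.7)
The plan is to apply Theorem \ref{thm:pos_prop}, which gives
\[\rho_{m,d} = \rho_{m,d}(\infty) \prod_p \rho_{m,d}(p),\]
and then lower-bound each local factor. Since $m$ is odd, the discussion at the end of Section \ref{sec:pos_prop} (the real $m$-th root of any real number exists) gives $\rho_{m,d}(\infty) = 1$, so the task reduces to bounding the Euler product at the finite primes.

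My strategy is to partition the primes according to their residue class modulo $m$. Because $m$ is prime, every prime $p$ falls into exactly one of the mutually exclusive cases $p = m$, $p \equiv 1 \pmod{m}$, or $p \not\equiv 0,1 \pmod{m}$. For the single prime $p = m$, Proposition \ref{prop:bound_p|m} applies (its hypothesis $\gcd(m-1,m) = 1$ is automatic) and contributes the factor $L_0(m,d)$ with its three sub-cases depending on whether $d = m$, $d = 2m$, or $d > 2m$.

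For primes $p \equiv 1 \pmod{m}$, the subgroup of nonzero $m$-th powers in $\F_p^{\times}$ has index $m$, so the quantity $\Phi(p)$ of Proposition \ref{prop:bound_matrix} equals $(p-1)/(mp)$, explaining the base $1 - (p-1)/(mp)$ appearing in $L_1^{\wee}$ and $L_1^{\med}$. I then split these primes into three sub-ranges by size. For $p < d$, Lemma \ref{lem:A_is_invertible} gives matrix rank $p+1$, so Proposition \ref{prop:bound_matrix} produces the exponent $p+1$ in $L_1^{\wee}$. For $d \leq p < (m-1)^2(d-2)^2$, the rank is $d+1$, yielding $L_1^{\med}$. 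For $p \geq (m-1)^2(d-2)^2$, the sharper Proposition \ref{prop:bound_p=1_large} applies and produces $L_1^{\giant}$.

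For primes $p \not\equiv 0,1 \pmod{m}$, the primality of $m$ forces $\gcd(p-1,m) = 1$, so Proposition \ref{prop:bound_pneq1} applies; its two sub-cases in $p$ produce $L_{\neq 1}^{\wee}(m,d)$ and $L_{\neq 1}^{\giant}(m,d)$ directly. Multiplying all contributions gives the claimed inequality. The main (and fairly minor) obstacle is purely bookkeeping: one must verify that the four stratifications by the size of $p$ within the two relevant residue classes tile the primes without gaps or overlaps, and that the hypotheses of each cited proposition are met at every applicable prime (e.g.\ that $p > (m-1)^2(d-2)^2 - 1$ holds throughout the range defining $L_1^{\giant}$). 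This reduces to checking a handful of elementary inequalities and requires no new ideas beyond the propositions already established.
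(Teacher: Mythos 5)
Your proposal is correct and follows essentially the same route as the paper: invoke the product formula of Theorem \ref{thm:pos_prop} (with $\rho_{m,d}(\infty)=1$ for odd $m$), split the finite primes by residue class modulo $m$ and by size, and apply Propositions \ref{prop:bound_p|m}, \ref{prop:bound_matrix} (with $\Phi(p)=\frac{p-1}{mp}$ and the rank from Lemma \ref{lem:A_is_invertible}), \ref{prop:bound_p=1_large}, and \ref{prop:bound_pneq1} in the corresponding ranges. This matches the paper's proof, which is just a terser version of the same bookkeeping.
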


\begin{proof}
	This follows directly from applying Propositions \ref{prop:bound_p=1_large},  \ref{prop:bound_pneq1}, \ref{prop:bound_matrix}, or \ref{prop:bound_p|m} to each local density $\rho(p)$ in 
	\[\rho_{m,d} = \prod_{p \text{ prime}} \rho(p) = \rho(m) \prod_{p \equiv 1 (m)} \rho(p) \prod_{p \not \equiv 0,1 (m)} \rho(p),\]
	splitting the products further into the ranges in the statement as appropriate. When $p \equiv 1 \pmod{p}$, the fraction of nonzero $m$-th power residue classes used in Proposition \ref{prop:bound_matrix} is $\Phi(p) = \frac{p-1}{mp}$.
\end{proof}

\begin{remark}\label{rem:computability}
	Corollary \ref{cor:lower_bound} provides a way to compute an explicit lower bound for $\rho_{m,d}$. Notice that all the products involved are finite, save for $L_1^{\giant}(m,d)$ and $L_{\neq 1}^{\giant}(m,d)$. These products are related to the Riemann zeta function $\zeta(d(m-1)/m)$ and $\zeta(d+1)$, respectively, as the product runs over the appropriate Euler factors, but only for the primes in certain (unions of) conjugacy classes.
	
	This alone ensures that when $d$ is sufficiently large, these products are close to one, because they are part of the tail of $\zeta(s)$ and $\zeta(s) \to 1$ as $s \to \infty$. Explicit values, valid to several decimal places, of similar products of this form were computed in \cite[p.\ 26 -- 34]{mathar}.
\end{remark}

We can make this result less explicit, but somewhat more pleasant by fixing a gonality $m$ and allowing $d \to \infty$, which is tantamount to allowing $g \to \infty$. 

\begin{lemma}\label{lem:limits}
	For a fixed prime $m$, we compute the limits of some of the products defined in Corollary \ref{cor:lower_bound} as $d \to \infty$;
	\begin{align*}
		\lim_{d \to \infty} L_1^{\med}(m,d) &= 1 ,\\
		\lim_{d \to \infty} L_1^{\giant}(m,d) &= 1,\\
		\lim_{d \to \infty} L_{\neq 1}^{\giant}(m,d) &= 1.
	\end{align*}
\end{lemma}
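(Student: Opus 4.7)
The plan is to take logarithms and reduce each limit to the statement that a certain sum over primes tends to zero. For any $x \in [0, 1/2]$ we have the elementary inequality $-\log(1-x) \leq 2x$, so for each product $\prod_p (1-x_p)$ under consideration it will suffice to bound $\sum_p x_p$ from above by a quantity vanishing as $d \to \infty$ (the tails $x_p$ here are easily seen to be much less than $1/2$ for $d$ large).

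For the two ``giant'' products, the exponent on $p$ in $x_p$ grows linearly with $d$, so I would control the sum by a tail of the Riemann zeta function. Explicitly,
\[\sum_{\substack{p \equiv 1 (m)\\ p \geq (m-1)^2(d-2)^2}} p^{-\frac{d(m-1)}{m}} \leq \zeta\!\left(\tfrac{d(m-1)}{m}\right) - 1, \qquad \sum_{\substack{p \not\equiv 0,1(m) \\ p > \frac{d}{2}-1}} p^{-(d+1)} \leq \zeta(d+1) - 1.\]
Since $\zeta(s) \to 1$ as $s \to \infty$, both upper bounds tend to zero, giving $L_1^{\giant}(m,d), L_{\neq 1}^{\giant}(m,d) \to 1$.

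The harder case is $L_1^{\med}$, where the range $d < p < (m-1)^2(d-2)^2$ has size polynomial in $d$ but each factor $1 - \alpha_p^{d+1}$ has $\alpha_p = \frac{m-1}{m} + \frac{1}{mp}$ arbitrarily close to $1$. I will factor $\alpha_p = \tfrac{m-1}{m}\bigl(1 + \tfrac{1}{(m-1)p}\bigr)$ and exploit the hypothesis $p > d$ to obtain the uniform bound
\[\alpha_p^{d+1} \leq \left(\tfrac{m-1}{m}\right)^{d+1}\!\left(1 + \tfrac{1}{(m-1)d}\right)^{d+1} \leq e^{2/(m-1)}\left(\tfrac{m-1}{m}\right)^{d+1}\]
via $(1+u)^n \leq e^{un}$. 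Crudely bounding the number of primes in the range by $(m-1)^2(d-2)^2$, one finds $\sum_p \alpha_p^{d+1}$ is at most a polynomial in $d$ times $\bigl(\tfrac{m-1}{m}\bigr)^{d+1}$, which decays exponentially, so $L_1^{\med}(m,d) \to 1$.

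The main obstacle, and the reason for treating the three products separately, is precisely the $L_1^{\med}$ case: the range of primes is not a tail of all primes, and each $\alpha_p$ really does approach $1$, so the argument hinges on the fact that $p > d$ forces $\alpha_p$ to be uniformly bounded strictly away from $1$. This uniform separation is what allows the exponential decay to defeat the polynomial growth in the number of factors.
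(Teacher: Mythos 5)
Your proposal is correct and takes essentially the same route as the paper: the two giant products are handled by tails of $\zeta(s)$ with $s \to \infty$, and $L_1^{\med}(m,d)$ by a uniform exponential bound on each factor's deviation beating the polynomially many primes in the range --- the paper derives its uniform bound $1-\frac{p-1}{mp} \leq \frac{m}{m+1}$ from $p \geq m+1$, whereas you use $p > d$ to get $e^{2/(m-1)}\left(\frac{m-1}{m}\right)^{d+1}$, a cosmetic difference. One inaccuracy in your framing (harmless to the argument): for fixed prime $m$ the quantities $\alpha_p$ are never arbitrarily close to $1$, being bounded above by $\frac{m}{m+1}$, which is precisely the observation the paper exploits.
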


\begin{proof}
	For $L_1^{\giant}(m,d)$ and $ L_{\neq 1}^{\giant}(m,d) $, the conclusion follows from recognizing that as $d \to \infty$, the product consists of a subset of factors of the  convergent product $\zeta(s)$ for $s=\frac{d(m-1)}{m}, d+1$ respectively. Thus the limit is necessarily 1.
	
	For $L_1^{\med}(m,d)$ the conclusion requires more work. First, observe that since the product in $ L_1^{\med}(m,d)$ runs over primes congruent to 1 modulo $m$, we have $p \geq m+1$. This implies
	\[1-\left(1-\frac{p-1}{mp}\right)^{d+1} \geq 1 - \left( \frac{m}{m+1}\right)^{d+1}.\]
	Thus we have
	\[1 \geq L_1^{\med}(m,d) \geq \prod_{\substack{ p \equiv 1 (m)\\ d < p < (m-1)^2(d-2)^2}} \left( 1 - \left( \frac{m}{m+1}\right)^{d+1} \right) \geq \prod_{\substack{ p \equiv 1 (m)\\  p < (m-1)^2(d-2)^2}} \left( 1 - \left( \frac{m}{m+1}\right)^{d+1} \right).\]
	We can compute the limit of the rightmost expression as $d \to \infty$ by taking logarithms. 
	
	Use $\pi_{1,m}(X)$ to denote the number of primes $p \equiv 1 \pmod{m}$  with $p \leq X$, so
	\[\log \prod_{\substack{ p \equiv 1 (m)\\  p < (m-1)^2(d-2)^2}} \left( 1 - \left( \frac{m}{m+1}\right)^{d+1} \right) = \left( \pi_{1,m}((m-1)^2(d-2)^2-1) \right) \log \left( 1 - \left( \frac{m}{m+1}\right)^{d+1} \right).\]
	Using the Taylor series for the logarithm, we have
	\begin{align*}
		\left|\log \left( 1 - \left( \frac{m}{m+1}\right)^{d+1} \right) \right| &= \sum_{j \geq 1} \frac{1}{j}\left(\frac{m}{m+1}\right)^{(d+1)j} \\
		&\leq \sum_{j \geq 1}\left(\frac{m}{m+1}\right)^{(d+1)j} \\
		&= \frac{(\frac{m}{m+1})^{d+1}}{1 - (\frac{m}{m+1})^{d+1}}.
	\end{align*}
	Finally, we observe that upon taking limits, we have
	\[\lim_{d \to \infty} \pi_{1,m}((m-1)^2(d-2)^2-1) \frac{(\frac{m}{m+1})^{d+1}}{1 - (\frac{m}{m+1})^{d+1}} = 0,\]
	since the exponential $(\frac{m}{m+1})^{d+1}$ decays more quickly than $\pi_{1,m}((m-1)^2(d-2)^2-1)$, which is bounded above by a (fixed) polynomial in $d$.
	
	Thus as $d$ grows, $L_1^{\med}(m,d)$ sits in between 1 and another product approaching 1, so we must have $\lim_{d \to \infty} L_1^{\med}(m,d) = 1$.
\end{proof}

This gives us a way to see where these lower bounds are going for a fixed prime $m$ dividing $d$ as $d$ grows, in an entirely computable way. We can restate this as follows.

\begin{corollary}\label{cor:limit_rho}
	Let $m$ be a fixed odd prime. Then
	\[\liminf_{d \to \infty} \rho_{m,d} \geq \left(1 - \frac{1}{m^{m+1}}\right) \prod_{p \equiv 1 (m)} \left( 1 - \left(1-\frac{p-1}{mp}\right)^{p+1} \right) \prod_{p \not\equiv 0,1 (m)} \left(1 - \frac{1}{p^{2(p+1)}}\right).\]
	When $m=2$ we have
	\[\liminf_{d \to \infty} \frac{\rho_{2,d}}{\rho_{2,d}(\infty)} \geq \frac{7}{8} \prod_{p > 2} \left( 1 - \left(1 - \frac{p-1}{2p}\right)^{p+1} \right) \approx 0.66120.\]
\end{corollary}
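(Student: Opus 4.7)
The plan is to invoke Corollary \ref{cor:lower_bound} and Lemma \ref{lem:limits} directly, taking $\liminf_{d\to\infty}$ of both sides of the resulting lower bound on $\rho_{m,d}$. For fixed odd prime $m$ and all $d > 2m$, Corollary \ref{cor:lower_bound} gives
\[\rho_{m,d} \geq L_0(m,d) L_1^{\wee}(m,d) L_1^{\med}(m,d) L_1^{\giant}(m,d) L_{\neq 1}^{\wee}(m,d) L_{\neq 1}^{\giant}(m,d),\]
where $L_0(m,d) = 1 - m^{-(m+1)}$ is constant in $d$, and by Lemma \ref{lem:limits} the three factors $L_1^{\med}$, $L_1^{\giant}$, and $L_{\neq 1}^{\giant}$ each tend to $1$ as $d \to \infty$. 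Thus the nontrivial contribution to the liminf comes from the two ``small prime'' products.

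The remaining step is to show $L_1^{\wee}(m,d)$ and $L_{\neq 1}^{\wee}(m,d)$ converge to the asserted infinite products as $d \to \infty$. Each is monotonically decreasing in $d$ (more primes enter as $d$ grows, and every individual factor is less than $1$), so the limits exist and equal the corresponding infinite products; it remains only to show the limits are positive, i.e.\ that the products converge in the usual sense. Via the standard criterion $\sum(1-a_p) < \infty$, this reduces to verifying
\[\sum_{p \equiv 1 \,(m)} \left(1 - \frac{p-1}{mp}\right)^{p+1} < \infty \quad \text{and} \quad \sum_{p \not\equiv 0,1 \,(m)} \frac{1}{p^{2(p+1)}} < \infty.\]
The second sum is trivially majorized by a geometric series. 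For the first, observe that for $p \geq 2$ one has $1 - \frac{p-1}{mp} \leq 1 - \frac{1}{2m} < 1$, so the terms decay exponentially in $p$ and the sum converges. Taking liminfs on both sides of the lower bound then yields the stated inequality for odd $m$.

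For $m=2$, Corollary \ref{cor:lower_bound} is stated only for odd prime $m$, but the underlying Propositions \ref{prop:bound_p=1_large}, \ref{prop:bound_matrix}, and \ref{prop:bound_p|m} all apply verbatim; when $m=2$ every odd prime satisfies $p \equiv 1 \pmod 2$, so the $L_{\neq 1}$ products are empty, and Proposition \ref{prop:bound_p|m} gives $L_0(2,d) = 7/8$ for all $d > 4$. Because $\rho_{2,d}(\infty)$ is not known to stabilize as $d \to \infty$, I would work with the ratio $\rho_{2,d}/\rho_{2,d}(\infty) = \prod_{p \nmid \infty}\rho_{2,d}(p)$, which factors through the finite places only, and then run the same liminf argument to get
\[\liminf_{d\to\infty} \frac{\rho_{2,d}}{\rho_{2,d}(\infty)} \geq \frac{7}{8}\prod_{p>2}\left(1 - \left(1-\tfrac{p-1}{2p}\right)^{p+1}\right).\]
The main obstacle throughout is verifying convergence of the infinite product $\prod_{p \equiv 1 (m)}(1 - (1 - \frac{p-1}{mp})^{p+1})$, which, as shown, reduces to a routine geometric comparison; the explicit numerical estimate $\approx 0.66120$ for $m=2$ is then obtained by truncating this product and bounding the tail.
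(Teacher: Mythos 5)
Your proposal is correct and follows essentially the same route as the paper: invoke Corollary \ref{cor:lower_bound} together with Lemma \ref{lem:limits}, note that $L_0$ stabilizes at $1-m^{-(m+1)}$ while $L_1^{\wee}$ and $L_{\neq 1}^{\wee}$ tend to the stated infinite products (whose convergence is the routine geometric comparison you give), and for $m=2$ pass to the ratio $\rho_{2,d}/\rho_{2,d}(\infty)=\prod_{p}\rho_{2,d}(p)$ with $L_0(2,d)=7/8$ from Proposition \ref{prop:bound_p|m} and empty $L_{\neq 1}$ products. The paper's proof is just a terser version of this argument, so nothing further is needed.
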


\begin{proof}
	By Corollary \ref{cor:lower_bound} and Lemma \ref{lem:limits}, we need only take the limits of $L_0(m,d)$, $L_1^{\wee}(m,d)$, and $L_{\neq 1}^{\wee}(m,d)$ as $d \to \infty$, since $L_1^{\med}(m,d), L_1^{\giant}(m,d)$, and $L_{\neq 1}^{\giant}$ can all be made arbitrarily close to 1.
	
	In the case of $m=2$, Corollary \ref{cor:lower_bound} applies to the local densities at the finite places, but $\rho_{2,d}(\infty) \neq 1$. Thus by taking a limit as $d \to \infty$, we obtain a lower bound for $\liminf_{d \to \infty} \frac{\rho_{2,d}}{\rho_{2,d}(\infty)}$. 
\end{proof}

Moreover, the infinite products in Corollary \ref{cor:limit_rho} are straightforward to compute to several decimal places of precision. By recognizing that $p \geq m+1$ in the first product, we have that 
\[\prod_{p \equiv 1 (m)} \left( 1 - \left(1-\frac{p-1}{mp}\right)^{p+1} \right) \geq \prod_{\substack{p \equiv 1 (m) \\ p \leq A}} \left( 1 - \left(1-\frac{p-1}{mp}\right)^{p+1} \right) \prod_{\substack{p \equiv 1 (m) \\ p > A}} \left( 1 - \left(\frac{m}{m+1}\right)^{p+1} \right).\]
The rightmost factor is seen to converge to 1 quickly, e.g.\ by taking logarithms and comparing to a geometric series. In fact, we have
\begin{equation}
\label{eq:tail_A}
	\prod_{\substack{p \equiv 1 (m) \\ p > A}} \left( 1 - \left(\frac{m}{m+1}\right)^{p+1} \right) \geq 1 - \left( \frac{m}{m+1} \right)^{A+1},
\end{equation}
so we may choose $A$ large enough so this factor is as close to 1 as desired. It remains to compute the factors for the finitely many $p \leq A$. Furthermore, this is quite well behaved in $m$, in that for any level of precision, we need only choose $A$ to be a sufficiently large multiple $Cm$, where $C$ does not depend on $m$.

For the other factor, we employ a similar strategy and compare to the Riemann zeta function,
\begin{equation}
\label{eq:tail_B}
	\prod_{p \not\equiv 0,1 (m)} \left(1 - \frac{1}{p^{2(p+1)}}\right) \geq \prod_{\substack{p \not\equiv 0,1 (m) \\ p \leq B}} \left(1 - \frac{1}{p^{2(p+1)}}\right) \prod_{p > B} \left(1 - \frac{1}{p^{2(B+2)}}\right).
\end{equation}
The rightmost factor is the tail of $\zeta(2B + 4)^{-1}$, which converges to 1 rapidly. For example, taking $B=10$ is sufficient to show
\begin{align*}
	\prod_{p \not\equiv 0,1 (m)} \left(1 - \frac{1}{p^{2(p+1)}}\right) &\geq \frac{1}{\zeta(24)} \prod_{\substack{p \not\equiv 0,1 (m) \\ p \leq 10}} \left(1 - \frac{1}{p^{2(p+1)}}\right) \prod_{p \leq 10}\left(1 - \frac{1}{p^{24}}\right)^{-1}.
\end{align*}
When $m > 7$, the first product runs over all primes up to 10, producing the lower bound of at least $0.98422$.

\begin{corollary}\label{cor:liminf_uniform}
	Let $m$ be a fixed odd prime. Then
	\[\liminf_{d \to \infty} \rho_{m,d} \geq 0.83511.\]
\end{corollary}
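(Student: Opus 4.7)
The plan is to apply Corollary~\ref{cor:limit_rho} and bound each of the three factors in the resulting product uniformly in the odd prime $m$. Recall that Corollary~\ref{cor:limit_rho} provides
\[\liminf_{d\to\infty}\rho_{m,d}\;\geq\;\Bigl(1-\tfrac{1}{m^{m+1}}\Bigr)\prod_{p\equiv 1(m)}\!\!\Bigl(1-\bigl(1-\tfrac{p-1}{mp}\bigr)^{p+1}\Bigr)\prod_{p\not\equiv 0,1(m)}\!\!\Bigl(1-\tfrac{1}{p^{2(p+1)}}\Bigr).\]
The first factor $1 - 1/m^{m+1}$ is increasing in $m \geq 3$, yielding the uniform lower bound $80/81$ attained at $m = 3$. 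The third factor consists of terms in $(0,1)$, so restricting to fewer primes only increases the product; hence it is bounded below by the universal constant $\prod_{p}(1 - 1/p^{2(p+1)})$, which by \eqref{eq:tail_B} (with $B=10$) exceeds $0.98422$, dominated by the $p=2$ contribution $63/64$.

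The main work is bounding the second factor uniformly in $m$. I would proceed exactly as in the discussion after Corollary~\ref{cor:limit_rho}: split the product at a cutoff $A = Cm$ for a sufficiently large absolute constant $C$. Since $p\equiv 1\pmod{m}$ forces $p\geq m+1$, the uniform inequality $1 - (p-1)/(mp) \leq m/(m+1)$ holds, and \eqref{eq:tail_A} shows the tail $\prod_{p>A,\, p\equiv 1(m)}$ can be made arbitrarily close to $1$ uniformly in $m$ by taking $C$ large. For primes $p\equiv 1\pmod{m}$ with $p\leq A$, one evaluates each factor directly.

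The key uniform estimate uses that, since $m$ is an odd prime and $p\equiv 1\pmod{m}$ is an odd prime, $p$ must have the form $p=2jm+1$ for some $j\geq 1$ (as $km+1$ is even when $k$ is odd). A direct computation with $\log(1-x)$ shows each factor $1-(1-2j/(2jm+1))^{2jm+2}$ strictly exceeds its $m\to\infty$ limit $1-e^{-2j}$ for every finite $m$, and moreover only a (typically sparse) subset of the integers $\{2jm+1: j\geq 1\}$ is actually prime for any fixed $m$. Together with the estimates for the first and third factors, this reduces the problem to a numerical accounting: handle a finite range $3 \leq m \leq M_0$ of small primes by direct evaluation (where the corresponding finite product in factor two is particularly favorable) and handle $m>M_0$ by combining the tail estimates with the asymptotic bound $\prod_{j\geq 1}(1-e^{-2j})\approx 0.8464$.

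The main obstacle is the case analysis across all odd primes $m$, since the estimate must hold uniformly. Small $m$ are addressed by explicit evaluation of the finite part of the second factor; large $m$ require exploiting the fact that only a few of the integers $2jm+1$ are prime for any given $m$, so that the realized lower bound is strictly larger than the pessimistic asymptotic worst case. A careful numerical bookkeeping along these lines produces the claimed uniform lower bound $0.83511$.
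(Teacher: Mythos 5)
Your overall strategy matches the paper's (apply Corollary \ref{cor:limit_rho}, bound the first and third factors uniformly, split the product over $p \equiv 1 \pmod m$ at a cutoff $A = Cm$ using \eqref{eq:tail_A}, and treat small $m$ by direct evaluation), but there is a genuine gap at the decisive quantitative step. The target $0.83511$ is essentially exactly $0.84850 \times 0.98422$, so there is no slack: the asymptotic fallback you propose for large $m$, namely the head bound $\prod_{j \geq 1}\left(1 - e^{-2j}\right) \approx 0.8464$, yields only $0.8464 \times 0.98422 \approx 0.8330 < 0.83511$. Thus the "pessimistic worst case" you would fall back on fails for \emph{every} large $m$, and no amount of numerical bookkeeping over a finite range $3 \leq m \leq M_0$ can repair it, because the shortfall persists as $m \to \infty$.

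You correctly sense that one must exploit the sparseness of primes $p \equiv 1 \pmod m$, but you never say how to do this uniformly in $m$, and that is precisely the missing idea. The paper's mechanism is concrete: with $A = 20m$, the candidates are $p = 2km+1$ with $1 \leq k \leq 9$; since $m \geq 11$ is prime (so $3 \nmid m$), exactly three of these nine values are divisible by $3$ and hence composite, so at most six factors occur. Because the factors $1 - \left(1 - \frac{2k}{2km+1}\right)^{2km+2}$ increase in $k$, the worst case is that the surviving indices are $k = 1,2,4,5,7,8$, and each such factor is at least its $m \to \infty$ limit $1 - e^{-2k}$ (the monotonicity you also noted). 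This gives the head bound $\prod_{k=1,2,4,5,7,8}\left(1 - e^{-2k}\right) \approx 0.84850$, which, multiplied by the third-factor bound $0.98422$ from \eqref{eq:tail_B}, the tail bound $1 - (11/12)^{221}$ from \eqref{eq:tail_A}, and $1 - 11^{-12}$, lands exactly at $0.83511$ for all $m \geq 11$; the cases $m = 3,5,7$ are then read off from Example \ref{ex:lim_infs}. Without this (or some equally explicit) uniform exclusion of composite candidates, your argument does not reach the stated constant.
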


\begin{proof}
Direct computation shows the result holds for $m=3, 5, 7$; see Example \ref{ex:lim_infs}. Suppose now that $m \geq 11$ is an odd prime. By the above discussion, using \eqref{eq:tail_B} with $B=10$, we have
	\[	\prod_{p \not\equiv 0,1 (m)} \left(1 - \frac{1}{p^{2(p+1)}} \right) \geq 0.98422...\]
	and we note that $1-\frac{1}{m^{m+1}} \geq 1 - \frac{1}{11^{12}}$, so it remains to bound the $\prod_{p \equiv 1 (m)} \left( 1 - \left(1-\frac{p-1}{mp}\right)^{p+1} \right)$ factor from below for an arbitrary prime $m \geq 11$.
	
	Using \eqref{eq:tail_A} with $A = 20m$, we observe that if $p \equiv 1 \pmod{m}$ for $p \leq 20m$, then $p = 2km+1$ for some $1 \leq k \leq 9$. Furthermore, we have $3 \mid 2km + 1$ for three such values of $k$, so we can omit those $k$'s. Since these factors are increasing in $p$, we achieve a lower bound by omitting $k=3, 6, 9$. We have then
	\begin{align*}
		\prod_{\substack{p \equiv 1 (m) \\ p \leq A}} \left( 1 - \left(1-\frac{p-1}{mp}\right)^{p+1} \right) & \geq \prod_{k=1,2,4,5,7,8} \left( 1 - \left(1-\frac{2km}{m(2km+1)}\right)^{2km+2} \right),\\
		\prod_{\substack{p \equiv 1 (m) \\ p > A}} \left( 1 - \left(\frac{m}{m+1}\right)^{p+1} \right) & \geq 1 - \left( \frac{m}{m+1} \right)^{20m+1} \geq  1 - \left( \frac{11}{12} \right)^{221} \approx 0.999999995.
	\end{align*}
	For the latter, we can take $m=11$, as the right hand side is seen to be increasing in $m$. This factor is very nearly 1, and thus will have negligible impact. For the former, we observe the right hand side is decreasing in $m$, so it suffices to take a limit as $m \to \infty$. We have
	\begin{align*}\lim_{m \to \infty} \left( 1 - \left(1-\frac{2km}{m(2km+1)}\right)^{2km+2} \right) &= 1-e^{-2k},\\
	\implies	\prod_{\substack{p \equiv 1 (m) \\ p \leq A}} \left( 1 - \left(1-\frac{p-1}{mp}\right)^{p+1} \right) & \geq \prod_{k=1,2,4,5,7,8} \left(1 - e^{-2k} \right) \approx 0.84850.
	\end{align*}
	Taken together, this verifies the claim for $m \geq 11$. 
\end{proof}

\subsection{Examples}

In this subsection, we compute numerical lower bounds for $\rho_{m,d}$ for selected $(m,d)$ values. The reader primarily interested in such numerical values --- especially for $d$ sufficiciently large relative to a fixed $m$ --- may find these lower bounds sufficient for their purposes without going through the considerable additional effort of computing local densities $\rho(p)$ exactly, as we do later only in the case of $(m,d) = (3,6)$.

\begin{Example}[$m=3, d=6$]\label{ex:m=3}
	In the case where $(m,d) = (3,6)$ the genus of $C_f$ is generically $4$. We can use Corollary \ref{cor:lower_bound} to bound $\rho_{3,6}$ by computing
	\begin{align*}
		L_0(3,6) &= 1 - \frac{1}{3^3} = \frac{26}{27},\\
		L_1^{\wee}(3,6) &= 1, \\
		L_1^{\med}(3,6) &= \prod_{\substack{p \equiv 1 (3) \\ p \leq 61}} \left(1 - \left(1-\frac{p-1}{3p}\right)^7\right) \approx 0.59724,\\
		L_1^{\giant}(3,6) &= \prod_{\substack{p \equiv 1 (3) \\ p > 61}} \left(1-\frac{1}{p^4}\right) = \prod_{\substack{p \equiv 1 (3) \\ p \leq 61}} \left(1-\frac{1}{p^4}\right)^{-1} \prod_{\substack{p \equiv 1 (3)}} \left(1-\frac{1}{p^4}\right) \approx 0.9999998, \\
		L_{\neq 1}^{\wee}(3,6) &= \left(1 - \frac{1}{2^6}\right) = \frac{63}{64}, \\
		L_{\neq 1}^{\giant}(3,6) &= \prod_{\substack{p \equiv 2 (3) \\ p > 2}} \left(1 - \frac{1}{p^7}\right) = \left(1 - \frac{1}{2^7}\right)^{-1} \prod_{p \equiv 2 (3)} \left(1 - \frac{1}{p^7}\right) \approx 0.999987,
	\end{align*}
	to find that
	\[\rho_{3,6} \geq 0.56612.\]
	That is, at least 56\% of curves $y^3 = f(x,z)$ over $\Q$ with $\deg f = 6$ are locally soluble. Note that the infinite products $\prod_{\substack{p \equiv 1 (3)}} \left(1-\frac{1}{p^4}\right)$ and $\prod_{p \equiv 2 (3)} \left(1 - \frac{1}{p^7}\right)$ above are termed Euler modulo products and denoted $\zeta_{3,1}(4)$ and $\zeta_{3,2}(7)$ respectively in \cite[see p.\ 25]{mathar}, and the values used in the above computations were taken from that paper.
	
	There is room for improvement in the lower bound above for $L_1^\med(3,6)$, due to the fact that we expect Proposition \ref{prop:bound_matrix} to be missing many liftable points; see Remark \ref{rem:crudeness}. Since there are only seven primes involved in $L_1^\med(3,6)$, one may use a computer algebra system to enumerate all sextic forms $f(x,z)$ and search for points that lift. The results are tabulated in Table \ref{tab:rho36_computer}.  See Appendix \ref{appx:code} for a more detailed description of this approach.
		
	\begin{table}[h]
	\caption{Lower bounds for $\rho_{3,6}(p)$ for primes $p \equiv 1 \pmod{3}$ up to $p=61$}
	\label{tab:rho36_computer}
	\begin{center}
	{\renewcommand{\arraystretch}{1.25}
	\begin{tabular}{|c | c|}
		\hline $p$ & $\rho_{3,6}(p) \geq$\\ \hline
	7  & $\frac{810658}{823543} \approx 0.98435$\\ 
	13 & $\frac{62655132}{62748517} \approx 0.99851$\\ 
	19 & $\frac{893660256}{893871739} \approx 0.99976$ \\
	31 & $\frac{27512408250}{27512614111} \approx 0.99999$\\
	37 & $\frac{94931742132}{94931877133} \approx 0.999998$\\
	43 & $\frac{271818511748}{271818611107} \approx{0.9999996}$\\
	61 & $\frac{3142742684700}{3142742836021} \approx 0.99999995$ \\ \hline
	\end{tabular}}
	\end{center}
	\end{table}
	We can then take the product of these lower bounds and use them in place of $L_{1}^{\med}(3,6)$ in the calculations above, producing
	\[\rho_{3,6} \geq 0.93134.\]
\end{Example}

\begin{Example}[$m = 5, d=5$]\label{ex:m=5}
	In the case where $(m,d) = (5,5)$, we use Corollary \ref{cor:lower_bound} to obtain
	\begin{align*}
		L_0(5,5) &= 1 - \frac{1}{5^4} = 0.9984,\\
		L_1^{\wee}(5,5) &= 1, \\
		L_1^{\med}(5,5) &= \prod_{\substack{p \equiv 1 (5) \\ p \leq 131}} \left( 1 - \left( 1 - \frac{p-1}{5p} \right)^6 \right) \approx 0.10671, \\
		L_1^{\giant}(5,5) &= \prod_{\substack{p \equiv 1 (5) \\ p > 131}} \left( 1 - \frac{1}{p^4} \right) \approx 0.999999994, \\
		L_{\neq 1}^{\wee}(5,5) &= 1,\\
		L_{\neq 1}^{\giant}(5,5) &= \prod_{p \not\equiv 0,1 (5)} \left(1 - \frac{1}{p^6} \right) \approx 0.98301.
	\end{align*}
	Putting these together with Corollary \ref{cor:lower_bound} yields the following (albeit somewhat disappointing) bound,
	\[\rho_{5,5} \geq 0.10473.\]
	
	As with Example \ref{ex:m=3}, the primes $p \equiv 1 \pmod{5}$ which are too small for Proposition \ref{prop:bound_p=1_large} to apply are the limiting factor in this approach. We  again improve these values with a computer search as described in Appendix \ref{appx:code}, and tabulate them below in Table \ref{tab:rho55_computer}
	\begin{table}[h]
	\caption{Lower bounds for $\rho_{5,5}(p)$ for primes $p \equiv 1 \pmod{5}$ up to $p=131$}
	\label{tab:rho55_computer}
	\begin{center}
	{\renewcommand{\arraystretch}{1.25}
	\begin{tabular}{|c | c|}
		\hline $p$ & $\rho_{3,6}(p) \geq$\\ \hline
	11  & $\frac{1729840}{1771561} \approx 0.97644$\\ 
	31 & $\frac{ 887443392}{887503681} \approx 0.99993$\\ 
	41 & $\frac{4750102896}{4750104241} \approx 0.9999997$ \\
	61 & $\frac{ 51520371384}{51520374361} \approx 0.99999994$\\
	71 & $\frac{128100279888}{128100283921} \approx 0.99999996$\\
	101 & $\frac{1061520142440}{1061520150601} \approx 0.999999992$\\
	131 & $\frac{5053913130552}{5053913144281} \approx 0.999999997$ \\ \hline
	\end{tabular}}
	\end{center}
	\end{table}
	
	This produces the considerable improvement
	\[\rho_{5,5} \geq  0.95826,\]
	demonstrating once again the outsize impact that small primes have on the adelic density, as well as the limitations of Proposition \ref{prop:bound_matrix}.

	We also note that these computations revealed that for all $p > 31$, irreducible curves of the form $y^5 = \overline{f}(x,z)$ where $\deg f = 5$ possess a smooth $\F_p$-point, and hence a lift $y^5 = f(x,z)$ possesses a $\Q_p$-point. This improves on the range of validity for Proposition \ref{prop:bound_p=1_large} in the case when $m=5$ and $d=5$.
\end{Example}

\begin{Example}[$d \to \infty$]\label{ex:lim_infs}
	For a fixed prime $m$, we consider the behavior of the lower bound for $\rho_{m,d}$ given by Corollary \ref{cor:lower_bound} as $d$ grows. For example, taking $m=3$, we compute a lower bound for $\rho_{m,d}$ using Corollary \ref{cor:lower_bound} for several values of $d$. These lower bounds, and the approximate values of $L_1^{\sm}$, $L_1^{\med}$, and $L_{\neq 1}^{\sm}$ are included in the Table \ref{tab:rho_3d_d_grows} below.
	
	\begin{table}[h]
	\caption{Lower bounds for $\rho_{3,d}(p)$ for small $d$ via Corollary \ref{cor:lower_bound}}
	\label{tab:rho_3d_d_grows}
	\begin{center}
	{
	\begin{tabular}{|c | c| c | c | c |}
		\hline $d$ & $L_1^{\sm}(3,d) \approx$ & $L_1^{\med}(3,d) \approx$ & $L_{\neq 1}^{\sm}(3,d) \approx$ & $\rho_{3,d} \geq $\\ \hline
	6  &    1    & 0.59723 & 0.98437 & 0.56612\\ 
	9  & 0.93223 & 0.69389 & 0.98437 & 0.62890\\ 
	12 & 0.93223 & 0.81839 & 0.98437 & 0.74174\\ 
	15 & 0.92682 & 0.91381 & 0.98437 & 0.82342\\ 
	18 & 0.92682 & 0.96277 & 0.98437 & 0.86753\\ 
	21 & 0.92635 & 0.98536 & 0.98437 & 0.88744\\ \hline
	\end{tabular}}
	\end{center}
	\end{table}

	Note that $L_1^{\med}(3,d)$ is increasing with $d$, as expected by Lemma \ref{lem:limits}. Using Corollary \ref{cor:limit_rho} and the ensuing discussion, we can quickly compute a decimal approximation of a lower bound for $\liminf_{d \to \infty} \rho_{3,d}$. Taking $A=60$ in \eqref{eq:tail_A} and $B=10$ in \eqref{eq:tail_B}, we find 
	\begin{align*}
		\prod_{p \equiv 1 (3)} \left(1 - \left(1 - \frac{p-1}{3p}\right)^{p+1}\right) & \geq 0.92635,\\
		\prod_{p \equiv 2 (3)} \left(1 - \frac{1}{p^{2(p+1)}}\right) &\geq 0.98437,
	\end{align*}
	so
	\[\liminf_{d \to \infty} \rho_{3,d} \geq 0.90061.\]
	
	Below in Table \ref{tab:liminf_rhomd}, we compute a lower bound for $\liminf_{d \to \infty}$ for other small prime exponents $m$, where we take $A=20m$ and $B=10$ to compute the necessary infinite products, as above.

	\begin{center}
	\begin{table}[h]
	\caption{Lower bounds for $\liminf_{d \to \infty} \rho_{m,d}$ via Corollary \ref{cor:limit_rho} for selected odd primes $m$}
	\label{tab:liminf_rhomd}
	
	{
	\begin{tabular}{|c | c|| c | c |}
		\hline $m$ & $\liminf_{d \to \infty} \rho_{m,d}(p) \geq$ & $m$ & $\liminf_{d \to \infty} \rho_{m,d}(p) \geq$\\ \hline
	3 & 0.90061 & \longvdots{1em} & \longvdots{1em} \\
	5 & 0.89457 & 103 & 0.98183\\
	7 & 0.97143 & 107 & 0.98156\\
	11 & 0.87167 & 109 & 0.98418\\
	13 & 0.96823 & 113 & 0.85336\\ 
	17 & 0.98206 & 127 & 0.96662\\
	19 & 0.98418 & \longvdots{1em} & \longvdots{1em} \\
	23 & 0.86036 & 1009 & 0.98417\\
	29 & 0.85968 & 1013 & 0.84918\\
	31 & 0.98418 & 1019 & 0.85128\\
	37 & 0.96546 & 1021 & 0.98417\\ 
	41 & 0.85737 & \longvdots{1em} & \longvdots{1em} \\ \hline
	\end{tabular}}
	\end{table}
	\end{center}
		
	We make a few brief observations about Table \ref{tab:liminf_rhomd}. First is that these methods will not produce lower bounds exceeding $\prod_{p \not\equiv 0,1 (m)} \left(1 - \frac{1}{p^{2(p+1)}}\right)$. In particular, when $m > 7$ above, our lower bounds do not exceed $0.98422$. However some $m$ values come quite close, e.g.\ $m=19, 31, 109, 1009, 1021$, indicating that for such primes, to improve our lower bounds for $\rho_{m,d}$, we need to improve our bounds for $L_{\neq 1}^\wee(m,d)$. 
	
	Note the drops  present at several $m$ values, e.g.\ $m=11,23,29,113,1013,1019$. These can be explained by considering the smallest prime $p \equiv 1 \pmod{m}$. For example, when $m=7$, the smallest prime $p \equiv 1 \pmod{7}$ is $p=29$, while the smallest prime $p \equiv 1 \pmod{11}$ is lower at $p=23$. The small primes have an outsize impact on our lower bounds for the infinite product $\prod_{p \equiv 1 (m)} \left(1 - \left(1 - \frac{p-1}{mp}\right)^{p+1}\right)$. 
\end{Example}

\begin{Example}[$m = 4$]\label{ex:m=4}
	To illustrate the similarities and differences when $m$ is composite, consider $m=4$ and $d$ a multiple of $4$. While we cannot apply Corollaries \ref{cor:lower_bound} or \ref{cor:limit_rho} directly, the methods of this section nevertheless apply to determine $\rho_{4,d}(p)$ for the finite primes $p$.
	
	For the prime $p=2$ and primes $p \equiv 1 \pmod{4}$, Propositions \ref{prop:bound_p=1_large}, \ref{prop:bound_matrix}, and \ref{prop:bound_p|m} apply as usual, with the genus $g_0 = \frac{3(d-2)}{2}.$	 If $p \equiv 3 \pmod{4}$, we observe that $\left(\F_p^\times\right)^4 = \left(\F_p^\times\right)^2$, so $C_f$ has an $\F_p$-point if and only if the hyperelliptic curve $y^2 - f(x,z)$ has one. This allows us to apply the result of Proposition \ref{prop:bound_p=1_large} for all primes $p \equiv 1 \pmod{4}$ such that $p \geq (d-2)^2$, instead of $p \geq 4g_0^2$. Thus whenever $d \geq 8$ we have
	\begin{align*}
		\frac{\rho_{4,d}}{\rho_{4,d}(\infty)} &\geq \frac{7}{8} \prod_{\substack{p \equiv 1 (4) \\ p < d}} \left(1 - \left(1 - \frac{p-1}{4p}\right)^{p+1} \right) \prod_{\substack{p \equiv 1 (4) \\ d \leq p  <  4g_0^2}} \left(1 - \left(1 - \frac{p-1}{4p}\right)^{p+1} \right) \prod_{\substack{p \equiv 1 (4) \\ p \geq 4g_0^2}} \left(1 - \frac{1}{p^{d/2}} \right)\\
		&\times \prod_{\substack{p \equiv 3 (4) \\ p < d}} \left(1 - \left(1 - \frac{p-1}{2p}\right)^{p+1} \right) \prod_{\substack{p \equiv 3 (4) \\ d \leq p  <  (d-2)^2}} \left(1 - \left(1 - \frac{p-1}{2p}\right)^{p+1} \right) \prod_{\substack{p \equiv 3 (4) \\ p \geq (d-2)^2}} \left(1 - \frac{1}{p^{d/2}} \right).
	\end{align*}
	In the case of $d=4$ we replace $7/8$ by $3/4$ to account for the behavior at $p=2$.
	
	For small values of $d$, this produces the values in Table \ref{tab:rho_4d}. In the limit as $d \to \infty$, we find
	\[\liminf_{d \to \infty} \frac{\rho_{4,d}}{\rho_{4,d}(\infty)} \geq 0.49471,\]
	which is slightly worse than the lower bound for $m=2$ case computed in Example \ref{ex:lim_infs}, as one might expect, given that for primes $p \equiv 1 \pmod{4}$ there are fewer quartic residues in $\F_p$.
	
	To deal with the infinite place, we observe that $C_f$ has a real point precisely when $f(x,z)$ takes a positive value. When $m$ is even and $d=4$, \cite{BCF_genus_one} rigorously show $\rho_{m,4}(\infty) \geq 0.873914$. For $4 \leq d \leq 20$, we obtained the approximations for $\rho_{m,d}(\infty)$ using a Monte Carlo approach with $10^7$ samples and recorded them in Table \ref{tab:rho_4d}.
	\begin{center}
	\begin{table}[h]
	\caption{Lower bounds for $\rho_{4,d}/\rho_{4,d}(\infty)$ and approximations of $\rho_{4,d}(\infty)$ for small $d$}
	\label{tab:rho_4d}
	{
	\begin{tabular}{|c | c| c |}
		\hline $d$ & $\rho_{4,d}/\rho_{4,d}(\infty) \geq$ & $\rho_{4,d}(\infty) \approx$ \\ \hline
	4  & 0.10125 & 0.8739562 \\ 
	8  & 0.01711 & 0.9183913 \\ 
	12 & 0.03419 & 0.9378118 \\ 
	16 & 0.08218 & 0.9493136 \\ 
	20 & 0.14848 & 0.9568297 \\ \hline
	\end{tabular}}
	\end{table}	
	\end{center}	
\end{Example}

\section{Upper bounds for the proportion}
\label{sec:upper_bounds}

In this section, we consider obstructions to local solubility to give upper bounds on $\rho_{m,d}(p)$, and thus $\rho_{m,d}$. Our primary goal is to show that even in the limit as $d \to \infty$, strictly fewer than 100\% of superelliptic curves are everywhere locally soluble; see Corollary \ref{cor:limsup_uniform}. For this, it is sufficient to study the behavior at $p=2$.

\begin{lemma}\label{lem:upper_bound_p=2}
	Fix an  integer $m \geq 2$ and suppose $d \geq 6$ is divisible by $m$. Then
	\[\rho_{m,d}(2) \leq 1 - \frac{1}{2^9} +  \frac{1}{2^{d+4}}.\]
\end{lemma}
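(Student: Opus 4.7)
The strategy is to exhibit a set $S \subset \Z_2^{d+1}$ of positive Haar measure, every element of which yields a superelliptic curve $C_f$ with no $\Q_2$-point, and then bound $\mu_2(S)$ from below. The obstruction I would use is a valuation mismatch: if $f(X,Z) \equiv 2 \pmod{4}$ for every primitive $(X,Z) \in \Z_2^2$, then $v_2(f(X,Z)) = 1$, which is not a multiple of $m$ for any $m \geq 2$. Every $\Q_2$-point of $C_f \subset \P(1,d/m,1)$ can be normalized so that $(x,z) \in \Z_2^2$ is primitive and $y \in \Q_2$, and then $y^m = f(x,z)$ forces $m v_2(y) = v_2(f(x,z)) = 1$, which is impossible. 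So no $f \in S$ gives a curve locally soluble at $2$.

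To turn this pointwise condition into congruences on $(c_0, \ldots, c_d)$, I would observe that $f(X,Z) \bmod 4$ depends only on $(X,Z) \bmod 4$, so it suffices to evaluate at the six points of $\P^1(\Z/4)$: $[X{:}1]$ for $X \in \{0,1,2,3\}$ and $[1{:}Z]$ for $Z \in \{0,2\}$. Writing out these six evaluations yields six linear congruences modulo $4$ on the coefficients,
\begin{align*}
	& c_0 \equiv c_d \equiv 2 \pmod 4, \quad c_0 + 2c_1 \equiv c_d + 2c_{d-1} \equiv 2 \pmod 4,\\
	& \textstyle\sum_{i=0}^d c_i \equiv \sum_{i=0}^d (-1)^i c_i \equiv 2 \pmod 4,
\end{align*}
and I would define $S$ to be the lift to $\Z_2^{d+1}$ of the subset of $(\Z/4)^{d+1}$ satisfying all six.

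Computing $\mu_2(S)$ is then a bookkeeping exercise. Fixing $c_0 = c_d = 2$ reduces the $[2{:}1]$ and $[1{:}2]$ conditions to $c_1, c_{d-1} \in \{0,2\}$, giving $4$ combinations. The sum and alternating-sum conditions then become two linear constraints on $(c_2, \ldots, c_{d-2}) \in (\Z/4)^{d-3}$; the map $\Phi \colon (c_2, \ldots, c_{d-2}) \mapsto (\sum c_i, \sum (-1)^i c_i) \in (\Z/4)^2$ has image the index-$2$ subgroup $\{(A,B) : A \equiv B \pmod 2\}$, and the prescribed targets always lie in this image. Each target therefore has $4^{d-3}/8 = 2^{2d-9}$ preimages, so $\mu_2(S) = 4 \cdot 2^{2d-9}/4^{d+1} = 1/2^9$, giving $\rho_{m,d}(2) \leq 1 - 1/2^9$, which already implies the stated bound.

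The main technical obstacle is the linear algebra over $\Z/4$: because $\Z/4$ is not a field, the map $\Phi$ above does not have full image, so I would need to split the indices of $(c_2, \ldots, c_{d-2})$ by parity, study the partial sums $E = \sum_{\text{even } i} c_i$ and $O = \sum_{\text{odd } i} c_i$ (over $2 \leq i \leq d-2$), and determine which residues $(E, O) \in (\Z/4)^2$ are compatible with both congruences. The hypothesis $d \geq 6$ ensures that each parity class contains at least one unconstrained coordinate, which is what makes the image computation and preimage count go through cleanly; the $1/2^{d+4}$ slack in the statement is then absorbed with room to spare.
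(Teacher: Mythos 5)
Your proposal is correct and is in essence the paper's own construction: the six mod-$4$ congruences you impose cut out (up to the stratum $\overline{f} \equiv 0 \pmod 2$) exactly the paper's family $\overline{f} = h\,x^2(x+z)^2z^2$ with $c_0 \equiv c_d \equiv \sum_i c_i \equiv 2 \pmod 4$, and your insolubility mechanism --- the value of $f$ at any primitive $(x,z) \in \Z_2^2$ has $2$-adic valuation $1$, which cannot equal $m\,v_2(y)$ --- is the same valuation obstruction the paper exploits at its three $\F_2$-points. The only real difference is packaging: by phrasing the condition as $f(X,Z) \equiv 2 \pmod 4$ for all primitive $(X,Z)$ you also absorb the curves with $f \equiv 0 \pmod 2$, so your count $\mu_2(S) = 2^{-9}$ yields the marginally sharper bound $\rho_{m,d}(2) \leq 1 - 2^{-9}$, which implies the stated inequality.
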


\begin{proof}
	
	Consider the degree $d$ forms $f(x,z)$ which reduce modulo 2 as
	\begin{equation}\label{eq:p=2_lots_double_roots}
		\overline{f}(x,z) = h(x,z)x^2(x+z)^2z^2
	\end{equation}
	for $h(x,z) \in \F_2[x,z]$ a nonzero form of degree $d-6$. There are $2^{d+1-6}-1$ such forms, so the probability that $f(x,z)$ reduces this way is $\frac{1}{2^6} - \frac{1}{2^{d+1}}$.
	
	We now claim that if $f(x,z)$ satisfies \eqref{eq:p=2_lots_double_roots} then the probability of $y^m = f(x,z)$ having no $\Q_2$-solutions is at least $\frac{1}{8}$. Consider first a solution $[x:y:1] \equiv [0:0:1] \pmod{2}$, i.e.\ $x,y \in 2\Z_2$. A necessary condition for such a solution to exist is for $2^2 \mid c_0$, which occurs with probability $\frac{1}{2}$. The same argument shows that $2^2 \mid c_d$ and $2^2 \mid \sum_{i=0}^d c_i$ are necessary for the $\F_2$-solutions $[1:0:1]$ and $[1:0:0]$ to lift to $\Q_2$-solutions, each occuring with probability 1/2. Thus the chance of none of these necessary conditions being met is $1/8$.
	
	Combining this with our earlier calculation, we may compute a lower bound for the probability of $f(x,z)$ for which $C_f$ has no $\Q_2$-points, hence
	\[1 - \rho_{m,d}(2) \geq \frac{1}{8}\left(\frac{1}{2^6} - \frac{1}{2^{d+1}}\right) = \frac{1}{2^9} - \frac{1}{2^{d+4}}.\]
	Rearranging the inequality gives the desired result.
\end{proof}
	
Taking limits as $d \to \infty$, we obtain upper bounds for the limiting behavior of $\rho_{m,d}$, complementing Corollary \ref{cor:liminf_uniform}.

\begin{corollary}\label{cor:limsup_uniform}
	Fix an integer $m \geq 2$. Then 
	\[\limsup_{d \to \infty} \rho_{m,d} \leq 1- \frac{1}{2^9} \approx 0.99804.\]
\end{corollary}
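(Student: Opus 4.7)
The plan is to recognize that Corollary \ref{cor:limsup_uniform} follows almost immediately from combining the product formula for the adelic density (Theorem \ref{thm:pos_prop}) with the upper bound for the local density at $p = 2$ (Lemma \ref{lem:upper_bound_p=2}). Since $\rho_{m,d}$ is a probability, so is each local factor $\rho_{m,d}(p)$ and $\rho_{m,d}(\infty)$, all of which lie in $[0,1]$.

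First I would write
\[\rho_{m,d} \;=\; \rho_{m,d}(\infty) \prod_{p} \rho_{m,d}(p)\]
using Theorem \ref{thm:pos_prop}. Dropping every factor except the one at $p=2$ and bounding the rest by $1$ yields the crude but sufficient inequality $\rho_{m,d} \leq \rho_{m,d}(2)$. Then I would invoke Lemma \ref{lem:upper_bound_p=2}, which for $d \geq 6$ divisible by $m$ gives
\[\rho_{m,d} \;\leq\; \rho_{m,d}(2) \;\leq\; 1 - \frac{1}{2^9} + \frac{1}{2^{d+4}}.\]

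Finally, I would take the limit superior as $d \to \infty$ through multiples of $m$. The error term $2^{-(d+4)}$ goes to zero, so
\[\limsup_{d \to \infty} \rho_{m,d} \;\leq\; 1 - \frac{1}{2^9},\]
completing the argument. There is no real obstacle here; the content of the corollary is entirely packaged into Lemma \ref{lem:upper_bound_p=2}, where the genuine work occurs (producing many $f$ for which the reduction modulo $2$ has a triple double root and independently forcing the necessary congruences $4 \mid c_0$, $4 \mid c_d$, $4 \mid \sum c_i$ to fail). The only thing one should be a little careful about is confirming that the hypotheses of Theorem \ref{thm:pos_prop} and Lemma \ref{lem:upper_bound_p=2} are both in force, namely $m \geq 2$, $m \mid d$, $(m,d) \neq (2,2)$, and $d \geq 6$ — all of which hold eventually as $d \to \infty$ and so do not affect the limsup.
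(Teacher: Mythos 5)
Your proposal is correct and matches the paper's own argument: the paper likewise observes $\rho_{m,d} \leq \rho_{m,d}(2)$ (there called a trivial observation, which also follows from the product formula as you note), applies Lemma \ref{lem:upper_bound_p=2}, and lets the $2^{-(d+4)}$ term vanish as $d \to \infty$. Your additional check that the hypotheses $(m,d) \neq (2,2)$ and $d \geq 6$ hold for all sufficiently large $d$ is a harmless refinement of the same proof.
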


\begin{proof}
	We make the trivial observation that $\rho_{m,d} \leq \rho_{m,d}(2)$ and apply Lemma \ref{lem:upper_bound_p=2}. Since this bound is uniform in $d$ (and in fact in $m$) we can take the limit as $d \to \infty$.
\end{proof}

We conclude this section by turning our attention to primes $p$ more generally. Following the convention established in Appendix \ref{appx:counting_forms}, let $N_{d,0}$ denote the number of binary degree $d$ forms in $\F[x,z]$ up to scaling which have no roots. Let $N_{d,\irr}$ denote the number of irreducible such forms. We have the trivial observation that $N_{d,\irr} \leq N_{d,0}$. In the following lemma only, we use $\mu$ to denote the usual M\"obius function.

\begin{lemma}\label{lem:upper_bound}
	Fix positive integers $m \geq 2$ and $d$ divisible by $m$ such that $(m,d) \neq (2,2)$. Then
	\[\rho_{m,d}(p) \leq 1 - \frac{p-1}{p^{2d+2} - p^{d+1}} N_{d,0} \leq 1 - \frac{p-1}{d\left(p^{2d+2} - p^{d+1}\right)} \sum_{e \mid d} \mu \left(\frac{d}{e} \right) p^e.\]
\end{lemma}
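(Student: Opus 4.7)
Plan: I aim to exhibit an explicit family of insoluble $f \in \Z_p[x,z]_d$ whose Haar measure matches the claimed lower bound. For each integer $j \geq 1$ and each nonzero form $\bar g \in \F_p[x,z]_d$ of degree $d$ without roots in $\P^1(\F_p)$, set
\[
A_{j, \bar g} = \left\{ f \in \Z_p[x,z]_d : v_p(f) = j,\ \overline{f/p^j} = \bar g \right\},
\]
where $v_p(f)$ is the minimum $p$-adic valuation of the coefficients of $f$. The sets $A_{j, \bar g}$ are pairwise disjoint. Parametrizing $f = p^j(\tilde g + ph)$ for a fixed lift $\tilde g \in \Z_p[x,z]_d$ of $\bar g$ and $h \in \Z_p[x,z]_d$ arbitrary, a direct Haar measure calculation gives $\mu_p(A_{j, \bar g}) = p^{-(j+1)(d+1)}$. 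Writing $M_{d,0} = (p-1) N_{d,0}$ for the number of unscaled degree-$d$ forms in $\F_p[x,z]$ with no $\F_p$-roots, and summing the geometric series over $j \geq 1$, the total measure is
\[
\sum_{j \geq 1} \frac{M_{d,0}}{p^{(j+1)(d+1)}} = \frac{M_{d,0}}{p^{2d+2} - p^{d+1}} = \frac{(p-1) N_{d,0}}{p^{2d+2} - p^{d+1}},
\]
which is precisely the first claimed bound.

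The core step is verifying that every such $f$ is locally insoluble. Any hypothetical $\Q_p$-point $[x_0 : y_0 : z_0]$ on $C_f$ can be rescaled in the weighted projective space $\P(1, d/m, 1)$ via $[x:y:z] \sim [\lambda x : \lambda^{d/m} y : \lambda z]$ to ensure $(x_0, z_0) \in \Z_p^2$ with $\max(|x_0|, |z_0|) = 1$. Since $\bar g$ has no $\F_p$-root, $g(x_0, z_0) \in \Z_p^\times$ for $g = f/p^j$, so $v_p(f(x_0, z_0)) = j$ exactly. The relation $y_0^m = f(x_0, z_0)$ then forces $m \mid j$, because $v_p(y_0^m) = m \cdot v_p(y_0) \in m\Z$. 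For $m \nmid j$ this is an immediate contradiction, so $C_f(\Q_p) = \emptyset$. When $m \mid j$, the substitution $y \mapsto p^{j/m} y$ identifies $C_f$ with $C_g$, and solubility reduces to whether $\bar g$ takes some $m$-th-power value on $\P^1(\F_p)$; the main technical obstacle will be handling this residual case, which I plan to address by partitioning the forms $\bar g$ according to their coset in $\F_p^\times / (\F_p^\times)^m$ and exploiting the $\F_p^\times$-scaling action so that every scaling class contributes to the insoluble count, absorbing any small discrepancy into comparable terms.

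For the second inequality, I will use $N_{d,0} \geq N_{d, \irr}$, since any irreducible binary form of degree $d \geq 2$ over $\F_p$ has no $\F_p$-rational root (such a root would split off a linear factor) and its $x^d$-coefficient is nonzero (else $z$ would divide it). Thus irreducible degree-$d$ forms up to scaling correspond bijectively with monic irreducible polynomials of degree $d$ in $\F_p[x]$, counted by Gauss's formula as $\frac{1}{d} \sum_{e \mid d} \mu(d/e) p^e$; substituting this into the first inequality gives the second.
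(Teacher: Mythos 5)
Your measure bookkeeping is correct: $\mu_p(A_{j,\bar g}) = p^{-(j+1)(d+1)}$, the count $(p-1)N_{d,0}$ of nonzero rootless reductions, and the geometric series over $j\geq 1$ do produce exactly $\frac{(p-1)N_{d,0}}{p^{2d+2}-p^{d+1}}$, and your handling of the second inequality via monic irreducibles and Gauss's formula is the same as the paper's. The genuine gap is the insolubility claim on the strata with $m \mid j$, which you flag but do not resolve, and the fix you sketch cannot work. If $m \mid j$ and $p \nmid m$, then $C_f$ with $f = p^j g$ is soluble as soon as $\bar g$ takes a single value in $\left(\F_p^\times\right)^m$ on $\P^1(\F_p)$: the substitution $y \mapsto p^{j/m}y$ reduces to $y^m = g$, and Hensel's lemma lifts any unit $m$-th power value. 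In the worst case $\gcd(m,p-1)=1$ (e.g.\ $m=3$, $p \equiv 2 \pmod 3$) every element of $\F_p^\times$ is an $m$-th power, so \emph{every} rootless $\bar g$ takes $m$-th power values and \emph{every} curve in $A_{j,\bar g}$ with $m \mid j$ has a $\Q_p$-point. Partitioning by cosets of $\left(\F_p^\times\right)^m$ and using the scaling action gives you nothing in that case, since the coset group is trivial; there is no ``small discrepancy'' to absorb inside your family. The entire contribution $\sum_{m \mid j} (p-1)N_{d,0}\,p^{-(j+1)(d+1)}$ must instead be replaced by insoluble measure found outside your family (for instance, forms whose first nonzero reduction, at a level $j$ with $m \nmid j$, has only multiple roots that fail to lift), and estimating that is a genuinely different argument which your proposal does not supply.

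For comparison, the paper's proof is a one-step version of your stratification: it conditions on $\bar f \equiv 0$, notes that a $\Q_p$-point forces $p \mid y$ and hence $p^2 \mid f(x,z)$, so $[x:z]$ must reduce to a root of $\tfrac1p f$, and then counts the rootless reductions of $\tfrac1p f$ per scaling class ($N_{d,0}$ out of $p^d + \cdots + 1$), which is where the denominator $p^{2d+2}-p^{d+1}$ comes from; it never introduces the higher strata and so never meets the $m \mid j$ issue. Note that if you restrict your union to $j=1$ (or to all $j$ with $m \nmid j$) you do get a rigorous bound of the same asymptotic quality, e.g.\ $1-\rho_{m,d}(p) \geq (p-1)N_{d,0}/p^{2d+2}$, which already suffices for the sharpness comparison at the end of Section 5; but to reach the precise constant in the lemma you must either produce the replacement insoluble measure described above or argue as the paper does, and as written your proposal does neither.
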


\begin{proof}
	Suppose $\overline{f}(x,z) = 0$, which occurs with probability $\frac{1}{p^{d+1}}$. Then for any point on $C_f$, we must have $p \mid y$. Since $m \geq 2$, this implies $p^2 \mid f(x,z)$, or equivalently that $[x:z]$ is a root of $\frac{1}{p}f(x,z)$. If $\frac{1}{p}f(x,z)$, viewed as a binary form of degree $d$ up to scaling over $\F_p$, has no roots, then $C_f$ is insoluble. This proves the first inequality, since we have shown
	\[1- \rho_{m,d}(p) \geq \frac{1}{p^{d+1}} \left(\frac{N_{d,0}}{p^d+p^{d-1} + \cdots + p + 1}\right) = \frac{1}{p^{d+1}} \left( \frac{(p-1)N_{d,0}}{p^{d+1}-1}\right).\]
	
	For the second inequality, we use the observation that $N_{d,\irr} \leq N_{d,0}$. We also have that, up to scaling, we may assume that an irreducible degree $d$ form is monic. A standard result in elementary number theory shows
	\[N_{d, \irr} = \frac{1}{d} \sum_{e \mid d} \mu \left(\frac{d}{e}\right) p^e,\]
	see e.g.\ \cite[\S 7.2, Corollary 2]{IrelandRosen}, and this is sufficient to yield the second inequality.
\end{proof}

While Lemma \ref{lem:upper_bound} is not suitable for giving nontrivial upper estimates for $\limsup_{d \to \infty} \rho_{m,d}$, it is sufficient to allow us to conclude $\rho_{m,d}(p) < 1$ for all primes $p$. In the case where $(m,d)=(3,6)$ and $p \equiv 2 \pmod{3}$, Lemma \ref{lem:form_counts} states that $N_{6,0} \sim \frac{53}{144}p^6$; thus the bound in Lemma \ref{lem:upper_bound} asymptotically becomes
\[1 - \rho_{3,6}(p) \gg \frac{53}{144}p^{-7} ,\]
which is seen to be sharp by Theorem \ref{thm:exact_rho_3_6}.

\section{An exact formula for the \texorpdfstring{$m=3$}{m=3} and \texorpdfstring{$d=6$}{d=6} case}
\label{sec:m=3_d=6}

The goal of this section is to prove Theorem \ref{thm:exact_rho_3_6}, giving an exact formula for $\rho_{m,d}(p)$ when $m=3$ and $d=6$. As in Section \ref{sec:lower_bounds}, the idea is to study when $C$ has solutions modulo $p^n$ which may be lifted via Hensel's lemma. Here however, we must be more careful in dealing with the case that these points may not be smooth. Some of the strategies we employ resemble those of \cite{BCF_genus_one} for genus one curves, but here there are far more cases to check. We also comment that this strategy to achieve exact density formulas may not generalize well to cases of larger $m$ (see Remark \ref{rem:lifting_double_roots}).

We lay out the idea of the argument below in \S \ref{sec:exact_setup}, detailing the five cases of interest. In \S \ref{sec:exact_geom}, we give geometric arguments to deal with three (easier) cases. \S \ref{sec:exact_intermediate} contains intermediate results which will be used multiple times thereafter, giving a flavor for the type of argument to compute exact local densities. The final two cases are then handled in \S \ref{sec:exact_conjugate} and \S \ref{sec:exact_triple}, culminating in the proof of Theorem \ref{thm:exact_rho_3_6}  in \S \ref{sec:exact_proof_rho}.

\subsection{Setup}\label{sec:exact_setup}

Let $m=3$, $d=6$, and $F$ the defining polynomial of $C_f$ for $f(x,z)$ a binary sextic form,
\begin{equation}\label{eq:sec_homog}
	F = y^3 - f(x,z) = y^3 - c_6x^6 - c_5x^5z - c_4x^4z^2 - c_3x^3z^3 - c_2x^2z^4 - c_1xz^5 - c_0z^6.
\end{equation}
Let $\overline{F} \in \F_p[x,y,z]$ denote the image under the reduction modulo $p$ map. As we will see in Lemma \ref{lem:factorization_counts} there are five possible ways that $\overline{F}$ could factor in $\overline{\F_p}[x,z][y]$:
\begin{enumerate}[label = \arabic*.]
	\item $\overline{F}$ is absolutely irreducible;
	\item $\overline{F}$ has three distinct linear factors over $\F_p$, i.e.\ $\overline{F} = \prod_{i=1}^3 (y - h_i(x,z))$ for binary quadratic forms $h_i(x,z) \in \F_p[x,z]$;
	\item $\overline{F}$ has a linear factor over $\F_p$ and a pair of conjugate factors over $\F_{p^2}$, i.e.\ $\overline{F} = (y - h(x,z))(y - g_1(x,z))(y - g_2(x,z))$ for binary quadratic forms $h(x,z) \in \F_p[x,z]$, and a conjugate pair $g_1, g_2 \in \F_{p^2}[x,z]$;
	\item $\overline{F}$ has three conjugate factors over $\F_{p^3}$, $\overline{F} = \prod_{i=1}^3 (y - g_i(x,z))$;
	\item $\overline{F}$ has a triple root, $\overline{F} = (y - h(x,z))^3$ where $h(x,z) \in \F_p[x,z]$.
\end{enumerate}

\begin{remark}
	More generally, one could study factorization of weighted homogeneous polynomials
	\[y^3 + h(x,z)y^2 + g(x,z)y + f(x,z),\]
	similar to the generalized binary quartics of \cite{BCF_genus_one}, and obtain more diverse factorization types. Since we are interested in superelliptic curves, we will stick to the five factorization types above.
\end{remark}

We define an auxiliary condition $(*)$, which is satisfied by $F$ if and only if $\overline{c_6} \notin \F_p^3$. Equivalently, we have
\begin{equation}
\label{eq:condition_star}
	F \text{ satisfies } (*) \iff y^3 - c_6 \text{ is irreducible in } \F_p[y].
\end{equation}
If $F$ satisfies $(*)$ then $C_f$ has no point at infinity modulo $p$, as $\overline{F}(1,y,0) = 0$ has no solutions. It is analogous to the condition $(*)$ as defined in \cite{BCF_genus_one} and plays a similar role, making appearances in \S \ref{sec:exact_conjugate} and \S \ref{sec:exact_triple}. We denote by $\rho^*(p)$ the local density of curves for which $F$ satisfies condition $(*)$. 

The first thing we need to know is how often each of the factorization types 1 --- 5 appear for $\overline{F}$. This is computed below for all $\overline{F}$ and for those satisfying condition $(*)$ in Lemma \ref{lem:factorization_counts}.

\begin{lemma}\label{lem:factorization_counts}
	Let $\overline{F}$ correspond to the reduction of a superelliptic curve of the form \eqref{eq:sec_homog}. The table below indicates the frequencies for which each factorization type 1 -- 5 appear, as (the reductions of) $c_6, \ldots c_0$ range from $0$ to $p-1$.
\begin{center}
\begin{tabular}{|l || l | l | l |}
	\hline Factorization type & $p = 3$ & $p \equiv 1 \pmod{3}$ & $p \equiv 2 \pmod{3}$ \\ \hline  
	1. Abs. irr. & 2160& $p^3(p^4 - 1)$ & $p^3(p^4 - 1)$\\
	2. 3 distinct linear over $\F_p$ & 0 & $\frac{1}{3}(p^3 - 1)$ & 0\\
	3. Linear + conj. & 0 & 0 & $p^3 - 1$\\
	4. 3 conjugate factors & 0 & $\frac{2}{3}(p^3 -1)$ & 0\\
	5. Triple factor & 27 & 1 & 1\\ \hline
	Total & $3^7$ & $p^7$ & $p^7$ \\ \hline
\end{tabular}
\end{center}
The following table lists the analogous counts of factorization types when condition $(*)$ is satisfied.
\begin{center}
\begin{tabular}{|l || l | l | l |}
	\hline Factorization type & $p = 3$ & $p \equiv 1 \pmod{3}$ & $p \equiv 2 \pmod{3}$ \\ \hline  
	1. Abs. irr. & 0 & $\frac{2}{3}p^2(p-1)(p^4 - 1)$ & 0\\
	2. 3 distinct linear over $\F_p$ & 0 & 0 & 0\\
	3. Linear + conj. & 0 & 0 & 0\\
	4. 3 conjugate factors & 0 & $\frac{2}{3}p^2(p -1)$ & 0\\
	5. Triple factor & 0 & 0 & 0\\ \hline
	Total & 0 & $\frac{2}{3}p^6(p-1)$ & 0 \\ \hline
\end{tabular}
\end{center}
\end{lemma}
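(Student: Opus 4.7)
My plan is to derive both tables by analyzing the factorization of $y^3 - \overline{f}(x,z)$ in $\overline{\F_p}[x,z][y]$. Since $\overline{\F_p}[x,z]$ is a UFD and $y^3 - \overline{f}$ has degree $3$ in $y$, it is reducible exactly when $\overline{f}$ admits a cube root $h \in \overline{\F_p}[x,z]$; in that case $\overline{F} = (y-h)(y-\omega h)(y-\omega^2 h)$ for $\omega$ a primitive cube root of unity. Thus Type~1 consists of those $\overline{f}$ that are not cubes in $\overline{\F_p}[x,z]$, and the other four types are distinguished by the smallest extension of $\F_p$ containing such a cube root. A preliminary Galois-theoretic observation shows any cube root lies in $\F_{p^3}[x,z]$: the absolute Galois group of $\F_p$ permutes $\{h,\omega h,\omega^2 h\}$, so $\Frob^3$ fixes $h$.

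The case $p=3$ is immediate: cubing is the additive Frobenius, so $\overline{f}$ is a cube precisely when $\overline{c_1} = \overline{c_2} = \overline{c_4} = \overline{c_5} = 0$. There are $3^3 = 27$ such forms, each of the form $(y-h)^3$ (Type~5), and the remaining $3^7 - 27 = 2160$ are Type~1. For $p \equiv 2 \pmod 3$, where cubing is a bijection on $\F_p$, I would inductively solve for the coefficients of $h = ax^2 + bxz + cz^2$ from $\overline{f} = h^3$: the coefficient $a$ is the unique $\F_p$-cube root of $c_6$, then $b = c_5/(3a^2)$, and so on. This gives a bijection between $\F_p[x,z]_2$ and the cubes in $\F_p[x,z]_6$, producing $p^3$ cubes, of which $p^3-1$ give Type~3 (since $\omega h \in \F_{p^2}\setminus\F_p$ for $h \neq 0$), $h=0$ gives Type~5, and the rest are Type~1.

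The case $p \equiv 1 \pmod 3$ is the main obstacle. Cubing is $3$-to-$1$ on $\F_p[x,z]_2 \setminus \{0\}$, so cube roots in $\F_p[x,z]_2$ contribute $(p^3-1)/3$ forms of Type~2 and one of Type~5. For Type~4, I would study cube roots in $\F_{p^3}[x,z]_2 \setminus \F_p[x,z]_2$: Galois-invariance of $h^3$ forces $h^{(p)} = \omega^k h$ for some $k \in \{0,1,2\}$, so I define $S_k \subseteq \F_{p^3}[x,z]_2$ to be the $\F_p$-subspace of such $h$. Each coefficient $a$ of $h$ independently satisfies $a^{(p)} = \omega^k a$, and the key technical step is to show that this has exactly $p$ solutions in $\F_{p^3}$. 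This follows from Hilbert~90 (the map $a \mapsto a^{(p)}/a$ surjects onto the norm-one subgroup of $\F_{p^3}^\times$, which contains $\omega$ precisely when $3 \mid p^2+p+1$, with fiber size $p-1$), or directly by writing $a = \alpha^j$ for $\alpha$ a generator of $\F_{p^3}^\times$ and solving the resulting linear congruence modulo $p^3-1$. Thus $|S_k| = p^3$; the $S_k$ pairwise intersect in $\{0\}$; and the free $\mu_3$-action on $\bigcup_k S_k \setminus \{0\}$ quotients to $(3p^3-3)/3 = p^3-1$ nonzero cubes. Subtracting Type~2 gives $(2/3)(p^3-1)$ Type~4 forms, and Type~1 equals $p^7 - p^3$.

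For the second table, condition $(*)$ is vacuous when $p=3$ or $p \equiv 2 \pmod 3$ because $\F_p^3 = \F_p$ in both cases. For $p \equiv 1 \pmod 3$, Types~2 and~5 have $c_6 = a^3$ with $a \in \F_p$ and are thus excluded. For Type~4 with $c_6 \neq 0$, the leading coefficient $a$ of $h$ must itself be nonzero and in $\F_{p^3} \setminus \F_p$ (else $a^{(p)} = \omega^k a$ with $k \neq 0$ forces $a = 0$); if $c_6 = a^3$ were also $d^3$ for some $d \in \F_p$, then $a = \omega^k d \in \F_p$, a contradiction, so $(*)$ is automatic. Restricting the $S_k$ counts to nonzero leading coefficient gives $(p-1)p^2$ for each $k \in \{1,2\}$, which after quotienting by $\mu_3$ yields $2(p-1)p^2/3$ Type~4 forms satisfying $(*)$. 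Subtracting from the total $(2(p-1)/3) \cdot p^6 = (2/3)p^6(p-1)$ of forms satisfying $(*)$ produces the Type~1 count of $(2/3)p^2(p-1)(p^4-1)$, completing the verification of both tables.
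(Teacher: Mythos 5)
Your proposal is correct and reproduces every entry of both tables, but in the main case $p \equiv 1 \pmod 3$ it takes a genuinely different route from the paper. The paper funnels everything through its Lemma \ref{lem:m-th_power_lemma}: if $\overline{F}$ is reducible then $\overline{f} = ah^3$ with $a \in \F_p$ and $h \in \F_p[x,z]$, so all counting happens over $\F_p$ --- there are $(p-1)(p^2+p+1) = p^3-1$ nonzero reducible forms, sorted into types 2 and 4 according to whether $a$ is a cubic residue, and the $(*)$-table is obtained by additionally requiring the $x^2$-coefficient of $h$ to be nonzero. You instead count the cube roots themselves inside $\F_{p^3}[x,z]$, decomposing them into the Frobenius eigenspaces $S_k = \{h \colon \Frob(h) = \omega^k h\}$, computing $|S_k| = p^3$ coefficientwise via Hilbert 90 (equivalently, cyclicity of $\F_{p^3}^\times$ and $3 \mid p^2+p+1$), and then dividing by the free $\mu_3$-action; types 2 and 4 then occur in ratio $1:2$ because one eigenspace is rational and two are not, and the $(*)$-restriction becomes the condition that the leading coefficient of $h$ be nonzero. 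Your route is more structural but works over $\F_{p^3}$; the paper's is more elementary and reuses a lemma it needs anyway. Your treatments of $p=3$, of $p \equiv 2 \pmod 3$ (where your coefficientwise induction is essentially the paper's Lemma \ref{lem:m-th_power_lemma} argument redone --- note you should start the induction at the first nonzero coefficient to cover $c_6 = 0$), and of the vanishing of the $(*)$-table outside $p \equiv 1 \pmod 3$ all match the stated counts.

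One small inaccuracy worth fixing: your preliminary claim that every cube root lies in $\F_{p^3}[x,z]$ because ``$\Frob^3$ fixes $h$'' is false when $p \equiv 2 \pmod 3$; there the Frobenius permutation of $\{h, \omega h, \omega^2 h\}$ is a transposition, one root is rational and the other two lie in $\F_{p^2}[x,z]$ (which is exactly why type 3 is ``linear plus a conjugate pair over $\F_{p^2}$''). Since you never invoke this claim in the $p \equiv 2 \pmod 3$ analysis, and it is valid in the $p \equiv 1 \pmod 3$ case where you do use it, the error does not affect the proof, but the sentence should be corrected.
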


\begin{proof} 
    Assume for the moment that $p \neq 3$. If $\overline{F}$ is not absolutely irreducible over $\F_p$, then by Lemma \ref{lem:m-th_power_lemma} we must have $\overline{f}(x,z) = ah(x,z)^3$ for $a \in \F_p$ and $h \in \F_p[x,z]$. We may further assume that $h$ has leading term 1. Thus $\overline{F}$ factors as
    \[\overline{F} = (y - \alpha h(x,z))(y - \omega \alpha h(x,z))(y - \omega^2 \alpha h(x,z))\]
    where $\alpha^3 = a$ and $\omega$ is a primitive third root of unity defined possibly over $\F_{p^2}$. It is now clear that the five listed possibilities are the only possible factorizations of $\overline{F}$, and the only way to find a triple factor is if $h_0 = 0$, so $\overline{F} = y^3$.

	Assume now that $\overline{f} \neq 0$ and consider $p \equiv 1 \pmod{3}$. Then $\omega \in \F_p$, so $\overline{F}$ has either 3 linear factors over $\F_p$ or three conjugate factors over $\F_{p^3}$, depending on the value of $a$, putting us in type 2 or 4. We know that there are $\frac{p-1}{3}$ nonzero cubic residues and $\frac{2(p-1)}{3}$ nonresidues mod $p$, so we obtain the stated counts by recognizing that there are $p^2 + p + 1$ ways to choose $h$ for each nonzero value of $a$.
	
	Assume now that $p \equiv 2 \pmod{3}$. In this case, the cube map is an isomorphism of $\F_p^\times$, but $\omega \notin \F_p$, so we must be in type 3, because $a$ is always in $\F_p^\times$. Hence there are again $(p-1)(p^2 + p + 1) = p^3 - 1$ ways in which $\overline{F}$ is reducible, this time all landing in type 3.
	
	In either case, we have exhausted the possibilities for which $\overline{F}$ is (absolutely) reducible. Since there were $p^3$ of these in total, we are left with $p^7 - p^3$ occurences of type 1. 
	
	Condition $(*)$ can only be satisfied in the case that $p \equiv 1 \pmod{3}$, because otherwise there exists a root to $y^3 - c_6 \equiv 0 \pmod{p}$. Since there are $\frac{2}{3}(p-1)$ nonzero cubic nonresidues that could be the residue of $c_6$, and the other coefficients of $\overline{F}$ may be chosen freely, there are $\frac{2}{3}p^6(p-1)$ total choices of $\overline{F}$ satisfying $(*)$. These clearly cannot come from a triple factor or distinct linear factors over $\F_p$, so the only possibilities are types 1 or 4. Since we need the $x^2$ term of $h$ to be nonzero, there are only $\frac{2}{3}p^2(p-1)$ ways to factor in type 4 while satisfying condition $(*)$, and subtracting this from the total gives the frequency of type 1.
	
	Now we consider the case of $p=3$. Since $\overline{\F_3}$ is characteristic 3, if $\overline{F}$ is reducible, we have $\overline{F} = y^3 - h(x,z)^3 = (y - h(x,z))^3$. Thus there are $3^3 = 27$ ways to choose $h(x,z)$ in this case, all of which give rise to a triple factor. The remaining $3^7 - 3^3 = 2160$ choices of $\overline{F}$ must all be absolutely irreducible over $\F_3$.
\end{proof}

Let $\xi_i$ denote the density of the set of $f(x,z)$ for which $F = y^3 - f(x,z)$ has reduction $\overline{F}$ with factorization type $i$ for $1 \leq i \leq 5$. Similarly, let $\xi_i^*$ denote the density of $f(x,z)$ for which the associated $F$ also satisfies condition $(*)$. The counts of Lemma \ref{lem:factorization_counts} allow us to compute $\xi_i$ and $\xi_i^*$ directly.

\begin{corollary}\label{cor:factorization_probs}
	The densities $\xi_i$ are given by the table below.
	\begin{center}
\begin{tabular}{|l || l | l | l |}
	\hline $\xi_i$ & $p = 3$ & $p \equiv 1 \pmod{3}$ & $p \equiv 2 \pmod{3}$ \\ \hline
	$\xi_1$ & 80/81 & $1 - \frac{1}{p^4}$ & $1 - \frac{1}{p^4}$\\
	$\xi_2$ & 0& $\frac{1}{3p^7}(p^3 - 1)$ & 0\\
	$\xi_3$ & 0& 0 & $\frac{1}{p^7}(p^3 - 1)$\\
	$\xi_4$ &0& $\frac{2}{3p^7}(p^3 - 1)$ & 0\\
	$\xi_5$ & 1/81 & $\frac{1}{p^7}$ & $\frac{1}{p^7}$\\ \hline
\end{tabular}
\end{center}
If $p \equiv 1 \pmod{3}$ and $F$ satisfies condition $(*)$, the nonzero densities $\xi^*_i$ are
\[\xi^*_1 = 1 - \frac{1}{p^4}, \quad \quad \xi^*_4 = \frac{1}{p^4}.\]
\end{corollary}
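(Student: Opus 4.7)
The plan is essentially mechanical. Each density $\xi_i$ is the normalized Haar measure (with $\mu_p(\Z_p^7) = 1$) of the subset of coefficient tuples $(c_0, \ldots, c_6) \in \Z_p^7$ whose reduction modulo $p$ produces an $\overline{F}$ of factorization type $i$. Since factorization type depends only on the reductions $\overline{c_j}$, this subset is a union of cosets of $(p\Z_p)^7$ in $\Z_p^7$; by translation invariance of Haar measure, its measure is exactly the number of such cosets divided by $p^7$ (or $3^7$ in the case $p=3$). The coset count is the quantity tabulated in the first table of Lemma \ref{lem:factorization_counts}, so $\xi_i$ is obtained by dividing each entry of that table by the appropriate total.

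For the conditional densities $\xi_i^*$, I would first note that condition $(*)$ is itself a congruence condition modulo $p$, this time on the single coefficient $c_6$: namely, $\overline{c_6}$ must be a nonzero cubic nonresidue in $\F_p$. When $p=3$ or $p \equiv 2 \pmod{3}$, the cube map on $\F_p$ is surjective, so no such residue exists and every $\xi_i^*$ vanishes. When $p \equiv 1 \pmod{3}$, there are $\frac{2(p-1)}{3}$ admissible residues for $c_6$ while $c_0, \ldots, c_5$ remain free, giving $\frac{2}{3}p^6(p-1)$ admissible tuples. Dividing the (second) table counts of Lemma \ref{lem:factorization_counts} by this total produces $\xi_1^* = 1 - \tfrac{1}{p^4}$ and $\xi_4^* = \tfrac{1}{p^4}$, with the other $\xi_i^*$ equal to zero.

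The closing step is just arithmetic verification of each fraction, e.g.\ $p^3(p^4-1)/p^7 = 1 - 1/p^4$, $(p^3-1)/(3p^7)$ for type 2 when $p \equiv 1 \pmod 3$, and $\frac{2}{3}p^2(p-1)(p^4-1) \big/ \frac{2}{3}p^6(p-1) = 1 - 1/p^4$ for $\xi_1^*$. There is no substantive obstacle: the corollary is a direct translation of the lattice-point counts of Lemma \ref{lem:factorization_counts} into $p$-adic densities, and no finer $p$-adic information is needed because the factorization type of $\overline{F}$ is determined by the coefficients modulo $p$ alone.
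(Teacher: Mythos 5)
Your proposal is correct and follows essentially the same route as the paper: the corollary is obtained directly by dividing the counts in Lemma \ref{lem:factorization_counts} by the relevant totals ($p^7$, or $\frac{2}{3}p^6(p-1)$ in the $(*)$ case), with the translation invariance of $\mu_p$ justifying the passage from counts of residues modulo $p$ to $p$-adic densities, exactly as the paper sets up at the start of Section \ref{sec:lower_bounds}. Your arithmetic checks (e.g.\ $p^3(p^4-1)/p^7 = 1 - p^{-4}$ and $\frac{2}{3}p^2(p-1)(p^4-1)\big/\frac{2}{3}p^6(p-1) = 1 - p^{-4}$) match the stated values, so there is nothing to add.
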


Returning to local solubility, suppose the reduction of $F$ as given in \eqref{eq:sec_homog} has factorization type $i$ and let $\sigma_i$ denote the density of $F$ possessing a $\Q_p$ solution. Let $\sigma_i^*$ denote the density of $F$ with factorization type $i$ satisfying $(*)$ having a $\Q_p$-solution. Then we have
\begin{align}
	\label{eq:rho_sum} \rho(p) &= \sum_{i=1}^5 \xi_i \sigma_i, \\
	\label{eq:rho_star_sum} \rho^*(p) &= \xi_1^* \sigma_1^* + \xi_4^*\sigma_4^*.
\end{align}
Thus to obtain an exact formula for the local density $\rho(p)$, we may consider separately the local densities $\sigma_i$ with prescribed factorization types.

Along the way, we will also need to know the frequency of various factorization types for binary forms of degrees up to 6. In the proofs of Lemmas \ref{lem:sigma4_double_root}, \ref{lem:taus}, and \ref{lem:thetas} for instance, it is often useful to know the the proportion of such forms having types of roots. Namely, having no roots can be used to see that there are no points, while having a simple root implies the existence of a $\Q_p$-point by Hensel lifting arguments. We will also need to know how often roots have higher multiplicities to determine the exact probabilities.

More precisely, we will count nonzero degree $d$ forms over $\F_p$ in two ways: up to scaling by $\F_p^\times$, and monic forms. 

\begin{definition}[monic]
	A degree $d$ binary form $f(x,z)$ is \textbf{monic} if $f(1, 0) = 1$.
\end{definition}

It is straightforward to see that there are $p^d$ distinct monic degree $d$ forms $f(x,z)$ over $\F_p$, by writing
\[f(x,z) = \sum_{i=0}^d c_ix^i z^{d-i},\]
taking $c_d = 1$ and choosing $c_i$ freely for $0 \leq i < d$. We can then use this to determine that there are $p^d + p^{d-1} + \cdots + p + 1$ distinct degree $d$ forms up to scaling by a nonzero constant. If $c_d \neq 0$, then scaling $f$ by $\frac{1}{c_d}$ yields a monic form. More generally, there is a unique way --- up to scaling by a nonzero constant --- to write 
\[f(x,z) = z^k f_0(x,z),\]
where $f_0$ is a monic degree $d-k$ form for some $0 \leq k \leq d$. Counting the number of such forms gives the desired total.

Since $\F_p[x,z]$ is a unique factorization domain, there is a unique way to factor a degree $d$ form $f(x,z)$, considered up to scaling, into its factors, also considered up to scaling. Similarly, if $f(x,z)$ is monic, it can be factored uniquely into monic factors.

\begin{definition}[factorization type]
	Let $f(x,z)$ be a degree $d$ considered up to scaling (resp.\ monic) binary form with unique factorization
	\[f(x,z) = \prod_{i=1}^r f_i(x,z)^{a_i},\]
	where the irreducible factors $f_i(x,z)$ are also considered up to scaling (resp.\ monic). The \textbf{factorization type} of $f(x,z)$ is the data of the degrees $d_i = \deg f_i$ and multiplicities $a_i$ of the factors. This can be shortened to $(d_1^{a_1} d_2^{a_2} \cdots d_r^{a_r})$. 
	
	For convenience (and uniqueness of the type) we adopt a lexicographic ordering, that $d_i \leq d_{i+1}$ for all $i$ and $a_i \leq a_{i+1}$ if $d_i = d_{i+1}$. When $a_i = 1$, it is omitted. If $f(x,z)$ has a linear factor of multiplicity one, its factorization type takes the form $(1d_2^{a_2} \cdots d_r^{a_r})$. This case will be denoted more compactly as $(1*)$.
\end{definition}

The proportion of degree $d$ forms up to scaling (resp.\ degree $d$ monic forms) possessing certain factorization types, indexed by an integer $i$, is denoted $\eta_{d,i}$ (resp.\ $\eta_{d,i}'$ for monic forms). The following lemma is a direct consequence of Lemma \ref{lem:form_counts}, which can be found in Appendix \ref{appx:counting_forms}, and will be used repeatedly.

\begin{lemma}\label{lem:etas}
	The proportions $\eta_{d,i}$, $\eta_{d,i}'$ are given as follows, for $2 \leq d \leq 6$.
	\begin{center} \small \renewcommand{\arraystretch}{2}
	\begin{tabularx}{\textwidth}{| c | l | c | c |}
		\hline $\boldsymbol{d}$ & \textbf{Fact.\ type} & $\boldsymbol{\eta_{d,i}}$ & $\boldsymbol{\eta_{d,i}'}$ \\ \hline 
		\endhead
		
		\hline \endfoot
		
		\multirow{3}*{$2$} & 0. No roots & $\displaystyle \frac{{\left(p - 1\right)} p}{2  {\left(p^{2} + p + 1\right)}} $ & $\displaystyle  \frac{p - 1}{2  p} $ \\*
		 & 1. $\displaystyle (1*)$ & $\displaystyle  \frac{{\left(p + 1\right)} p}{2  {\left(p^{2} + p + 1\right)}} $ & $\displaystyle  \frac{p - 1}{2  p}$ \\*
		 & 2. $\displaystyle (1^2)$ & $\displaystyle  \frac{p + 1}{p^{2} + p + 1} $ & $\displaystyle \frac{1}{p}$ \\[1ex] \hline 
		 
		 \multirow{3}*{$3$} & 0. No roots & $\displaystyle  \frac{{\left(p - 1\right)} p}{3  {\left(p^{2} + 1\right)}} $ & $\displaystyle  \frac{{\left(p + 1\right)} {\left(p - 1\right)}}{3  p^{2}} $ \\*
		 & 1. $(1*)$ & $\displaystyle  \frac{{\left(2  p + 1\right)} p}{3  {\left(p^{2} + 1\right)}} $ & $\displaystyle  \frac{2  {\left(p + 1\right)} {\left(p - 1\right)}}{3  p^{2}} $ \\*
		 & 2. $(1^3)$ & $\displaystyle  \frac{1}{p^{2} + 1} $ & $\displaystyle  \frac{1}{p^{2}} $ \\[1ex] \hline 
		 
		 \multirow{5}*{$4$} & 0. No roots & $\displaystyle \frac{{\left(3  p^{2} + p + 2\right)} {\left(p - 1\right)} p}{8  {\left(p^{4} + p^{3} + p^{2} + p + 1\right)}} $ & $\displaystyle \frac{{\left(3  p^{2} + p + 2\right)} {\left(p - 1\right)}}{8  p^{3}}$ \\*
		 & 1. $(1*)$ & $\displaystyle \frac{{\left(5  p^{2} + p + 2\right)} {\left(p + 1\right)} p}{8  {\left(p^{4} + p^{3} + p^{2} + p + 1\right)}} $ & $\displaystyle \frac{{\left(5  p^{2} + 3  p + 2\right)} {\left(p - 1\right)}}{8  p^{3}}$ \\*
		 & 2. $(1^2 2)$ & $\displaystyle \frac{{\left(p + 1\right)} {\left(p - 1\right)} p}{2  {\left(p^{4} + p^{3} + p^{2} + p + 1\right)}}$ & $\displaystyle \frac{p - 1}{2  p^{2}}$ \\*
		 & 3. $(1^2 1^2)$ & $\displaystyle \frac{{\left(p + 1\right)} p}{2  {\left(p^{4} + p^{3} + p^{2} + p + 1\right)}}$ & $\displaystyle \frac{p - 1}{2  p^{3}} $ \\*
		 & 4. $(1^4)$ & $ $ $\displaystyle \frac{p + 1}{p^{4} + p^{3} + p^{2} + p + 1}$ & $\displaystyle \frac{1}{p^{3}}$ \\[1ex] \hline 
		 
		  \multirow{6}*{$5$} & 0. No roots & $\displaystyle \frac{{\left(11  p^{2} - 5  p + 6\right)} {\left(p - 1\right)} p}{30  {\left(p^{2} + p + 1\right)} {\left(p^{2} - p + 1\right)}}$ & $\displaystyle \frac{{\left(11  p^{2} - 5  p + 6\right)} {\left(p + 1\right)} {\left(p - 1\right)}}{30  p^{4}}$ \\*
		 & 1. $(1*)$ & $\displaystyle \frac{{\left(19  p^{3} + 6  p^{2} + 4  p + 1\right)} p}{30  {\left(p^{2} + p + 1\right)} {\left(p^{2} - p + 1\right)}}$ & $\displaystyle \frac{{\left(19  p^{3} + 14  p^{2} + 4  p - 6\right)} {\left(p - 1\right)}}{30  p^{4}} $ \\*
		 & 2. $(1^2 3)$ & $\displaystyle \frac{{\left(p + 1\right)} {\left(p - 1\right)} p}{3  {\left(p^{2} + p + 1\right)} {\left(p^{2} - p + 1\right)}}$ & $\displaystyle \frac{{\left(p + 1\right)} {\left(p - 1\right)}}{3  p^{3}}$ \\*
		 & 3. $(1^3 2)$ & $\displaystyle \frac{{\left(p - 1\right)} p}{2  {\left(p^{2} + p + 1\right)} {\left(p^{2} - p + 1\right)}}$ & $\displaystyle \frac{p - 1}{2  p^{3}}$ \\*
		 & 4. $(1^2 1^3)$ & $\displaystyle \frac{p}{{\left(p^{2} + p + 1\right)} {\left(p^{2} - p + 1\right)}}$ & $\displaystyle \frac{p - 1}{p^{4}}$ \\*
		 & 5. $(1^5)$ & $\displaystyle \frac{1}{{\left(p^{2} + p + 1\right)} {\left(p^{2} - p + 1\right)}}$ & $\displaystyle \frac{1}{p^{4}}$ \\[1ex] \hline 
		 
		 \multirow{10}*{$6$} & 0. No roots & $\displaystyle \frac{{\left(53  p^{4} + 26  p^{3} + 19  p^{2} - 2  p + 24\right)} {\left(p - 1\right)} p}{144  {\left(p^{6} + p^{5} + p^{4} + p^{3} + p^{2} + p + 1\right)}} $ & $\displaystyle \frac{{\left(53  p^{4} + 26  p^{3} + 19  p^{2} - 2  p + 24\right)} {\left(p - 1\right)}}{144  p^{5}} $ \\*
		 & 1. $(1*)$ & $\displaystyle \frac{{\left(91  p^{4} + 26  p^{3} + 23  p^{2} + 16  p - 12\right)} {\left(p + 1\right)} p}{144  {\left(p^{6} + p^{5} + p^{4} + p^{3} + p^{2} + p + 1\right)}}$ & $\displaystyle \frac{{\left(91  p^{3} - 27  p^{2} + 50  p - 48\right)} {\left(p + 1\right)} {\left(p - 1\right)}}{144  p^{5}}$ \\*
		 & 2. $(1^2 4)$ or $(1^2 2 2)$ & $\displaystyle \frac{{\left(3  p^{2} + p + 2\right)} {\left(p + 1\right)} {\left(p - 1\right)} p}{8  {\left(p^{6} + p^{5} + p^{4} + p^{3} + p^{2} + p + 1\right)}}$ & $\displaystyle \frac{{\left(3  p^{2} + p + 2\right)} {\left(p - 1\right)}}{8  p^{4}}$ \\*
		 & 3. $(1^2 1^2 2)$ & $\displaystyle \frac{{\left(p + 1\right)} {\left(p - 1\right)} p^{2}}{4  {\left(p^{6} + p^{5} + p^{4} + p^{3} + p^{2} + p + 1\right)}}$ & $\displaystyle \frac{{\left(p - 1\right)}^{2}}{4  p^{4}}$ \\*
		 & 4. $(1^2 1^2 1^2)$ & $\displaystyle \frac{{\left(p + 1\right)} {\left(p - 1\right)} p}{6  {\left(p^{6} + p^{5} + p^{4} + p^{3} + p^{2} + p + 1\right)}}$ & $\displaystyle \frac{{\left(p - 1\right)} {\left(p - 2\right)}}{6  p^{5}} $ \\*
		 & 5. $(1^3 3)$ & $\displaystyle \frac{{\left(p + 1\right)}^{2} {\left(p - 1\right)} p}{3  {\left(p^{6} + p^{5} + p^{4} + p^{3} + p^{2} + p + 1\right)}}$ & $\displaystyle \frac{{\left(p + 1\right)} {\left(p - 1\right)}}{3  p^{4}}$ \\*
		 & 6. $(1^31^3)$ & $\displaystyle \frac{{\left(p + 1\right)} p}{2  {\left(p^{6} + p^{5} + p^{4} + p^{3} + p^{2} + p + 1\right)}} $ & $\displaystyle \frac{p - 1}{2  p^{5}}$ \\*
		 & 7. $(1^4 2)$ & $\displaystyle \frac{{\left(p + 1\right)} {\left(p - 1\right)} p}{2  {\left(p^{6} + p^{5} + p^{4} + p^{3} + p^{2} + p + 1\right)}}$ & $\displaystyle \frac{p - 1}{2  p^{4}}$ \\*
		 & 8. $(1^2 1^4)$ & $\displaystyle \frac{{\left(p + 1\right)} p}{p^{6} + p^{5} + p^{4} + p^{3} + p^{2} + p + 1}$ & $\displaystyle \frac{p - 1}{p^{5}}$ \\*
		 & 9. $(1^6)$ & $\displaystyle \frac{p + 1}{p^{6} + p^{5} + p^{4} + p^{3} + p^{2} + p + 1}$ & $\displaystyle \frac{1}{p^{5}}$ \\[1ex] \hline
	\end{tabularx}
	\end{center}
\end{lemma}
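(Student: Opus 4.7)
The plan is to obtain each entry in the table by normalizing the corresponding count supplied by Lemma \ref{lem:form_counts}. There are exactly $p^d$ monic binary forms of degree $d$ over $\F_p$ (one for each choice of $c_0,\ldots,c_{d-1}$ with $c_d=1$), and since every nonzero degree $d$ form admits a unique decomposition $f = z^k f_0$ with $f_0$ monic of degree $d-k$ (up to scaling by $\F_p^\times$), the set of degree $d$ forms up to scaling has cardinality $\sum_{k=0}^{d} p^{d-k} = (p^{d+1}-1)/(p-1)$. Accordingly, $\eta'_{d,i}$ and $\eta_{d,i}$ are obtained by dividing the monic and up-to-scaling counts of type $i$ forms from Lemma \ref{lem:form_counts} by $p^d$ and $(p^{d+1}-1)/(p-1)$, respectively, and simplifying.

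To produce these counts from scratch I would use the standard formula $I_k = \tfrac{1}{k}\sum_{e\mid k}\mu(k/e)p^e$ for the number of monic degree $k$ irreducibles in $\F_p[t]$, and enumerate forms type by type via the dehomogenization $f(x,z) \mapsto f(x,1)$, with a factor of $z$ corresponding to a root at $[1:0]$. For instance, type $(1^2 4)$ in degree $6$ is counted by choosing a monic linear factor ($p$ choices) and a monic irreducible quartic ($I_4$ choices) to form $\ell^2 q$; type $(1^2 2\,2)$ is counted by choosing the doubled linear factor ($p$ choices) together with an unordered pair of distinct monic quadratic irreducibles ($\binom{I_2}{2}$ choices); and so on. To pass from monic to up-to-scaling counts one aggregates monic forms of each degree $d-k$ for $0 \le k \le d$, tracking how a factor of $z^k$ alters the factorization type (for example, $z^2 \cdot h$ with $h$ a separable monic cubic contributes to the degree $5$ type $(1^2 1^3)$ only when $h$ has three distinct $\F_p$-rational roots).

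The main obstacle is bookkeeping at degree $6$, where ten factorization types arise and the resulting rational functions carry denominators $p^6 + p^5 + \cdots + 1$. To maintain correctness I would implement the enumeration in a computer algebra system and use the consistency check $\sum_i \eta_{d,i} = \sum_i \eta'_{d,i} = 1$ for each $2 \le d \le 6$; for $d=2$, say, the three entries $\tfrac{(p-1)p}{2(p^2+p+1)} + \tfrac{(p+1)p}{2(p^2+p+1)} + \tfrac{p+1}{p^2+p+1}$ simplify to $1$, and analogous checks at higher $d$ flag any algebraic slip immediately. A further cross-check is that, in each row, the $(1*)$ entry equals $1$ minus the sum of the ``no roots'' entry and the entries whose linear factors all have multiplicity at least $2$, which pins down the $(1*)$ formula without direct enumeration.
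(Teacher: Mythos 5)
Your proposal is correct and is essentially the paper's proof: Lemma \ref{lem:etas} follows by dividing the counts $N_{d,i}$ and $N_{d,i}'$ of Lemma \ref{lem:form_counts} by the totals $\sum_{j=0}^{d} p^j$ (forms up to scaling) and $p^d$ (monic forms), exactly as in your first paragraph. One caution on your supplementary from-scratch enumeration: the row labeled ``$(1^2 4)$ or $(1^2 2\,2)$'' in fact counts all monic forms $\ell^2 q$ with $\ell$ linear and $q$ a rootless quartic, so the class $(1^2 2^2)$ (a squared irreducible quadratic) must be included as well --- omitting it drops $p \cdot I_2$ monic forms and does not recover the tabulated entry $\frac{(3p^2+p+2)(p-1)}{8p^4}$, although the sum-to-one check you propose would indeed flag the discrepancy.
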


\subsection{Geometric arguments: computing \texorpdfstring{$\sigma_1$, $\sigma_2$, and $\sigma_3$}{sigma1, sigma2, and sigma3}}\label{sec:exact_geom}

When $\overline{F}$ is absolutely irreducible, we can leverage the proof of Proposition \ref{prop:bound_p=1_large} to see that $\sigma_1 = 1$ when $p$ is sufficiently large.

\begin{proposition}\label{prop:sigma_1}
	Suppose $C_f$ is given by \eqref{eq:sec_homog} with $\overline{F}$ absolutely irreducible over $\F_p$. Then the reduction $\overline{C_f}$ has a smooth $\F_p$-point whenever 
	\begin{enumerate}[label = (\roman*)]
		\item $p \equiv 1 \pmod{3}$ and $p > 43$, or
		\item $p \equiv 2 \pmod{3}$ and $p > 2$.
	\end{enumerate}
	In particular, whenever (i) or (ii) above is satisfied we have $\sigma_1 = \sigma_1^* = 1$.
\end{proposition}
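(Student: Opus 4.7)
The plan is to exhibit a smooth $\F_p$-point on the reduction $\overline{C_f}$, which then lifts to a $\Q_p$-point via Hensel's lemma (Theorem~\ref{thm:hensel}), giving $\sigma_1 = 1$. Since condition $(*)$ only forbids $\F_p$-points above the point at infinity, and the smooth point I will construct can be chosen to be affine, the same argument will simultaneously yield $\sigma_1^* = 1$. The two cases require different strategies because the cube map on $\F_p$ behaves very differently depending on $p \bmod 3$.

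For case (ii), I would exploit the fact that for $p \equiv 2 \pmod{3}$ the cube map is an isomorphism of $\F_p$. Consequently, the fiber of the projection $\overline{C_f} \to \P^1_{\F_p}$ over every $[x_0 : z_0] \in \P^1(\F_p)$ contains exactly one $\F_p$-point, so $\#\overline{C_f}(\F_p) = p + 1$. A direct computation of partial derivatives of $F = y^3 - f(x,z)$ shows that any singular $\F_p$-point must have $y=0$ (using $p \neq 3$) and project to a multiple root of $\overline{f}$. Since $\overline{f}$ is a nonzero degree $6$ form (by absolute irreducibility of $\overline{F}$), it has at most three distinct multiple roots, giving $\#\overline{C_f}^{\sm}(\F_p) \geq p + 1 - 3 \geq 3$ for all primes $p \geq 5$.

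For case (i), I would follow the proof of Proposition~\ref{prop:bound_p=1_large} essentially verbatim, applying the improved Hasse--Weil bound \eqref{eq:hasse-weil_improvement} to the normalization $\widetilde{C_f}$. Since the generic genus is $g_0 = (m-1)(d-2)/2 = 4$ and singular contributions $\tfrac{1}{2} r_P(r_P-1)\lfloor 2\sqrt{p}\rfloor - r_P \geq 0$ for $r_P \geq 2$ cancel favorably, one obtains
\[
\#\overline{C_f}^{\sm}(\F_p) \geq p + 1 - 4\lfloor 2\sqrt{p}\rfloor.
\]
It then remains to check that this quantity is positive for every prime $p > 43$ with $p \equiv 1 \pmod{3}$. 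The smallest such prime is $p = 61$, where $62 - 4\cdot 15 = 2 > 0$, and the bound strengthens for all larger $p$; this is a short numerical verification.

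The sharpness of the threshold $p > 43$ is the main subtlety. One must use the \emph{improved} Hasse--Weil estimate with $\lfloor 2\sqrt{p}\rfloor$ rather than $2\sqrt{p}$, because the naive bound $p + 1 \geq 8\sqrt{p}$ fails at $p = 61$ (since $8\sqrt{61} \approx 62.48 > 62$). What makes the claim work is the arithmetic coincidence that the primes between $43$ and $61$ are all $\equiv 2 \pmod{3}$, so that skipping from $43$ to $61$ exactly bridges the regime where the floor improvement becomes decisive.
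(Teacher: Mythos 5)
Your proposal is correct and follows essentially the same route as the paper: for $p \equiv 1 \pmod{3}$ you run the normalization-plus-Hasse--Weil argument of Proposition \ref{prop:bound_p=1_large}, with the improved bound \eqref{eq:hasse-weil_improvement} supplying exactly the borderline prime $p=61$, just as the paper does. Your case (ii) phrasing (one point per fiber since cubing is a bijection, singular points only over the at most three multiple roots of $\overline{f}$) is a mild repackaging of the paper's appeal to the proof of Proposition \ref{prop:bound_pneq1}, and your observation that the argument is independent of condition $(*)$ correctly yields $\sigma_1^* = 1$.
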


\begin{proof}
	The curve $\overline{C_f}$ is cut out by $\overline{F}$. Note that the reduction of $f(x,z)$ is not a cube if $\overline{F}$ is absolutely irreducible. Taking $m=3$ in the proof of Proposition \ref{prop:bound_p=1_large}, we have that since $\overline{f}$ is not a cube, the reduction $\overline{C_f}$ is guaranteed to have a smooth $\F_p$-point when $p  > (m-1)^2(d-2)^2-1 = 63$. Furthermore, the improved Hasse--Weil bound \eqref{eq:hasse-weil_improvement} shows that $\overline{C_f}$ is guaranteed to have a smooth point when $p=61$. Hence in case (i) we have $\sigma_1 = 1$ and $\sigma_1^* = 1$ as well, since this argument is independent of the $(*)$ condition.
	
	When $p \equiv 2 \pmod{3}$, taking $d=6$ in the proof of Proposition \ref{prop:bound_pneq1} shows that for $p > 2$ such that $3 \nmid p$, there always exists an $\F_p$ solution of $\overline{F} = 0$ which is liftable by Hensel's lemma. Hence in case (ii) we have $\sigma_1 = 1$ as well.
\end{proof} 

When $\overline{F}$ has factorization types 2 or 3, which occur when $\overline{F}$ has at least one factor of the form $y - g(x,z)$ where $g$ is a (nonzero) binary quadratic form, we study the $\F_p$-points on the irreducible components of $\overline{C_f}$. See \cite[Proposition 2.6]{BCF_genus_one} for the analogous case for genus one curves.

\begin{proposition}\label{prop:sigma_2=1}
	Suppose $C_f$ is given by \eqref{eq:sec_homog} and $\overline{F}$ has factorization type 2 modulo $p$. Then $C_f$ has a $\Q_p$ point, or equivalently, $\sigma_2 = 1$. 
\end{proposition}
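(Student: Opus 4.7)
My plan is to exploit the very rigid shape that factorization type 2 forces on $\overline{f}$, via Lemma \ref{lem:m-th_power_lemma} and Lemma \ref{lem:factorization_counts}: the fact that $\overline{F} = \prod_{i=1}^3 (y - h_i(x,z))$ splits completely over $\F_p$ forces $\overline{f}(x,z) = a\, h(x,z)^3$ for some $a \in (\F_p^\times)^3$ and some nonzero binary quadratic form $h(x,z) \in \F_p[x,z]$, and hence $p \equiv 1 \pmod 3$. (If $h$ were zero we would land in type 5, and if $a$ were not a cube we would be in type 4.) So, writing $\alpha \in \F_p$ with $\alpha^3 = a$ and $\omega \in \F_p$ a primitive cube root of unity, the three components of the reduction $\overline{C_f} \subset \P_{\F_p}(1,2,1)$ are the $\F_p$-rational curves $Y_i \colon y = \omega^{i-1} \alpha\, h(x,z)$ for $i = 1,2,3$, each isomorphic to $\P^1$.

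The next step is to locate a smooth $\F_p$-point on $\overline{C_f}$. Two distinct components $Y_i, Y_j$ intersect precisely along the locus $h(x,z) = 0$, which consists of at most $2$ points of $\P^1(\F_p)$ since $h$ is a nonzero binary quadratic. Away from this locus the three components are disjoint, so for any $[x_0:z_0] \in \P^1(\F_p)$ with $h(x_0,z_0) \neq 0$ and any $i$, the point $P_i = [x_0 : \omega^{i-1}\alpha\, h(x_0,z_0) : z_0]$ lies on exactly one component. The Jacobian criterion with $\partial F/\partial y = 3y^2$ then gives smoothness at $P_i$, because $p \neq 3$ (as $p \equiv 1 \pmod 3$) and $y(P_i) \neq 0$. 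Since $p \geq 7$ in this regime, there are at least $3(p-1) > 0$ such smooth $\F_p$-points.

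Finally, I choose such a smooth point with $z_0 \neq 0$ (possible since $h(x,1)$ has at most two roots in $\F_p$ and $p \geq 7$), pass to the affine chart $z=1$ where $C_f$ is cut out by $y^3 - f(x,1)$, and apply Hensel's lemma (Theorem \ref{thm:hensel}) in the variable $y$: the partial $3y^2$ is a unit at the chosen point, so the $\F_p$-solution lifts to a $\Q_p$-point of $C_f$. This proves $\sigma_2 = 1$.

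The only real obstacle is the bookkeeping at the start: verifying that factorization type 2 truly forces $\overline{f} = a h^3$ with $h$ a \emph{nonzero} quadratic and $a$ a \emph{nonzero} cube in $\F_p^\times$, so that the three linear-in-$y$ components are genuinely distinct, defined over $\F_p$, and meet only in a small (codimension-one) locus. Once that structural fact is in place, the geometric argument and Hensel lift are immediate, and no case analysis on $p$ beyond $p \equiv 1 \pmod 3$ is needed.
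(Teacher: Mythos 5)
Your proof is correct and follows essentially the same route as the paper's: decompose $\overline{C_f}$ into its three $\F_p$-rational components $y = h_i(x,z)$, observe that the components meet in only finitely many points, and Hensel-lift a smooth $\F_p$-point in the variable $y$. The only difference is cosmetic: you exploit the explicit structure $h_i = \omega^{i-1}\alpha h$ to identify the intersection locus with $\{h = 0\}$ (at most two points of $\P^1(\F_p)$, yielding at least $3(p-1)$ smooth points), whereas the paper bounds each pairwise intersection by two roots of the quadratic $h_i - h_j$, yielding $3(p-3)$; both counts are positive for $p \equiv 1 \pmod 3$, so both arguments close in the same way.
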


\begin{proof} 
	For $p = 3$ and $p \equiv 2 \pmod{3}$ the result is vacuously true, since factorization type 2 does not occur. Thus we may assume $p \equiv 1 \pmod{3}$, and in particular $p > 3$.
	
	We have $\overline{F} = \prod_{i=1}^3 (y - h_i(x,z))$ for distinct binary quadratics $h_i$, so $\overline{C_f}$ is the union of $\overline{C_i} \colon y = h_i(x,z)$. Each $\overline{C_i}$ has $p+1$ points in $\F_p$, and each distinct $(i,j)$ pair has at most two intersection points. To see this, suppose $(\alpha, \beta, \gamma)$ is on $C_1$ and $C_2$, i.e.\ 
	\[h_1(\alpha, \gamma) = \beta = h_2(\alpha, \gamma).\]
	Thus $(\gamma x - \alpha z)$ is a linear factor of the binary quadratic $h_1 - h_2$, and there are at most two such factors.	
	
	With this in hand, we have a maximum of 6 total intersection points. This gives at least $3(p+1) - 2\cdot 6 = 3(p-3)$ smooth points, so whenever $p > 3$ we can lift one of these $\F_p$ points to a $\Q_p$ point on $C$, giving $\sigma_2 = 1$.
\end{proof}

\begin{proposition}\label{prop:sigma_3=1}
	Suppose $C_f$ is given by \eqref{eq:sec_homog} and $\overline{F}$ has factorization type 3 modulo $p$. Then $C_f$ has a $\Q_p$ point, or equivalently, $\sigma_3 = 1$. 
\end{proposition}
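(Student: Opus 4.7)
The plan is to mirror the strategy of Proposition~\ref{prop:sigma_2=1}: locate a smooth $\F_p$-point on the reduction $\overline{C_f}$ by examining its $\F_p$-rational component, then lift to a $\Q_p$-point via Hensel's lemma. By Lemma~\ref{lem:factorization_counts}, factorization type 3 occurs only for $p \equiv 2 \pmod{3}$, so I restrict to this case throughout.

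First I would isolate the $\F_p$-rational component $\overline{C_h} \colon y = h(x,z)$ of the reducible curve $\overline{C_f} = \overline{C_h} \cup \overline{C_{g_1}} \cup \overline{C_{g_2}}$. Parameterized by $[x:z] \mapsto [x:h(x,z):z]$, it is smooth and carries exactly $p+1$ $\F_p$-points. Such a point is automatically smooth on $\overline{C_f}$ unless it also lies on one of the conjugate components $\overline{C_{g_1}}$ or $\overline{C_{g_2}}$.

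The main (and only substantive) step is bounding these singular $\F_p$-points via a Galois-theoretic observation. If an $\F_p$-point $(x_0, h(x_0,z_0), z_0)$ of $\overline{C_h}$ also lies on $\overline{C_{g_1}}$, then applying the nontrivial element of $\Gal(\F_{p^2}/\F_p)$, which swaps $g_1$ with $g_2$ while fixing $x_0$ and $z_0$, shows the point must lie on $\overline{C_{g_2}}$ as well. Hence $[x_0:z_0]$ is a projective root of the homogeneous form $g_1 - g_2 \in \F_{p^2}[x,z]$, which is nonzero of degree at most $2$; indeed nonzero because $g_1, g_2$ are a genuine conjugate pair (else both would lie in $\F_p[x,z]$, contradicting the factorization type). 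Such a form has at most $2$ projective roots, leaving at least $(p+1) - 2 = p-1 \geq 1$ smooth $\F_p$-points of $\overline{C_f}$ on $\overline{C_h}$, valid for every prime $p \geq 2$.

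Finally, at any smooth $\F_p$-point of $\overline{C_f}$ some partial derivative of $F$ is nonvanishing modulo $p$, so Hensel's lemma (Theorem~\ref{thm:hensel}), applied in an affine chart, lifts the point to a $\Q_p$-point of $C_f$, establishing $\sigma_3 = 1$. I do not anticipate a serious obstacle: the only new ingredient compared to Proposition~\ref{prop:sigma_2=1} is the Galois-conjugacy reduction of potential singularities on $\overline{C_h}$ to the zero locus of a single nonzero quadratic form, and the resulting count survives all the way down to $p=2$.
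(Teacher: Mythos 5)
Your proof is correct and follows essentially the same route as the paper: isolate the $\F_p$-rational component $y = h(x,z)$ with its $p+1$ points, bound the $\F_p$-points it shares with the conjugate components by the (at most two) roots of a nonzero binary quadratic form, and lift one of the remaining $p-1 \geq 1$ smooth points by Hensel's lemma. The only cosmetic difference is that you obtain the bound via Frobenius and the roots of $g_1 - g_2$, whereas the paper counts the at most four $\F_{p^2}$-intersection points and notes they form conjugate pairs, so at most two are $\F_p$-rational; both give the same conclusion.
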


\begin{proof}
	As in the proof of Proposition \ref{prop:sigma_2=1}, the statement is vacuously true for $p=3$ and all primes $p \equiv 1 \pmod{3}$, so we assume $p \equiv 2 \pmod{3}$.
	
	Recall that by the proof of Lemma \ref{lem:factorization_counts} we have
	\[\overline{F} = (y - h(x,z))(y^2 + h(x,z)y + h(x,z)^2),\]
	where $h(x,z)$ is a (nonzero) binary quadratic form over $\F_p$. Let 
	\[\overline{C_1} \colon y = h(x,z), \hspace{2cm} \overline{C_2} \colon y^2 + h(x,z)y + h(x,z)^2 = 0.\]
	In fact, $\overline{C_2}$ is geometrically reducible, factoring over $\F_{p^2}$, where the third root of unity is defined. This means that after a finite extension, we are in the same situation as in the proof of Proposition \ref{prop:sigma_2=1}, and each of the components has at most two intersection points. 
	
	In particular $\overline{C_1}$ has $p+1$ $\F_p$-points and at most 4 $\F_{p^2}$-points of intersection with $\overline{C_2}$, forming (at most) two conjugate pairs. Thus at most two $\F_p$-points of $\overline{C_1}$ intersect with $\overline{C_2}$, so $p-1 > 0$ of the points on $C_1$ are smooth solutions to $\overline{F} = 0$, which we can lift to a $\Q_p$-solution.
\end{proof}

\begin{remark}
	These arguments can be generalized to larger $m$, if one is willing to exclude small primes $p$. Suppose $m$ is prime and $d=km$ for $k \geq 1$. Consider a superelliptic curve of the form $C_f \colon y^m = f(x,z)$, where the reduction of $f$ modulo $p$ is nonzero. We have already seen, through the proofs of Propositions \ref{prop:bound_p=1_large} and \ref{prop:bound_pneq1}, that when $\overline{f}$ is not a perfect $m$-th power, there exists a $\Q_p$ point on $C_f$ for sufficiently large primes $p$.
	
	If $\overline{f}$ is a perfect $m$-th power, then over $\overline{\F_p}$, the reduction $\overline{C_f}$ breaks up into $m$ components
	\[C_i \colon y = h_i(x,z),\]
	where $h_i$ is a (nonzero) binary form of degree $k = \frac{d}{m}$. The argument in the proof of Proposition \ref{prop:sigma_2=1} shows that each $C_i$ intersects with another $C_j$ in at most $k$ points. 
	
	Suppose at least one of the components, say $C_1$, is defined over $\F_p$, thus excluding factorization type 4 when $(m,d) = (3,6)$, which will require more care. Since $C_1$ has $p+1$ $\F_p$-points, we have that $p+1 - k(m-1)$ of these points lift by Hensel's lemma. Thus when $p > km - (k+1)$, the curve $C_f$ is guaranteed a $\Q_p$-point.
	
	The caveat is that relatively few superelliptic curves have these factorization types. Only $p^{k+1}-1$ out of the $p^d$ choices for $\overline{f}(x,z)$ modulo $p$ have that $\overline{f}$ is a nonzero $m$-th power. If $p \equiv 1 \pmod{m}$, then further $\frac{p-1}{mp}$ of those will have a factor defined over $\F_p$. As $d \to \infty$ for fixed $m$, these fail to make up a positive proportion. 
\end{remark}

Considering only factorization types 1 --- 3, we are essentially no better off than in Section \ref{sec:lower_bounds}, where using $m=3$ and $d=6$ we obtain lower bounds
\[\rho_{3,6}(p) \geq \begin{cases} 1 - \frac{1}{p^4} & p \equiv 1 \pmod{3}, \ p > 43\\ 1 - \frac{1}{p^7} & p \equiv 2 \pmod{3}, \ p > 2, \end{cases}\]
via Propositions \ref{prop:bound_p=1_large} and \ref{prop:bound_pneq1}. Thus even just to obtain the improved asymptotics of Theorem \ref{thm:exact_rho_3_6}, it is necessary to consider factorization types 4 and 5. 

\subsection{Intermediate results}\label{sec:exact_intermediate}

The following intermediate results will be used repeatedly, as will the strategies of their proofs. Throughout, $f(x,z)$ denotes a binary sextic form with coefficients $c_i \in \Z_p$ and $C_f$ the equation cut out by $y^3 - f(x,z)$. 

\begin{lemma}\label{lem:alpha_beta}
	Let $p \equiv 1 \pmod{3}$. Suppose $c_4, c_5, c_6 \in p\Z_p$ and $c_3 \in \Z_p$ are fixed, such that the reduction $\overline{c_3}$ is neither a cubic residue nor zero, i.e.\ $\overline{c_3} \notin \F_p^3$. Let $\beta$ be the probability that $C_f$ has a $\Q_p$-point of the form $[x:y:1]$, as $c_0, c_1, c_2$ range over $\Z_p$. Let $\alpha$ denote the same probability, but with $c_0, c_1, c_2 \in p\Z_p$. We have
	\begin{align*}
		\alpha &= \frac{(p^3 + p + 1)}{p^4 + p^3 + p^2 + p + 1}, \\
		\beta &= \frac{p(p^3 + p^2 + 1)}{p^4 + p^3 + p^2 + p + 1}.
	\end{align*}
\end{lemma}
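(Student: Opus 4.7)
The strategy is to compute the Haar measure on $(c_0,c_1,c_2)$ yielding a $\Q_p$-solution of $y^3 = f(x,1)$ by stratifying on the reduction $\bar g(x) := \bar c_3 x^3 + \bar c_2 x^2 + \bar c_1 x + \bar c_0$ modulo $p$: note that $\bar f(x,1) = \bar g(x)$ because $c_4,c_5,c_6 \in p\Z_p$, and by hypothesis $\bar c_3 \in \F_p^\times \setminus (\F_p^\times)^3$ is fixed. Throughout I would exploit the $\F_p$-linear bijection $(c_0,c_1,c_2) \mapsto (\bar g(0), \bar g(1), \bar g(2))$, which is valid for $p \geq 7$ (the relevant Vandermonde determinant equals $2 \not\equiv 0$).

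An immediate application of Theorem~\ref{thm:hensel} succeeds whenever either (a) $\bar g$ has a simple $\F_p$-root $x_0$ (lift $x_0$ to $x \in \Z_p$ with $f(x,1) = 0$, take $y = 0$), or (b) some $\bar g(x_0) \in (\F_p^\times)^3$ (lift in $y$, using $3y_0^2 \neq 0$ since $p \neq 3$). I would tally the measure of these configurations using the degree-$3$ row of Lemma~\ref{lem:etas} restricted to cubics with non-cube leading coefficient; for the cubic-residue count in (b) a short argument with the cubic character on $\F_p^\times$ (or, equivalently, an inclusion-exclusion applied to the jointly uniform triple $(\bar g(0),\bar g(1),\bar g(2)) \in \F_p^3$) gives the exact proportion.

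What remains are the bad configurations: $\bar g$ has no simple $\F_p$-root \emph{and} its image avoids $(\F_p^\times)^3$. A short Galois-theoretic analysis shows these are exactly the triple factors $\bar c_3 (x-r)^3$ and a distinguished subfamily of irreducible cubics over $\F_p$, characterized by $\bar c_3 \cdot N_{\F_{p^3}/\F_p}(x_0-\alpha_0)$ avoiding $(\F_p^\times)^3$ for every $x_0 \in \F_p$, where $\alpha_0$ is a root of $\bar g$. In each such case I would translate the obstruction to the origin (if there is any $\F_p$-root) and apply the substitution $x = pu$, $y = pv$, rescaling the equation to one of the same shape but with new coefficients $(c_0',c_1',c_2') \in (p\Z_p)^3$, which places the subproblem into the $\alpha$ scenario. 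In the $\alpha$ case the rescaling is self-referential: starting with $(c_0,c_1,c_2) \in (p\Z_p)^3$, no $\F_p$-point with $x$ a unit can lift since $\bar f(x,1) \equiv \bar c_3 x^3$ is a non-cube, forcing $x \in p\Z_p$ ab initio.

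Assembling the above yields a linear system of the form
\[
    \beta \;=\; A_\beta + B_\beta\,\alpha, \qquad \alpha \;=\; A_\alpha + B_\alpha\,\alpha,
\]
with $A_\bullet, B_\bullet$ explicit rational functions of $p$ extracted from the two measure counts. Solving for $\alpha$ first and substituting into the expression for $\beta$ produces the two claimed rational functions. The principal obstacle is Step~3: obtaining exact counts for the irreducible cubics whose image avoids $(\F_p^\times)^3$ (a character-sum evaluation for $\chi_3 \circ \bar g$ with $\chi_3$ the cubic character, or a careful inclusion-exclusion using Lemma~\ref{lem:etas}), and correctly bookkeeping how the $c_4,c_5,c_6 \in p\Z_p$ transform under $x \mapsto r + pu$, $y \mapsto pv$ so that the recursion closes on just $\alpha$ and $\beta$ without introducing auxiliary densities.
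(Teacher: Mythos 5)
There is a genuine gap, and it sits exactly where the paper's proof does its real work: the stratum where $\overline{g}(x) = \overline{c_3}x^3 + \overline{c_2}x^2 + \overline{c_1}x + \overline{c_0}$ has no simple $\F_p$-root and takes no value in $\left(\F_p^\times\right)^3$. You correctly observe that this stratum consists of the triple factors $\overline{c_3}(x-r)^3$ together with (possibly) irreducible cubics avoiding cube values, but you leave the count of the latter family as ``the principal obstacle.'' In fact that family is empty: whenever $\overline{g}$ is not $\overline{c_3}(x-rz)^3$, the curve $y^3 = \overline{g}(x)$ has genus at most $1$ after normalization, and since $\overline{c_3} \notin \F_p^3$ it has no $\F_p$-rational points above infinity, so the Hasse--Weil bound gives at least $p+1-2\sqrt{p} > 0$ smooth affine $\F_p$-points, each of which Hensel-lifts; this is precisely the paper's argument, and some such geometric input is needed. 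The naive Weil bound $\left|\sum_x \chi(\overline{g}(x))\right| \leq 2\sqrt{p}$ for a cubic character $\chi$ only yields $p - 4\sqrt{p}$ affine points, which is vacuous for $p = 7, 13$, and your alternative of inclusion--exclusion on the jointly uniform triple $(\overline{g}(0),\overline{g}(1),\overline{g}(2))$ cannot detect cube values at the other $p-3$ arguments, so it produces at best a lower bound in the spirit of Proposition \ref{prop:bound_matrix}, not the exact density. Without the emptiness of this stratum your $\beta$-equation acquires an extra insolubility term and cannot yield the stated formula; note also that such insoluble irreducible cubics could not be handled by ``translating the obstruction to the origin,'' since they have no $\F_p$-root at all and are simply insoluble.

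A secondary structural error: the recursion is not self-referential in $\alpha$. In the $\alpha$ scenario one needs $p \mid x, y$, hence $p^2 \mid c_0$ (probability $\frac{1}{p}$); after rescaling, if $v(c_1)=1$ one solves a linear equation and lifts (probability $1-\frac{1}{p}$), and otherwise $p^2 \mid c_1$ and then $p^3 \mid c_0$ are forced, after which dividing by $p^3$ makes the new $c_0, c_1, c_2$ range over all of $\Z_p$ --- that is, one lands back in the $\beta$ scenario, not in $\alpha$ (the new lower-order coefficients are generically units, not in $p\Z_p$). The correct closed system is $\beta = 1 - p^{-2} + p^{-2}\alpha$ and $\alpha = p^{-1} - p^{-2} + p^{-3}\beta$, whose solution is the stated pair of rational functions; as written, your system $\alpha = A_\alpha + B_\alpha \alpha$ is derived from an incorrect reduction, and your description also omits the intermediate divisibility/linear-lifting steps that supply the coefficients $A_\alpha, B_\alpha$.
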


\begin{proof}
The reduction $\overline{C_f}$ is isomorphic to the curve cut out by
\begin{equation}\label{eq:cubic}
   y^3 \equiv \overline{f}(x,z) \equiv c_3x^3 + c_2x^2z + c_1xz^2 + c_0z^3 \pmod{p}.
\end{equation}
We look for a smooth solution $[x:y:1]$ to \eqref{eq:cubic}, which lifts to a $\Q_p$-point on $C_f$ by Hensel's lemma. Note that there are no solutions of the form $[x:y:0]$ because $\overline{c_3} \notin \F_p^3$.
    
The normalization of $\overline{C_f}$ has geometric genus at most 1. Applying the Hasse--Weil bound method to \eqref{eq:cubic} as in the proof of Proposition \ref{prop:bound_p=1_large}, we see that whenever $f(x,z)$ doesn't have a triple root (equivalently $\overline{f} \neq \overline{c_3}(x-\alpha z)^3$ for some $\alpha \in \F_p$), we have
\[\#\overline{C_f}^{\sm}(\F_p) \geq p+1 - 2\sqrt{p} > 0.\]
The rightmost inequality follows from the fact that $p \geq 7$ since $p \equiv 1 \pmod{3}$. Thus we have found our desired $\Q_p$-point whenever $\overline{f} \neq \overline{c_3}(x-\alpha z)^3$. 

The proportion of cubics over $\F_p$ with fixed leading coefficient $c_3$ having a triple root is equal to $\eta_{3,2}' = \frac{1}{p^2}$ (see Lemma \ref{lem:etas}). In this case, after a change of variables, we  may assume \eqref{eq:cubic} is of the form $y \equiv c_3x^3 \pmod{p}$, i.e.\ $c_0, c_1, c_2 \in p\Z_p$. The probability of $C_f$ having a $\Q_p$-point in this case is precisely $\alpha$. Thus we have
\begin{equation}\label{eq:beta}
		\beta = 1 - \frac{1}{p^2} + \frac{\alpha}{p^2}.
	\end{equation}
	
We now assume $c_0, c_1, c_2 \in p\Z_p$, and we are looking to lift solutions of $y^3 \equiv c_3x^3 \pmod{p}$, whose only $\F_p$-solution is the (singular) point $(0,0)$ by our assumption on $c_3$. Thus $p \mid x,y$ is necessary, so looking modulo $p^2$, we see that $p^2 \mid c_0$ is also a necessary condition, which occurs with probability $\frac{1}{p}$ as $c_0$ runs through $p\Z_p$. 

Assuming $p^2 \mid c_0$, we perform a change of variables by replacing $x$ and $y$ by $px$ and $py$. Dividing by $p^2$, the equation for $\overline{C_f}$ is now
\[0 \equiv \frac{c_1}{p} x + \frac{c_0}{p^2} \pmod{p}.\]
If $v_p(c_1) = 1$, then this is merely a linear equation, so for any choice of $y \in \Z_p$, we can find an $\F_p$ solution that lifts to a $\Q_p$-one. This occurs with probability $1-\frac{1}{p}$, so with probability $\frac{1}{p}$ we have $p^2 \mid c_1$.

Assuming $p^2 \mid c_1$, we again find it is necessary for $p^3 \mid c_0$. Dividing the equation for $\overline{C_f}$ by $p^3$ instead of $p^2$ as above, we obtain
\[y^3 \equiv c_3x^3 + \frac{c_2}{p}x^2 + \frac{c_1}{p^2}x + \frac{c_0}{p^3} \pmod{p},\]
which puts us back in the case of $\beta$, where $c_0, c_1, c_2 \in \Z_p$. That is, we have shown
\begin{align}\label{eq:alpha}
 	\nonumber \alpha &= \frac{1}{p} \left( 1 - \frac{1}{p} + \frac{\beta}{p^2}\right)\\
 	&= \frac{1}{p} + \frac{1}{p^2} + \frac{\beta}{p^3}.
\end{align}
Combining \eqref{eq:beta} and \eqref{eq:alpha} and solving simultaneously, we obtain the claimed values.
\end{proof}

The strategy employed in the proof of Lemma \ref{lem:alpha_beta} --- making successive reductions until we reach a case we know and solving a system of equations to determine desired probabilities --- will be used repeatedly. For results with longer proofs, it is convenient to organize the argument with a table. We illustrate this below with the computation for $\alpha$.

\begin{center}
	\begin{tabular}{l l l | l l l l l l l}
	& & & $v(c_6)$ & $v(c_5)$ & $v(c_4)$ & $v(c_3)$ & $v(c_2)$ & $v(c_1)$ & $v(c_0)$ \\
	
	$\alpha =$ & $\alpha_a =$ & $\frac{1}{p}\alpha_b$
		& $\geq 1$ & $\geq 1$ & $\geq 1$ & $=0$ & $\geq 1$ & $\geq 1$ & $\geq 1$\\
		
	 & $\alpha_b =$ & $1 - \frac{1}{p} + \frac{1}{p}\alpha_c$
		& $\geq 1$ & $\geq 1$ & $\geq 1$ & $=0$ & $\geq 1$ & $\geq 1$ & $\geq 2$\\
		
	 & $\alpha_c =$ & $\frac{1}{p}\alpha_d$
		& $\geq 1$ & $\geq 1$ & $\geq 1$ & $=0$ & $\geq 1$ & $\geq 2$ & $\geq 2$\\
		
	 & $\alpha_d =$ & $\alpha_e$
		& $\geq 1$ & $\geq 1$ & $\geq 1$ & $=0$ & $\geq 1$ & $\geq 2$ & $\geq 3$\\
		
	 & $\alpha_e =$ & $\beta$
		& $\geq 4$ & $\geq 3$ & $\geq 2$ & $=0$ & $\geq 0$ & $\geq 0$ & $\geq 0$		
	\end{tabular}
	\end{center}
	
	The first step in the table above was recognizing that $p^2 \mid c_0$ is necessary for a solution to lift. In the second step, we assume $v(c_0) \geq 2$ and compute the probability of a liftable solution when $v(c_1) = 1$, moving to the next line if $p^2 \mid c_1$, and so on. One sees that combining the steps in the table, we achieve the same formula for $\alpha$ in terms of $\beta$ as \eqref{eq:alpha}.
	
	Repeating the argument of the proof of Lemma \ref{lem:alpha_beta}, we obtain similar results when $c_3$ is fixed of valuation 1 or 2. We will use all of these later as well. Note that the probabilities depend on the conjugacy class of $p$ modulo 3, owing to the fact that the probability of a nonzero element of $\F_p$ being a cubic residue differs in each case.

\begin{lemma}\label{lem:master}
	Suppose $c_3, c_4, c_5, c_6 \in \Z_p$ are fixed with $p$-adic valuation given below. Let $\alpha', \beta', \alpha'', \beta''$ denote the probabilities that $C_f$ has a $\Q_p$-point of the form $[x:y:1]$ as $c_0, c_1, c_2$ vary over $\Z_p$ with the specified valuation(s).
	{ \small \begin{align}
		\nonumber &v(c_6)& &v(c_5)& &v(c_4)& &v(c_3)& &v(c_2)& &v(c_1)& &v(c_0)& \\ 
		\label{eq:alpha_prime} &\geq 2& &\geq 2& &\geq 2& &=1& &\geq 1& &\geq 1& &\geq 1& &\displaystyle \alpha' = \begin{cases}  \frac{2  p^{3} + 2  p^{2} + 3}{3  {\left(p^{3} + p^{2} + p + 1\right)}} = \frac{5}{8}, & p = 3 \\[1ex] 
		\frac{2  p^{4} + 2  p^{3} + 3  p + 1}{3  {\left(p^{4} + p^{3} + p^{2} + p + 1\right)}},& p \equiv 1 \pmod{3} \\[1ex]
		\frac{2  p^{4} + 2  p^{3} + 3  p + 3}{3  {\left(p^{4} + p^{3} + p^{2} + p + 1\right)}},& p \equiv 2 \pmod{3} \end{cases}  \\
		\label{eq:beta_prime} &\geq 2& &\geq 2& &\geq 2& &=1& &\geq 1& &\geq 0& &\geq 0& &\displaystyle \beta' = \begin{cases}  \frac{3  p^{3} + 2  p^{2} + 2  p}{3  {\left(p^{3} + p^{2} + p + 1\right)}} = \frac{7}{8}, & p = 3 \\[1ex] 
		\frac{3  p^{4} + p^{3} + 2  p^{2} + 2  p}{3  {\left(p^{4} + p^{3} + p^{2} + p + 1\right)}},& p \equiv 1 \pmod{3} \\[1ex]
		\frac{3  p^{4} + 3  p^{3} + 2  p^{2} + 2  p}{3  {\left(p^{4} + p^{3} + p^{2} + p + 1\right)}},& p \equiv 2 \pmod{3} \end{cases} \\
		\label{eq:alpha_prime_prime} &\geq 3& &\geq 3& &\geq 3& &=2& &\geq 2& &\geq 2& &\geq 2& &\displaystyle \alpha'' = \begin{cases} \frac{2  p^{3} + 2  p^{2} + 3  p}{3  {\left(p^{3} + p^{2} + p + 1\right)}} = \frac{27}{40} , & p = 3 \\[1ex] 
		\frac{2  p^{4} + 2  p^{3} + p^{2} + 3  p}{3  {\left(p^{4} + p^{3} + p^{2} + p + 1\right)}},& p \equiv 1 \pmod{3} \\[1ex]
		\frac{2  p^{4} + 2  p^{3} + 3  p^{2} + 3  p}{3  {\left(p^{4} + p^{3} + p^{2} + p + 1\right)}},& p \equiv 2 \pmod{3} \end{cases} \\
		\label{eq:beta_prime_prime} &\geq 3& &\geq 3& &\geq 3& &=2& &\geq 3& &\geq 3& &\geq 3& &\displaystyle \beta'' = \begin{cases} \frac{3  p^{3} + 2  p + 2}{3  {\left(p^{3} + p^{2} + p + 1\right)}} = \frac{89}{120} , & p = 3 \\[1ex] 
		\frac{p^{4} + 3  p^{3} + 2  p + 2}{3  {\left(p^{4} + p^{3} + p^{2} + p + 1\right)}},& p \equiv 1 \pmod{3} \\[1ex]
		\frac{3  p^{4} + 3  p^{3} + 2  p + 2}{3  {\left(p^{4} + p^{3} + p^{2} + p + 1\right)}},& p \equiv 2 \pmod{3} \end{cases}
	\end{align}}
\end{lemma}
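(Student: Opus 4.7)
The plan is to adapt the recursive strategy used in the proof of Lemma \ref{lem:alpha_beta}: stratify the coefficient space $(c_0, c_1, c_2) \in \Z_p^3$ by $p$-adic valuations, apply Hensel's lemma (Theorem \ref{thm:hensel}) to the reduced equation $y^3 \equiv f(x,1) \pmod{p^k}$ for successively larger $k$, and organize the resulting probabilities into a linear system whose solution gives the claimed closed forms.

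For $\beta'$, the reduction of $f(x,1)$ modulo $p$ is the linear polynomial $\overline{c_1}\, x + \overline{c_0}$, since $v(c_3)=1$ and $v(c_2), v(c_4), v(c_5), v(c_6) \geq 1$ kill their respective terms. When $\overline{c_1}\neq 0$ (probability $1-1/p$) the unique root $x_0$ paired with $y=0$ is a smooth point of the reduction, because $\partial_x F \equiv -\overline{c_1}\not\equiv 0 \pmod p$, and Hensel's lemma lifts it to a $\Q_p$-point. When $\overline{c_1}=0$, the congruence becomes $y^3\equiv \overline{c_0}\pmod p$: we lift via Hensel when $\overline{c_0}$ is a nonzero cube, obtain no point when $\overline{c_0}$ is a nonzero non-cube, and fall into the $\alpha'$ configuration when $\overline{c_0}=0$. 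This produces the linear relation
\[
\beta' \;=\; \left(1-\tfrac{1}{p}\right) + \tfrac{\#(\F_p^\times)^3}{p^2} + \tfrac{1}{p^2}\,\alpha',
\]
where $\#(\F_p^\times)^3$ equals $(p-1)/3$, $p-1$, or $2$ in the three residue cases $p\equiv 1\pmod 3$, $p\equiv 2\pmod 3$, or $p=3$.

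For $\alpha'$, the reduction $y^3\equiv 0\pmod p$ forces $p\mid y$; substituting $y=py'$ and requiring $p^3 \mid f(x,1)$ reduces the problem to analyzing $p\mid g(x)$, where $g(x) = (c_3/p)\,x^3 + (c_2/p)\,x^2 + (c_1/p)\,x + (c_0/p)$ is a cubic in $\F_p[x]$ whose leading coefficient $\overline{c_3/p}$ is a unit. Using the factorization-type proportions $\eta_{3,i}'$ from Lemma \ref{lem:etas}: if $\overline{g}$ has a simple root, Hensel's lemma produces $x_0\in\Z_p$ with $f(x_0,1)=0$, giving the $\Q_p$-point $(x_0,0)$; if $\overline{g}$ has no roots in $\F_p$, no lift exists; if $\overline{g}$ has a triple root, a change of variables $x\mapsto x_0+px$ produces a new configuration that is either $\alpha''$/$\beta''$ (depending on the new valuations of $c_0, c_1, c_2$) or cycles back to $\alpha'$/$\beta'$. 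The resulting relation expresses $\alpha'$ in terms of $\alpha''$, $\beta''$, and elementary quantities, and combined with the relation for $\beta'$ closes the system once $\alpha''$, $\beta''$ are known. The formulas for $\alpha''$ and $\beta''$ are derived by the same reasoning with $v(c_3)=2$ and all the other valuations shifted by one; the triple-root case there recurses back to $\alpha'$, $\beta'$, $\alpha$, or $\beta$, producing a final closed system solvable by linear algebra.

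The main obstacle is not conceptual but combinatorial bookkeeping: the three residue cases ($p=3$, $p\equiv 1\pmod 3$, $p\equiv 2\pmod 3$) must be tracked separately, since $\#(\F_p^\times)^3$ and the behavior of Hensel's lemma in characteristic $3$ (where we must invoke the refined form \eqref{eq:Hensel_improved} instead of the basic version, because $\partial_y(y^3) \equiv 0 \pmod 3$) differ in each. Organizing the argument as a table tracking, for each reduction step, the valuation vector $(v(c_0),\ldots,v(c_6))$, the probability weight carried, and the relation to previously computed quantities --- as was done for $\alpha$ in the proof of Lemma \ref{lem:alpha_beta} --- is the cleanest way to verify all four formulas simultaneously, and indeed this is the approach the authors anticipate with their remark that this calculation was implemented in Sage.
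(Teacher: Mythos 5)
Your overall strategy (stratify by valuations, lift with Hensel, close a linear system) is the paper's strategy, but the system you set up is not the one that actually arises, and your $p=3$ case is wrong. Concretely: in the $\beta'$ relation you treat ``$\overline{c_0}$ is a nonzero cube in $\F_p$'' as sufficient for solubility and plug in $\#(\F_3^\times)^3 = 2$, giving $\beta' = \tfrac{2}{3} + \tfrac{2}{9} + \tfrac{1}{9}\alpha'$ at $p=3$. This overcounts: for $p=3$ every residue is a cube mod $3$, but $\partial_y(y^3)\equiv 0$, so (for example) $y^3 = 2$ has no $\Z_3$-solution even though $2$ is a cube mod $3$; being a cube must be tested modulo $27$, and with the remaining coefficients in play the correct move (the paper's) is the substitution $y \mapsto y+a$ with $a^3 \equiv c_0 \pmod 3$, which shows that the whole $v(c_1)\geq 1$ branch reduces to the $\alpha'$ configuration with probability one, i.e.\ $\beta' = \tfrac{2}{3} + \tfrac{1}{3}\alpha'$ at $p=3$. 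Your relation, combined with the correct $\alpha'$ recursion, yields $\alpha'(3) = \tfrac{76}{121} \neq \tfrac{5}{8}$, so the stated $p=3$ values would not come out.

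The second, structural problem is where your recursions point. In the triple-root branch of $\alpha'$ one moves the root to $x\equiv 0$, observes that $p^3 \mid c_0$ is forced (probability $1/p$), and then substitutes $x\mapsto px$, $y\mapsto py$ and divides by $p^3$; this leaves $c_3$ untouched, so $v(c_3)=1$ persists and the new configuration is exactly $\beta'$, giving $\alpha' = \eta_{3,1}' + \tfrac{1}{p^3}\beta'$. There is no way to reach $v(c_3)=2$ by these moves, so $\alpha'$ is never ``expressed in terms of $\alpha''$, $\beta''$'' as you claim, and $(\alpha',\beta')$ is a self-contained $2\times 2$ system. Likewise $(\alpha'',\beta'')$ closes on itself: the triple-root branch of $\alpha''$ lands directly in $\beta''$ (no rescaling needed, so $\alpha'' = \eta_{3,1}' + \tfrac{1}{p^2}\beta''$, note the different power of $p$), and the deep branch of $\beta''$ returns to $\alpha''$; it never touches $\alpha'$, $\beta'$, $\alpha$, or $\beta$ --- indeed $\alpha$ and $\beta$ of Lemma \ref{lem:alpha_beta} require $\overline{c_3}$ to be a nonzero non-cube, a hypothesis absent here. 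So the coupled four-variable (plus $\alpha,\beta$) system you propose to solve is mis-specified; carried out as written it would not produce the formulas in the statement, even though the correct bookkeeping along your general lines does.
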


\begin{proof} Starting with $\alpha'$, in order for there to be a liftable $\F_p$-point of the form $[x:y:1]$, we need $y \equiv 0 \pmod{p}$, and a root of the polynomial $\frac{1}{p}(c_3x^3 + c_2x^2 + c_1x + c_0) \pmod{p}$. There is an $\eta_{3,1}' = \frac{2}{3}(1 - \frac{1}{p^2})$ chance of the existence of a simple root, an $\eta_{3,0}' = \frac{1}{3}(1 - \frac{1}{p^2})$ chance of no roots, and an $\eta_{3,2}' = \frac{1}{p^2}$ chance of a triple root (see Lemma \ref{lem:etas}). If we have a triple root, we may assume it is at $x \equiv 0 \pmod{p}$ after a change of variables, allowing us to assume $v(c_2), v(c_1), v(c_0) \geq 2$.

Considering the resulting polynomial mod $p^3$, we have that $x$ is a root if and only if $p^3 \mid c_0$, which occurs with probability $\frac{1}{p}$. Changing variables by replacing $x,y$ by $px, py$ and dividing by $p^3$ gives us that $c_6, c_5, c_4 \in p^2 \Z_p$ (though their valuations may increase), and $v(c_0), v(c_1) \geq 0$, while $v(c_2) \geq 1$ and $c_3$ remains unchanged. This is precisely the case of $\beta'$, giving us
\[\alpha' = \eta_{3,1}' + \frac{\beta'}{p^3} = \frac{2}{3}\left(1 - \frac{1}{p^2}\right) + \frac{\beta'}{p^3} ,\]
regardless of the choice of prime $p$.

Now we compute $\beta'$. If $v(c_1) = 0$ then we can always find a smooth solution to $c_1x + c_0 \equiv 0 \pmod{p}$, as a linear polynomial always has a simple root. If $v(c_1) \geq 1$ then mod $p$ we have $F(x,y,1) \equiv y^3 - c_0$. Suppose $v(c_0) = 0$, for if not we are in the case of $\alpha'$. If $p \equiv 1 \pmod{3}$ this has a liftable solution with probability $1/3$, as $1/3$ of the residue classes in $\F_p^\times$ are cubic residues. If $p \equiv 2 \pmod{3}$, this probability is $1$, since every nonzero residue is a cube. If $p = 3$, then the change of variables $y \mapsto y + a$, where $a \equiv c_0 \pmod{p}$ gives the new equation
\[F(x,y,1) = y^3 - c_3x^3 - c_2x^2 - c_1x - c_0 + a^3,\]
and since $c_0 \equiv a^3 \pmod{p}$ we have that $p \mid c_0$ after a change of variables. Hence, we have
\[\beta' = \begin{cases} (1 - \frac{1}{p}) + \frac{1}{p}(\frac{1}{3}(1 - \frac{1}{p}) + \frac{1}{p}\alpha'), & p \equiv 1 \pmod{3}\\
	(1 - \frac{1}{p}) + \frac{1}{p}((1 - \frac{1}{p}) + \frac{1}{p}\alpha'), & p \equiv 2 \pmod{3} \\ 
	(1 - \frac{1}{p}) + \frac{1}{p}\alpha', & p = 3. \end{cases}\]
	Solving these equations for $\alpha'$ and $\beta'$ gives the values in the tables.

To compute $\alpha''$, we proceed as in the calculation for $\alpha'$. We can compute the probability that $\frac{1}{p^2}(c_3x^3 + c_2x^2 + c_1x + c_0)$ has a simple root or triple root, and notice that if there is a triple root, it can be moved to $x \equiv 0 \pmod{p}$, putting us in the case of $\beta''$. Thus
\[\alpha'' = \eta_{3,1}' + \frac{\beta''}{p^2} = \frac{2}{3}\left(1 - \frac{1}{p^2}\right) + \frac{\beta''}{p^2} .\]

For $\beta''$, we immediately make the change of variables $x\mapsto px, y \mapsto py$ and divide by $p^3$. This doesn't change $c_3$, but puts the valuations of $c_2, c_1, c_2$ at \textit{at least} 2, 1, and 0 respectively. If $p \neq 3$ and $v(c_0) = 0$, then we can compute the probability that $y^3 = c_0$ has a solution depending on the residue class of $p$. If $p \mid c_0$ then we look mod $p^2$, where our polynomial becomes linear. If $v(c_1) \geq 2$ then we must have $p^2 \mid c_0$ in order to have a solution, putting us back in the case of $\alpha''$.

If $p = 3$ then we can take the same approach as for $\beta'$. After changing variables in $y$, we may assume that $p \mid c_0$, and then follow the same argument as $p \neq 3$. Thus the values of $\beta''$ in terms of $\alpha''$ become
\[ \beta'' = \begin{cases} \frac{1}{3}(1 - \frac{1}{p}) + \frac{1}{p}((1 - \frac{1}{p}) + \frac{1}{p^2}\alpha''), &p \equiv 1 \pmod{3}\\
	1 - \frac{1}{p} + \frac{1}{p}((1 - \frac{1}{p}) + \frac{1}{p^2}\alpha''), & p \equiv 2 \pmod{3}\\
	1 - \frac{1}{p} + \frac{1}{p^2}\alpha'', &p = 3.\end{cases}\]
	Solving the equations for $\alpha''$ and $\beta''$ gives the values stated in the table.
\end{proof}

\begin{remark}\label{rem:independence}
	The probabilities in Lemmas \ref{lem:alpha_beta} and \ref{lem:master} are independent of $c_4, c_5, c_6$, even though they may be changed in the second parts of the proofs. This is key, and also noted in the proof of \cite[Lemma 2.8]{BCF_genus_one}. 
\end{remark}

We conclude this subsection with another result which will be used repeatedly in what follows. While it is independent from the later results, we will make reference in the proof to quantities which will be defined and determined in \S \ref{sec:exact_triple}, in an effort to keep the paper compact.

\begin{lemma}\label{lem:mu}
	Fix a prime $p > 31$, or $p>2$ satisfying $p \equiv 2 \pmod{3}$. Suppose $c_4, c_5, c_6$ are fixed with $c_5, c_6 \in p^3 \Z_p$ (resp.\ $p^2\Z_p$) and $v_p(c_4) = 2$ (resp.\ $v_p(c_4) = 1$). Let $\mu$ (resp.\  $\mu'$) denote the proportion of $f$ for which $C_f$ has a $\Q_p$-point of the form $[x:y:1]$ as $c_0, c_1, c_2, c_3$ vary over $p^2 \Z_p$ (resp.\ $p\Z_p$). Then
	\begin{align}
		\label{eq:mu} \mu &= \begin{cases}\frac{45  p^{11} - 6  p^{10} + 5  p^{9} - 30  p^{8} + 69  p^{7} - 29  p^{6} - 39  p^{5} + 81  p^{4} - 120  p^{3} + 60  p^{2} + 108  p - 72}{72  p^{11}}, & p \equiv 1 \pmod{3} \\[2ex]
		\frac{{\left(5  p^{10} - 3  p^{9} + 2  p^{7} + 3  p^{6} - 16  p^{5} + 25  p^{4} - 16  p^{3} - 8  p^{2} + 20  p - 8\right)} {\left(p + 1\right)}}{8  p^{11}},& p \equiv 2 \pmod{3} \end{cases}\\
		\nonumber \mu' &= \eqref{eq:mu_prime}.
	\end{align}
\end{lemma}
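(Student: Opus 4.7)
The plan is to apply the iterative strategy developed in Lemmas \ref{lem:alpha_beta} and \ref{lem:master}: classify finite $\Q_p$-points $[x:y:1]$ of $C_f$ by the valuation $v_p(x)$ and by the factorization type of an appropriate normalization of the reduction of $f(x,1)$, then use the substitutions $x \mapsto px$, $y \mapsto py$ to reduce every subcase either to one of the previously computed probabilities $\alpha, \beta, \alpha', \beta', \alpha'', \beta''$ or recursively back to $\mu$ or $\mu'$ itself, yielding a linear system that we solve.

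For $\mu$, the hypotheses $c_5, c_6 \in p^3\Z_p$, $v_p(c_4) = 2$, and $c_0, \ldots, c_3 \in p^2\Z_p$ give $f(x,1) \equiv p^2 \tilde g(x) \pmod{p^3}$ with $\tilde g(x) = \sum_{i=0}^4 (c_i/p^2)\, x^i \in \Z_p[x]$ of unit leading coefficient, so after rescaling its reduction $\bar g(x) \in \F_p[x]$ is a monic quartic varying uniformly as $c_0, \ldots, c_3$ vary. Consider first the case $v_p(x) = 0$: then $v_p(f(x,1)) \geq 2$ with equality if and only if $\bar g(\bar x) \neq 0$, so that subcase yields no $\Q_p$-point since $2$ is not a multiple of $3$. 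Conversely, if $\bar g$ has a simple root $\bar x_0$, the improved Hensel's lemma (Theorem \ref{thm:hensel}) applied after writing $y = p y_1$ produces a $\Q_p$-point with $y \equiv 0 \pmod p$, so factorization types $(1*)$ contribute fully. Factorization types with no root contribute nothing from the $v_p(x)=0$ branch, while types with only higher-multiplicity roots --- namely $(1^2 2)$, $(1^2 1^2)$, and $(1^4)$ --- require further substitution: translating the multiple root to the origin and replacing $x$ by $px$ converts the problem into an instance controlled by $\alpha', \beta', \alpha''$, or $\beta''$. The resulting weighted combination uses the proportions $\eta_{4,i}'$ from Lemma \ref{lem:etas} together with the proportion of cube residues in $\F_p^\times$, which is $1$ or $1/3$ according as $p \equiv 2$ or $1 \pmod 3$.

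The case $v_p(x) \geq 1$ is handled by substituting $x \mapsto px$, $y \mapsto py$, and dividing through by $p^3$. The resulting equation is again of the superelliptic form $y^3 = \tilde f(x,1)$ with new coefficients whose valuations place us back in either the $\mu$ or $\mu'$ regime, producing the recursive terms. Combining the $v_p(x) = 0$ and $v_p(x) \geq 1$ contributions yields a single linear equation for $\mu$, whose solution produces the stated rational function in each of the two congruence classes of $p$ modulo $3$.

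The computation of $\mu'$ proceeds along the same lines: the hypotheses now give $f(x,1) \equiv p\, \tilde g(x) \pmod{p^2}$ with $\tilde g$ again (generically) degree four with unit leading coefficient, and the case $v_p(x) \geq 1$ reduces by the same substitution to the $\mu$-regime established just above. The principal obstacle is the bookkeeping: the branching tree is wider than in Lemma \ref{lem:master} because a quartic admits substantially more factorization types than a cubic, and the two congruence classes of $p$ modulo $3$ must be tracked throughout. The size assumptions $p > 31$ (respectively $p > 2$ for $p \equiv 2 \pmod 3$) guarantee that the auxiliary densities invoked in the recursion remain valid uniformly.
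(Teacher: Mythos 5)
Your opening move is the same as the paper's: since every coefficient lies in $p^2\Z_p$ (resp.\ $p\Z_p$) and $v(c_4)=2$ (resp.\ $1$), any affine point forces $p\mid y$ and hence forces $\bar x$ to be a root of the monic quartic obtained by reducing $\tfrac{1}{p^2}f(x,1)$ (resp.\ $\tfrac1p f(x,1)$), so one classifies by the factorization type of that quartic with weights $\eta_{4,i}'$ from Lemma \ref{lem:etas}; the no-root and simple-root branches are handled exactly as you say. The gap is in what the multiple-root branches reduce to. They are \emph{not} instances of $\alpha',\beta',\alpha'',\beta''$ from Lemma \ref{lem:master}: those quantities all have the cubic coefficient $c_3$ as the pivot (valuation exactly $1$ or $2$). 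If you recenter a double root of the quartic at the origin in the $\mu$-regime, you get $v(c_2)=2$, $v(c_0),v(c_1)\geq 3$, and after $x\mapsto px$, $y\mapsto py$ and dividing by $p^3$ the low coefficients $(c_3,c_2,c_1,c_0)$ have valuations $(\geq 2,\,=1,\,\geq 1,\,\geq 0)$ --- a \emph{quadratic}-pivot lifting problem, which is the quantity the paper calls $\theta_2$ (resp.\ $\tau_2$ for $\mu'$), computed in Lemmas \ref{lem:thetas} and \ref{lem:taus}, not any of the cubic-pivot densities. The quadruple-root branch is the serious one: the same substitution lands you in a quartic-pivot regime ($\theta_7$, resp.\ $\tau_7$), a genuinely new multi-step computation. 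That branch is precisely where the hypothesis $p>31$ is used --- one must eventually invoke the Hasse--Weil bound for curves $y^3=(\text{quartic})$ of geometric genus at most $3$ --- and in the $\mu'$ case its terminal step is the value of $\mu$ itself ($\tau_{7g}=\mu$). None of this is visible in your argument, and attributing the size hypothesis to the "auxiliary densities" $\alpha',\dots,\beta''$ (which hold for all $p$) misplaces where the constraint comes from.

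Relatedly, the self-referential linear system you propose does not materialize. In the paper $\mu$ is given by the closed expression \eqref{eq:mu_sum}, $\mu = \eta_{4,1}' + \eta_{4,2}'\theta_2 + \eta_{4,3}'\theta_3 + \eta_{4,4}'\theta_7$, with nothing recursing back to $\mu$; the only genuine recursion is the one-way chain $\mu' \to \tau_7 \to \mu \to \theta_7 \to$ (Hasse--Weil), which is why $\mu$ must be computed first and $\mu'$ second. Your claim that the $v_p(x)\geq 1$ branch "places us back in the $\mu$ or $\mu'$ regime" fails on valuation bookkeeping: after $x\mapsto px$, $y\mapsto py$ the quartic coefficient acquires valuation $3$ and the low coefficients acquire the staircase pattern above, so you land in the $\theta_2$/$\theta_3$/$\theta_7$ (resp.\ $\tau$) regimes, never back in the regime defining $\mu$ or $\mu'$. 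Moreover, splitting by $v_p(x)$ is not the right organizing principle in the first place, since the roots of the reduced quartic can sit at any residue, and independence of the two double roots in the $(1^21^2)$ case still has to be argued (the paper does this by noting the two lifting computations involve disjoint sets of coefficients). As written, your proof would not produce the stated rational functions without first carrying out the missing $\theta$- and $\tau$-type computations.
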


\begin{proof}
	Consider first $\mu$, so let $v(c_4) = 2$, $c_5, c_6 \in p^3 \Z_p$, and $c_0, c_1, c_2, c_3$ vary in $p^2 \Z_p$. A necessary condition for $[x:y:1]$ to satisfy $F = 0$ is $p \mid y$, hence $\frac{1}{p^2} \left(c_3x^3 + c_2x^2 + c_1x + c_0 \right) \equiv 0 \pmod{p}$. Thus the probabilility depends on how this quartic factors modulo $p$, the proportions of which are given by $\eta_{4,i}'$ from Lemma \ref{lem:etas}.
	
	If $\frac{1}{p^2}\left(c_3x^3 + c_2x^2 + c_1x + c_0 \right)$ has no roots modulo $p$, then there can necessarily be no liftable solution. If it has a root of multiplicity 1, then we can lift it to a $\Q_p$-solution. If it has a double root, then after composing with an automorphism of $\P^1$, we may assume the root occurs at $[x:z] = [0:1]$, i.e.\ we have $v(c_0), v(c_1) \geq 3$ while $v(c_2) = 2$. Thus we are precisely in the case of $\theta_2$, to be defined in \S \ref{sec:exact_triple} and computed in Lemma \ref{lem:thetas}. Similarly, if the quartic has two double roots, the probability of at least one lifting is given by $\theta_3$. If it has a quadruple root, the probability of it lifting is given by $\theta_7$, which is valid for all primes $p > 31$ or $p > 2$ if $p \equiv 2 \pmod{3}$. Thus we have
	\begin{equation}\label{eq:mu_sum}
		\mu = \eta_{4,1}' + \eta_{4,2}' \theta_2 + \eta_{4,3}' \theta_3 + \eta_{4,4}' \theta_7
	\end{equation}
	which gives the value stated in \eqref{eq:mu}.
	
	For $\mu'$, a similar analysis shows that
	\begin{equation}\label{eq:mu_prime_sum}
		\mu' = \eta_{4,1}' + \eta_{4,2}' \tau_2 + \eta_{4,3}' \tau_3 + \eta_{4,4}' \tau_7
	\end{equation}
	where again the $\tau_i$ are to be defined in \S \ref{sec:exact_triple} and computed in Lemma \ref{lem:taus}. This gives the value of $\mu'$ which given in \eqref{eq:mu_prime}. We comment that while none of the $\tau_i$ use $\mu'$ as defined here, the value of $\mu$ is used in the computation of $\tau_7$, so this rational function for $\mu'$ is valid for all primes $p > 31$ or $p > 2$ if $p \equiv 2 \pmod{3}$.
\end{proof}

\subsection{Three conjugate factors: computing \texorpdfstring{$\sigma_4$}{sigma4}}\label{sec:exact_conjugate}

By Lemma \ref{lem:factorization_counts} and Corollary \ref{cor:factorization_probs}, this type only occurs when $p \equiv 1 \pmod{3}$, so we assume this for the remainder of \S\ref{sec:exact_conjugate}. If $\overline{F}$ has three distinct conjugate factors, none of which are defined over $\F_p$, then the proof of Lemma \ref{lem:factorization_counts} shows that
\[\overline{F}(x,y,z) = y^3 - ah_0(x,z)^3,\]
where $a \notin (\F_p^\times)^3$ is nonzero and $h_0(x,z)$ is a binary quadratic form defined over $\F_p$ up to scaling. Note also that $(*)$ is satisfied whenever $h$ above is monic.

It is thus clear that $\overline{F}$ has no $\F_p$-solutions for which $h_0(x,z) \neq 0$. However, if $h_0(x_0,z_0) = 0$, the point $(x_0,0,z_0)$ is not a smooth point of $\overline{F}$. After considering the possible factorization types of $h_0(x,z)$ we obtain the following value for $\sigma_4$ when $p$ is sufficiently large.

\begin{proposition}\label{prop:sigma_4}
	Suppose $C_f$ is given by \eqref{eq:sec_homog} and $\overline{F}$ has factorization type 4 modulo $p$ for a prime $p > 43$. Then the proportions of $f$ and $f$ satisfying $(*)$ for which $C_f(\Q_p) \neq \emptyset$ are
	\begin{align*}
		\sigma_4 &= \eqref{eq:sigma_4}\\
		\sigma_4^* &= \eqref{eq:sigma_4_star}.
	\end{align*}
\end{proposition}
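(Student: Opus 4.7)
The plan is to decompose $\sigma_4$ by the factorization over $\F_p$ of the binary quadratic $h_0(x,z)$ appearing in $\overline{F} = y^3 - a\,h_0(x,z)^3$. Because $a$ is a non-cube, any $\F_p$-point of $\overline{C_f}$ must satisfy $h_0(x_0,z_0) = 0$ and $y_0 = 0$, so the $\F_p$-points of $\overline{C_f}$ lie over the roots of $h_0$ in $\P^1(\F_p)$. By Lemma \ref{lem:etas} with $d=2$, $h_0$ is either irreducible, has two distinct $\F_p$-roots, or has a double root, with proportions $\eta_{2,0}, \eta_{2,1}, \eta_{2,2}$ among quadratics up to scaling and $\eta'_{2,0}, \eta'_{2,1}, \eta'_{2,2}$ among monic quadratics.

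If $h_0$ is irreducible over $\F_p$, then $\overline{C_f}(\F_p) = \emptyset$, hence $C_f(\Q_p) = \emptyset$, and this subcase contributes $0$. In the two-distinct-roots subcase, I would use an element of $\mathrm{PGL}_2(\F_p)$ to normalize the pair of roots, translate a chosen singular point of $\overline{C_f}$ to the origin, dehomogenize, and stratify on valuations in the spirit of the proofs of Lemmas \ref{lem:alpha_beta} and \ref{lem:master}, expressing the lift probability at each singular point in terms of $\alpha$ (and its relatives $\beta, \alpha', \beta'$). Since $\overline{C_f}$ has two singular $\F_p$-points, one must combine the two single-point probabilities: the Hensel analyses at $[0:0:1]$ and $[1:0:0]$ (say) involve the disjoint coefficient sets $\{c_0,c_1,c_2\}$ and $\{c_4,c_5,c_6\}$ given $c_3$ fixed, so the two lifting events are independent and inclusion--exclusion yields $\sigma_4^{(B)} = 1 - (1-\alpha)^2$ or the appropriate analogue. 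In the double-root subcase, a further change of variables gives $h_0 = r x^2$, whence $\overline{f} \equiv a r^3 x^6 \pmod{p}$: $c_6 \not\equiv 0 \pmod p$ is itself a non-cube while $v(c_i) \geq 1$ for $0 \leq i \leq 5$, and the only $\F_p$-point is $[0:0:1]$; here I would run a fresh multi-step valuation reduction, tracking the valuations of $c_0, c_1, \ldots, c_5$ in turn until the recursion closes onto a previously computed quantity.

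Assembling the three subcase contributions against $\eta_{2,i}$ yields $\sigma_4$ as the rational function \eqref{eq:sigma_4}. For $\sigma_4^*$, condition $(*)$ forces $c_6 \notin \F_p^3$, and since $c_6 \equiv a \cdot \mathrm{lc}(h_0)^3 \pmod{p}$ with $a$ a non-cube, this requires $\mathrm{lc}(h_0) \not\equiv 0 \pmod{p}$, i.e.\ $h_0$ is monic up to scaling. Aggregating the same per-subcase lifting probabilities against the monic proportions $\eta'_{2,i}$ produces \eqref{eq:sigma_4_star}. The main obstacle is the reduction in the double-root subcase: the roles of low- and high-index coefficients are inverted relative to the setup of Lemma \ref{lem:master}, so the valuation tower must be worked out afresh, with careful bookkeeping (via a table in the style of \S\ref{sec:exact_intermediate}) to ensure the recursion terminates at a known probability. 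A secondary subtlety is rigorously justifying the independence claim underlying the inclusion--exclusion in the two-root subcase, since $c_3$ enters the local analyses at both singular points and must be handled by conditioning.
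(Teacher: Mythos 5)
Your decomposition by the factorization type of $h_0$, the zero contribution when $h_0$ is irreducible, the split case handled by independence to give $1-(1-\alpha)^2$ (as in Lemma \ref{lem:alpha_beta} and Corollary \ref{cor:sigma_4_h_splits}), and the weighting by $\eta_{2,i}$ versus $\eta_{2,i}'$ to pass between $\sigma_4$ and $\sigma_4^*$ all agree with the paper. The gap is in the double-root case. You propose to run the valuation tower ``until the recursion closes onto a previously computed quantity,'' but it does not: after the successive reductions one arrives at an equation $y^3 = f(x,z)$ whose leading coefficient is a unit that is a non-cube modulo $p$ and whose remaining coefficients are unconstrained, i.e.\ an arbitrary curve satisfying condition $(*)$. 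Its solubility probability is $\rho^*(p)$, which is not known at that stage and which itself depends on the quantity you are trying to compute. Accordingly, the paper's Lemma \ref{lem:sigma4_double_root} only yields a linear relation \eqref{eq:lambda_rho_star_relation} between the double-root lifting probability $\lambda$ and $\rho^*(p)$. To close the circle one must couple this with $\rho^*(p) = \xi_1^*\sigma_1^* + \xi_4^*\sigma_4^*$ from \eqref{eq:rho_star_sum} (with $\xi_i^*$ from Corollary \ref{cor:factorization_probs}), use $\sigma_1^* = 1$ from Proposition \ref{prop:sigma_1} --- this is precisely where the hypothesis $p>43$ enters, and your outline never invokes it --- and with $\sigma_4^* = \eta_{2,1}'\bigl(1-(1-\alpha)^2\bigr) + \eta_{2,2}'\lambda$, then solve the resulting system simultaneously for $\lambda$, $\rho^*$, $\sigma_4^*$, and finally $\sigma_4 = \eta_{2,1}\bigl(1-(1-\alpha)^2\bigr) + \eta_{2,2}\lambda$. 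Without recognizing this self-referential structure, the valuation table in the double-root case cannot terminate, and the rational functions \eqref{eq:sigma_4} and \eqref{eq:sigma_4_star} cannot be derived.

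A secondary point: the intermediate steps of that tower also require auxiliary lifting densities established elsewhere ($\alpha'$, $\alpha''$ from Lemma \ref{lem:master}, $\mu$ from Lemma \ref{lem:mu}, and $\tau_2$, $\theta_2$ from Lemmas \ref{lem:taus} and \ref{lem:thetas}), whose validity again needs $p$ large; so ``careful bookkeeping'' alone does not suffice --- these quantities must be in hand before the double-root case can be assembled. Your handling of condition $(*)$ (monic $h_0$, hence the $\eta'$ weights) and of independence at the two nodes in the split case is correct and matches the paper.
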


The proof is given in \S \ref{sec:proof_sigma4}, after studying the factorization types of the binary quadratic form $h_0(x,z)$ individually.

\subsubsection{\texorpdfstring{$h_0(x,z)$}{h0(x,z)} has no roots in \texorpdfstring{$\F_p$}{Fp}.} If this is the case, then there are no $\Q_p$-points, because there are no $\F_p$ solutions to $h_0(x,z) = 0$, which is necessary by the above argument.

The probability of $h_0(x,z)$ having no roots is $\eta_{2,0} = \frac{p(p-1)^2}{2(p^3 - 1)}$. If $F$ satisfies $(*)$, then we may as well assume $h_0(x,z)$ is monic, and the probability of it having no roots is $\eta_{2,0}' = \frac{p-1}{2p}$.

\subsubsection{\texorpdfstring{$h_0(x,z)$}{h0(x,z)} has distinct roots in \texorpdfstring{$\F_p$}{Fp}.} The probability of this occurring is $\eta_{2,1} = \frac{p(p^2-1)}{2(p^3-1)}$, and $\eta_{2,1}' = \frac{p-1}{2p}$ in the case of condition $(*)$. After composing with an automorphism of $\P^1$, we may assume the roots of $h_0$ are located at $[x:z] = [1:0]$ and $[0:1]$, so we have $h_0(x,z) = xz$. Thus we have $\overline{F}(x,y,z) = y^3 - ax^3z^3$, where $a \in \F_p^\times - (\F_p^\times)^3$. We now need to compute the probabilities that $[0:0:1]$ and $[1:0:0]$ lift to $\Q_p$-points. This is analogous to \cite[\S 2.3.2]{BCF_genus_one}, and follows from Lemma \ref{lem:alpha_beta} applied to each root.

\begin{corollary}\label{cor:sigma_4_h_splits}
	Suppose $\overline{F}$ has factorization type 4, with $h_0(x,z)$ having distinct linear factors mod $p$. Then the probability that $F$ has a $\Q_p$-solution is given by 
	\[1 - (1 - \alpha)^2 = \frac{(p^3 + p + 1)(2p^4 + p^3 + 2p^2 + p + 1)}{(p^4 + p^3 + p^2 + p + 1)^2}\]
	where $\alpha$ is as defined in Lemma \ref{lem:alpha_beta}.
\end{corollary}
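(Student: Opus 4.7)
My plan is to reduce Corollary \ref{cor:sigma_4_h_splits} to a direct application of Lemma \ref{lem:alpha_beta} at each of the two singular $\F_p$-points of $\overline{C_f}$, combined with an inclusion-exclusion computation that exploits the independence noted in Remark \ref{rem:independence}.

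First, I would set up the normal form. By hypothesis, $\overline{F}(x,y,z) = y^3 - a h_0(x,z)^3$ for some $a \in \F_p^\times \setminus (\F_p^\times)^3$ and $h_0(x,z)$ a binary quadratic with two distinct roots over $\F_p$. Composing with an automorphism of $\P^1$ so that the roots land at $[0:1]$ and $[1:0]$, I may assume $h_0(x,z) = xz$, so $\overline{F} = y^3 - ax^3z^3$. Since $a$ is a non-cube in $\F_p^\times$, any $\F_p$-solution must have $x = 0$ or $z = 0$; thus the only $\F_p$-points on $\overline{C_f}$ are $P_0 = [0:0:1]$ and $P_\infty = [1:0:0]$, and any $\Q_p$-point of $C_f$ must reduce to one of these. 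In terms of coefficients, we are conditioning on $v(c_6), v(c_5), v(c_4), v(c_2), v(c_1), v(c_0) \geq 1$ with $\overline{c_3} = a$.

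The key observation is that lifts of $P_0$ (to points of the form $[x:y:1]$) and lifts of $P_\infty$ (to points of the form $[1:y:z]$) fall precisely under the hypothesis of Lemma \ref{lem:alpha_beta}. For $P_0$, we already have $c_4, c_5, c_6 \in p\Z_p$ and $\overline{c_3} \notin \F_p^3$, and we let $c_0, c_1, c_2$ range over $p\Z_p$; thus the probability that $P_0$ lifts is exactly $\alpha$. Applying the involution $x \leftrightarrow z$ (equivalently $c_i \leftrightarrow c_{6-i}$), the probability that $P_\infty$ lifts is likewise $\alpha$, now as $c_4, c_5, c_6$ range over $p\Z_p$ with $c_0, c_1, c_2 \in p\Z_p$ fixed.

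By Remark \ref{rem:independence}, the value of $\alpha$ does not depend on $c_4, c_5, c_6$, so the event ``$P_0$ lifts'' is measurable with respect to $(c_0, c_1, c_2)$ alone, and symmetrically the event ``$P_\infty$ lifts'' is measurable with respect to $(c_4, c_5, c_6)$ alone. With $c_3$ fixed, these variables range independently over $p\Z_p$ under the normalized Haar measure, so the two lifting events are independent. Hence the probability that \emph{neither} point lifts is $(1-\alpha)^2$, and therefore
\[
\Pr(C_f(\Q_p) \neq \emptyset) = 1 - (1-\alpha)^2.
\]
Finally, substituting $\alpha = (p^3+p+1)/(p^4+p^3+p^2+p+1)$ and simplifying $1-\alpha = p^2(p^2+1)/(p^4+p^3+p^2+p+1)$, the factorization $1-(1-\alpha)^2 = (1-(1-\alpha))(1+(1-\alpha)) = \alpha\bigl(2-\alpha\bigr)$ yields the claimed rational function. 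The only subtlety is verifying the independence of the two lifting events across all the nested reductions that appear in the proof of Lemma \ref{lem:alpha_beta}, but this is exactly what Remark \ref{rem:independence} guarantees.
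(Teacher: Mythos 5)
Your proposal is correct and follows essentially the same route as the paper: reduce to $\overline{F} = y^3 - ax^3z^3$, apply Lemma \ref{lem:alpha_beta} to each of the two singular points $[0:0:1]$ and $[1:0:0]$, and use the independence from Remark \ref{rem:independence} (lifting at one point depends only on $c_0,c_1,c_2$, at the other only on $c_4,c_5,c_6$) to get $1-(1-\alpha)^2$. The algebraic simplification via $1-\alpha = p^2(p^2+1)/(p^4+p^3+p^2+p+1)$ matches the stated rational function, so nothing is missing.
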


\begin{proof}
	Suppose $F(x,y,z) \equiv y^3 - ah_0(x,z)^3 \pmod{p}$, where $a \notin \F_p^3$, such that $h_0(x,z)$ has distinct linear factors mod $p$. After a change of coordinates, we may assume that $h_0(x,z) = xz$, giving us
	\[\overline{F}(x,y,z) = y^3 - ax^3z^3.\]
	The only $\F_p$-points of $\overline{F}\equiv 0$ are at $[1:0:0]$ and $[0:0:1]$, both of which are singular.
	
	Since $c_0, c_1, c_2, c_4, c_5, c_6 \in p\Z_p$ and $c_3 \equiv a \notin \F_p^3$, for the point $[0:0:1]$ we are in the case of Lemma \ref{lem:alpha_beta}. The probability that $[0:0:1]$ lifts to a $\Q_p$-point is thus $\alpha$. Note that by Remark \ref{rem:independence}, this only depends on the choices of $c_0, c_1, c_2$. Similarly, but with the roles of $x$ and $z$ reversed, the probability that $[1:0:0]$ lifts is also $\alpha$, and only depends on $c_4, c_5, c_6$. Thus the two probabilities are independent, allowing us to compute the probability that at least one of the points lifts by $1 - (1-\alpha)^2$.
\end{proof}

\subsubsection{\texorpdfstring{$h_0(x,z)$}{h0(x,z)} has double root} We now need to carry out the analysis for when $h_0(x,z)$ has a double root. This occurs with probability $\eta_{2,2} = \frac{p^2 - 1}{p^3 - 1}$ and in the case of $(*)$, $\eta_{2,2}' = \frac{1}{p}$. This case requires more work, which we organize into the following lemma.

\begin{lemma}\label{lem:sigma4_double_root}
	Assume $p > 3$ and suppose $\overline{F}$ has factorization type 4, with $h_0(x,z)$ having a double root modulo $p$. Then the probability that $F$ has a $\Q_p$-solution is given by $\lambda$, where
	\begin{align}\label{eq:lambda_rho_star_relation}
		\lambda = \frac{1}{p^{15}}\rho^*(p) &+ (p-1)\Big(72p^{25} + 72p^{23} + 72p^{22} + 24p^{21} - 24p^{20} + 36p^{19} - 84p^{18} + 72p^{17} - 27p^{16}  \\
		\nonumber & + 18p^{15} - 13p^{14} - 12p^{13} - 36p^{12} + 25p^{11} - 55p^{10} + 12p^9 - 115p^8 + 105p^7 - 178p^6   \\
		\nonumber & + 73p^5 - 35p^4 + 67p^3 - 93p^2 + 36p - 12\Big) \Big/ \Big(72p^{22}(p^5 - 1)\Big)
	\end{align}
	
\end{lemma}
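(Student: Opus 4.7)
The plan is to set up coordinates so that the double root of $h_0(x,z)$ lies at $[0:1]$, giving $h_0(x,z) = x^2$ after absorbing the leading coefficient into $a$, so that $\overline F(x,y,z) = y^3 - a x^6$ with $a \in \F_p^\times \setminus (\F_p^\times)^3$. Equivalently, $\overline{c_6} = a$ is a non-cube while $c_0, \ldots, c_5 \in p\Z_p$. Since the only $\F_p$-point of $\overline F = 0$ is the singular point $[0:0:1]$, the curve $C_f$ has a $\Q_p$-point if and only if there exists $(x, y) \in p\Z_p \times p\Z_p$ satisfying $y^3 = f(x,1)$, and $\lambda$ is the conditional probability of this occurring as the reduced variables $c_i/p \in \Z_p$ (for $0 \leq i \leq 5$) vary uniformly.

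I would carry out a case analysis on the valuations $v(c_i)$ for $0 \leq i \leq 5$, organized as a decision tree in the style of the arguments for Lemmas \ref{lem:alpha_beta} and \ref{lem:master}. At each node, one of the following occurs: (a) a reduction of $f$ modulo a suitable power of $p$ exhibits a simple root, yielding a $\Q_p$-point directly by Hensel's lemma; (b) after a change of variables, the configuration reduces to a case already computed in Lemmas \ref{lem:master} or \ref{lem:mu}, contributing a known term built from $\alpha'$, $\beta'$, $\alpha''$, $\beta''$, $\mu$, or $\mu'$; or (c) one imposes a further necessary divisibility on some $c_i$, contributing a factor of $1/p$ and recursing on the remaining coefficients.

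The key structural observation producing the $\rho^*(p)$ contribution is the behavior of the weighted blow-up $x \mapsto p x$, $y \mapsto p^2 y$, under which $y^3 - f(x,1)$ becomes $p^6 \bigl((y')^3 - \tilde f(x',1)\bigr)$ with $\tilde f(x,1) = \sum_{i=0}^{6} c_i p^{i-6} x^i$. This has integral coefficients precisely when $v(c_i) \geq 6-i$ for every $0 \leq i \leq 5$, a deepest branch reached with conditional probability $p^{-(5+4+3+2+1)} = p^{-15}$ under the starting constraints. In this branch the new leading coefficient remains $c_6$, still a non-cube, so the blown-up curve automatically satisfies condition $(*)$ and admits a $\Q_p$-point with probability exactly $\rho^*(p)$ by definition. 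This accounts for the $\frac{1}{p^{15}}\rho^*(p)$ term in \eqref{eq:lambda_rho_star_relation}.

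The main obstacle is the extensive bookkeeping required to enumerate the shallower branches: for each pattern of Newton-polygon valuations of $f(x,1)$, one must track the conditional probability that the dominant coefficient of the appropriate twist of $f$ is a cubic residue modulo $p$ (using that $p \equiv 1 \pmod 3$ throughout factorization type 4), and combine these contributions with those of the $\alpha'$, $\beta'$, $\alpha''$, $\beta''$, $\mu$, and $\mu'$ branches. Assembling the resulting decision tree into a single rational expression --- best done by computer algebra, as the authors note --- and simplifying yields the stated relation \eqref{eq:lambda_rho_star_relation} between $\lambda$ and $\rho^*(p)$.
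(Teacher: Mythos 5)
Your proposal follows essentially the same route as the paper's proof: a valuation decision tree on $c_0,\dots,c_5$ in which simple roots lift via Hensel's lemma, intermediate configurations reduce to previously computed quantities (the paper's steps use $\alpha'$, $\alpha''$, $\mu$ from Lemmas \ref{lem:master} and \ref{lem:mu}, together with $\Phi(p)$, $\tau_2$, $\theta_2$, and one Hasse--Weil step), and the deepest branch $v(c_i)\ge 6-i$, of conditional measure $p^{-15}$, becomes after the composite substitution $x\mapsto px$, $y\mapsto p^2y$ a curve satisfying $(*)$, yielding the $\frac{1}{p^{15}}\rho^*(p)$ term. This is exactly the structure of the paper's table of steps $\lambda_a,\dots,\lambda_p$, so the proposal is correct in outline and takes the same approach.
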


\begin{proof}
	After a change of variables, we may assume $h_0(x,z) = x^2$, so we have $\overline{F}(x,y,z) = y^3 - ax^6$, where $a \notin \F_p^3$. That is, we have $c_6 \in \Z_p$ such that $c_6 \equiv a \pmod{p}$ and $c_0, ..., c_5 \in p\Z_p$. The only $\F_p$-point of $\overline{F}$ is the singular point at $[0:0:1]$ since $a \notin \F_p^3$. Thus $\lambda$ is the probability that this point lifts.
	
	The table below lists the valuations of the coefficients of $f$. For each line, we compute the probability that the singular point lifts or not, and then move on to the next line. The probability for moving to the next line will always  be $\frac{1}{p}$. This will give rise to a linear relation between $\lambda$ and $\rho^*$, the probability that $F$ has a $\Q_p$-point when its leading coefficient is not a cube mod $p$.
	
	\begin{center}
	\begin{tabular}{l l l | l l l l l l l}
	& & & $c_6$ & $c_5$ & $c_4$ & $c_3$ & $c_2$ & $c_1$ & $c_0$ \\
	
	$\lambda =$ & $\lambda_a =$ & $\frac{1}{p}\lambda_b$
		& $=0$ & $\geq 1$ & $\geq 1$ & $\geq 1$ & $\geq 1$ & $\geq 1$ & $\geq 1$\\
	
	& $\lambda_b =$ & $\left(1 - \frac{1}{p}\right) + \frac{1}{p}\lambda_c$
		& $=0$ & $\geq 1$ & $\geq 1$ & $\geq 1$ & $\geq 1$ & $\geq 1$ & $\geq 2$\\
		
	 & $\lambda_c =$ & $\frac{1}{p}\lambda_d$
		& $=0$ & $\geq 1$ & $\geq 1$ & $\geq 1$ & $\geq 1$ & $\geq 2$ & $\geq 2$\\
		
	 & $\lambda_d =$ & $\left(1 - \frac{1}{p}\right) + \frac{1}{p}\lambda_e$
		& $=3$ & $\geq 3$ & $\geq 2$ & $\geq 1$ & $\geq 0$ & $\geq 0$ & $\geq 0$\\
		
	 & $\lambda_e =$ & $\left(1 - \frac{1}{p}\right) + \frac{1}{p}\lambda_f$
		& $=3$ & $\geq 3$ & $\geq 2$ & $\geq 1$ & $\geq 1$ & $\geq 0$ & $\geq 0$\\
		
	 & $\lambda_f =$ & $\Phi(p) + \frac{1}{p}\lambda_g$
		& $=3$ & $\geq 3$ & $\geq 2$ & $\geq 1$ & $\geq 1$ & $\geq 1$ & $\geq 0$\\

	 & $\lambda_g =$ & $\left(1 - \frac{1}{p}\right)\alpha' + \frac{1}{p}\lambda_h$
		& $=3$ & $\geq 3$ & $\geq 2$ & $\geq 1$ & $\geq 1$ & $\geq 1$ & $\geq 1$\\
		
	 & $\lambda_h =$ & $ \left(1 - \frac{1}{p} \right) \left( \frac{p-1}{2p} + \frac{1}{p^2} \right) + \frac{1}{p}\lambda_i$
		& $=3$ & $\geq 3$ & $\geq 2$ & $\geq 2$ & $\geq 1$ & $\geq 1$ & $\geq 1$\\
		
	 & $\lambda_i =$ & $\left(1 - \frac{1}{p}\right) + \frac{1}{p}\lambda_j$
		& $=3$ & $\geq 3$ & $\geq 2$ & $\geq 2$ & $\geq 2$ & $\geq 1$ & $\geq 1$\\
		
	 & $\lambda_j =$ & $\frac{1}{p}\lambda_k$
		& $=3$ & $\geq 3$ & $\geq 2$ & $\geq 2$ & $\geq 2$ & $\geq 2$ & $\geq 1$\\
		
	 & $\lambda_k =$ & $\left(1 - \frac{1}{p}\right)\mu + \frac{1}{p}\lambda_\ell$
		& $=3$ & $\geq 3$ & $\geq 2$ & $\geq 2$ & $\geq 2$ & $\geq 2$ & $\geq 2$\\
		
	 & $\lambda_\ell =$ & $\left(1 - \frac{1}{p}\right)\alpha'' + \frac{1}{p}\lambda_m$
		& $=3$ & $\geq 3$ & $\geq 3$ & $\geq 2$ & $\geq 2$ & $\geq 2$ & $\geq 2$\\	
		
	 & $\lambda_m=$ & $\left(1 - \frac{1}{p} \right) \left( \frac{p-1}{2p} + \frac{(p+2)(2p^2 - 3p + 3)}{6p^3} \right) + \frac{1}{p}\lambda_n$
		& $=3$ & $\geq 3$ & $\geq 3$ & $\geq 3$ & $\geq 2$ & $\geq 2$ & $\geq 2$\\	
	
	 & $\lambda_n =$ & $\left(1 - \frac{1}{p}\right) + \frac{1}{p}\lambda_o$
		& $=3$ & $\geq 3$ & $\geq 3$ & $\geq 3$ & $\geq 3$ & $\geq 2$ & $\geq 2$\\	
		
	 & $\lambda_o =$ & $ \frac{1}{p}\lambda_p$
		& $=3$ & $\geq 3$ & $\geq 3$ & $\geq 3$ & $\geq 3$ & $\geq 3$ & $\geq 2$\\	
		
	 & $\lambda_p =$ & $ \rho^*$
		& $=0$ & $\geq 0$ & $\geq 0$ & $\geq 0$ & $\geq 0$ & $\geq 0$ & $\geq 0$\\	
		
	\end{tabular}
	\end{center}
	
	Putting together the steps above gives \eqref{eq:lambda_rho_star_relation}. Each step is justified below.
	\begin{enumerate}[label = (\alph*)]
		\item The only possible point reduces to $[0:0:1]$, so $p \mid x,y$. Reducing $F(x,y,z)$ mod $p^2$ reveals that $v(c_0) \geq 2$ is necessary. 
		
		\item If $v(c_1) = 1$, which occurs with probability $1-\frac{1}{p}$ then we can fix $y \in p\Z_p$ and look for solutions to $F(x,y,1)$ as a function of $x$. While it is clear $p \mid x$ is necessary, we have $v(F'(x,y,1)) = 1$, so we need to look for solutions mod $p^3$. Looking modulo $p^3$, we have the linear equation $c_1x + c_0 \equiv 0 \pmod{p^3}$, which we can solve, finding something that lifts. If $v(c_1) = 2$ then we move to the next line.
		
		\item Again we have $p \mid x,y$, so we reduce mod $p^3$ and find that it is necessary to have $p^3 \mid c_0$. This occurs with probability $\frac{1}{p}$. Before moving to the next line we replace both $x$ and $y$ with $px$ and $py$, then divide by $p^3$.
		
		\item With probability $1- \frac{1}{p}$ we have $v(c_2) = 0$, in which case $\overline{C_f}$ is isomorphic to
		\[y^3 \equiv c_2x^2z + c_1xz^2 + c_0z^3 \pmod{p}\]
		and its normalization has geometric genus at most 1. Since $\overline{f} \neq ah^3$ for $a \in \F_p$ and $h \in \F_p[x,z]$, we apply the Hasse--Weil bound (see the proof of Proposition \ref{prop:bound_p=1_large}) to find
		\[\#\overline{C_f}^{\sm}(\F_p) \geq p + 1 - 2\sqrt{p} > 1,\]
		where the rightmost inequality holds for all primes $p > 4$. $\overline{C_f}$ has only the point $[1:0:0]$ above infinity, so there must exist some smooth $\F_p$-point $[x:y:1]$ which lifts to a $\Q_p$-point of $C_f$. If $v(c_2) \geq 1$ we move to the next line.

		\item With probability $1 - \frac{1}{p}$ we have $v(c_1) = 0$ and the reduced equation is $\overline{F}(x,y,1) = y^3 - c_1x - c_0$, which is linear in $x$ and thus has a solution with $y \in p\Z_p$. With probability $\frac{1}{p}$ we have $v(c_1) \geq 1$ and move to the next line.
		
		\item The reduced equation is now $\overline{F} = y^3 - c_0 $. The probability that $c_0$ is a nonzero cubic residue is $\Phi(p) = \frac{1}{3}\left(1 - \frac{1}{p}\right)$ (see Proposition \ref{prop:bound_matrix} for the definition). If $p \nmid c_0$ is not a cubic residue, then no point lifts. With probability $\frac{1}{p}$ we have $v(c_0)= 1$ and we move to the next line.
		
		\item With probability $1 - \frac{1}{p}$ we have $v(c_3) = 1$ and we are in the case of $\alpha'$. See \eqref{eq:alpha_prime} from Lemma \ref{lem:master}. With probability $\frac{1}{p}$ we move to the next line.
		
		\item	With probability $1 - \frac{1}{p}$ we have $v(c_2) = 1$. It is clear that for any solution, we must have $p \mid y$ and $\frac{1}{p}(c_2x^2 + c_1x + c_0) \equiv 0 \pmod{p}$. The quadratic $\frac{1}{p}(c_2x^2 + c_1x + c_0)$ has no roots with probability $\eta_{2,0}' = \frac{1}{2}\frac{(p-1)}{p}$ and distinct roots with probability $\eta_{2,1}' = \frac{1}{2}\frac{(p-1)}{p}$. In the case of distinct roots, one can check that either root lifts to the $x$-coordinate of a $\Q_p$-point with $y \in p\Z_p$. A double root occurs with probability $\eta_{2,2}' = \frac{1}{p}$, and the probability of a solution lifting is equal to $\tau_2$, to be defined later shown to be $\tau_2 = \frac{1}{p}$ in Lemma \ref{lem:taus}. Thus we have
		\[\lambda_h = \left(1 - \frac{1}{p} \right) \left( \eta_{2,1}' + \eta_{2,2}'\tau_2 \right) + \frac{1}{p}\lambda_i = \left(1 - \frac{1}{p} \right) \left( \frac{p-1}{2p} + \frac{1}{p^2} \right) + \frac{1}{p}\lambda_i.\]

		\item Reducing mod $p$ we see that any solution must have $p \mid y$. If $v(c_1) = 1$ then $\frac{1}{p}(c_1x + c_0)$ is linear in $x$ and has a solution modulo $p$. A straightforward check shows that an $x$-value solving this equation modulo $p$ lifts to a solution of $F(x,y,1) \equiv 0 \pmod{p^3}$ with $p \mid y$, and hence to a $\Q_p$-solution by Hensel's lemma. With probability $\frac{1}{p}$ we move to the next line.
		
		\item Reducing mod $p^2$, we have $c_0 \in p^2 \Z_p$ is necessary to obtain a $\Q_p$-solution. This occurs with probability $\frac{1}{p}$, and we move to the next line.
		
		\item With probability $1 - \frac{1}{p}$ we have $v(c_4) = 2$ and we are in the case of $\mu$ from Lemma \ref{lem:mu}. With probability $\frac{1}{p}$ we move to the next line.
		
		\item[($\ell$)]  With probability $1 - \frac{1}{p}$ we have $v(c_3) = 2$ and we are in the case of $\alpha''$. See \eqref{eq:alpha_prime_prime} from Lemma \ref{lem:master}. With probability $\frac{1}{p}$ we move to the next line.
		
		\setcounter{enumi}{12}
		\item With probability $1 - \frac{1}{p}$ we have $v(c_2) = 2$. It is clear that for any solution, we must have $p \mid y$ and $\frac{1}{p^2}(c_2x^2 + c_1x + c_0) \equiv 0 \pmod{p}$. The quadratic $\frac{1}{p^2}(c_2x^2 + c_1x + c_0)$ has no roots with probability $\eta_{2,0}' = \frac{1}{2}\frac{(p-1)}{p}$ and distinct roots with probability $\eta_{2,1}' = \frac{1}{2}\frac{(p-1)}{p}$. In the case of distinct roots, one can check that either root lifts to the $x$-coordinate of a $\Q_p$-point with $y \in p\Z_p$. A double root occurs with probability $\eta_{2,2}' = \frac{1}{p}$, and the probability of a solution lifting is equal to $\theta_2$, to be defined later shown to be $\theta_2 = \frac{(p+2)(2p^2 - 3p + 3)}{6p^2}$ in Lemma \ref{lem:thetas}. Thus we have
		\[\lambda_m = \left(1 - \frac{1}{p} \right) \left( \eta_{2,1}' + \eta_{2,2}'\theta_2 \right) + \frac{1}{p}\lambda_n = \left(1 - \frac{1}{p} \right) \left( \frac{p-1}{2p} + \frac{(p+2)(2p^2 - 3p + 3)}{6p^3} \right) + \frac{1}{p}\lambda_n.\]

		\item If $v(c_1) = 2$ then we can lift a root of $\frac{1}{p^2}(c_1x + c_0) \equiv 0 \pmod{p}$ to a solution. If not, we move to the next line.
		
		\item We have $p \mid y$, so reducing modulo $p^3$ shows that $c_0 \in p^3\Z_p$ is necessary, which occurs with probability $\frac{1}{p}$. Replacing $y$ by $py$ and dividing by $p^3$ moves us to the next line.
		
		\item Recalling that $c_6$ is not congruent to a cubic residue mod $p$, we are in the case of $\rho^*$, because we have assumed no conditions on the coefficients except the $(*)$ condition.
	\end{enumerate}
\end{proof}

\subsubsection{Completing the proof of Proposition \ref{prop:sigma_4}}\label{sec:proof_sigma4}

\begin{proof}[Proof of Proposition \ref{prop:sigma_4}]
	Lemma \ref{lem:sigma4_double_root} gives us a linear relation between $\rho^*(p)$ and $\lambda$. Here we give another such relation, and solve the system for $\rho^*(p)$ and $\lambda$. Recall
	\[\rho^*(p) = \xi_1^* + \xi_4^*\sigma_4^*\]
	since $\sigma_1^* = 1$ when $p > 43$. We have given the values of $\xi_i^*$ in Corollary \ref{cor:factorization_probs}. We break $\sigma_4^*$ down into the cases where $h_0(x,z)$ has no roots, distinct simple roots, or a double root, giving us
	\begin{align*}
		\sigma_4^* &= \eta_{2,1}'(1 - (1-\alpha)^2) + \eta_{2,2}' \lambda\\
		&=\left(\frac{p-1}{2p}\right)\left(1 - (1 - \alpha)^2\right) + \frac{1}{p} \lambda 
	\end{align*}
	by Corollary \ref{cor:sigma_4_h_splits}. Combining these, we have the relation
	\[\rho^*(p) = \xi_1^* + \xi_4^*\left(\left(\frac{p-1}{2p}\right)\left(1 - (1 - \alpha)^2\right) + \frac{1}{p} \lambda \right).\]
	We also write $\sigma_4$ as
	\begin{align*}
		\sigma_4 &= \eta_{2,1}(1-(1-\alpha)^2) + \eta_{2,2}\lambda\\
		&= \left( \frac{1}{2}\frac{p(p^2 - 1)}{p^3 - 1} \right)\left( 1 - (1-\alpha)^2\right)  + \left( \frac{p^2 - 1}{p^3 - 1} \right) \lambda.
	\end{align*}
	Using the above relations and \eqref{eq:lambda_rho_star_relation} from Lemma \ref{lem:sigma4_double_root}, we have four equations relating $\sigma_4, \sigma_4^*, \rho^*, \lambda$, which may be solved using computer algebra software to produce \eqref{eq:rho_star} -- \eqref{eq:lambda}. \href{https://github.com/c-keyes/Density-of-locally-soluble-SECs/blob/bd6a8b39ea8c63bf8e7a847063c70998d01ee8aa/SEC_rho36_23Aug21.ipynb}{See here} for an implementation using Sage \cite{sagemath}.
\end{proof}

\subsection{Triple factors: computing \texorpdfstring{$\sigma_5$}{sigma5}}\label{sec:exact_triple}

Suppose $p \neq 3$. Then if $\overline{F}$ has factorization type 5, we have $F\equiv y^3 \pmod{p}$, so the coefficients of $f(x,z)$ are all divisible by $p$. 

For any solution, that is, for any $[x_0: y_0: z_0]$ such that $F(x_0,y_0,z_0)=0$, we have that $y_0^3=f(x_0,z_0)$. Since we are assuming that $[x_0: y_0: z_0]$ is a solution, we also have that $0=y_0^3-f(x_0,z_0)=y_0^3 \pmod{p}$, and thus $p \mid y_0^3$ so we have that $p^3 \mid y_0^3$. Since we have $y_0^3=f(x_0,z_0)$, then $p^3\mid f(x_0,z_0)$.

Writing $f_1=\frac{1}{p} f$, we see that each solution $[x_0: y_0: z_0]$ satisfies $f_1(x_0,z_0)\equiv y_0 \equiv 0 \pmod{p}$. Thus we will consider the different possible factorizations of $f_1(x,z)$ modulo $p$. If $f_1(x,z) \equiv 0\pmod{p}$ then all the coefficients of $f$ are divisible by $p^2$ and so we can write $f_2=\frac{1}{p^2} f$. Now each solution $[x_0: y_0: z_0]$ must satisfy $f_2(x_0,z_0)\equiv y_0 \equiv 0 \pmod{p}$ and so we consider  the different possible factorizations of $f_2(x,z)$ modulo $p$. 

Now if $f_2(x,z)\equiv 0 \pmod{p}$, then all of the coefficients of $f(x,z)$ are divisible by $p^3$. In this case we can replace $y$ by $py$ and divide through by $p^3$ and obtain another arbitrary superelliptic curve with $m=3$ and $d=6$, namely, $y^3=\frac1p f_2(x,z)$ with coefficients in $\mathbb{Z}_p$ in which case, the probability of solubility is $\rho$. This occurs with probability $\frac{1}{p^{14}}$.

For $i=0, \dots, 9$, we denote by $\eta_{6,i}$ the probability of each possible factorization type for a binary sextic modulo $p$ (see Lemma \ref{lem:etas}), and we denote by $\tau_i$ (respectively $\theta_i$) the probability of solubility of $f_1$ (respectively $f_2$) with factorization type $i$ modulo $p$. Thus we have:
\[\sigma_5= \frac{1}{p^{14}}\rho +\left( 1-\dfrac{1}{p^7} \right)\sum\limits_{i=0}^9 \eta_{6,i} \tau_i +\left(\frac{p^7-1}{p^{14}}\right) \sum\limits_{i=0}^9 \eta_{6,i} \theta_i. \]

In order to compute the $\tau_i$ and $\theta_i$, we make the following definitions. Let $\sigma_5'$ be the probability that $F(x,y,z)$ has a $\Q_p$-solution when $v(c_6) = 1$ and $v(c_i) \geq 1$ for $0 \leq i \leq 5$. Take $\sigma_5''$ to be the probability that $F(x,y,z)$ has a $\Q_p$-solution when $v(c_6) = 2$ and $v(c_i) \geq 2$ for $0 \leq i \leq 5$.

\begin{proposition}\label{prop:sigma_5}
	Suppose $C_f$ is given by \eqref{eq:sec_homog} and $\overline{F}$ has factorization type 5 modulo $p$ for a prime $p > 43$ or $p > 2$ with $p \equiv 2 \pmod{3}$. Then the proportion of $f$ for which $C_f(\Q_p) \neq \emptyset$ is a rational function in $p$ given explicitly by
	\[\sigma_5 = \eqref{eq:sigma_5}.\]
	The proportions $\sigma_5'$ and $\sigma_5''$ defined above are also rational functions in $p$,
	\begin{align*}
		\sigma_5' &= \eqref{eq:sigma_5_prime},\\
		\sigma_5'' &= \eqref{eq:sigma_5_prime_prime}.
	\end{align*}
\end{proposition}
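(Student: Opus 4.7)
The plan is to evaluate the decomposition
\[
\sigma_5 = \frac{1}{p^{14}}\rho + \left(1 - \frac{1}{p^7}\right)\sum_{i=0}^{9} \eta_{6,i}\tau_i + \left(\frac{p^7 - 1}{p^{14}}\right) \sum_{i=0}^{9} \eta_{6,i}\theta_i
\]
given just above the statement, by computing each probability $\tau_i$ and $\theta_i$ separately in terms of the already-known quantities $\alpha, \beta, \alpha', \beta', \alpha'', \beta'', \mu, \mu', \lambda, \rho^*, \sigma_4$ from \S \ref{sec:exact_geom}--\S \ref{sec:exact_conjugate}. The frequencies $\eta_{6,i}$ are supplied by Lemma \ref{lem:etas}. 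Along the way, analogous recursions will express $\sigma_5'$ and $\sigma_5''$ in the same list of unknowns, producing an affine linear system in $(\rho, \sigma_5, \sigma_5', \sigma_5'')$ to be solved symbolically.

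For $i = 0$ and $i = 1$ the computation is immediate. When $\overline{f_1}$ (resp.\ $\overline{f_2}$) has no $\F_p$-roots, any point $[x:y:1] \in C_f(\Q_p)$ would force $y \in p\Z_p$ and $v(f_j(x,1)) \geq 2$, which is impossible because $\overline{f_j}(x,1)$ takes only unit values; the point at infinity is ruled out similarly by examining $v(c_6)$. Hence $\tau_0 = \theta_0 = 0$. When $\overline{f_j}$ has a simple root at some $[a:b] \in \P^1_{\F_p}$, Hensel's lemma lifts it to a genuine root $[\tilde a : \tilde b]$ of $f_j$ in $\P^1_{\Z_p}$, and the point $[\tilde a : 0 : \tilde b]$ lies on $C_f$, so $\tau_1 = \theta_1 = 1$.

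For $i \geq 2$, the strategy is the bookkeeping recursion employed in the proof of Lemma \ref{lem:sigma4_double_root}. After an $\mathrm{SL}_2(\Z_p)$ change of variables moving a root of maximal multiplicity to $[0:1]$, we track the $p$-adic valuations of $c_0, \ldots, c_6$ step by step. At each step one of three outcomes arises: (a) the constraints land us in a setting already covered by Lemmas \ref{lem:alpha_beta}, \ref{lem:master}, or \ref{lem:mu}, yielding one of $\alpha, \beta, \alpha', \beta', \alpha'', \beta'', \mu, \mu'$; (b) after the rescaling $(x, y) \mapsto (px, py)$ and division by an appropriate power of $p$, we return to the factorization-type-5 setting with altered leading coefficient valuation, giving a contribution of $\sigma_5'$, $\sigma_5''$, or $\rho$ itself; or (c) for types with several distinct roots of the same multiplicity (types 3, 4, 6, and 8), we use the independence of the local contributions at disjoint roots, exactly as in the proof of Corollary \ref{cor:sigma_4_h_splits}, to obtain expressions of the form $1 - \prod_j (1 - \kappa_j)$ where each $\kappa_j$ is a single-root contribution.

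The principal obstacle is organizational rather than conceptual: with eight nontrivial factorization types in each of the $\tau$ and $\theta$ contexts, each producing its own recursion tree that may further branch based on $p \equiv 1$ versus $p \equiv 2 \pmod{3}$ (due to the cubic-residue density $\Phi(p)$ differing in the two cases), the volume of bookkeeping is considerable. Moreover, the appearance of $\sigma_5', \sigma_5''$ in several of the recursions couples them back to $\sigma_5$, so the recursions do not terminate individually but form a system of affine linear equations in $(\rho, \sigma_5, \sigma_5', \sigma_5'')$. Parallel arguments, starting instead from $v(c_6) = 1$ (respectively $v(c_6) = 2$) with the remaining coefficients of valuation at least $1$ (respectively at least $2$), produce two further equations in the same variables. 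The final step is to assemble all relations --- including the previously established formulas for $\rho^*$, $\sigma_4$, and $\lambda$ --- and solve the resulting system using the Sage notebook cited in the introduction, producing the closed-form rational functions \eqref{eq:sigma_5}, \eqref{eq:sigma_5_prime}, and \eqref{eq:sigma_5_prime_prime}. The prime bounds $p > 43$ for $p \equiv 1 \pmod{3}$ and $p > 2$ for $p \equiv 2 \pmod{3}$ are inherited from Proposition \ref{prop:sigma_1}, which is precisely what is required to ensure $\sigma_1 = \sigma_1^* = 1$ whenever these recursions cycle back to the generic case.
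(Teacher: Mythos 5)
Your plan is essentially the paper's own proof: Lemmas \ref{lem:taus} and \ref{lem:thetas} compute each $\tau_i$ and $\theta_i$ by exactly these valuation-tracking recursions feeding into $\alpha, \beta, \alpha', \beta', \alpha'', \beta'', \mu, \mu'$, close the system via $\sigma_5' = \sum_i \eta_{6,i}'\tau_i$ and $\sigma_5'' = \sum_i \eta_{6,i}'\theta_i$, and the final step in \S\ref{sec:exact_proof_rho} solves the displayed decomposition of $\sigma_5$ together with $\rho = \sum_i \xi_i\sigma_i$ as a linear system in Sage, with the bound $p>43$ entering through $\sigma_1=\sigma_1^*=1$ just as you say. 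The only detail you gloss is that the independence claim for three double roots ($\tau_4$, $\theta_4$) is not quite the two-root argument of Corollary \ref{cor:sigma_4_h_splits}: it needs the $3$-transitivity of $\Aut\P^1$ and the observation that lifting at $[1:1]$ is governed by $f(1,1)=\sum_i c_i$ (and, for $\theta_4$, a discriminant argument), which the paper supplies.
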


The proofs of these equalities are spread across the remainder of this section. $\sigma_5'$ is computed in the proof Lemma \ref{lem:taus}, along with the values of $\tau_i$. $\sigma_5''$ is computed in the proof of Lemma \ref{lem:thetas}, along with the $\theta_i$ values. The proof is completed in \S\ref{sec:exact_proof_rho} with the computation of $\sigma_5$, along with $\rho_{3,6}$.

\begin{lemma}\label{lem:taus}
	The $\tau_i$ values are tabulated below. These hold for all primes $p > 31$, and for $p > 3$ if $p \equiv 2 \pmod{3}$.
	\begin{align*}
		\tau_0 &= 0 & \tau_5 &= \begin{cases} \frac{3  p^{3} + p^{2} + 2  p + 2}{3  {\left(p^{4} + p^{3} + p^{2} + p + 1\right)}}, & p \equiv 1 \pmod{3}\\[2ex] \frac{{\left(3  p^{2} + 2\right)} {\left(p + 1\right)}}{3  {\left(p^{4} + p^{3} + p^{2} + p + 1\right)}},& p \equiv 2 \pmod{3} \end{cases}\\
		\tau_1 &= 1 & \tau_6 &= 1 - (1-\tau_5)^2 = \eqref{eq:tau_6}\\
		\tau_2 &= \frac{1}{p} & \tau_7 &= \eqref{eq:tau_7}\\
		\tau_3 &= 1 - (1-\tau_2)^2 = \frac{2  p - 1}{p^{2}} & \tau_8 &= 1 - (1-\tau_2)(1 - \tau_7) = \eqref{eq:tau_8} \\
		\tau_4 &= 1 - (1 - \tau_2)^3 = \frac{3  p^{2} - 3  p + 1}{p^{3}} & \tau_9 &= \eqref{eq:tau_9}\\
	\end{align*}
\end{lemma}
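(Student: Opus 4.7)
The plan is to dispatch the cases $i = 0, 1, \ldots, 9$ one at a time, splitting them into trivial cases, independence cases, and computational cases; then assemble $\sigma_5'$ as a weighted sum at the end. Writing $f = p f_1$, the coefficients $c_i/p$ of $f_1$ for $i \leq 5$ are uniform in $\Z_p$ and $c_6/p$ is a unit, so after rescaling $f_1$ is uniformly distributed among monic binary sextics over $\Z_p$. Its reduction modulo $p$ is thus a uniformly random monic binary sextic in $\F_p[x,z]$, and the factorization-type conditional measures are exactly the $\eta_{6,i}'$ from Lemma~\ref{lem:etas}.

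The trivial cases are $\tau_0 = 0$ (if $f_1$ has no $\F_p$-root then $v(f(x_0,z_0)) = 1$ for every $[x_0:z_0] \in \P^1(\Z_p)$, which cannot equal $3v(y)$) and $\tau_1 = 1$ (a simple root of $f_1 \bmod p$ lifts via Theorem~\ref{thm:hensel} to a $\Z_p$-root of $f_1$, and hence of $f$, producing the $\Q_p$-point $[x_0:0:1]$). The composite cases $\tau_3, \tau_4, \tau_6, \tau_8$ reduce by independence: the multiple roots of $f_1 \bmod p$ occupy disjoint residue classes, so the conditions needed to produce a $\Q_p$-point near each are probabilistically independent in the sense of Remark~\ref{rem:independence}, each double root contributing with probability $\tau_2$ and each triple (resp.\ quadruple) root with probability $\tau_5$ (resp.\ $\tau_7$). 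This yields the stated complementary-product formulas.

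For the computational cases $\tau_2, \tau_5, \tau_7, \tau_9$, I would translate the multiple root of $f_1 \bmod p$ to $[0:1]$ and execute the tabular valuation analysis developed in Lemma~\ref{lem:sigma4_double_root}. For $\tau_2$ this gives $v(c_2) = 1$ and $v(c_0), v(c_1) \geq 2$; expanding $f(pu, 1)$ shows $v(f(pu,1)) = 2$ unless $v(c_0) \geq 3$, an event of probability $1/p$. When $v(c_0) \geq 3$, combining the access point at $[0:0:1]$ with $v(x) = 1$ always yields a $\Q_p$-point, via the identity $(\F_p^\times)^2 (\F_p^\times)^3 = \F_p^\times$ following from $\gcd(2,3)=1$; this absence of residue-class dependence explains why $\tau_2 = 1/p$ uniformly. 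For $\tau_5$ and $\tau_7$, the analogous tableau tracking valuations of $c_0, c_1, c_2$ (resp.\ $c_0, \ldots, c_3$) bottoms out in weighted combinations of $\alpha, \beta, \alpha', \beta', \alpha'', \beta''$ from Lemmas~\ref{lem:alpha_beta} and~\ref{lem:master} and of $\mu$ from Lemma~\ref{lem:mu}; the cubic-residue density $\Phi(p)$ then produces the case split by $p \bmod 3$.

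The main obstacle will be $\tau_9$: after translating the sextuple root to $[0:1]$, forcing higher valuations on $c_0, \ldots, c_5$ recreates the defining hypotheses of $\sigma_5'$, and one further cascade reproduces those of $\sigma_5''$ and (via the formula preceding Proposition~\ref{prop:sigma_5}) of $\sigma_5$ itself. Consequently $\tau_9$, $\sigma_5'$, $\sigma_5''$, and $\sigma_5$ must be solved as a coupled linear system, which is precisely why $\sigma_5'$ and $\sigma_5''$ were introduced as auxiliary quantities. Once the system is resolved, $\sigma_5'$ emerges as the weighted sum $\sigma_5' = \sum_{i=0}^{9} \eta_{6,i}' \tau_i$, yielding the rational function of \eqref{eq:sigma_5_prime}.
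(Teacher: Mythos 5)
Your overall architecture matches the paper's: trivial cases for $\tau_0,\tau_1$, independence arguments for $\tau_3,\tau_4,\tau_6,\tau_8$, valuation cascades bottoming out in $\alpha,\beta,\alpha',\beta',\alpha'',\beta'',\mu$ for $\tau_2,\tau_5,\tau_7$, and a linear system closing $\tau_9$ against $\sigma_5'=\sum_i\eta_{6,i}'\tau_i$. But your justification of the terminal step of $\tau_2$ has a genuine gap. After imposing $v(c_0)\geq 3$ and substituting $x\mapsto px$, $y\mapsto py$, the residual equation modulo $p$ is $y^3\equiv c_2x^2+c_1x+c_0$ with $c_2$ a unit and $c_1,c_0$ essentially arbitrary. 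When $p\equiv 1\pmod 3$ and this quadratic is irreducible over $\F_p$ (a positive-proportion event), its value set is an additive translate of $c_2\cdot\{\text{squares}\}$, not a multiplicative coset, so the identity $(\F_p^\times)^2(\F_p^\times)^3=\F_p^\times$ says nothing about whether a nonzero cube value occurs, and there is no simple root to lift either. Your identity only handles the degenerate case $q=c_2(x-r)^2$ and the case $p\equiv 2\pmod 3$. The paper's step (the justification of $\lambda_d$ in Lemma \ref{lem:sigma4_double_root}, reused for $\tau_{2b}$) instead applies the Hasse--Weil bound to the genus-at-most-one curve $y^3=c_2x^2z+c_1xz^2+c_0z^3$; this is precisely where the hypothesis on the size of $p$ enters, and some such input is unavoidable.

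Your description of the $\tau_9$ step is also structurally off. Starting from $v(c_6)=1$, each rescaling in the cascade shifts $v(c_6)$ by $\pm 3$ (the substitutions divide by $p^3$), so the recursion returns exactly to the hypotheses of $\sigma_5'$ and never reaches those of $\sigma_5''$ (which require $v(c_6)=2$) or of $\sigma_5$; the system to solve is just the two equations relating $\tau_9$ and $\sigma_5'$, with $\theta_2$ and $\mu$ entering as separately computed inputs, while $\sigma_5''$ only arises in the parallel $\theta_9$ computation and $\sigma_5$ only in the final assembly with $\rho$. Relatedly, $c_6/p$ is a unit only in the $\sigma_5'$ setting; in the $\sigma_5$ decomposition the weights are the $\eta_{6,i}$, not $\eta_{6,i}'$. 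Finally, the independence you invoke for $\tau_4$ needs the paper's specific argument: after moving the three double roots to $[0:1],[1:1],[1:0]$, the lifting condition at $[1:1]$ is $v\bigl(\sum_i c_i\bigr)\geq 3$, which is independent of $c_0,c_1,c_5,c_6$. ``Disjoint residue classes'' alone does not give independence, and indeed this is exactly what breaks down for more than three double roots (Remark \ref{rem:lifting_double_roots}).
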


\begin{proof} Recall that $f_1 = \frac{1}{p}f$ and assume $f_1 \not \equiv 0 \pmod{p}$. We consider the possible factorization types of $f_1$ as a binary sextic form, given by the index $i$ in the $d=6$ row of Lemma \ref{lem:etas}, and compute the probabilities $\tau_i$ of a root $f_1(x,z) \equiv 0$ lifting to a $\Q_p$-point of $C_f$.

\subsubsection*{No roots: \texorpdfstring{$\tau_0$}{tau0}}
If $f_1(x,z) \equiv 0 \pmod{p}$ has no roots in $\mathbb{F}_p$, then $f(x,z)\equiv 0 \pmod{p^2}$ has no solutions and thus $f(x,z)\equiv 0 \pmod{p^3}$ has no solutions, so $\tau_0=0$.

\subsubsection*{Simple roots: \texorpdfstring{$\tau_1$}{tau1}}
If $f_1(x,z) \equiv 0 \pmod{p}$ has a simple root in $\mathbb{F}_p$, it lifts to a $\Q_p$-point on $C_f$ of the form $[x_0 : y_0 : z_0]$ with $p \mid y_0$ (see also $\lambda_i$ in the proof of Lemma \ref{lem:sigma4_double_root}), so $\tau_1=1$.

\subsubsection*{Double roots: \texorpdfstring{$\tau_2$}{tau2}, \texorpdfstring{$\tau_3$}{tau3}, \emph{and} \texorpdfstring{$\tau_4$}{tau4}}
If $f_1(x,z) \equiv 0 \pmod{p}$ has a double root in $\mathbb{F}_p$, we can assume this root occurs at $[0 : 1]$ and the valuations of the coefficients are as in the first line of the following table. 

\begin{center}
	\begin{tabular}{l l l | l l l l l l l}
	& & & $c_6$ & $c_5$ & $c_4$ & $c_3$ & $c_2$ & $c_1$ & $c_0$ \\
	
	$\tau_2 =$ & $\tau_{2a} =$ & $\frac{1}{p} \tau_{2b}$
		& $\geq 1$ & $\geq 1$ & $\geq 1$ & $\geq 1$ & $= 1$ & $\geq 2$ & $\geq 2$\\
		
	& $\tau_{2b} =$ & $1$
		& $\geq 4$ & $\geq 3$ & $\geq 2$ & $\geq 1$ & $= 0$ & $\geq 0$ & $\geq 0$\\
	\end{tabular}
	\end{center}
	
\begin{enumerate}[label = (\alph*)]
	\item Since $p \mid x$, reducing modulo $p^3$ reveals that $p \mid c_0$ is necessary, which occurs with probability $\frac{1}{p}$. Before moving to the next line, we replace $x,y$ by $px, py$ and divide by $p^3$.

	\item The justification is identical to that of $\lambda_{d}$. 
\end{enumerate}
Thus we have $\tau_2=\frac{1}{p}$.

If $f_1(x,z)$ has two double roots in $\F_p$, then after composing with an automorphism of $\P^1$, they occur at $[0:1]$ and $[1:0]$, and we have $v(c_0), v(c_1), v(c_5), v(c_6) \geq 2$ and $v(c_2) = v(c_4) = 1$. The probability, $\tau_2$, that the root at $[0:1]$ lifts to a $\Q_p$-point depends only on $c_0$; the same argument with $c_6$ shows that $\tau_2$ is the probability $[1:0]$ lifts and thus the two are independent. This allows us to write
\[\tau_3= 1 - \left(1 - \tau_2\right)^2 = \frac{2p-1}{p^2}.\]

If $f_1(x,z)$ has three double roots in $\F_p$, then after composing with an automorphism of $\P^1$, we may assume they occur at $[0:1]$, $[1:1]$, and $[1:0]$. To extend the independence argument above to these three roots, we need to argue that the probability of $[1:1]$ lifting is still $\tau_2$, even if we assume the points at $[0:1]$ and $[1:0]$ do not lift. To see this, we recognize that $f(1,1) = \sum_{i=0}^d c_i$, and our assumption that $f_1$ has a double root at $[1:1]$ is equivalent to requiring $v\left(\sum_{i=0}^d c_i\right) \geq 2$. Running through the proof of $\tau_2$ shows that we need only for $v\left(\sum_{i=0}^d c_i\right) \geq 3$, which occurs with probability $\frac{1}{p}$, independent of the valuations of $c_0, c_1, c_5, c_6$ (i.e.\ independent of the lifting behavior at $[0:1]$ and $[1:0]$). Therefore
\[\tau_4= 1 - \left(1 - \tau_2\right)^3=\frac{3p^2-3p+1}{p^3}.\]

\subsubsection*{Triple roots: \texorpdfstring{$\tau_5$}{tau5} \emph{and} \texorpdfstring{$\tau_6$}{tau6}}
If $f_1(x,z) \equiv 0 \pmod{p}$ has a triple root in $\mathbb{F}_p$ we can assume the valuations of the coefficients are as in line $1$ of the following table.

\begin{center}
	\begin{tabular}{l l l | l l l l l l l}
	& & & $c_6$ & $c_5$ & $c_4$ & $c_3$ & $c_2$ & $c_1$ & $c_0$ \\
	
	$\tau_5 =$ & $\tau_{5a} =$ & $\frac{1}{p}\tau_{5b}$
		& $\geq 1$ & $\geq 1$ & $\geq 1$ & $=1$ & $\geq 2$ & $\geq 2$ & $\geq 2$\\
		
	& $\tau_{5b} =$ & $\beta'$
		& $\geq 4$ & $\geq 3$ & $\geq 2$ & $= 1$ & $\geq 1$ & $\geq 0$ & $\geq 0$\\
		
	\end{tabular}
	\end{center}
\begin{enumerate}[label = (\alph*)]
	\item Since $p \mid x$, reducing modulo $p^3$ reveals that $p \mid c_0$ is necessary, which occurs with probability $\frac{1}{p}$. Before moving to the next line, we replace $x,y$ by $px, py$ and divide by $p^3$.
	
	\item We are in the situation of $\beta' = \eqref{eq:beta_prime}$; see Lemma \ref{lem:master}.
\end{enumerate}

In the case of $\tau_5$, $f(x,z)$ has one triple root and no other roots, therefore any $\mathbb{Q}_p$ point must come from lifting the triple root which happens with probability $\frac{1}{p}\beta'$, thus giving $\tau_5 = \frac{1}{p}\beta'$, which is equal to the stated expression.

In the case of $\tau_6$, $f(x,z)$ has two triple roots. We assumed that one triple root was at $[x:z]=[0:1]$ and we could similarly assume the other triple root is at $[1:0]$. The probabilities of these lifting are independent because the computation of the former involves coefficients $c_2,c_1,c_0$ and the second involves coefficients $c_4,c_5,c_6$. Thus each point lifts to a $\mathbb{Q}_p$ point with probability $\frac{1}{p}\beta'$, and hence 
\[\tau_6 = 1- (1-\tau_5)^2 = \eqref{eq:tau_6}.\]
  
 \subsubsection*{Quadruple roots \texorpdfstring{$\tau_7$}{tau7} and \texorpdfstring{$\tau_8$}{tau8}}
 
If $f_1(x,z) \equiv 0 \pmod{p}$ has a quadruple root in $\mathbb{F}_p$, we can assume the the root occurs at $[0 : 1]$ and the coefficients have valuations as listed in the first line of the following table.  
 
\begin{center}
	\begin{tabular}{l l l | l l l l l l l}
	& & & $c_6$ & $c_5$ & $c_4$ & $c_3$ & $c_2$ & $c_1$ & $c_0$ \\
	
	$\tau_7 =$ & $\tau_{7a} =$ & $\frac{1}{p}\tau_{7b}$
		& $\geq 1$ & $\geq 1$ & $= 1$ & $\geq 2$ & $\geq 2$ & $\geq 2$ & $\geq 2$\\
		
	& $\tau_{7b} =$ & $\left(1-\frac{1}{p}\right)+\frac{1}{p}\tau_{7c}$
		& $\geq 4$ & $\geq 3$ & $= 2$ & $\geq 2$ & $\geq 1$ & $\geq 0$ & $\geq 0$\\
		
	& $\tau_{7c} =$ & $\Phi(p) +\frac{1}{p} \tau_{7d}$
		& $\geq 4$ & $\geq 3$ & $= 2$ & $\geq 2$ & $\geq 1$ & $\geq 1$ & $\geq 0$\\
		
	& $\tau_{7d} =$ & $\left(1 - \frac{1}{p} \right) \left( \frac{p-1}{2p} + \frac{1}{p^2} \right) + \frac{1}{p}\tau_{7e}$
		& $\geq 4$ & $\geq 3$ & $= 2$ & $\geq 2$ & $\geq 1$ & $\geq 1$ & $\geq 1$\\
		
	& $\tau_{7e} =$ & $\left(1-\frac{1}{p}\right)+ \frac{1}{p}\tau_{7f}$
		& $\geq 4$ & $\geq 3$ & $= 2$ & $\geq 2$ & $\geq 2$ & $\geq 1$ & $\geq 1$\\	
		
	& $\tau_{7f} =$ & $\frac{1}{p}\tau_{7g}$
		& $\geq 4$ & $\geq 3$ & $= 2$ & $\geq 2$ & $\geq 2$ & $\geq 2$ & $\geq 1$\\	
			
	& $\tau_{7g} =$ & $\mu$
		& $\geq 4$ & $\geq 3$ & $= 2$ & $\geq 2$ & $\geq 2$ & $\geq 2$ & $\geq 2$\\		
	\end{tabular}
	\end{center}

\begin{enumerate}[label = (\alph*)]
	\item Since $p \mid x$, reducing modulo $p^3$ reveals that $p \mid c_0$ is necessary, which occurs with probability $\frac{1}{p}$. Before moving to the next line, we replace $x,y$ by $px, py$ and divide by $p^3$.

	\item With probability $1 - \frac{1}{p}$ we have $v(c_1) = 0$ and the reduced equation is $\overline{F}(x,y,1) = y^3 - c_1x - c_0$, which is linear in $x$ and thus has a solution with $y \in p\Z_p$. With probability $\frac{1}{p}$ we have $v(c_1) \geq 1$ and move to the next line.
	
	\item The reduced equation is now $\overline{F} = y^3 - c_0 $. The probability that $c_0$ is a nonzero cubic residue is \[\Phi(p) = \begin{cases} \frac{1}{3}\left(1 - \frac{1}{p} \right), & p \equiv 1 \pmod{3} \\ 1 - \frac{1}{p} , & p \equiv 2 \pmod{3}\end{cases}\] (see Proposition \ref{prop:bound_matrix} for the definition of $\Phi(p)$). If $p \nmid c_0$ is not a cubic residue, then no point lifts. With probability $\frac{1}{p}$ we have $v(c_0)= 1$ and we move to the next line.

	\item With probability $1 - \frac{1}{p}$ we have $v(c_2) = 1$. It is clear that for any solution, we must have $p \mid y$ and $\frac{1}{p}(c_2x^2 + c_1x + c_0) \equiv 0 \pmod{p}$. The quadratic $\frac{1}{p}(c_2x^2 + c_1x + c_0)$ has no roots with probability $\eta_{2,0}' = \frac{1}{2}\frac{(p-1)}{p}$ and distinct roots with probability $\eta_{2,1}' = \frac{1}{2}\frac{(p-1)}{p}$. In the case of distinct roots, one can check that either root lifts to the $x$-coordinate of a $\Q_p$-point with $y \in p\Z_p$. A double root occurs with probability $\eta_{2,2}' = \frac{1}{p}$, and the probability of a solution lifting is equal to $\tau_2 = \frac{1}{p}$. Thus we have
		\[\tau_{7d} = \left(1 - \frac{1}{p} \right) \left( \eta_{2,1}' + \eta_{2,2}'\tau_2 \right) + \frac{1}{p}\tau_{7e} = \left(1 - \frac{1}{p} \right) \left( \frac{p-1}{2p} + \frac{1}{p^2} \right) + \frac{1}{p}\tau_{7e}.\]

	\item Modulo $p^2$ we have $\frac{1}{p}(c_1x+c_0)$, so if $v_p(c_1)=1$, which happens with probability $1-\frac{1}{p}$, then there is a solution that lifts to $\mathbb{Q}_p$. 

	\item Reducing modulo $p^2$, we see that $p^2\mid c_0$ is necessary to get a $\mathbb{Q}_p$ solution. This happens with probability $\frac{1}{p}$ and so we move to the next line.

	\item We are now in the case of $\mu = \eqref{eq:mu}$ from Lemma \ref{lem:mu}. 
\end{enumerate}

In the case of $\tau_7$ there is only a quadruple root so any $\mathbb{Q}_p$-point must come from lifting a quadruple root which happens with probability $\tau_7$, computed to be \eqref{eq:tau_7}.

In the case of $\tau_8$, there is a quadruple root and a double root. We can make the usual argument about independence of these lifting based on which coefficients were used for the argument. Then \[\tau_8=1-(1-\tau_7)(1-\tau_2) = \eqref{eq:tau_8}.\]

\subsubsection{Sextuple roots: \texorpdfstring{$\tau_9$}{tau9} and \texorpdfstring{$\sigma_5'$}{sigma5'}}

If $f_1(x,z) \equiv 0 \pmod{p}$ has a sextuple root in $\mathbb{F}_p$, we can assume this root occurs at $[0 : 1]$ and the valuations of the coefficients are as in the first line of the following table.

\begin{center}
	\begin{tabular}{l l l | l l l l l l l}
	& & & $c_6$ & $c_5$ & $c_4$ & $c_3$ & $c_2$ & $c_1$ & $c_0$ \\
	
	$\tau_9 =$ & $\tau_{9a} =\frac1p \tau_{9b}$ & 
		& $= 1$ & $\geq 2$ & $\geq 2$ & $\geq 2$ & $\geq 2$ & $\geq 2$ & $\geq 2$\\
		
	& $\tau_{9b} = \left(1-\frac{1}{p}\right)+\frac{1}{p}\tau_{9c}$ & 
		& $= 4$ & $\geq 4$ & $\geq 3$ & $\geq 2$ & $\geq 1$ & $\geq 0$ & $\geq 0$\\
		
	& $\tau_{9c} =\Phi(p) +\frac{1}{p} \tau_{9d}$ & 
		& $= 4$ & $\geq 4$ & $\geq 3$ & $\geq 2$ & $\geq 1$ & $\geq 1$ & $\geq 0$\\
		
	& $\tau_{9d} = \left(1 - \frac{1}{p} \right) \left( \frac{p-1}{2p} + \frac{1}{p^2} \right) + \frac{1}{p}\tau_{9e}$ & 
		& $= 4$ & $\geq 4$ & $\geq 3$ & $\geq 2$ & $\geq 1$ & $\geq 1$ & $\geq 1$\\
	
	& $\tau_{9e} =\left(1-\frac{1}{p}\right)+ \frac{1}{p}\tau_{9f}$ & 
		& $= 4$ & $\geq 4$ & $\geq 3$ & $\geq 2$ & $\geq 2$ & $\geq 1$ & $\geq 1$\\
		
	& $\tau_{9f} =\frac1p \tau_{9g}  $ & 
		& $= 4$ & $\geq 4$ & $\geq 3$ & $\geq 2$ & $\geq 2$ & $\geq 2$ & $\geq 1$\\
	
	& $\tau_{9g} = \left(1 - \frac{1}{p}\right)\alpha'' + \frac{1}{p}\tau_{9h}$ & 
		& $= 4$ & $\geq 4$ & $\geq 3$ & $\geq 2$ & $\geq 2$ & $\geq 2$ & $\geq 2$\\
		
	& $\tau_{9h} = \left(1 - \frac{1}{p} \right) \left( \frac{p-1}{2p} + \frac{\theta_2}{p} \right) + \frac{1}{p}\tau_{9i}$ & 
		& $= 4$ & $\geq 4$ & $\geq 3$ & $\geq 3$ & $\geq 2$ & $\geq 2$ & $\geq 2$\\	
		
	& $\tau_{9i} = \left(1 - \frac{1}{p}\right) + \frac{1}{p}\tau_{9j}$ & 
		& $= 4$ & $\geq 4$ & $\geq 3$ & $\geq 3$ & $\geq 3$ & $\geq 2$ & $\geq 2$\\	
			
	& $\tau_{9j} = \frac{1}{p}\tau_{9k}$ & 
		& $= 4$ & $\geq 4$ & $\geq 3$ & $\geq 3$ & $\geq 3$ & $\geq 3$ & $\geq 2$\\	
		
	& $\tau_{9k} = \left(1-\frac{1}{p}\right) + \frac{1}{p}\tau_{9\ell}$ & 
		& $= 1$ & $\geq 1$ & $\geq 0$ & $\geq 0$ & $\geq 0$ & $\geq 0$ & $\geq 0$\\	
	
	& $\tau_{9\ell} = \Phi(p) + \left(1 - \Phi(p) - \frac{1}{p} \right)\beta + \frac{1}{p}\tau_{9m}$ & 
		& $= 1$ & $\geq 1$ & $\geq 1$ & $\geq 0$ & $\geq 0$ & $\geq 0$ & $\geq 0$\\		
		
	& $\tau_{9m} = \left(1-\frac{1}{p}\right) + \frac{1}{p}\tau_{9n}$ & 
		& $= 1$ & $\geq 1$ & $\geq 1$ & $\geq 1$ & $\geq 0$ & $\geq 0$ & $\geq 0$\\	
		
	& $\tau_{9n} = \left(1-\frac{1}{p}\right) + \frac{1}{p}\tau_{9o}$ & 
		& $= 1$ & $\geq 1$ & $\geq 1$ & $\geq 1$ & $\geq 1$ & $\geq 0$ & $\geq 0$\\	
		
	& $\tau_{9o} = \Phi(p) + \frac{1}{p}\tau_{9p}$ & 
		& $= 1$ & $\geq 1$ & $\geq 1$ & $\geq 1$ & $\geq 1$ & $\geq 1$ & $\geq 0$\\	
		
	& $\tau_{9p} = \sigma_5'$ & 
		& $= 1$ & $\geq 1$ & $\geq 1$ & $\geq 1$ & $\geq 1$ & $\geq 1$ & $\geq 1$\\	
	\end{tabular}
	\end{center}
	
	\begin{enumerate}[label = (\alph*)]
		\item Since $p \mid x$, reducing modulo $p^3$ reveals that $p^3 \mid c_0$ is necessary, which occurs with probability $\frac{1}{p}$. Before moving to the next line, we replace $x,y$ by $px, py$ and divide by $p^3$.
		
		\item With probability $1 - \frac{1}{p}$ we have $v(c_1) = 0$ and the reduced equation is $\overline{F}(x,y,1) = y^3 - c_1x - c_0$, which is linear in $x$ and thus has a solution with $y \in p\Z_p$. With probability $\frac{1}{p}$ we have $v(c_1) \geq 1$ and move to the next line.
		
		\item The reduced equation is now $\overline{F} = y^3 - c_0 $. The probability that $c_0$ is a nonzero cubic residue is $\Phi(p)$. If $p \nmid c_0$ is not a cubic residue, then no point lifts. With probability $\frac{1}{p}$ we have $v(c_0)= 1$ and we move to the next line.
		
		\item The justification is identical to $\tau_{7d}$, producing 
		\[\tau_{9d} = \left(1 - \frac{1}{p} \right) \left( \eta_{2,1}' + \eta_{2,2}'\tau_2 \right) + \frac{1}{p}\tau_{9e} = \left(1 - \frac{1}{p} \right) \left( \frac{p-1}{2p} + \frac{1}{p^2} \right) + \frac{1}{p}\tau_{9e}.\]
		
		\item Modulo $p^2$ we have $\frac{1}{p}(c_1x+c_0)$, so if $v_p(c_1)=1$, which happens with probability $1-\frac{1}{p}$, then there is a solution that lifts to $\mathbb{Q}_p$. 
		
		\item Reducing modulo $p^2$, we see that $p^2\mid c_0$ is necessary to get a $\mathbb{Q}_p$ solution. This happens with probability $\frac{1}{p}$ and so we move to the next line.
		
		\item If $v(c_3) = 2$, then we are in the situation of $\alpha'' = \eqref{eq:alpha_prime_prime}$ from Lemma \ref{lem:master}. This happens with probability $1 - 1/p$, so with probability $1/p$ we have $v(c_3) \geq 3$ and we move to the next line.
		
		\item With probability $1 - \frac{1}{p}$ we have $v(c_2) = 2$. It is clear that for any solution, we must have $p \mid y$ and $\frac{1}{p^2}(c_2x^2 + c_1x + c_0) \equiv 0 \pmod{p}$. The quadratic $\frac{1}{p^2}(c_2x^2 + c_1x + c_0)$ has no roots with probability $\eta_{2,0}' = \frac{1}{2}\frac{(p-1)}{p}$ and distinct roots with probability $\eta_{2,1}' = \frac{1}{2}\frac{(p-1)}{p}$. In the case of distinct roots, one can check that either root lifts to the $x$-coordinate of a $\Q_p$-point with $y \in p\Z_p$. A double root occurs with probability $\eta_{2,2}' = \frac{1}{p}$, and the probability of a solution lifting is equal to $\theta_2$, given in Lemma \ref{lem:thetas}. Thus we have
		\[\tau_{9h} = \left(1 - \frac{1}{p} \right) \left( \eta_{2,1}' + \eta_{2,2}'\theta_2 \right) + \frac{1}{p}\tau_{9i} = \left(1 - \frac{1}{p} \right) \left( \frac{p-1}{2p} + \frac{\theta_2}{p} \right) + \frac{1}{p}\tau_{9i}.\]
		
		\item Modulo $p^3$ we have $\frac{1}{p^2}(c_1x+c_0)$, so if $v_p(c_1)=2$, which happens with probability $1-\frac{1}{p}$, then there is a solution that lifts to $\mathbb{Q}_p$. 
		
		\item Reducing modulo $p^3$ reveals that $p^3 \mid c_0$ is necessary, which occurs with probability $\frac{1}{p}$. Before moving to the next line, we replace $y$ by $py$ and divide by $p^3$.
		
		\item With probability $1-\frac{1}{p}$ we have $v(c_4) = 0$, in which case the normalization of $\overline{C_f}$ is seen to have geometric genus at most 3. Since $\overline{f} \neq ah^3$ for $a \in \F_p$ and $h \in \F_p[x,z]$, we apply the Hasse--Weil bound (see the proof of Proposition \ref{prop:bound_p=1_large}) to find
		    \[\#\overline{C_f}^{\sm}(\F_p) \geq p+1 - 6 \sqrt{p} > 1,\]
		    where the rightmost inequality holds for all primes $p > 31$. $\overline{C_f}$ has only the point $[1:0:0]$ above infinity, so there must exist some smooth $\F_p$-point $[x:y:1]$ which lifts to a $\Q_p$-point of $C_f$. If $p \equiv 2 \pmod{3}$, it suffices to take $p > 2$ (see the proof of Proposition \ref{prop:bound_pneq1}). If $v(c_4) \geq 1$ we move to the next line.

		\item[($\ell$)]  With probability $1-\frac{1}{p}$ we have $v(c_3) = 0$, in which case the normalization of $\overline{C_f}$ is seen to have geometric genus at most 1. If $p \equiv 1 \pmod{3}$ and $c_3 \in \left( \F_p^\times \right)^3$ is a nonzero cubic residue, then whenever $\overline{f} \neq c_3(x-\alpha z)$ for $\alpha \in \F_p[x,z]$, we apply the Hasse--Weil bound (see the proof of Proposition \ref{prop:bound_p=1_large}) to find
		    \[\#\overline{C_f}^{\sm}(\F_p) \geq p+1 - 2 \sqrt{p} > 3,\]
		    where the rightmost inequality holds for all primes $p > 7$. In this case, $\overline{C_f}$ must possess a smooth $\F_p$-point $[x:y:1]$ which then lifts to a $\Q_p$-point of $C_f$. On the other hand, if $\overline{f} = c_3(x-\alpha z)$ then the fact that $c_3$ is a cubic residue produces a liftable solution. If $c_3$ is not a cubic residue, the probability of solution is given by $\beta$. If $p \equiv 2 \pmod{3}$, it suffices to take $p > 2$ (see the proof of Proposition \ref{prop:bound_p=1_large}). In either case, we have
		    \[\tau_{9\ell} = \Phi(p) + \left(1 - \Phi(p) - \frac{1}{p} \right)\beta + \frac{1}{p}\tau_{9m},\]
		    which is well defined when $p \equiv 2 \pmod{3}$ even though $\beta$ is not, since $1 - \Phi(p) - \frac{1}{p}=0$.
				
		\setcounter{enumi}{12}
		\item The justification is identical to that of $\lambda_d$.

		\item The justification is identical to that of line (b).
		
		\item The justification is identical to that of line (c).
		
		\item This is the definition of $\sigma_5'$. 
	\end{enumerate}	
	The table above gives us a relation between $\tau_9$ and $\sigma_5'$, while the definition of $\sigma_5'$ gives another:
		\[\sigma_5' = \sum_{i=0}^9 \eta_{6,i}' \tau_i.\]
		Solving the two simultaneously give the values of $\tau_9 = \eqref{eq:tau_9}$ and $\sigma_5' = \eqref{eq:sigma_5_prime}$.	
\end{proof}

\begin{remark}
\label{rem:lifting_double_roots}
	The independence argument used in the computation of $\tau_4$ in terms of $\tau_2$ makes use of the 3-transitivity of $\Aut \P^1$. This argument breaks down if attempting to lift more than three such roots independently, suggesting that more care may be needed to compute $\rho_{m,d}(p)$ exactly when $d \geq 8$.
\end{remark}

\begin{lemma}\label{lem:thetas}
	The $\theta_i$ values are tabulated below. These hold for all primes $p > 31$, and for $p > 3$ if $p \equiv 2 \pmod{3}$.
	\begin{align*}
		\theta_0 &= 0 & \theta_5 &= \begin{cases} \frac{p^{4} + 3  p^{3} + 2  p + 2}{3  {\left(p^{4} + p^{3} + p^{2} + p + 1\right)}}, & p \equiv 1 \pmod{3}\\[1ex] \frac{{\left(3  p^{3} + 2\right)} {\left(p + 1\right)}}{3  {\left(p^{4} + p^{3} + p^{2} + p + 1\right)}},& p \equiv 2 \pmod{3} \end{cases}\\
		\theta_1 &= 1 & \theta_6 &= \eqref{eq:theta_6}\\
		\theta_2 &= \begin{cases} \frac{{\left(2  p^{2} - 3  p + 3\right)} {\left(p + 2\right)}}{6  p^{3}}, & p \equiv 1 \pmod{3}\\[1ex] \frac{{\left(2  p^{2} - 3  p + 2\right)} {\left(p + 1\right)}}{2  p^{3}},& p \equiv 2 \pmod{3} \end{cases} & \theta_7 &= \eqref{eq:theta_7}\\
		\theta_3 &= 1 - (1-\theta_2)^2 = \eqref{eq:theta_3} & \theta_8 &= \eqref{eq:theta_8}\\
		\theta_4 &= 1 - (1-\theta_2)^3 = \eqref{eq:theta_4} & \theta_9 &= \eqref{eq:theta_9}\\
	\end{align*}
\end{lemma}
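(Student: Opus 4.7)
The argument will parallel the computation of the $\tau_i$ in Lemma \ref{lem:taus}, with two notable differences: the coefficients of $f$ now have valuation at least $2$ throughout (since we are analyzing $f_2 = \tfrac{1}{p^2}f$), so every lifting chain must be shifted up by one level of $p$-adic precision; and the sextuple-root case $\theta_9$ will couple with $\sigma_5''$ rather than $\sigma_5'$. First, $\theta_0 = 0$ and $\theta_1 = 1$ are immediate: if $f_2$ has no $\F_p$-root, no solution $[x_0:y_0:z_0]$ can satisfy $y_0 \equiv 0$ and $f_2(x_0,z_0) \equiv 0$, while a simple root lifts via Hensel's lemma exactly as in the corresponding step of $\lambda_i$ in the proof of Lemma \ref{lem:sigma4_double_root}.

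For the double root case $\theta_2$, I would move the root to $[0:1]$ so that $v(c_0), v(c_1) \geq 3$ and $v(c_2) = 2$, then run a table of conditional probabilities analogous to the one computed for $\tau_2$: at each line impose the next necessary divisibility condition (such as $p^4 \mid c_0$, then $p^3 \mid c_1$), each occurring with conditional probability $\tfrac{1}{p}$, then rescale $(x,y) \mapsto (px,py)$ and divide by $p^3$ to land in a case where the Hasse--Weil bound or a direct linear-root argument applies. The residue class of $p$ modulo $3$ enters through $\Phi(p)$ in exactly one step (the $y^3 \equiv c_0$ comparison), which accounts for the two branches in the formula. Once $\theta_2$ is known, the arguments $\theta_3 = 1 - (1-\theta_2)^2$ and $\theta_4 = 1 - (1-\theta_2)^3$ follow from the usual independence argument (using $3$-transitivity of $\Aut \P^1$ to move the roots to $[0:1]$, $[1:0]$, $[1:1]$), since the probability of lifting at $[0:1]$ depends only on $c_0, c_1$, at $[1:0]$ only on $c_5, c_6$, and at $[1:1]$ only on $f(1,1) = \sum c_i$; cf.\ the treatment of $\tau_3, \tau_4$.

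For $\theta_5$ (triple root) I will move the root to $[0:1]$ and run a one-line table that imposes $p^4 \mid c_0$, then rescales, and lands in the situation of $\beta''$ from Lemma \ref{lem:master} (rather than $\beta'$, which was used for $\tau_5$). This gives $\theta_5 = \tfrac{1}{p}\beta''$, which expands to the claimed rational function split by the residue class of $p \pmod{3}$. Then $\theta_6 = 1 - (1-\theta_5)^2$ by the same independence argument. The quadruple root case $\theta_7$ proceeds through a longer table mimicking that of $\tau_7$, but with each valuation bumped up by one and the final step now landing in the case of $\mu$ (for which the coefficients $c_0,\ldots,c_3$ lie in $p^2\Z_p$ and $v(c_4) = 2$), whose value is supplied by Lemma \ref{lem:mu}. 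Then $\theta_8 = 1 - (1-\theta_7)(1-\theta_2)$ by independence, using that the quadruple root involves $c_0,\ldots,c_4$ and the double root involves $c_4,\ldots,c_6$ only (with the overlap at $c_4$ already fixed to have $v(c_4) = 2$ by the factorization type).

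The main obstacle is $\theta_9$ (sextuple root), which cannot be evaluated independently: after placing the root at $[0:1]$ and running a long chain in the coefficients $c_0, c_1, c_2, c_3$, invoking $\alpha''$, $\theta_2$ (already computed), and several applications of Hasse--Weil or Hensel in each middle step, one is eventually forced into the situation $v(c_6) = 2$, $v(c_i) \geq 2$ for all $i$, which by definition has solubility probability $\sigma_5''$. This produces a linear expression
\begin{equation*}
  \theta_9 = A(p) + \frac{1}{p^{N}} \sigma_5''
\end{equation*}
for some explicit rational function $A(p)$ and some $N$. Pairing this with the defining equation
\begin{equation*}
  \sigma_5'' = \sum_{i=0}^{9} \eta_{6,i}' \, \theta_i,
\end{equation*}
and substituting the already-known $\theta_0,\ldots,\theta_8$, yields a $2 \times 2$ linear system that solves simultaneously for $\theta_9$ and $\sigma_5''$. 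The main difficulty is bookkeeping: the chain for $\theta_9$ is the longest of the paper, each step requires justifying Hensel liftability or invoking the Hasse--Weil bound for a normalization of bounded geometric genus, and the size conditions ($p > 31$, or $p > 3$ and $p \equiv 2 \pmod 3$) must be propagated from every Hasse--Weil application so that the final rational functions are valid uniformly on the stated range. I would implement the resulting linear system in Sage as described in the introduction to produce the explicit closed forms in \eqref{eq:theta_3}--\eqref{eq:theta_9}.
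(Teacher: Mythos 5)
Your overall architecture is the paper's (chains of conditional lifting probabilities after rescalings, independence via moving roots to $[0:1],[1:0],[1:1]$, and coupling $\theta_9$ with $\sigma_5''$ in a $2\times 2$ linear system), but two of your intermediate identifications are wrong and would prevent you from reproducing the stated formulas. First, $\theta_5 \neq \frac{1}{p}\beta''$. A triple root of $f_2$ at $[0:1]$ already forces $v(c_0),v(c_1),v(c_2)\geq 3$ and $v(c_3)=2$, and any candidate point has $p\mid x,y$, so $F(x,y,1)$ is automatically divisible by $p^3$: no extra condition such as $p^4\mid c_0$ is needed before substituting $x\mapsto px$, $y\mapsto py$ and dividing by $p^3$. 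After this free rescaling the constant coefficient can be a unit, so the chain opens with a $\Phi(p)$ term, and carrying it out gives $\theta_5=\Phi(p)+\frac{1}{p}\left(1-\frac{1}{p}\right)+\frac{1}{p^3}\alpha''$, which is exactly $\beta''$, not $\frac{1}{p}\beta''$; your value tends to $0$ as $p\to\infty$, while the tabulated $\theta_5$ tends to $\frac13$ (resp.\ $1$). The same misplaced $\frac1p$-condition appears in your sketch of $\theta_2$ (imposing ``$p^4\mid c_0$, then $p^3\mid c_1$'' before rescaling), which would again give a value of order $\frac1p$ rather than the stated $\theta_2$ with nonzero limit. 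The extra factor of $p^2$ in the coefficients is precisely what makes the initial rescaling free for all the $\theta_i$, and is the reason the $\theta_i$ are substantially larger than the corresponding $\tau_i$.

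Second, your route for $\theta_7$ is both wrong and circular: it is $\tau_{7g}$, not the end of the $\theta_7$ chain, that equals $\mu$, and Lemma \ref{lem:mu} computes $\mu$ from $\theta_2,\theta_3,\theta_7$ via \eqref{eq:mu_sum}, so the dependency must run $\theta_2,\theta_3,\theta_7 \Rightarrow \mu \Rightarrow \tau_7$; you cannot quote Lemma \ref{lem:mu} to get $\theta_7$. In the correct chain the quadruple root forces $v(c_4)=2$ exactly, so after the rescalings the last line has $v(c_4)=0$ and the Hasse--Weil bound (geometric genus at most $3$) gives lifting probability $1$ for $p>31$, or $p>2$ when $p\equiv 2\pmod 3$ --- this is where the size hypotheses in the lemma come from. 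Your treatment of $\theta_0,\theta_1,\theta_3,\theta_8$ and of the coupled system $\theta_9 = A(p)+p^{-N}\sigma_5''$, $\sigma_5''=\sum_i \eta_{6,i}'\theta_i$ is the paper's; note only that the independence giving $\theta_4=1-(1-\theta_2)^3$ requires the additional observation that, with $c_1,c_2$ fixed, the lifting probability at a double root depends only on $c_0$ (via linearity of the quadratic discriminant in $c_0$), since unlike $\tau_2$ the probability $\theta_2$ is not governed by a single divisibility condition on $c_0$.
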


\begin{proof} Recall that $f_2 = \frac{1}{p^2}f$ and assume $f_2 \not \equiv 0 \pmod{p}$. We consider the possible factorization types of $f_2$ as a binary sextic form, given by the index $i$ in the $d=6$ row of Lemma \ref{lem:etas}, and compute the probabilities $\theta_i$ of a root $f_2(x,z) \equiv 0$ lifting to a $\Q_p$-point of $C_f$.

\subsubsection*{No roots: \texorpdfstring{$\theta_0$}{theta0}}
If $f_2(x,z) \equiv 0 \pmod{p}$ has no roots in $\mathbb{F}_p$, then $f(x,z)\equiv 0 \pmod{p^3}$ has no solutions, so $\theta_0=0$.

\subsubsection*{Simple roots: \texorpdfstring{$\theta_1$}{theta1}}
If $f_2(x,z) \equiv 0 \pmod{p}$ has a simple root in $\mathbb{F}_p$, it lifts to a $\Q_p$-point on $C_f$ of the form $[x_0 : y_0 : z_0]$ with $p \mid y_0$, so $\theta_1 = 1$.

\subsubsection*{Double roots: \texorpdfstring{$\theta_2$}{theta2}, \texorpdfstring{$\theta_3$}{theta3}, \emph{and} \texorpdfstring{$\theta_4$}{theta4}}
If $f_2(x,z) \pmod{p}$ has a double root in $\mathbb{F}_p$, after composition with an automorphism of $\P^1$, we can assume the root occurs at $[0:1]$. Replacing $x,y$ by $px, py$ and dividing by $p^3$ we obtain the valuations of the coefficients listed in the first line of the following table.

\begin{center}
	\begin{tabular}{l l l | l l l l l l l}
	& & & $c_6$ & $c_5$ & $c_4$ & $c_3$ & $c_2$ & $c_1$ & $c_0$ \\
	
	$\theta_2 =$  & $\theta_{2a} =$ & $\Phi(p) +\frac{1}{p}\theta_{2b}$
		& $\geq 5$ & $\geq 4$ & $\geq 3$ & $\geq 2$ & $= 1$ & $\geq 1$ & $\geq 0$\\
		
	& $\theta_{2b} =$ & $\frac{1}{2}\left(1-\frac{1}{p}\right)+\frac{1}{p}\theta_{2c}$
		& $\geq 5$ & $\geq 4$ & $\geq 3$ & $\geq 2$ & $= 1$ & $\geq 1$ & $\geq 1$\\
	
	& $\theta_{2c} =$ & $\frac{1}{p}\theta_{2d}$
		& $\geq 5$ & $\geq 4$ & $\geq 3$ & $\geq 2$ & $= 1$ & $\geq 2$ & $\geq 2$\\
		
	& $\theta_{2d} =$ & $1$
		& $\geq 8$ & $\geq 6$ & $\geq 4$ & $\geq 2$ & $= 0$ & $\geq 0$ & $\geq 0$\\
	\end{tabular}
	\end{center}

\begin{enumerate}[label = (\alph*)]
	\item After the change of variables, reduced equation is $\overline{F} = y^3 - c_0 $. The probability that $c_0$ is a nonzero cubic residue is $\Phi(p)$. If $p \nmid c_0$ is not a cubic residue, then no point lifts. With probability $\frac{1}{p}$ we have $v(c_0)= 1$ and we move to the next line.
	
	\item In this case, we consider the quadratic $\frac{1}{p}(c_2x^2+c_1x+c_0)$ over $\mathbb{F}_p$. If it has a simple root, which happens with probability $\eta_{2,1}' = \frac{1}{2}\left(1-\frac{1}{p}\right)$, these lift to $\mathbb{Q}_p$ points with $y \in p\Z_p$. If it has no roots, the equation is insoluble, and if the quadratic has a double root, which happens with probability $\eta_{2,2}' = \frac{1}{p}$, we can shift it to $[0: 1]$, giving the valuations in the next line.
	
\item Reducing modulo $p^3$ reveals that $p^3 \mid c_0$ is necessary, which occurs with probability $\frac{1}{p}$. Before moving to the next line, we replace $x,y$ by $px,py$ and divide by $p^3$.

\item  The justification is identical to that of $\lambda_{d}$. 
\end{enumerate} 

This gives the expression for $\theta_2$ in the statement. The same independence arguments as for $\tau_2$ and $\tau_3$ in the proof of Lemma \ref{lem:taus} apply here to give
\begin{align*}
	\theta_3 &= 1 - (1-\theta_2)^2 = \eqref{eq:theta_3}. 
\end{align*}

For $\theta_4$, we need to modify the argument from $\tau_4$ slightly. We observe that for $p > 2$ if $c_1, \ldots, c_6$ are fixed (satisfying the conditions above) then as $c_0$ varies, the probability of a lift is precisely $\theta_2$. This is already clear for steps $\theta_{2a}$ and $\theta_{2c}$ above. To see why this holds for $\theta_{2b}$, we consider the quadratic
\[x^2 + \frac{c_1}{c_2}x + \frac{c_0}{c_2},\]
whose discriminant $\frac{c_1^2}{c_2^2}-4\frac{c_0}{c_2}$ determines its factorization type. This discriminant is linear in $c_0$, so for fixed $c_1, c_2$, as $c_0$ runs through $p\Z_p$, it will take quadratic residue/nonresidue values with probility $\frac{p-1}{2p}$, and be divisible by $p$ with probability $\frac1p$. 

Hence, we see that for $p \neq 2$, we can view $\theta_2$ as depending only on the value of $c_0$. Thus, as in the determination of $\tau_4$ in the proof of Lemma \ref{lem:taus}, we see that after moving the roots to $[0:1], [1:1],$ and $[1:0]$, the lifting behavior at $[1:1]$ is independent of the other points, making
\[\theta_4 = 1 - \left( 1-\theta_2\right)^3= \eqref{eq:theta_4}.\]

In the case of $p=2$ --- which will be needed in \S \ref{sec:p=2} --- we observe that the proof of $\theta_2$ above shows that lifting $[0:1]$ depends only on $c_0, c_1$, and the valuation of $c_2$. thus the lifting behavior of $[1:0]$ depends only on $c_5, c_6$, and the valuation of $c_4$. For $\theta_4$, the other double root is located at $[1:1]$, and the lifting argument depends on 
\[f(1,1)= \sum_{i=0}^6 c_i \quad \text{and} \quad f'(1,1) = \sum_{i=0}^6 ic_i.\]
The latter can be controlled by $c_3$, while the former may be controlled by writing $c_2 = 4 + 8c_2'$ for some $c_2'\in \Z_2$ and letting $c_2'$ vary. This is independent of $c_0, c_1, c_5, c_6$, and so we have that $\theta_4 = 1 - (1 - \theta_2)^3 = \eqref{eq:theta_4}$ for $p=2$ as well.

\subsubsection*{Triple roots: \texorpdfstring{$\theta_5$}{theta5} \emph{and} \texorpdfstring{$\theta_6$}{theta6}} 
If $f_2(x,z) \pmod{p}$ has a triple root in $\mathbb{F}_p$, after composition with an automorphism of $\P^1$, we can assume the root occurs at $[0:1]$. Replacing $x,y$ by $px, py$ and dividing by $p^3$ we obtain the valuations of the coefficients listed in the first line of the following table.

\begin{center}
	\begin{tabular}{l l l | l l l l l l l}
	& & & $c_6$ & $c_5$ & $c_4$ & $c_3$ & $c_2$ & $c_1$ & $c_0$ \\
		
	$\theta_5 =$& $\theta_{5a} =$ & $\Phi(p)+\frac{1}{p}\theta_{5b}$
		& $\geq 5$ & $\geq 4$ & $\geq 3$ & $=2$ & $\geq 2$ & $\geq 1$  &$\geq 0$\\
		
	& $\theta_{5b} =$ & $\left(1-\frac{1}{p} \right)+\frac{1}{p}\theta_{5c}$ 
		& $\geq 5$ & $\geq 4$ & $\geq 3$ & $= 2$ & $\geq 2$ & $\geq 1$  & $\geq 1$\\
		
	& $\theta_{5c} =$ & $\frac{1}{p}\theta_{5d}$
		& $\geq 5$ & $\geq 4$ & $\geq 3$ & $= 2$ & $\geq 2$ & $\geq 2$  & $\geq 1$\\
	
	& $\theta_{5d} =$ & $\alpha''$
		& $\geq 5$ & $\geq 4$ & $\geq 3$ & $= 2$ & $\geq 2$ & $\geq 2$  & $\geq 2$\\
				
	\end{tabular}
	\end{center}

\begin{enumerate}[label = (\alph*)]
	\item After the change of variables, reduced equation is $\overline{F} = y^3 - c_0 $. The probability that $c_0$ is a nonzero cubic residue is $\Phi(p)$. If $p \nmid c_0$ is not a cubic residue, then no point lifts. With probability $\frac{1}{p}$ we have $v(c_0)= 1$ and we move to the next line.
	
	\item Modulo $p^2$ we have $\frac{1}{p}(c_1x+c_0)$, so if $v_p(c_1)=1$, which happens with probability $1-\frac{1}{p}$, then there is a solution that lifts to $\mathbb{Q}_p$. 
	
	\item Reducing modulo $p^2$ reveals that $p^2 \mid c_0$ is necessary, which occurs with probability $\frac{1}{p}$, and we move to the next line. 
	
	\item We are in the situation of $\alpha'' = \eqref{eq:alpha_prime_prime}$ from Lemma \ref{lem:master}.
\end{enumerate}

This gives the expression for $\theta_5$ in the statement. The same independence argument as for $\tau_5$ and $\tau_6$ in the proof of Lemma \ref{lem:taus} apply to give
\[\theta_6 = 1 - (1-\theta_5)^2 = \eqref{eq:theta_6}.\]  
  
\subsubsection*{Quadruple roots: \texorpdfstring{$\theta_7$}{theta7} \emph{and} \texorpdfstring{$\theta_8$}{theta8}}

If $f_2(x,z) \pmod{p}$ has a quadruple root in $\mathbb{F}_p$, after composition with an automorphism of $\P^1$, we can assume the root occurs at $[0:1]$. Replacing $x,y$ by $px, py$ and dividing by $p^3$ we obtain the valuations of the coefficients listed in the first line of the following table.

\begin{center}
	\begin{tabular}{l l l | l l l l l l l}
	& & & $c_6$ & $c_5$ & $c_4$ & $c_3$ & $c_2$ & $c_1$ & $c_0$ \\
	
	$\theta_7 =$ &  $\theta_{7a} =$ &$\Phi(p)+\frac{1}{p}\theta_{7b}$
		& $\geq 5$ & $\geq 4$ & $= 3$ & $\geq 3$ & $\geq 2$ & $\geq 1$  &$\geq 0$\\
		
	& $\theta_{7b} =$ & $\left(1-\frac{1}{p} \right)+\frac{1}{p}\theta_{7c}$ 
		& $\geq 5$ & $\geq 4$ & $= 3$ & $\geq 3$ & $\geq 2$ & $\geq 1$  & $\geq 1$\\
		
	& $\theta_{7c} =$ & $\frac{1}{p}\theta_{7d}$
		& $\geq 5$ & $\geq 4$ & $= 3$ & $\geq 3$ & $\geq 2$ & $\geq 2$  & $\geq 1$\\
	
	& $\theta_{7d} =$ & $  \left(1-\frac{1}{p}\right) \left(\frac{1}{2}\left( 1- \frac{1}{p} \right)+ \frac{1}{p}\theta_{2} \right)  +  \frac{1}{p}\theta_{7e}$
		& $\geq 5$ & $\geq 4$ & $= 3$ & $\geq 3$ & $\geq 2$ & $\geq 2$  & $\geq 2$\\
		
	& $\theta_{7e} =$ & $ \left(1-\frac{1}{p} \right) + \frac{1}{p}\theta_{7f}$
		& $\geq 5$ & $\geq 4$ & $= 3$ & $\geq 3$ & $\geq 3$ & $\geq 2$  & $\geq 2$\\
		
	& $\theta_{7f} =$ & $ \frac{1}{p}\theta_{7g}$
		& $\geq 5$ & $\geq 4$ & $= 3$ & $\geq 3$ & $\geq 3$ & $\geq 3$  & $\geq 2$\\
		
	& $\theta_{7g} =$ & $ 1$
		& $\geq 2$ & $\geq 1$ & $= 0$ & $\geq 0$ & $\geq 0$ & $\geq 0$  & $\geq 0$\\
	\end{tabular}
	\end{center}
	
\begin{enumerate}[label = (\alph*)]
	\item The reduced equation is $\overline{F} = y^3 - c_0 $. The probability that $c_0$ is a nonzero cubic residue is $\Phi(p)$. If $p \nmid c_0$ is not a cubic residue, then no point lifts. With probability $\frac{1}{p}$ we have $v(c_0)= 1$ and we move to the next line.

\item Modulo $p^2$ we have $\frac{1}{p}(c_1x+c_0)$, so if $v_p(c_1)=1$, which happens with probability $1-\frac{1}{p}$, then there is a solution that lifts to $\mathbb{Q}_p$. 

	\item Reducing modulo $p^2$ reveals that $p^2 \mid c_0$ is necessary, which occurs with probability $\frac{1}{p}$, and we move to the next line. 

\item With probability $1 - \frac{1}{p}$ we have $v(c_2) = 2$. It is clear that for any solution, we must have $p \mid y$ and $\frac{1}{p^2}(c_2x^2 + c_1x + c_0) \equiv 0 \pmod{p}$. The quadratic $\frac{1}{p^2}(c_2x^2 + c_1x + c_0)$ has no roots with probability $\eta_{2,0}' = \frac{1}{2}\frac{(p-1)}{p}$ and distinct roots with probability $\eta_{2,1}' = \frac{1}{2}\frac{(p-1)}{p}$. In the case of distinct roots, one can check that either root lifts to the $x$-coordinate of a $\Q_p$-point with $y \in p\Z_p$. A double root occurs with probability $\eta_{2,2}' = \frac{1}{p}$, and the probability of a solution lifting is equal to $\theta_2$. Thus we have
		\[\theta_{7d} = \left(1 - \frac{1}{p} \right) \left( \eta_{2,1}' + \eta_{2,2}'\theta_2 \right) + \frac{1}{p}\theta_{7e} = \left(1 - \frac{1}{p} \right) \left( \frac{p-1}{2p} + \frac{\theta_2}{p} \right) + \frac{1}{p}\theta_{7e}.\]
		
	\item Modulo $p^3$ we have $(c_1x+c_0)$, so if $v_p(c_1)=2$, which happens with probability $1-\frac{1}{p}$, then there is a solution that lifts to $\mathbb{Q}_p$. 
		
	\item Reducing modulo $p^3$ reveals that $p^3 \mid c_0$ is necessary, which occurs with probability $\frac{1}{p}$. Before moving to the next line, we replace $y$ by $py$ and divide by $p^3$.

\item The justification is identical to that of $\tau_{9k}$.
\end{enumerate}

This gives the expression for $\theta_7$ in \eqref{eq:theta_7}. The same argument as for $\tau_7$ and $\tau_8$ in the proof of Lemma \ref{lem:taus} applies here to give
\[ \theta_8=1-(1-\theta_7)(1-\theta_2)=\eqref{eq:theta_8}.\]

\subsubsection*{Sextuple roots: \texorpdfstring{$\theta_9$}{theta9}}
If $f_2(x,z) \pmod{p}$ has a sextuple root in $\mathbb{F}_p$, after composition with an automorphism of $\P^1$, we can assume the root occurs at $[0:1]$. Replacing $x,y$ by $px, py$ and dividing by $p^3$ we obtain the valuations of the coefficients listed in the first line of the following table.

\begin{center}
	\begin{tabular}{l l l | l l l l l l l}
	& & & $c_6$ & $c_5$ & $c_4$ & $c_3$ & $c_2$ & $c_1$ & $c_0$ \\
	
	$\theta_9 =$ & $\theta_{9a} = \Phi(p) + \frac1p \theta_{9b}$ & 
		& $= 5$ & $\geq 5$ & $\geq 4$ & $\geq 3$ & $\geq 2$ & $\geq 1$ & $\geq 0$\\
		
	& $\theta_{9b} =\left(1-\frac{1}{p} \right)+\frac{1}{p}\theta_{9c}$ & 
		& $= 5$ & $\geq 5$ & $\geq 4$ & $\geq 3$ & $\geq 2$ & $\geq 1$ & $\geq 1$\\
		
	& $\theta_{9c} = \frac1p \theta_{9d}$ & 
		& $= 5$ & $\geq 5$ & $\geq 4$ & $\geq 3$ & $\geq 2$ & $\geq 2$ & $\geq 1$\\
		
	& $\theta_{9d} =   \left(1-\frac{1}{p}\right) \left(\frac{1}{2}\left( 1- \frac{1}{p} \right)+ \frac{1}{p}\theta_{2} \right)  +  \frac{1}{p}\theta_{9e}$ & 
		& $= 5$ & $\geq 5$ & $\geq 4$ & $\geq 3$ & $\geq 2$ & $\geq 2$ & $\geq 2$\\
	
	& $\theta_{9e} =   \left(1-\frac{1}{p}\right) +\frac1p \theta_{9f}$ & 
		& $= 5$ & $\geq 5$ & $\geq 4$ & $\geq 3$ & $\geq 3$ & $\geq 2$ & $\geq 2$\\
		
	& $\theta_{9f} =\frac1p \theta_{9g}$ & 
		& $= 5$ & $\geq 5$ & $\geq 4$ & $\geq 3$ & $\geq 3$ & $\geq 3$ & $\geq 2$\\
		
	& $\theta_{9g} =  \Phi(p) + \left(1 - \Phi(p) - \frac{1}{p} \right)\beta + \frac{1}{p}\theta_{9h}$ & 
		& $= 2$ & $\geq 2$ & $\geq 1$ & $\geq 0$ & $\geq 0$ & $\geq 0$ & $\geq 0$\\	
		
	& $\theta_{9h} = \left(1-\frac{1}{p}\right)+\frac1p \theta_{9i}$ & 
		& $= 2$ & $\geq 2$ & $\geq 1$ & $\geq 1$ & $\geq 0$ & $\geq 0$ & $\geq 0$\\	
			
	& $\theta_{9i} = \left(1-\frac{1}{p}\right)+\frac1p \theta_{9j}$ & 
		& $= 2$ & $\geq 2$ & $\geq 1$ & $\geq 1$ & $\geq 1$ & $\geq 0$ & $\geq 0$\\	
		
	& $\theta_{9j} = \Phi(p)+\frac1p \theta_{9k}$ & 
		& $= 2$ & $\geq 2$ & $\geq 1$ & $\geq 1$ & $\geq 1$ & $\geq 1$ & $\geq 0$\\	
		
	& $\theta_{9k} =\left(1 - \frac{1}{p}\right) \mu' + \frac1p{\theta_{9\ell}}$ & 
		& $= 2$ & $\geq 2$ & $\geq 1$ & $\geq 1$ & $\geq 1$ & $\geq 1$ & $\geq 1$\\	
	
	& $\theta_{9\ell} = \left(1 - \frac{1}{p}\right) \alpha' +\frac1p \theta_{9m}$ & 
		& $= 2$ & $\geq 2$ & $\geq 2$ & $\geq 1$ & $\geq 1$ & $\geq 1$ & $\geq 1$\\	
		
	& $\theta_{9m} = \left(1 - \frac{1}{p}\right)\left(\frac{p-1}{2p} + \frac{1}{p^2} \right) + \frac1p\theta_{9n} $ & 
		& $= 2$ & $\geq 2$ & $\geq 2$ & $\geq 2$ & $\geq 1$ & $\geq 1$ & $\geq 1$\\	
		
	& $\theta_{9n} = \left(1 - \frac{1}{p}\right) + \frac1p\theta_{9o}$ & 
		& $= 2$ & $\geq 2$ & $\geq 2$ & $\geq 2$ & $\geq 2$ & $\geq 1$ & $\geq 1$\\	
		
	& $\theta_{9o} = \frac1p\theta_{9p}$ & 
		& $= 2$ & $\geq 2$ & $\geq 2$ & $\geq 2$ & $\geq 2$ & $\geq 2$ & $\geq 1$\\	
		
	& $\theta_{9p} = \sigma_5''$ & 
		& $= 2$ & $\geq 2$ & $\geq 2$ & $\geq 2$ & $\geq 2$ & $\geq 2$ & $\geq 2$\\	
	\end{tabular}
	\end{center}
	
	\begin{enumerate}[label = (\alph*)]
	
	\item The reduced equation is now $\overline{F} = y^3 - c_0$. The probability that $c_0$ is a nonzero cubic residue is $\Phi(p)$. If $p \nmid c_0$ is not a cubic residue, then no point lifts. With probability $\frac{1}{p}$ we have $v(c_0)= 1$ and we move to the next line.

	\item Modulo $p^2$ we have $\frac{1}{p}(c_1x+c_0)$, so if $v_p(c_1)=1$, which happens with probability $1-\frac{1}{p}$, then there is a solution that lifts to $\mathbb{Q}_p$. 
		
	\item Reducing modulo $p^2$, we see that $p^2\mid c_0$ is necessary to get a $\mathbb{Q}_p$ solution. This happens with probability $\frac{1}{p}$ and so we move to the next line.
		
		\item The justification is identical to $\theta_{7d}$, producing 
		\[\theta_{9d} = \left(1 - \frac{1}{p} \right) \left( \eta_{2,1}' + \eta_{2,2}'\theta_2 \right) + \frac{1}{p}\theta_{9e} = \left(1 - \frac{1}{p} \right) \left( \frac{p-1}{2p} + \frac{\theta_2}{p} \right) + \frac{1}{p}\theta_{9e}.\]
		
		\item Modulo $p^3$ we have $\frac{1}{p^2}(c_1x+c_0)$, so if $v_p(c_1)=2$, which happens with probability $1-\frac{1}{p}$, then there is a solution that lifts to $\mathbb{Q}_p$. 
		
		\item Reducing modulo $p^3$, we see that $p^3\mid c_0$ is necessary to get a $\mathbb{Q}_p$ solution. This happens with probability $\frac{1}{p}$. Before moving to the next line, we replace $y$ by $py$ and divide by $p^3$.	
			
		\item The justification is identical to that of $\tau_{9\ell}$.
			
		\item The justification is identical to that of $\lambda_d$.
				
		\item With probability $1 - \frac{1}{p}$ we have $v(c_1) = 0$ and the reduced equation is $\overline{F}(x,y,1) = y^3 - c_1x - c_0$, which is linear in $x$ and thus has a solution with $y \in p\Z_p$. With probability $\frac{1}{p}$ we have $v(c_1) \geq 1$ and move to the next line.
		
		\item The reduced equation is now $\overline{F} = y^3 - c_0 $. The probability that $c_0$ is a nonzero cubic residue is $\Phi(p)$. If $p \nmid c_0$ is not a cubic residue, then no point lifts. With probability $\frac{1}{p}$ we have $v(c_0)= 1$ and we move to the next line.
		
		\item If $v(c_4)=1$ we are in the situation of $\mu' = \eqref{eq:mu_prime}$ from Lemma \ref{lem:mu}. This happens with probability $1 - \frac1p$, so with probability $\frac1p$ we have $v(c_4) \geq 2$ and we move to the next line. 
	
	 \item[$(\ell)$] If $v(c_3) = 1$, then we are in the situation of $\alpha' = \eqref{eq:alpha_prime_prime}$ from Lemma \ref{lem:master}. This happens with probability $1 - \frac1p$, so with probability $\frac1p$ we have $v(c_3) \geq 2$ and we move to the next line.
	
	\setcounter{enumi}{12}
		\item The justification is identical to $\tau_{9d}$ of Lemma \ref{lem:taus}, producing 
		\[\theta_{9m} = \left(1 - \frac{1}{p} \right) \left( \eta_{2,1}' + \eta_{2,2}'\tau_2 \right) + \frac{1}{p}\theta_{9n} = \left(1 - \frac{1}{p} \right) \left( \frac{p-1}{2p} + \frac{1}{p^2} \right) + \frac{1}{p}\theta_{9n}.\]
	
		\item The justification is identical to that of line (b). 

		\item The justification is identical to that of line (c). 
		
		\item	This is the definition of $\sigma_5''$.
	\end{enumerate}

The table above gives us a relation between $\theta_9$ and $\sigma_5''$, while the definition of $\sigma_5''$ gives another:
		\[\sigma_5'' = \sum_{i=0}^9 \eta_{6,i}' \theta_i.\]
		Solving the two simultaneously give the values of $\theta_9 = \eqref{eq:theta_9}$ and $\sigma_5'' = \eqref{eq:sigma_5_prime_prime}$.	
\end{proof}

\subsubsection{Completing the proofs of Proposition \ref{prop:sigma_5} and Theorem \ref{thm:exact_rho_3_6}}
\label{sec:exact_proof_rho}

To complete the proof of Proposition \ref{prop:sigma_5}, we must compute $\sigma_5$. In doing so, we will also compute the exact value of $\rho$, thereby completing the proof of part of Theorem \ref{thm:exact_rho_3_6} as well.

\begin{proof}[Proof of Proposition \ref{prop:sigma_5}]

Recall that $\sigma_5' = \eqref{eq:sigma_5_prime}$ and $\sigma_5'' = \eqref{eq:sigma_5_prime_prime}$ were computed in the proofs of Lemmas \ref{lem:taus} and \ref{lem:thetas}, respectively. Thus all that remains is to compute $\sigma_5$.

Recall that $\sigma_5$ is related to $\rho$ by
\[\sigma_5= \frac{1}{p^{14}}\rho +\left( 1-\dfrac{1}{p^7} \right)\sum\limits_{i=0}^9 \eta_{6,i} \tau_i +\left(\frac{p^7-1}{p^{14}}\right) \sum\limits_{i=0}^9 \eta_{6,i} \theta_i, \]
where the values of $\eta_{6_i}, \tau_i, \theta_i$ are given in Lemmas \ref{lem:etas}, \ref{lem:taus}, and \ref{lem:thetas}, respectively. On the other hand, we have
\[\rho = \sum_{i=1}^5 \xi_i \sigma_i,\]
with $\xi_i$ given in Corollary \ref{cor:factorization_probs} and $\sigma_i$ for $1 \leq i \leq 4$ given in Propositions \ref{prop:sigma_1}, \ref{prop:sigma_2=1}, \ref{prop:sigma_3=1}, \ref{prop:sigma_4}, respectively. We can thus solve the two equations above for $\sigma_5$ and $\rho$ as rational functions in $p$,
\begin{align*}
	\rho &= \eqref{eq:rho},\\
	\sigma_5 &= \eqref{eq:sigma_5},
\end{align*}
thereby completing the proof of Proposition \ref{prop:sigma_5}. \href{https://github.com/c-keyes/Density-of-locally-soluble-SECs/blob/bd6a8b39ea8c63bf8e7a847063c70998d01ee8aa/SEC_rho36_23Aug21.ipynb}{See here} again for an implementation in Sage \cite{sagemath}. 

Thus we have verified that for $i=1,2$, we have $\rho(p) = R_i(p)$ for an explicit rational function $R_i(t)$ and all sufficiently large primes $p \equiv i \pmod{3}$ as stated in Theorem \ref{thm:exact_rho_3_6}. It remains to observe the asymptotic behavior, i.e.\ that when $p \equiv 1 \pmod{3}$, this explicit function satisfies
\[1 - \rho(p) \sim \frac{2}{3}p^{-4},\]
and if $p \equiv 2 \pmod{3}$ then
\[1 - \rho(p) \sim \frac{53}{144} p^{-7}.\]
\end{proof}

\subsection{Small primes}
\label{sec:small_primes}

All that remains to prove Theorem \ref{thm:exact_rho_3_6} is to compute $\rho(p)$ for the remaining eight primes $p$, not handled directly by Propositions \ref{prop:sigma_1}, \ref{prop:sigma_2=1}, \ref{prop:sigma_3=1}, \ref{prop:sigma_4}, and \ref{prop:sigma_5}, namely $p=2, 3, 7, 13, 19, 31, 37, 43$. We handle the cases of $p=2$ and $p=3$ separately from the six remaining primes $p \equiv 1 \pmod{3}$ and conclude this section with the the calculation of the $\rho_{3,6} = 96.94\%$.

\subsubsection{\texorpdfstring{$p = 2$}{p=2}}\label{sec:p=2} Suppose $p=2$. By the proof of Proposition \ref{prop:bound_pneq1}, for all binary sextic forms $f(x,z)$ such that $\overline{f} \neq 0, x^2(x+z)^2z^2$, we can lift a point on the reduction $\overline{C_f}$ to a $\Q_2$-point of $C_f$. Thus we first restrict our attention to lifting $\F_2$-points of 
\[y^3 = x^2(x + z)^2z^2.\]
 By the same argument as that for $\theta_4$ in the proof of Lemma \ref{lem:thetas}, the probability of $[0 : 0 : 1]$, $[1 : 0 : 1]$, or $[1 : 0 : 0]$ lifting to a $\Q_2$-point are equal and independent. Thus it suffices to determine how often $[0 : 0 : 1]$ lifts. In fact, we will need the following lemma for all primes $p \neq 3$.
 
\begin{lemma}\label{lem:nu}
	Let $p\neq 3$ be a prime. Fix $c_2$ such that $v(c_2) = 0$ and $c_3, c_4, c_5, c_6 \in \Z_p$. As $c_0, c_1$ range over $p\Z_p$, let $\nu$ denote the probability that the $\F_p$-solution $[0:0:1]$ to $\overline{F}(x,y,z) = 0$ lifts to a $\Q_p$-solution to $F(x,y,z) = 0$. We have
	\[\nu = \frac{1}{p} \left( \eta_{2,1}' + \eta_{2,2}' \theta_2 \right) = \begin{cases} \frac{3  p^{4} - p^{3} + p^{2} - 3  p + 6}{6  p^{5}} & p \equiv 1 \pmod{3},\\[2ex] \frac{p^{4} + p^{3} - p^{2} - p + 2}{2  p^{5}} & p \equiv 2 \pmod{3}.\end{cases}\]
\end{lemma}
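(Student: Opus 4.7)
The strategy is to unwind the constraints on $c_0, c_1$ under which the singular $\F_p$-point $[0:0:1]$ on the reduction $\overline{C_f}$ lifts to a $\Q_p$-point. A direct application of Hensel's lemma is precluded by singularity (the gradient of $\overline{F}$ vanishes at $[0:0:1]$), so I would substitute $x = pX$, $y = pY$ with $X, Y \in \Z_p$ and analyze the resulting valuation equation
\[p^3 Y^3 = c_0 + c_1 p X + c_2 p^2 X^2 + p^3 (c_3 X^3 + c_4 p X^4 + c_5 p^2 X^5 + c_6 p^3 X^6).\]
The left-hand side has $v \geq 3$, while $v(c_2 p^2 X^2) = 2$, so $c_0$ must cancel the $c_2 p^2 X^2$ term modulo $p^3$; this first forces $v(c_0) \geq 2$, which happens with probability $1/p$ and accounts for the leading factor in the claimed formula.

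Given $v(c_0) \geq 2$, set $c_0 = p^2 c_0''$, $c_1 = p c_1'$, and divide the equation by $p^2$ to obtain a polynomial $G(X,Y) \in \Z_p[X,Y]$ whose reduction modulo $p$ is
\[\overline{G}(X,Y) = -\overline{c_2}\, X^2 - \overline{c_1'}\, X - \overline{c_0''},\]
a quadratic in $X$ alone with unit leading coefficient $\overline{c_2}$. As $(c_0, c_1)$ varies in $p^2\Z_p \times p\Z_p$, the pair $(\overline{c_0''}, \overline{c_1'})$ is uniform in $\F_p^2$, so the factorization type of this quadratic follows the monic-quadratic distribution encoded by $\eta_{2,i}'$ in Lemma \ref{lem:etas}. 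A simple root $\alpha$ (probability $\eta_{2,1}'$) lifts via Hensel's lemma applied to $G(X, 0) \in \Z_p[X]$, whose $X$-derivative at the root is $-(2\overline{c_2}\alpha + \overline{c_1'}) \not\equiv 0 \pmod p$ (valid even for $p=2$, since the only simple-root quadratic over $\F_2$ forces $\overline{c_1'} \neq 0$); the no-root case (probability $\eta_{2,0}'$) is insoluble because $\overline{G} \not\equiv 0$ on all of $\F_p^2$.

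The bulk of the argument lies in the double-root subcase (probability $\eta_{2,2}' = 1/p$), where the goal is to show the conditional lifting probability equals $\theta_2$. I would first shift $X \mapsto X - \alpha$ so the double root sits at $0$: the translated coefficients $\tilde{c_j}$ satisfy $v(\tilde{c_0}) \geq 3$, $v(\tilde{c_1}) \geq 2$, $v(\tilde{c_2}) = 0$, and $v(\tilde{c_j}) \geq 0$ for $j \geq 3$. A quick valuation check shows $v(\tilde{x}) = 1$ cannot yield a lift (the $\tilde{c_2}\tilde{x}^2$ term then has $v = 2$ with no other summand of the same valuation to cancel it), so one must further substitute $\tilde{x} = p^2 X'$, $y = pY'$ and divide through by $p^3$. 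The resulting equation has low-order coefficients $(a_0, a_1, a_2)$ with valuations $(\geq 0, \geq 1, =1)$---exactly matching the $(v(c_0), v(c_1), v(c_2))$ portion of the $\theta_{2a}$ line in the table proving Lemma \ref{lem:thetas}---while the higher-order coefficient valuations $v(a_3), \ldots, v(a_6) \geq 3, 5, 7, 9$ are strictly stronger than the $\geq 2, 3, 4, 5$ required there. Since the marginals $\overline{a_0}$ and $\overline{a_1/p}$ remain uniform in $\F_p$, each step of the $\theta_2$ analysis---the cube test of $\theta_{2a}$, the quadratic factorization of $\theta_{2b}$, the necessary condition of $\theta_{2c}$, and the Hasse--Weil conclusion of $\theta_{2d}$---applies verbatim, yielding the conditional lifting probability $\theta_2$.

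The main obstacle will be to verify cleanly that this transfer to the $\theta_2$ framework is correct at every stage: in particular, that the Hasse--Weil input in $\theta_{2d}$ remains valid with our stronger higher-order valuations (the reduced curve is still $y^3 = $ quadratic in $X' \pmod p$, which is geometrically integral because $v(a_2) = 1$ prevents the right-hand side from being a cube), and that the required uniformity of the new $\overline{a_i}$'s genuinely follows from the joint distribution of the original $c_0, c_1$ after conditioning on the double-root event. Once this is verified, combining the three subcases weighted by the overall $1/p$ yields $\nu = \frac{1}{p}(\eta_{2,1}' + \eta_{2,2}' \theta_2)$, and the two explicit rational expressions follow by substituting the values of $\eta_{2,1}', \eta_{2,2}'$ from Lemma \ref{lem:etas} and $\theta_2$ from Lemma \ref{lem:thetas}, separately for $p \equiv 1$ and $p \equiv 2 \pmod 3$, and simplifying.
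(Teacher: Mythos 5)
Your proposal is correct and follows essentially the same route as the paper's proof: force $p^2 \mid c_0$ (probability $\tfrac1p$), rescale $x \mapsto px$, read off the factorization type of the resulting unit-leading-coefficient quadratic in the $\eta'_{2,i}$ distribution, lift simple roots by Hensel, discard the rootless case, and reduce the double-root case to the $\theta_2$ situation — your treatment merely makes the last reduction (the shift, the second rescaling, and the uniformity of the new low-order coefficients) explicit where the paper simply asserts ``we are in the situation of $\theta_2$.'' The one point to make explicit is the range of $p$: Lemma \ref{lem:thetas} is stated only for $p>31$ (or $p>3$ when $p\equiv 2\pmod 3$), while Lemma \ref{lem:nu} is claimed and later used for all $p\neq 3$ including $p=2$ and small $p\equiv 1\pmod 3$, so you must observe, as the paper does, that the derivation of $\theta_2$ itself requires no such restriction — in particular at $p=2$ the step $\theta_{2d}=1$ comes from the cube map being a bijection on $\F_2$ together with a direct smoothness check, not from the Hasse--Weil bound $p+1-2\sqrt{p}>1$, which needs $p\geq 5$.
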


\begin{proof}
	Let $[x:y:z] \equiv [0 : 0 : 1] \pmod{p}$. We observe that $v(c_2x^2z^4 + c_1xz^5) \geq 2$, so if $v(c_0) = 1$, the equation is seen to be insoluble modulo $p^2$. With probability $\frac{1}{p}$ we have $c_0 \in p^2\Z_p$.
	
	Replacing $x$ by $px$, our equation becomes
	\[y^3 = \sum_{i=0}^6 p^ic_ix^i.\]
	Rewriting $c_i$ as $p^ic_i$, we have the valuations (in descending order) are given by
\[		\geq 6 \ \geq 5 \ \geq 4 \ \geq 3 \ = 2 \ \geq 2 \ \geq 2.\]
	Thus it is necessary for $p^3 \mid (c_2x^2 + c_1x + c_0)$, so with probability $\eta_{2,0}' = \frac{p-1}{2p}$, the equation is insoluble. With probability $\eta_{2,1}' = \frac{p-1}{2p}$, we have a lift, and with probability $\eta_{2,2}' = \frac{1}{p}$, we are in the situation of $\theta_2$ of Lemma \ref{lem:thetas}, and the proof is seen to be valid for all $p$. Putting this together yields the giving probability that $[0:0:1]$ lifts to a $\Q_p$-solution.
\end{proof}

\begin{corollary}\label{cor:p=2_lifting}
	Let $f(x,z)$ be a binary sextic form with $\overline{f}(x,z) = x^2(x+z)^2z^2 \pmod{2}$. The probability that $C_f$ has a $\Q_2$-point is 
	\[1 - (1-\nu)^3 = \frac{2675}{4096}.\]
	The probability that $C_f$ has an affine $\Q_2$-point of the form $[x:y:1]$ is 
	\[1 - (1-\nu)^2 = \frac{135}{256}.\]
\end{corollary}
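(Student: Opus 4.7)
The plan is to mimic the structure of the $\theta_4$ computation in Lemma \ref{lem:thetas}, but now at the level of the full curve $C_f$ over $\Q_2$ rather than a reduction of the coefficient polynomial. Since $\overline{f}(x,z) \equiv x^2(x+z)^2 z^2 \pmod{2}$, the reduction $\overline{C_f}$ has exactly three $\F_2$-points, namely $[0:0:1]$, $[1:0:1]$, and $[1:0:0]$, each of which is singular. Any $\Q_2$-point of $C_f$ must reduce to one of these three points, so
\[\mathrm{Prob}\Big(C_f(\Q_2) \neq \emptyset\Big) = \mathrm{Prob}\Big(\text{at least one of the three points lifts}\Big).\]
The affine case is identical, restricted to the two points $[0:0:1]$ and $[1:0:1]$.

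Next, I would compute the lifting probability at each of the three singular points and show each equals $\nu$. Expanding $\overline{f}(x,z) = x^2(x+z)^2 z^2 = x^4 z^2 + x^2 z^4$ in $\F_2[x,z]$, we see $c_2, c_4 \equiv 1$ and $c_0, c_1, c_3, c_5, c_6 \equiv 0 \pmod{2}$. Thus at $[0:0:1]$ the data $(c_0, c_1, c_2)$ satisfies exactly the hypotheses of Lemma \ref{lem:nu}, giving lifting probability $\nu$. For $[1:0:0]$, the involution $x \leftrightarrow z$ sending $c_i \mapsto c_{6-i}$ reduces to the same situation (now with $(c_6, c_5, c_4)$ playing the roles of $(c_0, c_1, c_2)$), giving $\nu$ again. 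For $[1:0:1]$, the $\F_2$-linear change of variables $x \mapsto x + z$ is an automorphism of $\P^1_{\F_2}$ sending $[1:0:1] \mapsto [0:0:1]$; since $x^2(x+z)^2 z^2$ is preserved by this substitution (because $(x+z)^2 \cdot x^2 \cdot z^2$ has the same shape), Lemma \ref{lem:nu} again yields probability $\nu$.

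The main obstacle is establishing independence of the three lifting events, so that the probability that at least one lifts is exactly $1 - (1-\nu)^3$. Here I would adapt the reasoning used for $\theta_4$ in the proof of Lemma \ref{lem:thetas} in the $p=2$ case. Inspecting the proof of Lemma \ref{lem:nu} shows that lifting at $[0:0:1]$ depends only on $c_0, c_1$ and the valuation of $c_2$ (with $c_3, \ldots, c_6$ free); by the symmetry above, lifting at $[1:0:0]$ depends only on $c_5, c_6$ and the valuation of $c_4$; and lifting at $[1:0:1]$ can be controlled by $f(1,1) = \sum_{i=0}^{6} c_i$, $f'(1,1) = \sum_{i=0}^6 i c_i$, and the valuation of a second derivative-type quantity at $[1:1]$. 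One can manipulate $c_3$ to control $f'(1,1)$, and write $c_2 = 4 + 8 c_2'$ (using $c_2 \equiv 1 \pmod 2$ and $v_2(c_2 - 1) \geq 2$ once necessary lifting conditions at the other points are imposed) to control $f(1,1) \bmod 8$ by varying $c_2'$. Since these manipulations do not disturb the coefficient data governing lifting at $[0:0:1]$ and $[1:0:0]$, the three events are independent. The argument is delicate precisely because 3-transitivity of $\mathrm{Aut}(\P^1)$ is unavailable over $\F_2$, so one must verify independence by hand as in the $\theta_4$ computation.

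Combining the three steps, the probability that $C_f$ has a $\Q_2$-point is $1 - (1-\nu)^3$, and the probability of having an affine $\Q_2$-point of the form $[x:y:1]$ is $1 - (1-\nu)^2$. Substituting $\nu = 5/16$ from Lemma \ref{lem:nu} yields the claimed values.
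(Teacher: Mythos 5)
Your proposal is correct and is precisely the paper's own argument: apply Lemma \ref{lem:nu} at the three (resp.\ two affine) singular $\F_2$-points $[0:0:1]$, $[1:0:1]$, $[1:0:0]$, and carry over the $p=2$ independence argument from the $\theta_4$ step of Lemma \ref{lem:thetas}, with lifting at the three points governed respectively by $(c_0,c_1)$, by $(f(1,1), f_x(1,1))$, and by $(c_5,c_6)$. Two minor slips worth fixing: in this setting $c_2$ is a unit, so the independence at $[1:0:1]$ is obtained by varying the free coefficients $c_2$ (odd) and $c_3$ (even) — the map $(c_2,c_3)\mapsto (f(1,1), f_x(1,1))$ has unimodular linear part, so these values are equidistributed on $(2\Z_2)^2$ independently of $c_0,c_1,c_5,c_6$ — rather than by writing $c_2 = 4+8c_2'$ as in the $\theta_4$ setting; and substituting $\nu = 5/16$ gives $1-(1-\nu)^3 = \frac{2765}{4096}$, so the $\frac{2675}{4096}$ in the statement is a transposition typo (your affine value $\frac{135}{256}$ does match).
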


\begin{proof}
	The two statements follow from applying Lemma \ref{lem:nu} to the two affine and three total $\F_2$-points of $\overline{C_f}$ independently (see the argument for $\theta_4$ in the proof of Lemma \ref{lem:thetas}).
\end{proof}

At this point, we have
	\begin{equation}
	\label{eq:p=2_rho}
		\rho_{3,6}(2) = 1- \frac{1}{2^6} + \frac{1}{2^7} \left( \frac{2675}{4096} + \sigma_5 \right),
	\end{equation}
	where $\sigma_5$ is the probability of solubility when $\overline{f}(x,z) = 0$.

To compute $\sigma_5$, we can follow the proofs of Proposition \ref{prop:sigma_5}. For $0 \leq i \leq 6$, the values of $\tau_i$ and $\theta_i$ from Lemmas \ref{lem:taus} and \ref{lem:thetas} hold for $p=2$. Corollary \ref{cor:p=2_lifting} can be used to compute $\theta_7, \mu, \tau_7$, and $\mu'$, in that order. We catalog these values below and highlight the modified steps.
	\begin{align*}
		\theta_7 &= \frac{13575}{16384} & \left(\text{use } \theta_{7g} = \frac{135}{256}\right),\\
		\theta_8 &= \frac{62727}{65536} & (\text{use updated } \theta_7),\\
		\mu &= \frac{90887}{131072} & (\text{use updated } \theta_7 \text{ in } \eqref{eq:mu_sum}),\\
		\tau_7 &= \frac{3760903}{8388608} & (\text{use } \tau_{7g} = \mu),\\
		\tau_8 &= \frac{12149511}{16777216} & (\text{use updated } \tau_8) ,\\
		\mu' &= \frac{40461063}{67108864} & (\text{use updated } \tau_7 \text{ in } \eqref{eq:mu_prime_sum}).
	\end{align*}
	
	We then solve the following equations, using the values above and Corollary \ref{cor:p=2_lifting} appropriately in the calculation of $\tau_9$,
	\begin{align*}
		\tau_9 &= \frac{1}{32768}\sigma_5' + \frac{7283817}{16252928} & \left(\text{use } \tau_{9k} = \frac{1}{2} \cdot \frac{136}{256} + \frac{1}{2}\tau_{9\ell}\right), \\
		\sigma_5' &= \sum_{i=0}^9 \eta_{6,i}' \tau_i & \left(\text{use updated } \tau_7, \tau_8 \right),\\
		\theta_9 &= \frac{1}{32768}\sigma_5'' + \frac{3559852801497}{4260607557632} & \left(\text{use updated } \mu' \text{ in }\theta_{9k} \right),\\
		\sigma_5'' &= \sum_{i=0}^9 \eta_{6,i}' \theta_i & \left(\text{use updated } \theta_7, \theta_8 \right),\\
		\sigma_5 &= \frac{1}{2^{14}}\rho +\left( 1-\dfrac{1}{2^7} \right)\sum\limits_{i=0}^9 \eta_{6,i} \tau_i +\left(\frac{2^7-1}{2^{14}}\right) \sum\limits_{i=0}^9 \eta_{6,i} \theta_i  ,\\
		\rho &= 1- \frac{1}{2^6} + \frac{1}{2^7} \left( \frac{2675}{4096} + \sigma_5 \right) & (\text{see } \eqref{eq:p=2_rho}).
	\end{align*}
	This yields
	\begin{equation}
	\label{eq:p=2_rho_exact}
		\rho_{3,6}(2) = \frac{45948977725819217081}{46164832540903014400} \approx 0.99532,
	\end{equation}
	a considerable improvement over $1-\frac{1}{2^6}$.

\subsubsection{Primes \texorpdfstring{$p \equiv 1 \pmod{3}$}{p=1 (mod 3)}} 

Suppose $p$ is one of $p=7, 13, 19, 31, 37, 43$. Here we are not able to conclude that when $\overline{F}(x,y,z)$ is absolutely irreducible, that $C_f$ has a $\Q_p$-point, i.e.\ that $\sigma_1 = 1$. At various other junctures, including the calculations of $\tau_i$ and $\theta_i$ in Lemmas \ref{lem:taus} and \ref{lem:thetas}, we use assumptions about the size of $p$ to conclude that certain equations over $\F_p$ always possess a liftable point. To circumvent this and fix the necessary calculations, we need a few intermediate results.

Consider equations of the form 
\begin{equation}\label{eq:rho33}
	y^3 = c_3x^3 + c_2x^2 + c_1x + c_0
\end{equation}
and denote by $\rho_{3,3}^\aff(p)$ the probability that \eqref{eq:rho33} has an affine $\Q_p$-point as $c_0, c_1, c_2, c_3$ vary in $\Z_p$ with $v(c_3) = 0$.

\begin{lemma}\label{lem:rho33aff}
Let $p \equiv 1 \pmod{3}$. When $p > 7$ we have
\[\rho_{3,3}^\aff(p) = \frac{1}{3} + \frac{2}{3}\beta = \frac{3  p^{4} + 3  p^{3} + p^{2} + 3  p + 1}{3  {\left(p^{4} + p^{3} + p^{2} + p + 1\right)}}.\]
In the case of $p=7$,
\[\rho_{3,3}^\aff(7) = \frac{1}{2058}\left(2002 + 28\alpha\right) = \frac{401245}{411747}.\]
\end{lemma}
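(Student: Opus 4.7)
The plan is to condition on the cubic character of $\overline{c_3}$. Since $v(c_3)=0$ and $p \equiv 1 \pmod{3}$, the residue $\overline{c_3} \in \F_p^\times$ lies in $(\F_p^\times)^3$ with probability $\frac{1}{3}$ and outside $(\F_p^\times)^3$ with probability $\frac{2}{3}$; these two cases will be handled by quite different arguments.

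In the cube case, I would first lift a cube root via Hensel's lemma applied to $s^3 - c_3$ (whose derivative $3s^2$ is a unit modulo $p$ since $p \neq 3$), writing $c_3 = a^3$ with $a \in \Z_p^\times$. The change of variables $X = 1/x$, $Y = y/x$ transforms \eqref{eq:rho33} into $Y^3 = c_3 + c_2 X + c_1 X^2 + c_0 X^3$, which has a smooth $\F_p$-point at $(0, \overline{a})$. Applying Hensel's lemma to this new equation produces a $\Z_p$-lift $Y(X_0)$ for every $X_0 \in p\Z_p$, and choosing any nonzero such $X_0$ yields an affine $\Q_p$-point $(1/X_0, Y(X_0)/X_0)$ of the original equation. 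Hence the conditional probability in this case is $1$.

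In the non-cube case, I would first show by a valuation argument that any affine $\Q_p$-point must already satisfy $(x, y) \in \Z_p^2$: assuming $v(x) < 0$, the term $c_3 x^3$ strictly dominates the right-hand side of \eqref{eq:rho33}, forcing $v(y) = v(x)$; then $Y = y/x$ is a unit satisfying $Y^3 \equiv c_3 \pmod p$, contradicting that $\overline{c_3}$ is not a cube. Hence the problem reduces to counting integral $\Z_p$-solutions, whose density is exactly $\beta$ from Lemma \ref{lem:alpha_beta}: the recursion defining $\beta$ uses only the mod-$p$ equation in the $z = 1$ chart and the Hensel analysis there, and these coincide for the $d = 3$ setup considered here and the $d = 6$ setup of that lemma (the extra terms with $c_4, c_5, c_6 \in p\Z_p$ do not affect the mod-$p$ equation or the relevant partial derivatives). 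Combining the two cases yields $\rho_{3,3}^\aff(p) = \tfrac{1}{3} + \tfrac{2}{3}\beta$ for $p > 7$, which simplifies to the stated rational function.

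The hard part will be the case $p = 7$: the Hasse--Weil bound $p + 1 - 2\sqrt p \approx 2.71$ used to justify the ``probability one'' step inside the $\beta$-recursion becomes tight, and certain factorization types of $\overline{f}(x)$ no longer guarantee a Hensel-liftable smooth integral $\F_7$-point on the affine curve. To handle this, I would revisit the $\beta$-recursion at $p = 7$, enumerate the exceptional sub-cases, and relate their contributions directly to $\alpha$ rather than to $1$. The resulting adjusted expression for the non-cube case combines with the cube case to give $\rho_{3,3}^\aff(7) = \tfrac{1}{2058}(2002 + 28\alpha)$, which evaluates to $\tfrac{401245}{411747}$ after substituting $\alpha = \tfrac{351}{2801}$ from Lemma \ref{lem:alpha_beta}.
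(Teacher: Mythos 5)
Your cube-case argument is the crux of the problem. The reciprocal substitution $X=1/x$, $Y=y/x$ is valid for every $p\equiv 1\pmod 3$, including $p=7$ (Hensel for the cube root of $c_3$ only needs $p\neq 3$, and $(0,\overline a)$ is a smooth point of the transformed equation), so carried out consistently your outline gives probability $1$ in the cube case at $p=7$ as well, hence $\rho_{3,3}^\aff(7)=\tfrac13+\tfrac23\beta(7)=\tfrac{2030+28\alpha}{2058}=\tfrac{8303}{8403}$, not the stated $\tfrac{2002+28\alpha}{2058}=\tfrac{401245}{411747}$. The $28$ classes the paper's search finds insoluble are exactly those with $\overline{c_3}$ a nonzero cube and $\overline f$ taking only nonzero non-cube values on $\F_7$ (if $\overline{c_3}$ were a non-cube, the smooth plane cubic would have at least $8-2\sqrt 7>0$ points, none at infinity, so an affine liftable point); for each of these your substitution manufactures a $\Q_7$-point with $v(x)=-1$, so under your reading they are soluble and the claimed identity at $p=7$ is unreachable. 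Your diagnosis of where $p=7$ hurts is also misplaced: the $\beta$-recursion (the non-cube case) is fine at $p=7$ --- Lemma \ref{lem:alpha_beta} explicitly covers $p\geq 7$, and there all affine points are automatically integral --- whereas the genuine failure at $p=7$ is in the cube case, where $p+1-2\sqrt p\approx 2.71$ points of the smooth model may all lie at infinity. So the plan to ``revisit the $\beta$-recursion and relate exceptional sub-cases to $\alpha$'' cannot produce the stated value while your cube-case argument stands; the exceptional classes at $p=7$ contribute $0$ (in the paper's count), not $\alpha$, and the $\alpha$-contributions (triple root with non-cube leading coefficient) are already inside $\beta$.

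The way out of the apparent paradox is that the quantity the paper actually computes and uses (in Lemma \ref{lem:pi}, and in the modified steps $\tau_{9\ell}$, $\theta_{9g}$) is the probability of a solution with $x,y\in\Z_p$, i.e.\ a $\Q_p$-point reducing to an affine point of the reduction; points with $v(x)<0$ must not be counted, since in those recursions they would correspond to points outside the residue disc being tracked (and for the sextic families that actually arise there, the terms $c_4x^4,c_5x^5,c_6x^6$ with their positive valuations rule such points out by a valuation-mod-$3$ argument). The paper's proof of Lemma \ref{lem:rho33aff} accordingly handles the cube case with the Hasse--Weil bound \eqref{eq:hasse-weil} (at least $p+1-2\sqrt p>3$ smooth points, at most three at infinity, valid once $p\geq 13$) together with the observation that when $\overline f=c_3(x-a)^3$ with $c_3$ a cube one lifts at $x=a+1$, and at $p=7$ replaces this by the enumeration $2002/28/28$. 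Under that integral-point reading your cube-case argument proves the wrong thing (it only produces non-integral points), although Hasse--Weil repairs it for $p>7$; under the literal ``affine $\Q_p$-point'' reading you have in effect found that the paper's $p=7$ value counts $\Z_7$-solubility rather than $\Q_7$-solubility --- but then you have not proved the lemma as stated either. In either case the $p=7$ formula does not follow from your outline.
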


\begin{proof}
Note that the justification when $p > 7$ is essentially that of $\tau_{9\ell}$ or $\theta_{9g}$. Whenever $c_3 \in \left(\F_p^\times\right)^3$ we have a solution, as the Hasse bound \eqref{eq:hasse-weil} applies to the normalization of the reduction of \eqref{eq:rho33} whenever $p > 7$, and if the right hand side factors as $c_3(x-a)^3$, we can lift $[a+1:y:1]$ using Hensel's lemma. If $c_3 \notin \left(\F_p^\times\right)^3$, then we are in the situation of $\beta$, giving the first statement.

When $p=7$, a computer search shows that of the 2058 equations \eqref{eq:rho33} over $\F_7$ with $v(c_3) = 0$, 2002 can be lifted via Hensel's lemma, 28 are insoluble, and 28 are of the form $y^3 = c_3(x-a)^3$, where $c_3 \notin \left(\F_p^\times\right)^3$. See the procedure \texttt{count_cubic_forms(p)}, contained in the file \texttt{CountForms.m}, which can be \href{https://github.com/c-keyes/Density-of-locally-soluble-SECs/blob/f492b080352291c758e10fe9f82a49618e7e095b/CountForms.m}{found here}, for an implementation in Magma \cite{magma}. The probability of lifting in this case is given by $\alpha$, proving the second statement.
\end{proof}

The following lemma complements Lemma \ref{lem:nu}, in that it provides the probability that a triple root modulo $p$ lifts to a $\Q_p$-point.

\begin{lemma}\label{lem:pi}
Let $p \equiv 1 \pmod{3}$ and fix $c_4, c_5, c_6 \in \Z_p$. As $c_0, c_1, c_2, c_3$ vary in $\Z_p$ with $v(c_3) = 0$, the $\F_p$-solution $[0:0:1]$ to $\overline{F}(x,y,z) = 0$ lifts to a $\Q_p$-solution to $F(x,y,z) = 0$ with probability
	\[\pi = \frac1p - \frac{1}{p^2} + \frac{1}{p^3}\rho_{3,3}^\aff = \begin{cases} \frac{17694619}{141229221} & p=7, \\[2ex] \frac{3  p^{6} + 3  p^{4} + 3  p^{3} + p^{2} + 1}{3  {\left(p^{4} + p^{3} + p^{2} + p + 1\right)} p^{3}} & p > 7. \end{cases}\]
\end{lemma}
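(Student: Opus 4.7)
The plan is to condition on the valuations of $c_0$ and $c_1$. Since $[0:0:1]$ reduces to a solution of $\overline F$ if and only if $\overline{c_0} = 0$, which occurs with probability $1/p$, the outer factor $\frac{1}{p}$ in $\pi$ is immediate. Given $v(c_0) \geq 1$, the partial derivative $\partial F / \partial x$ evaluated at $(0,0,1)$ equals $-c_1$, so when $v(c_1) = 0$ (conditional probability $1 - 1/p$) the reduction at $[0:0:1]$ is smooth and Hensel's lemma produces the desired $\Q_p$-lift; this contributes $\frac{1}{p}\bigl(1 - \frac{1}{p}\bigr) = \frac{1}{p} - \frac{1}{p^2}$ to $\pi$.

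In the singular sub-case $v(c_0) \geq 1, v(c_1) \geq 1$, any $\Q_p$-point $(x,y,1)$ reducing to $[0:0:1]$ has $v(x), v(y) \geq 1$, so every term of $F(x,y,1)$ aside from $-c_0$ has valuation at least $2$. Hence $v(c_0) = 1$ is insoluble modulo $p^2$, while $v(c_0) \geq 2$ (conditional probability $1/p$) demands further analysis; the hard case $v(c_0) \geq 2$, $v(c_1) \geq 1$ thus occurs with unconditional probability $1/p^3$. Substituting $(x,y) = (px', py')$ and dividing by $p^2$ converts the equation into
\[p\,{y'}^3 = a_0 + a_1 x' + c_2 {x'}^2 + pc_3 {x'}^3 + p^2 c_4 {x'}^4 + p^3 c_5 {x'}^5 + p^4 c_6 {x'}^6,\]
where $a_0 := c_0/p^2$ and $a_1 := c_1/p$ are independently uniform in $\Z_p$.

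The key claim is that the conditional probability of a $\Z_p$-solution to this equation equals $\rho_{3,3}^{\aff}$ precisely. One proves this by conditioning on whether $\overline{c_3}$ is a cubic residue modulo $p$, mirroring the decomposition $\rho_{3,3}^{\aff} = \frac{1}{3} + \frac{2}{3}\beta$ from Lemma \ref{lem:rho33aff}: when $\overline{c_3} \in (\F_p^\times)^3$, smooth liftings via Hensel applied to roots of $c_2 x^2 + a_1 x + a_0$ modulo $p$ produce $\Z_p$-solutions to the transformed equation in parallel with the companion cubic $y^3 = c_3 x^3 + c_2 x^2 + a_1 x + a_0$, with exceptional perfect-cube configurations reduced to $\alpha$-type lifts from Lemma \ref{lem:alpha_beta}; when $\overline{c_3} \notin (\F_p^\times)^3$, the Hasse--Weil argument underlying $\beta$ applies identically to the reduction of both curves. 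Assembling the cases yields
\[\pi = \frac{1}{p}\Bigl(1 - \frac{1}{p}\Bigr) + \frac{1}{p^3}\rho_{3,3}^{\aff} = \frac{1}{p} - \frac{1}{p^2} + \frac{1}{p^3}\rho_{3,3}^{\aff},\]
and substituting the formula (or exceptional value at $p=7$) from Lemma \ref{lem:rho33aff} gives both stated expressions. The main obstacle is verifying the equivalence in the hard case: the transformed equation is not literally of $\rho_{3,3}^{\aff}$ form owing to the leading factor of $p$ and the $O(p^2)$ corrections, and the most subtle sub-case is when $\overline{c_3}$ is a cubic residue, where one must carefully trace the perfect-cube factorizations of the associated cubic through the iterated substitutions $x \to px, y \to py$ used in the table analyses of Lemmas \ref{lem:taus} and \ref{lem:thetas}.
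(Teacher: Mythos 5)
Your decomposition attaches the three terms of the formula to the wrong events, and the pivotal claim in your ``hard case'' is quantitatively false. First, the conditioning: $\pi$ is used (in Proposition \ref{prop:sigma1_small_p}) for the probability that a \emph{triple root} of $\overline{f}$ at $[0:1]$ lifts, so the relevant sample space has $c_0, c_1, c_2 \in p\Z_p$ and $v(c_3)=0$; this is exactly how the paper's proof sets up its table, the statement's phrase ``vary in $\Z_p$'' notwithstanding. Under that conditioning your first two steps are vacuous: $\overline{c_0}=0$ is automatic (so no factor $\frac1p$ arises from it), and $v(c_1)=0$ is impossible (so there is no smooth-point contribution $\frac1p\bigl(1-\frac1p\bigr)$). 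In the paper the factor $\frac1p$ instead comes from the necessary condition $p^2\mid c_0$ (reduce modulo $p^2$ using $p\mid x,y$), and the factor $1-\frac1p$ from the sub-case $v(c_1)=1$, where after substituting $x\mapsto px$ the congruence modulo $p^3$ becomes a nontrivial linear equation in $x$ that Hensel lifts; only after the further events $v(c_1)\ge 2$ and $p^3\mid c_0$ (each of probability $\frac1p$), followed by $y\mapsto py$ and division by $p^3$, does one land in the $\rho_{3,3}^{\aff}$ configuration.

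Second, and independently of how one reads the statement, your central claim --- that in the case $v(c_0)\ge 2$, $v(c_1)\ge 1$ (with $c_2$ unrestricted) the rescaled equation $p{y'}^3 = c_2{x'}^2 + a_1x' + a_0 + O(p)$ is soluble with probability exactly $\rho_{3,3}^{\aff}$ --- is false. Reducing modulo $p$ forces $c_2{x'}^2+a_1x'+a_0\equiv 0$, a random quadratic whose leading coefficient is a unit with probability $1-\frac1p$; it has no root modulo $p$ with probability roughly $\frac12$, in which case there is no solution at all. So the true probability there is about $\frac12$, governed by quadratic-root statistics as in Lemma \ref{lem:nu} and $\theta_2$, not $\rho_{3,3}^{\aff} = 1 - O(p^{-2})$; the hand-wave via Hasse--Weil and the $\alpha$, $\beta$ lemmas does not apply to this equation. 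Equivalently, if $c_0,c_1,c_2$ really range over all of $\Z_p$ as in your setup, the correct answer is $\frac1p\bigl(1-\frac1p\bigr) + \frac{1}{p^2}\bigl(1-\frac1p\bigr)\nu + \frac{1}{p^3}\pi$, whose $p^{-3}$-coefficient is about $\frac12$ rather than $\rho_{3,3}^{\aff}\approx 1$, so the stated formula is not even the unconditional probability. Your final expression matches the lemma only because the misattributed weights $\frac1p$, $1-\frac1p$, $\frac1{p^3}$ happen to coincide numerically with the correct ones in the conditioned computation; the argument therefore does not establish the lemma.
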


\begin{proof}
	The proof follows techniques similar to ones we have already seen. 
	
	\begin{center}
	\begin{tabular}{l l l | l l l l l l l}
	& & & $c_6$ & $c_5$ & $c_4$ & $c_3$ & $c_2$ & $c_1$ & $c_0$ \\
	
	$\pi =$ & $\pi_a =\frac1p \pi_b$ & 
		& $\geq 0$ & $\geq 0$ & $\geq 0$ & $= 0$ & $\geq 1$ & $\geq 1$ & $\geq 1$\\
		
	& $\pi_b = \left(1-\frac1p\right) + \frac1p \pi_c$ & 
		& $\geq 6$ & $\geq 5$ & $\geq 4$ & $= 3$ & $\geq 3$ & $\geq 2$ & $\geq 2$\\
		
	& $\pi_c = \frac1p \pi_d$ & 
		& $\geq 6$ & $\geq 5$ & $\geq 4$ & $= 3$ & $\geq 3$ & $\geq 3$ & $\geq 2$\\
		
	& $\pi_d = \rho_{3,3}^{\aff}$ & 
		& $\geq 3$ & $\geq 2$ & $\geq 1$ & $= 0$ & $\geq 0$ & $\geq 0$ & $\geq 0$\\
	\end{tabular}
	\end{center}
	
	\begin{enumerate}[label = (\alph*)]
		\item We observe that $v(c_3x^3 + c_2x^2 + c_1x) \geq 2$, so it is necessary for $v(c_0) \geq 2$, which occurs with probability $\frac{1}{p}$. At this point, we replace $x$ by $px$ and $c_i$ by $p^ic_i$ and move to the next line.
		
		\item The justification is identical to that of $\theta_{9e}$.
		\item The justification is identical to that of $\theta_{9f}$. 
		\item The probability of finding a solution of the form $[x:y:1]$ is precisely $\rho_{3,3}^\aff(p)$ by definition.
	\end{enumerate}
	Putting these steps together, along with the value of $\rho_{3,3}^\aff$ from Lemma \ref{lem:rho33aff} yields the given formula.	
\end{proof}

Consider now equations of the form 
\begin{equation}\label{eq:rho34aff}
	y^3 = c_4x^4 + c_3x^3 + c_2x^2 + c_1x + c_0
\end{equation}
with $v(c_3) = 0$. Let the proportion of equations \eqref{eq:rho34aff} over $\Z_p$ possessing an affine $\Q_p$-point be denoted $\rho_{3,4}^{\aff}(p)$. This quantity came up in computing $\theta_7$ and $\tau_9$, (see in particular $\theta_{7g}$, $\tau_{9k}$), and hence also the quantities derived from them, including $\mu, \mu', \tau_7, \tau_8, \theta_8,$ and $\theta_9$. When $p > 31$, an application of the Hasse--Weil bound \eqref{eq:hasse-weil} is sufficient to guarantee the existence of a $\Q_p$-point; for $p \leq 31$, we have the following.

\begin{lemma}\label{lem:rho34aff}
	Let $p \equiv 1 \pmod{3}$. For $p \geq 31$ we have $\rho_{3,4}^\aff = 1$. For $p < 31$, we have
	\begin{align*}
		\rho_{3,4}^\aff(7) &= \frac{93877018682}{96889010407} \approx 0.96891, \\
		\rho_{3,4}^\aff(13) &= \frac{813159544}{815730721} \approx 0.99684, \\
		\rho_{3,4}^\aff(19) &= \frac{6856}{6859} \approx 0.99956.
	\end{align*}
\end{lemma}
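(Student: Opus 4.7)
The plan is to handle the three small primes $p \in \{7, 13, 19\}$ by a direct Hensel-style computer enumeration, and to establish $\rho_{3,4}^{\aff}(p) = 1$ for all $p \geq 31$ with $p \equiv 1 \pmod{3}$ via a case analysis combining Hasse--Weil bounds with a Newton-polygon rescaling. For the three small primes, the denominators $7^{13}$, $13^8$, $19^3$ indicate the $p$-adic precision $p^{N(p)}$ one must reach before the set of insoluble residue classes of $(c_0,\dots,c_4)$ with $v(c_3)=0$ stabilizes. I would enumerate all such classes modulo $p^{N(p)}$, apply Hensel's lemma to each to detect an affine liftable $\F_p$-point, and tally the liftable proportion; this is a finite mechanical calculation directly in the spirit of Appendix \ref{appx:code}.

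For $p \geq 31$, fix $(c_0,\dots,c_4)$ with $v(c_3)=0$, set $g(x) = \sum c_i x^i$, and let $\overline{g} \in \F_p[x]$ denote its reduction. In Case A, where $\overline{g}$ is not of the form $c \cdot h(x)^3$ in $\F_p[x]$, the curve $\overline{C}\colon y^3 = \overline{g}(x)$ is geometrically integral with smooth model of genus at most $3$; the counting argument from the proof of Proposition \ref{prop:bound_p=1_large} --- using \eqref{eq:hasse-weil_improvement} and subtracting at most five points (one at infinity, at most four singular affine points) --- produces a smooth affine $\F_p$-point whenever $p \geq 61$, while the three boundary primes $p \in \{31, 37, 43\}$ are cleared by a direct computer enumeration over the finitely many $\overline{g}$. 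In Case B, the conditions $\overline{c_3} \neq 0$ and $\deg g \leq 4$ force $\overline{g}(x) = \overline{c_3}(x-a)^3$ with $v(c_4) \geq 1$; after translating $x \mapsto x+a$ I may take $a=0$, giving $c_0, c_1, c_2, c_4 \in p\Z_p$ and $v(c_3)=0$. If $\overline{c_3} \in (\F_p^\times)^3$, Hensel's lemma applied at $x=1$ to $y^3 - g(1)$ yields an affine $\Q_p$-point directly. If $\overline{c_3}$ is a cubic nonresidue, I would rescale $x \mapsto p^j u$ with $j$ chosen so that the $c_4 x^4$ slope on the Newton polygon of $g$ supplies the dominant monomial after rescaling; a careful valuation-tracking argument shows that, outside a measure-zero set in $(c_0,c_1,c_2,c_4)$, the rescaled reduction falls into Case A for the new polynomial (with leading term supplied by $c_4$), whose solubility was just established. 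Since measure-zero exceptions contribute nothing to the local density, this completes the argument.

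The main obstacle will be Case B of the large-prime regime: one must show that the $c_4 x^4$ term --- entirely invisible modulo $p$ after translation --- systematically restores solubility at higher $p$-adic precision for a set of full measure, without any circular appeal to $\rho_{3,3}^{\aff}$, which by Lemma \ref{lem:rho33aff} is strictly less than $1$ for $p > 7$. A secondary issue is the boundary range $p \in \{31,37,43\}$ in Case A, where the Hasse--Weil bound is borderline and a finite computer check over the residue classes of $\overline{g}$ is needed to close the argument.
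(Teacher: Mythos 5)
Your small-prime strategy has a genuine gap. You propose to read the required $p$-adic precision off the denominators ($7^{13}$, $13^{8}$, $19^{3}$) and enumerate all classes of $(c_0,\dots,c_4)$ modulo $p^{N(p)}$, claiming the insoluble locus ``stabilizes'' there. That stabilization claim is exactly what would need proof, and it is not true at the depths you name: the denominators are outputs of the computation, not a priori bounds on the depth at which solubility is decided. For instance at $p=19$, whether a double root of the reduction lifts already requires $p^2\mid c_0$, then a cube-class condition on $c_0/p^2$, then a further rescaling with conditions at depth $4$ and beyond (this is the terminating recursion behind $\nu$ and $\theta_2$ in Lemmas \ref{lem:nu} and \ref{lem:thetas}), so classes modulo $19^3$ contain both soluble and insoluble tuples. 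Even granting stabilization, enumerating $p^{5N(p)}$ classes (e.g.\ $7^{65}$ of them) is not a feasible computation, and certifying \emph{in}solubility of a class at finite depth is itself delicate when the only approximate solutions are singular. The paper avoids all of this: it enumerates only the $\sim p^5$ quartics over $\F_p$ with unit leading coefficient, notes that Hensel's lemma decides every class whose reduction takes a nonzero cube value or has a simple root, that insolubility mod $p$ decides others, and that the only undetermined classes are those whose reduction has one or two double roots; these are weighted by the exact lifting probability $\nu$ of Lemma \ref{lem:nu} (with an independence argument for two double roots), which is what yields the exact rational values rather than bounds.

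In the large-prime regime your Case B comes from reading the side condition as $v(c_3)=0$ with $c_4$ unconstrained. In the setting actually used (see the paper's proof, which enumerates quartics with $c_4\not\equiv 0\pmod p$, and the applications at $\theta_{7g}$ and $\tau_{9k}$, where $v(c_4)=0$), the leading coefficient is a unit, so the reduction is a genuine quartic, never a constant times a cube, and Case B does not occur; the Hasse--Weil argument plus the enumeration at $p=31$ (needed since the lemma asserts the value $1$ at $p=31$, where $p+1-6\sqrt p<0$) finishes the proof, with no boundary checks required at $37$ and $43$. As written, your Case B is left as an acknowledged obstacle, so the argument is incomplete even on your own reading of the statement. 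For what it is worth, it can be closed without any appeal to $\rho_{3,3}^{\aff}$: if $j=v(c_4)<\infty$ (and $j\geq 1$ in Case B), substitute $x=u/p^j$, $y=w/p^j$ and multiply by $p^{3j}$ to get $w^3=(c_4p^{-j})u^4+c_3u^3+p^jc_2u^2+p^{2j}c_1u+p^{3j}c_0$, whose right-hand side reduces to $u^3\bigl((c_4p^{-j})u+c_3\bigr)$ modulo $p$ and hence has the simple root $u\equiv -c_3(c_4p^{-j})^{-1}\not\equiv 0$; Hensel lifts it to an exact root $u^\ast\in\Z_p$, giving the affine point $(u^\ast/p^j,0)$, and the excluded set $c_4=0$ has measure zero.
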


\begin{proof}
	For $p > 31$ this is a consequence of the Hasse--Weil bound \eqref{eq:hasse-weil}. For the four primes $p \leq 31$, the proof proceeds by enumeration of all binary quartic forms $f(x,z)$ over $\F_p$ with $c_4 \not\equiv 0 \pmod{p}$. If for any $[x:1]$ we have $f(x,1) \in \left(\F_p^\times\right)^3$ or $f(x,1) = 0$ is a root of multiplicity 1, then Hensel's lemma allows us to lift to a $\Q_p$-point. Of course, if $y^3 = f(x,z)$ is insoluble modulo $p$, then there exist no $\Q_p$-points.
	
	The only other possibility is that $f(x,z)$ has one or two double roots. In either case, the probability that such a root lifts to a $\Q_p$-point is $\nu$, by Lemma \ref{lem:nu}. By enumerating all such $f(x,z)$ and determining their value sets and factorizations, we computed $\rho_{3,4}^\aff(p)$ as listed above, finding in particular that $\rho_{3,4}^\aff(31) = 1$. 
	
	This enumeration procedure was implemented in Magma \cite{magma}. The relevant procedure, \texttt{count_quartic_forms(p)}, is contained in the file \texttt{CountForms.m} and can be \href{https://github.com/c-keyes/Density-of-locally-soluble-SECs/blob/f492b080352291c758e10fe9f82a49618e7e095b/CountForms.m}{found here}.
\end{proof}

With a similar approach as that of Lemmas \ref{lem:rho33aff} and \ref{lem:rho34aff}, we can determine $\sigma_1$ and $\sigma_1^*$ exactly for $p = 7, 13, 19, 31, 37, 43$.

\begin{proposition}\label{prop:sigma1_small_p} For the primes $p \equiv 1 \pmod{3}$ with $p \leq 43$, the values of $\sigma_1$ and $\sigma_1^*$ are given below.

\begin{align*}
	\sigma_1(7) &= \frac{577619497568784534247}{586438262710350126300} \approx 0.98496  & \sigma_1^*(7) &= \frac{653206973052553734217}{670215157383257287200} \approx 0.97462 \\
	\sigma_1(13) &= \frac{5931415654903952}{5941011706232655} \approx 0.99838 & \sigma_1^*(13) &= \frac{455813699762383}{457000900479435} \approx 0.99740 \\
	\sigma_1(19) &= \frac{1294027438921}{1294326278072} \approx  0.99976  & \sigma_1^*(19) &= \frac{43009044017}{43024696224} \approx 0.99963\\
	\sigma_1(31) &= \frac{3697903}{3697928} \approx 0.999993 & \sigma_1^*(31) &= \frac{477147}{477152} \approx  0.999989 \\
	\sigma_1(37) &= \frac{937764}{937765} \approx  0.999998 & \sigma_1^*(37) &=  \frac{608279}{608280} \approx 0.999998 \\
	\sigma_1(43) &= \frac{41047793}{41047800} \approx 0.9999998 & \sigma_1^*(43) &= \frac{3818399}{3818400} \approx 0.9999997 	
\end{align*}
\end{proposition}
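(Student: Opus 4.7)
The plan is to enumerate absolutely irreducible reductions $\overline{F}(x,y,z) = y^3 - \overline{f}(x,z)$ for each prime $p \in \{7, 13, 19, 31, 37, 43\}$ and determine the exact lifting probability for each reduction. Since the Hasse--Weil bound used in Proposition \ref{prop:sigma_1} fails in this range (one checks $p + 1 \leq 4\lfloor 2\sqrt{p} \rfloor$ for $p \leq 43$), we must explicitly handle the reductions that lack a smooth $\F_p$-point.

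The key observation is that any $\F_p$-point $[x_0:y_0:z_0]$ of $\overline{C_f}$ with $y_0 \neq 0$ is smooth, since $\partial_y F = 3y^2 \not\equiv 0 \pmod p$ for $p \neq 3$, and any point with $y_0 = 0$ is smooth exactly when $(x_0, z_0)$ is a simple root of $\overline{f}$. Hensel's lemma then guarantees a $\Q_p$-lift. Consequently, $\overline{C_f}$ has no smooth $\F_p$-point iff (a) every $\F_p$-root of $\overline{f}$ has multiplicity at least $2$, and (b) $\overline{f}(x_0,z_0) \notin (\F_p^\times)^3$ for all $[x_0:z_0] \in \P^1(\F_p)$. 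For the $\sigma_1^*$ variant we additionally restrict to $\overline{c_6} \notin \F_p^3$ throughout, which removes the point at infinity.

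For each reduction $\overline{F}$ failing the smoothness criterion, we enumerate its singular points and compute lifting probabilities using the intermediate results already developed: double roots via $\nu$ from Lemma \ref{lem:nu}, triple roots via $\pi$ from Lemma \ref{lem:pi}, and quadruple or higher-multiplicity configurations via case-by-case Hensel analysis in the spirit of Lemmas \ref{lem:taus} and \ref{lem:thetas} (reusing $\rho_{3,3}^{\aff}$ and $\rho_{3,4}^{\aff}$ from Lemmas \ref{lem:rho33aff} and \ref{lem:rho34aff}). When several singular points are present, their lifts combine via the same independence arguments used for $\tau_3$, $\tau_4$, $\theta_3$, $\theta_4$: after moving the singular points to $[0:0:1]$, $[1:0:0]$, and $[1:0:1]$ by an element of $\Aut \P^1$, the Hensel conditions at distinct points involve disjoint coefficient subsets. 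The final density is the sum of these contributions divided by the total number $p^3(p^4 - 1)$ of absolutely irreducible reductions.

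The principal obstacle is the computational scale: for $p = 43$ the number of absolutely irreducible reductions is of order $p^7 \approx 3 \times 10^{11}$, well beyond direct enumeration. We bypass this by filtering on conditions (a) and (b) above, which is very restrictive because on average roughly $(p-1)/3$ of the non-root values of $\overline{f}$ on $\P^1(\F_p)$ are cubic residues, so the requirement that none of them be so cuts down the ``bad'' set drastically. We implement the enumeration in Magma or Sage along the lines of \texttt{count\_cubic\_forms} and \texttt{count\_quartic\_forms} from Lemmas \ref{lem:rho33aff} and \ref{lem:rho34aff}, and sum the weighted lift probabilities over this residual finite set to produce the exact rational values of $\sigma_1(p)$ and $\sigma_1^*(p)$ stated in the proposition.
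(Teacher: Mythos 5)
Your proposal follows essentially the same route as the paper's proof: enumerate the absolutely irreducible sextic reductions, declare a form soluble whenever $\overline{f}$ takes a nonzero cube value or has a simple root (Hensel), and weight the residual singular-only configurations by $\nu$ for double roots (made independent via the $\theta_4$-style argument) and $\pi$ for triple roots, which is exactly the computation carried out with the Magma enumeration of Appendix \ref{appx:code} and Lemmas \ref{lem:nu} and \ref{lem:pi}. The only practical divergence is how the $O(p^7)$ search is tamed: the paper does not filter on your conditions (a) and (b) — checking them still requires a pass over all forms — but instead uses the scaling changes of variables in $y$ and $z$ to fix $c_6$ up to cubes and $c_0$ up to sixth powers plus parallelization, and its enumeration shows that for these primes only double- and triple-root configurations actually occur among the bad forms, so the extra quadruple-root analysis you allow for is never needed.
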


\begin{proof}

We proceed by enumerating binary sextic forms $f(x,z)$ and checking for liftable points in \texttt{Magma} \cite{magma}; see Appendix \ref{appx:code} for a description, including optimizations necessary to shorten the runtime of these calculations. 

Let $\overline{f}(x,z)$ be a binary sextic form  over $\F_p$ which is not equal to $h(x,z)^3$ for any binary quadratic form $h(x,z)$ (resp.\ also satisfying condition $(*)$). If $\overline{f}(x,z) \in \left(\F_p^\times\right)^3$ or $\overline{f}(x,z) = 0$ is a root of multiplicity 1, then by Hensel's lemma it lifts to a $\Q_p$-point of $C_f$.

If no such $[x:z]$ exist, then the equation is either insoluble, in which case $C_f(\Q_p) = \emptyset$, or the only $\F_p$-points come from multiple roots of $\overline{f}(x,z)$. These could be up to three double roots, or a triple root (note that two triple roots or a sextic root are ruled out by being in factorization case 1). By Lemma \ref{lem:nu}, each double root lifts (independently, by the same arguments as those for $\theta_4$ in the proof of Lemma \ref{lem:thetas}) to a $\Q_p$-point with probability $\nu$, while a triple root lifts with probability $\pi$ by Lemma \ref{lem:pi}.

Summing up the number of forms and weighting by the appropriate probability yields the given values of $\sigma_1$ (resp.\ $\sigma_1^*$). See \eqref{eq:sigma1_13_example} for the case of $p=13$ as an example.
\end{proof}

At this point, we can repeat the calculations of \S \ref{sec:exact_conjugate}, \ref{sec:exact_triple} --- namely those of $\sigma_4$ and $\sigma_5$ --- using the modifications above as appropriate. These modifications are described below; \href{https://github.com/c-keyes/Density-of-locally-soluble-SECs/blob/bd6a8b39ea8c63bf8e7a847063c70998d01ee8aa/SEC_rho36_23Aug21.ipynb}{see here} for an implementation in Sage \cite{sagemath}.
\begin{itemize}
	\item $\theta_{7g} = \rho_{3,4}^{\aff}$ in the proof of Lemma \ref{lem:thetas}. This is used to compute $\theta_7$, which is then used to compute $\mu$, $\tau_7$, and $\mu'$ in succession, and these values are used throughout.
	\item In the proof of Proposition \ref{prop:sigma_4}, the correct $\sigma_1^*$ value from Proposition \ref{prop:sigma1_small_p} must be used in \eqref{eq:rho_star_sum}.
	\item In the calculation of $\tau_9$ in Lemma \ref{lem:taus}, we use
	\begin{align*}
		\tau_{9k} &= \left(1 - \frac1p\right)\rho_{3,4}^\aff(p) + \frac1p \tau_{9\ell} \quad \text{and} \\
		\tau_{9\ell} &= \left(1 - \frac1p\right) \rho_{3,3}^\aff(p) + \frac1p \tau_{9m},
	\end{align*}
	where $\rho_{3,3}^\aff(p)$ and $\rho_{3,4}^\aff(p)$ are given in Lemmas \ref{lem:rho33aff} and \ref{lem:rho34aff}, respectively.
	\item In the calculation of $\theta_9$ in Lemma \ref{lem:thetas}, we use
	\[\theta_{9g} = \left(1 - \frac1p\right) \rho_{3,3}^\aff(p) + \frac1p \theta_{9h}.\]
	\item In the final calculation of $\rho$, we use the correct $\sigma_1$ value from Proposition \ref{prop:sigma1_small_p} in \eqref{eq:rho_sum}.
\end{itemize}
The exact values of $\rho_{3,6}(p)$ are recorded in \eqref{eq:rho7} -- \eqref{eq:rho43}.

\subsubsection{\texorpdfstring{$p=3$}{p=3}}

Suppose now that $p=3$. This case breaks from the others in that when $f(x,z) \not\equiv 0 \pmod{3}$, one cannot determine whether there exists a $\Q_3$-solution to $y^3 = f(x,z)$ from information modulo $p$ alone. Instead, one needs to know information modulo $3^3 = 27$.

In $\Z/27\Z$, the nonzero cubic residue classes are precisely
\[\left(\Z/27\Z^\times\right)^3 = \left\{1, 8, 10, 17, 19, 26\right\}.\]
For $a \in \Z_3$ with $v(a) = 0$, there exists $y \in \Z_3$ satisfying $y^3 = a$ if and only if $a \in \left(\Z/27\Z^\times\right)^3$. This is seen by applying Hensel's lemma, in the form of \eqref{eq:Hensel_improved}, with respect to $y$. Note also that for any $a \in \Z_3$, we have that its residue $a \in \left(\Z/27\Z^\times\right)^3$ if and only if $a + 9 \in \left(\Z/27\Z^\times\right)^3$; this will be used later.

Our approach mirrors that of the other primes $p$ in this section; we first establish some technical results, then use them to adapt our general strategy to work for $p=3$, yielding a value for $\rho_{3,6}(3)$. We begin with the following lemma, which effectively takes the place of $\Phi$ in the proofs of various lifting results.

\begin{lemma}\label{lem:replacing_Phi_for_3}
	Consider the probability of $F(x,y,1)=0$ having a $\Q_3$-solution under the following conditions.
	\begin{enumerate}[label = (\alph*)]
		\item Fix $c_3, c_4, c_5, c_6 \in 3\Z_3$ and vary $c_0 \in \Z_3 - 3\Z_3$ and $c_1, c_2 \in 3\Z_3$. The probability that $F(x,y,1)=0$ has a $\Q_3$-solution is $\frac{19}{27}$.
		\item Fix $c_3, c_4, c_5, c_6 \in 9\Z_3$ and vary $c_0 \in \Z_3 - 3\Z_3$,  $c_1 \in 3\Z_3$, and $c_2 \in 3\Z_3 - 9\Z_3$. The probability that $F(x,y,1)=0$ has a $\Q_3$-solution is $\frac{2}{3}$.
		\item Fix $c_2, c_3, c_4, c_5, c_6 \in 9\Z_3$ and vary $c_0 \in \Z_3 - 3\Z_3$ and $c_1 \in 3\Z_3$. The probability that $F(x,y,1)=0$ has a $\Q_3$-solution is $\frac{7}{9}$.
	\end{enumerate}
\end{lemma}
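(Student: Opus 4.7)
The central observation, noted in the paragraph preceding the lemma, is that a unit $a \in \Z_3$ is a cube in $\Z_3$ exactly when $a \pmod 9 \in \{1, 8\}$: this follows from the list $\{1, 8, 10, 17, 19, 26\}$ of unit cubes modulo $27$, which reduce mod $9$ to $\{1, 8\}$, combined with an application of the improved Hensel's lemma \eqref{eq:Hensel_improved} to $y^3 - a$ (the derivative $3y^2$ has valuation exactly $1$ for unit $y$). Thus in each of (a)--(c) I need to show $f(x,1)$ is automatically a unit (which follows from $f(x,1) \equiv c_0 \pmod 3$, since $c_0$ is a unit) and then count configurations where $\{f(x,1) \pmod 9 : x \in \Z_3\}$ meets $\{1, 8\}$.

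The main simplification is that under each case's hypotheses, $f(x,1) \pmod 9$ takes at most three distinct values as $x$ ranges over $\Z_3$, indexed by $x \pmod 3 \in \{0, 1, -1\}$: the terms $c_i x^i$ with $i$ large are already in $9\Z_3$, and for the remaining terms one uses $x^i \equiv \bar x^i \pmod 3$. Writing $c_i = 3 d_i$ for $i \geq 1$ in case (a), the three values work out to $c_0$, $c_0 + 3(A + B)$, and $c_0 + 3(2A + B)$ where $A = d_1 + d_3 + d_5$ and $B = d_2 + d_4 + d_6$ modulo $3$. As $(c_1, c_2)$ varies uniformly, $(d_1, d_2)$ --- and hence $(A, B)$ --- is uniform on $(\Z/3)^2$ independent of the fixed higher coefficients, and because $(A, B) \mapsto (A + B, 2A + B)$ is invertible, the pair of shifts $(s, t)$ is also uniform. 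The problem becomes enumerating $(c_0, s, t) \in U_9 \times (\Z/3)^2$ (with $U_9 = \{1,2,4,5,7,8\}$) for which $\{c_0, c_0 + 3s, c_0 + 3t\}$ meets $\{1, 8\}$: the values $c_0 \in \{1, 8\}$ always succeed (contributing $2 \cdot 9 = 18$), and for each $c_0 \in \{2, 4, 5, 7\}$ a direct check shows exactly five of the nine pairs succeed (contributing $4 \cdot 5 = 20$), yielding $38/54 = 19/27$.

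The same template handles (b) and (c). In (b), the terms $c_3, \ldots, c_6 \in 9\Z_3$ contribute $0 \pmod 9$ while $v(c_2) = 1$ forces $d_2 \in \{1, 2\}$; the three values are $c_0$, $c_0 + 3(d_1 + d_2)$, $c_0 + 3(d_2 - d_1)$, and enumeration over the $6 \cdot 3 \cdot 2 = 36$ triples $(c_0, d_1, d_2)$ gives $24$ successes ($12$ from $c_0 \in \{1, 8\}$ and $3$ from each $c_0 \in \{2, 4, 5, 7\}$), for a ratio of $2/3$. In (c), $c_2, \ldots, c_6 \in 9\Z_3$ and only $d_1 \in \Z/3$ matters: for $d_1 \not\equiv 0$ the value set $\{c_0 - 3, c_0, c_0 + 3\}$ spans all three unit residues mod $9$ congruent to $c_0 \pmod 3$, automatically meeting $\{1, 8\}$ (giving $12$ successes), and the remaining $6$ configurations with $d_1 = 0$ succeed iff $c_0 \in \{1, 8\}$ (adding $2$ more), totaling $14/18 = 7/9$. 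The only real obstacle is keeping the modular bookkeeping straight and verifying that the answer is genuinely independent of the fixed higher coefficients; both are transparent because the fixed coefficients enter only through linear statistics whose distribution over $\Z/3$ is uniform.
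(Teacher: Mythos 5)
Your proof is correct and takes essentially the same route as the paper's: both observe that $f(x,1)$ is a unit for $x \in \Z_3$ and that its relevant residue depends only on $x \bmod 3$, reduce solubility to one of the three values $f(0,1), f(\pm 1,1)$ being a cube, and then count over the uniformly distributed low-order coefficients; your criterion ``unit $a$ is a cube iff $a \equiv \pm 1 \pmod 9$'' is just a streamlined version of the paper's use of the six cubic residues mod $27$ together with their invariance under shifts by multiples of $9$. Your enumerations ($\frac{38}{54}$, $\frac{24}{36}$, $\frac{14}{18}$) reproduce the paper's conditional-probability computations of $\frac{19}{27}$, $\frac{2}{3}$, $\frac{7}{9}$ exactly.
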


\begin{proof}
	The restrictions on the $c_i$ guarantee that for any $(x,y) \in \Z_3^2$, we have $v\left(F(x,y,1)\right) = 0$. Thus it suffices to work modulo $3^3 = 27$ and determine if $f(x,1)$ takes a value in $\left(\Z/27\Z^\times\right)^3$. By our earlier observation that nonzero cubic residues modulo 27 are invariant under addition by multiples of 9, we have that $f(x,1) \in \left(\Z/27\Z^\times\right)^3$ if and only if $f(x+3, 1) \in \left(\Z/27\Z^\times\right)^3$, and hence it suffices to check at
	\begin{align*}
		f(0,1) &= c_0,\\
		f(1,1) &= \sum_{i=0}^6 c_i, \text{and}\\
		f(-1,1) &= \sum_{i=0}^6 (-1)^i c_i.
	\end{align*}
We have that $c_0 \in \left(\Z/27\Z^\times\right)^3$ with probability $\frac{1}{3}$. If not, then we may check at the other values.

Consider first (a). If $c_0$ is not in $\left(\Z/27\Z^\times\right)^3$ then let $c \equiv c_0 + c_3 + c_4 + c_5 + c_6 \pmod{27}$. Exactly one of $c$, $c + 3$, and $c - 3$ are in $\left(\Z/27\Z^\times\right)^3$, and as $c_1, c_2$ varying in $3\Z_3$, we have that $c_1 + c_2 \equiv 0, 3, -3 \pmod{9}$ each with equal probability of 1/3. If $\sum c_i \notin \left(\Z/27\Z^\times\right)^3$, then we verify by direct enumeration that the probability of $-c_1 + c_2$ satisfying $\sum (-1)^i c_i \in \left(\Z/27\Z^\times\right)^3$ is also 1/3. Hence we have the probability in (a) is given by
\[\frac{1}{3} + \frac{2}{3}\left(\frac{1}{3} + \frac{2}{3}\left(\frac{1}{3}\right)\right) = \frac{19}{27}.\]

For (b) and (c), we are in a similar situation, except we can ignore $c_3, \ldots, c_6$ entirely as their values will not affect whether $f(x,1)$ takes a value in $\left(\Z/27\Z^\times\right)^3$. To compute (b), we note that if $c_0$ is not in $\left(\Z/27\Z^\times\right)^3$, then $c_0 + c_2$ is with probability $\frac{1}{2}$, since $9 \nmid c_2$. If this is the case, there is a $\frac{1}{3}$ chance that $9 \mid c_1$ and we have $f(1,1) \in \left(\Z/27\Z^\times\right)^3$. If $c_0 + c_2 \notin \left(\Z/27\Z^\times\right)^3$, then there is a $\frac{2}{3}$ chance that one of $f(\pm 1, 1) \in \left(\Z/27\Z^\times\right)^3$. This comes out to the probability
\[\frac{1}{3} + \frac{2}{3} \left( \frac{1}{2} \right) = \frac{2}{3}.\]

For (c) we follow a similar approach, observing that if $c_0$ is not in $\left(\Z/27\Z^\times\right)^3$, we have a $\frac{2}{3}$ chance that one of $c_0 \pm c_1$ is, as only when $9 \mid c_1$ is the sum not a cube modulo 27. Thus we obtain
\[\frac{1}{3} + \frac{2}{3} \left(\frac{2}{3} \right) = \frac{7}{9}.\]\end{proof}

We now compute the probability of lifting a point $[x:0:z]$ on $y^3 = f(x,z)$ when $f(x,z)$ has a double root modulo $3$; after a change of coordinates, we may consider the point $[0 : 0 : 1]$. We call this probability $\nu$ as in Lemma \ref{lem:nu}.

\begin{lemma}\label{lem:nu_p=3}
	Fix $c_2, c_3, c_4, c_5, c_6 \in \Z_3$ and suppose $v(c_2)=0$. As $c_0, c_1$ vary in $3\Z_3$, the $\F_3$-solution $[0:0:1]$ to $\overline{F}(x,y,z) = 0$ lifts to a $\Q_3$-solution to $F(x,y,z) = 0$ with probability
	\[\nu = \frac{43}{243}.\]
\end{lemma}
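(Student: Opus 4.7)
A $\Q_3$-lift of $[0:0:1]$ to $C_f$ is a pair $(x,y) \in (3\Z_3)^2$ satisfying $y^3 = f(x)$. Writing $x = 3u$ and $y = 3v$ with $u,v \in \Z_3$, the equation becomes $27 v^3 = f(3u)$, and expanding
\[
f(3u) = c_0 + 3c_1 u + 9 c_2 u^2 + 27\bigl(c_3 u^3 + 3 c_4 u^4 + 9 c_5 u^5 + 27 c_6 u^6\bigr),
\]
the condition $v(f(3u)) \geq 3$ first forces $9 \mid c_0$, an event of probability $\tfrac{1}{3}$ as $c_0$ varies over $3\Z_3$. I would condition on this, write $c_0 = 9 c_0'$ and $c_1 = 3 c_1'$ so that $c_0', c_1'$ are uniform in $\Z_3$, and reduce modulo $3$ to the rescaled quadratic $\bar Q(u) := \bar{c_0'} + \bar{c_1'}\, u + \bar{c_2}\, u^2 \in \F_3[u]$. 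A discriminant count, using $v(c_2)=0$, splits the nine possible residues $(\bar{c_0'}, \bar{c_1'}) \in \F_3^2$ into three equiprobable types: $\bar Q$ irreducible, $\bar Q$ with a double root, or $\bar Q$ with two simple roots.

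In the irreducible case $v(Q(u)) = 0$ for every $u \in \Z_3$, so $v(f(3u)) = 2$ and no lift exists. In each of the remaining two cases, for each root $\bar u_0 \in \F_3$ of $\bar Q$, I would lift to $u_0^* \in \Z_3$, parametrize $u = u_0^* + 3t$ for $t \in \Z_3$, and study
\[
\frac{f(3u)}{27} = \frac{Q(u)}{3} + c_3 u^3 + 3 c_4 u^4 + 9 c_5 u^5 + 27 c_6 u^6
\]
as a function of $t$ and of the remaining freedom in $c_0', c_1'$ modulo higher powers of $3$. Determining when this expression is a cube in $\Z_3$ reduces, via the criterion from \S\ref{sec:small_primes} that a unit $a \in \Z_3^\times$ is a cube iff $a$ is a cube modulo $27$ (equivalently $a \equiv \pm 1 \pmod{9}$), to a finite cube-residue check on $Q(u)/3 + c_3 \bar u_0^{\,3}$ modulo $3$---precisely the type of computation carried out in Lemma \ref{lem:replacing_Phi_for_3}.

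The main obstacle is the double-root case: there, $v(Q(u))$ may be any value $\geq 1$, and raising it by one forces additional divisibility of $c_0, c_1$ modulo higher powers of $3$. Each such refinement drops the problem into a smaller instance with more coefficients of high valuation, so the computation organizes naturally as a table-of-valuations argument in the style of Lemmas \ref{lem:taus} and \ref{lem:thetas}. Averaging over the free parameters at each stage should produce independence of the fixed coefficients $c_3, c_4, c_5, c_6$ (compare Remark \ref{rem:independence}), and summing the contributions over the three cases, weighted by their $\tfrac{1}{3}$ probabilities and by the initial factor $\tfrac{1}{3}$ for $9 \mid c_0$, yields the closed form $\nu = \tfrac{43}{243}$; the denominator $3^5$ reflects the depth of the required $3$-adic analysis.
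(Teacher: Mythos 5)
Your overall skeleton is the paper's: force $9 \mid c_0$ (probability $\tfrac13$), split according to the reduction $\bar Q$ of the rescaled quadratic (your three types each occur with probability $\tfrac13$, which are exactly $\eta'_{2,0},\eta'_{2,1},\eta'_{2,2}$ at $p=3$), discard the irreducible case, and handle the double root by a valuation-table recursion with mod-$27$ cube criteria --- this is precisely the paper's $\nu=\tfrac13\bigl(\eta_{2,1}'+\eta_{2,2}'\theta_2\bigr)$ with a $p=3$-adapted $\theta_2$. However, two points in your plan are genuine gaps rather than deferred bookkeeping. First, the two-simple-roots case: you propose to treat it, like the double-root case, by a cube-residue check at a lifted root. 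What is actually needed (and what produces the summand $1$ in $\nu=\tfrac19\bigl(1+\tfrac{16}{27}\bigr)=\tfrac{43}{243}$) is that this case lifts with probability $1$: writing $h(u)=Q(u)+3c_3u^3+9c_4u^4+27c_5u^5+81c_6u^6$, so that $f(3u)=9h(u)$, a simple root of $\bar Q=\bar h$ lifts by Hensel to an exact zero $u_0\in\Z_3$ of $h$, and then $(x,y)=(3u_0,0)$ is already a $\Q_3$-point reducing to $[0:0:1]$; no cube condition enters at all. A residue check performed at a fixed lift $u_0^*$ would undercount this case, and the total would then not be $\tfrac{43}{243}$.

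Second, the asserted reduction to ``a finite cube-residue check on $Q(u)/3+c_3\bar u_0^{\,3}$ modulo $3$'' cannot be right as stated: being a cube is not a mod-$3$ condition on units of $\Z_3$ (each nonzero class mod $3$ contains both cubes and non-cubes); it is a mod-$9$/$27$ condition, as the criterion you quote yourself shows, and this is exactly the subtlety that makes $p=3$ exceptional and forces the appeal to Lemma \ref{lem:replacing_Phi_for_3}. In the double-root branch this computation is the entire quantitative content of the lemma: one must show that with probability $\tfrac23$ the rescaled constant term is a unit, in which case a Lemma \ref{lem:replacing_Phi_for_3}-type mod-$27$ count --- carried out using only the residual randomness of $c_0,c_1$, since $c_2,\dots,c_6$ are fixed here --- gives lift probability $\tfrac23$, and otherwise one recurses, with the terminal step justified at $p=3$ by the observation that the derivative of the quadratic vanishes at only one residue class; this yields the double-root probability $\tfrac{16}{27}$ (the paper's $\theta_2$ at $p=3$) and hence $\nu=\tfrac{43}{243}$. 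Your plan instead asserts that the table ``yields the closed form $\nu = 43/243$'' without producing either of the two numbers ($1$ and $\tfrac{16}{27}$) that the count actually requires, and the independence from $c_3,\dots,c_6$ is likewise asserted rather than verified; as written, the quantitative core of the proof is missing.
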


\begin{proof}
	Following the proof of Lemma \ref{lem:nu}, we have
	\[\nu = \frac1p \left(\eta_{2,1}' + \eta_{2,2}'\theta_2\right).\]
	To compute $\theta_2$, we follow the proof of Lemma \ref{lem:thetas}, except that in the first step we take
	\[\theta_{2a} = \frac{2}{3}\cdot \left(\frac{2}{3} \right) + \frac{1}{3}\theta_{2b},\]
	justifying as follows. With probability $\frac{2}{3}$ we have $v(c_0) = 0$, putting us in the case of Lemma \ref{lem:replacing_Phi_for_3}(c), in which case a lift exists with probability $\frac{2}{3}$, giving the left-hand term. With probability $\frac{1}{3}$ we have $3 \mid c_0$, and we continue with the computation of $\theta_2$ as in the proof of Lemma \ref{lem:thetas}.
	
	Note that in the last step, we may take $\theta_{2d}=1$ as usual, since the partial derivative of the quadratic $c_2x^2 + c_1x + c_0$ can only vanish modulo 3 for at most one value of $x$. Hence one of the other $x$ values may always be used to lift to a $\Q_3$-solution. This results in 
	\[\theta_2 = \frac{16}{27} \hspace{1cm} \text{and} \hspace{1cm} \nu = \frac{43}{243}.\]
\end{proof}

Next, we consider $\sigma_1$. The values of $\rho_{3,3}^\aff$ and $\rho_{3,4}^\aff$ will follow from similar reasoning. Of the 2160 (see Lemma \ref{lem:factorization_counts}) binary sextic forms $\overline{f}(x,z)$ modulo 3 with $\overline{F}$ absolutely irreducible, all but 54 have at least one $[x:z]$ such that the partial derivative of $\overline{F}$ with respect to $x$ (or $z$) is nonvanishing modulo 3, and hence liftable via Hensel's lemma (see Proposition \ref{prop:bound_p|m}). The remaining 54 may be enumerated and are seen to have the factorization types as follows:
\begin{itemize}
	\item 24 have one double root (i.e.\ $\overline{f}(x,z)$ has factorization type $1^24$ or $1^222$) modulo 3,
	\item 12 have two double roots (i.e.\ $\overline{f}$ has type $1^21^22$) modulo 3,
	\item 8 have three double roots (i.e.\ $\overline{f}$ has type $1^21^21^2$) modulo 3, and
	\item 10 have no roots modulo 3.
\end{itemize}
This leads to our determination of $\sigma_1$, as well as $\rho_{3,3}^\aff$ and $\rho_{3,4}^\aff$.

\begin{proposition}\label{prop:sigma_1_p=3}
	When $p=3$, we have
	\begin{align*}
		\sigma_1 &= \frac{5780143846}{5811307335} \approx 0.99463,\\
		\rho_{3,3}^\aff &= \frac{2103}{2183} \approx 0.96335 ,\\
		\rho_{3,4}^\aff &= \frac{4585681}{4782969} \approx 0.95875.
	\end{align*}
\end{proposition}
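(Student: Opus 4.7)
The plan is to prove Proposition \ref{prop:sigma_1_p=3} by enumeration, adapting the strategies of Proposition \ref{prop:sigma1_small_p} and Lemmas \ref{lem:rho33aff}, \ref{lem:rho34aff} to the arithmetic peculiarities of $p = 3$. The essential new feature is that $\partial F/\partial y = 3y^2$ vanishes identically modulo $3$, so Hensel lifting must be carried out using the variables $x$ or $z$, and the condition for a nonzero $a \in \Z_3$ to be a cube is determined by $a$ modulo $27$ (i.e.\ membership in $\left(\Z/27\Z^\times\right)^3 = \{1,8,10,17,19,26\}$) rather than merely by $a$ modulo $3$. The numerical content of both facts has already been packaged into Lemmas \ref{lem:nu_p=3} and \ref{lem:replacing_Phi_for_3}, which drive the computation.

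For $\sigma_1$ I would loop over each of the $2160$ absolutely irreducible reductions $\overline{F}$ modulo $3$. The $2106$ reductions that admit an $\F_3$-point $(x_0,y_0)$ with $\overline{f}'(x_0,1) \neq 0$ (or analogously at $z=0$) contribute conditional probability $1$ by Theorem \ref{thm:hensel}. The remaining $54$ are precisely the non-cube $\overline{f}$ satisfying the mod-$3$ relations $c_1 \equiv c_5 \equiv 0$ and $c_4 \equiv c_2 \not\equiv 0$, which cut out the locus where $\overline{f}'(x,1)$ vanishes on all of $\F_3$. For each such reduction I would combine three ingredients: (i) the lifting probability $\nu = 43/243$ from Lemma \ref{lem:nu_p=3} at each double root of $\overline{f}$; (ii) a short recursion analogous to the proof of Lemma \ref{lem:pi}, adapted to $p=3$, at any triple root; and (iii) the appropriate cube-residue-modulo-$27$ probability from Lemma \ref{lem:replacing_Phi_for_3} at each affine $x_0 \in \F_3$ where $\overline{f}(x_0,1) \neq 0$. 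The events at distinct $\F_3$-points are independent by the $3$-transitivity change-of-variables argument used for $\theta_4$ in the proof of Lemma \ref{lem:thetas}, so they combine as a product of complementary probabilities; summing across all $2160$ reductions yields $\sigma_1 = 5780143846/5811307335$.

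The arguments for $\rho_{3,3}^{\aff}(3)$ and $\rho_{3,4}^{\aff}(3)$ are structurally identical, with the enumeration running over the $2 \cdot 3^3 = 54$ cubic and $2 \cdot 3^4 = 162$ quartic residue classes satisfying $v(c_3) = 0$. The main obstacle is organizational rather than conceptual: at $p = 3$ the small size of $\F_3$ makes the mod-$9$ and mod-$27$ structure of each specific $\overline{f}$ consequential, so a coarse stratification by factorization type alone is insufficient and one must track the contribution from each of the (at most) four candidate $\F_3$-points individually. In practice I would implement this as a Magma routine patterned on the one used in Lemma \ref{lem:rho34aff}, augmenting the Hensel check with a membership test in $\left(\Z/27\Z^\times\right)^3$ and invoking the lifting subroutines provided by Lemmas \ref{lem:nu_p=3}, \ref{lem:pi} (with the obvious adaptations to $p=3$), and \ref{lem:replacing_Phi_for_3} as appropriate.
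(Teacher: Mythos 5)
Your plan is essentially the paper's proof: the same split of the $2160$ absolutely irreducible sextics into $2106$ Hensel-liftable ones and the $54$ exceptional ones with $c_1\equiv c_5\equiv 0$, $c_2\equiv c_4\not\equiv 0$ (stratified as $10$ with no roots and $24$, $12$, $8$ with one, two, three double roots), the same lifting inputs ($\nu$ from Lemma \ref{lem:nu_p=3} at double roots, the density $\frac13$ of cubes in $(\Z/27\Z)^\times$ at non-root points, with independence justified by tracking which coefficients control each of the four points of $\P^1(\F_3)$ modulo $27$), and the analogous count $162 = 144 + 18$ for $\rho_{3,4}^{\aff}$. The one divergence is $\rho_{3,3}^{\aff}$: no triple roots occur among the exceptional sextics or quartics, so the adapted Lemma \ref{lem:pi} is needed only there, and it cannot be invoked as a ready-made subroutine since at $p=3$ its value is itself expressed in terms of $\rho_{3,3}^{\aff}$; the paper instead reduces the stratum $3\mid c_1,c_2$ to $\overline{f}=x^3z^3$ and solves the self-referential linear relation $\rho_{3,3}^{\aff}=\frac{8}{9}+\frac{1}{9}\bigl(\frac{5}{9}+\frac{4}{9}\bigl(\frac{2}{9}+\frac{1}{27}\rho_{3,3}^{\aff}\bigr)\bigr)$, which is the fixed-point device your proposed recursion would have to produce.
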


\begin{proof}
	To compute $\sigma_1$, we need only determine the probability of lifting for each of the four values $[x:z]$, and see that they are independent of one another. When $f(x,z) \neq 0 \pmod{3}$, the probability of lifting is $\frac{1}{3}$, exactly the proportion of residues in $\Z/27\Z^\times$ in the image of the cube map. The probability of a double root modulo $p$ lifting is $\nu$, as after a change of coordinates we may assume $[x:z] = [0:1]$, putting us in the case of Lemma \ref{lem:nu_p=3}.
	
	For the independence, we treat each of the four cases separately, illustrating the argument in the case that $\overline{f}(x,z)$ has exactly one double root modulo 3, occurring at $[0:1]$. By Lemma \ref{lem:nu_p=3} the probability of lifting is $\nu$, and since this proof relies on Lemma \ref{lem:replacing_Phi_for_3}(c), the lifting behavior depends only on the values of $c_0$ and $c_1$. The lifting behavior at $[x:z] = [1:1], [-1:1], [1:0]$, for which $f(x,z) \not\equiv 0 \pmod{3}$, does not depend on the choice of lift of $[x:z]$ to $\Z/27\Z$ by our earlier discussions. Hence this depends only on, say, $c_3$, $c_4$, and $c_6$.
	
	Since lifting a double root depends only on $f(x,z)$ and $f'(x,z)$ modulo 27, lifting a nonzero value depends only on the value itself in this case, and we have 7 coefficients varying, the argument above easily extends to the other three cases. This justifies	
	\begin{align*}
		\sigma_1 = \frac{1}{2160}\left(2106 \right. &+ 10\left(1 - \left(\frac{2}{3}\right)^4\right) + 24 \nu \left(1 - \left(\frac{2}{3}\right)^3\right)  \\
		 & + \left. 12 \left(1 - (1-\nu)^2\right)\left(1 - \left(\frac{2}{3}\right)^2\right) + 8 \left(1 - (1-\nu)^3\right)\left(\frac{1}{3}\right) \right).
	\end{align*}
	
	For $\rho_{3,4}^\aff$, the story is similar, except we are only interested in lifing affine solutions $[x:1]$. Of the 162 quartics $c_4 x^4 + c_3x^3 + c_2x^2 + c_1x + c_0$ modulo 3, all but 18 have nonzero partial derivative, with the cases of no (affine) roots, one (affine) double root, and two (affine) double roots each appearing 6 times. The same lifting probabilities and independence arguments above apply, yielding the stated value.
	
	Finally, to compute $\rho_{3,3}^\aff$, we note that so long as $c_1, c_2$ are not both in $3\Z_3$, the partial derivative does not vanish and we have a root by Hensel's lemma. If $3 \mid c_1, c_2$, then $\overline{f}(x,z) = c_3x^3z^3 + c_0z^6$, and after a change of variables over $\F_3$, we may assume $\overline{f}(x,z) = x^3z^3$. 
	
	Let $f(x,z)\in \Z_3[x,z]$ be a binary sextic form reducing to $\overline{f} = x^3z^3$. Using the same techniques as in the proof of Lemma \ref{lem:replacing_Phi_for_3}, we have that the probability that $[\pm 1: 1]$ has a lift to a $\Q_3$-point is $\frac{1}{3}$ for each. Note that this can be made independent of the choices of $c_4$, $c_5$ and $c_6$, which is necessary to use in our calculations of $\theta_{9g}$ and $\tau_{9\ell}$. If neither of these lift, the probability that $[0:1]$ is computed by first replacing $x$ by $3x$, giving the valuations on $c_3, c_2, c_1, c_0$
	\[ \cdots \ =3 \ \geq 3 \ \geq 2 \ \geq 1.\]
	We compute that we have a lift with probability $\frac{2}{9} + \frac{1}{27}\rho_{3,3}^\aff$, yielding that
	\[\rho_{3,3}^\aff = \frac{8}{9} + \frac{1}{9}\left(\frac{5}{9} + \frac{4}{9}\left(\frac{2}{9} + \frac{1}{27}\rho_{3,3}^\aff\right)\right),\]
	and solving for $\rho_{3,3}^\aff$ gives the stated value.
\end{proof}

Taken together, we once again repeat the calculations of \S\ref{sec:exact_triple} to obtain $\sigma_5$ and $\rho$, using the same modifications we did previously with the primes $p \equiv 1 \pmod{3}$  up to $p=43$. We also must replace $\Phi$ by the appropriate probability in Lemma \ref{lem:replacing_Phi_for_3} wherever necessary in the proofs of Lemmas \ref{lem:taus} and \ref{lem:thetas}. Finally solving
\[\rho_{3,6}(3) = \frac{2160}{2187}\sigma_1 + \frac{1}{27}\sigma_5\]
yields
\begin{equation}\label{eq:rho3}
	\rho_{3,6}(3) = \frac{900175334869743731875930997281}{908381960435133191895132960000} \approx 0.99096.
\end{equation}
Once again, \href{https://github.com/c-keyes/Density-of-locally-soluble-SECs/blob/bd6a8b39ea8c63bf8e7a847063c70998d01ee8aa/SEC_rho36_23Aug21.ipynb}{see here} for an implementation of these calculations in Sage \cite{sagemath}.

\subsubsection{Calculating \texorpdfstring{$\rho_{3,6}$}{rho_36} exactly}

We are now ready to complete the proof of Theorem \ref{thm:exact_rho_3_6}.

\begin{proof}[Proof of Theorem \ref{thm:exact_rho_3_6}]
	We have already seen that $\rho(p) = R_i(p)$ for sufficiently large $p \equiv i \pmod{3}$; see \S \ref{sec:exact_proof_rho}. For the remaining primes, $p=2,3,7,13,19,31,37,43$, we have computed $\rho(p)$ in the preceding sections; see \eqref{eq:p=2_rho_exact}, \eqref{eq:rho7} -- \eqref{eq:rho43}, and \eqref{eq:rho3}. This yields the exact expression
	\[\rho_{3,6} = \prod_{p = 2,3,7,13,19,31,37,43} \rho(p) \prod_{\substack{p \equiv 1 \pmod{3}\\p > 43}} R_1(p) \prod_{\substack{p \equiv 2 \pmod{3}\\p > 2}} R_2(p).\]
	To obtain a numerical value, we compute the product of $\rho(p)$ for all $p \leq 10000$, finding $\prod_{p \leq 10000} \rho(p) \approx 0.96943$. Using the estimates $R_1(t) \geq 1-t^{-4}$ and $R_2(t) \geq 1 - t^{-7}$, we find 
	\[1 \geq \prod_{\substack{p \equiv 1 \pmod{3}\\p > 10000}} R_1(p) \prod_{\substack{p \equiv 2 \pmod{3}\\p > 10000}} R_2(p) \geq 1 - 1.6856\cdot 10^{-14},\]
	which more than suffices to conclude our numerical value is correct to several decimal places. These calculations are recorded in this \href{https://github.com/c-keyes/Density-of-locally-soluble-SECs/blob/bd6a8b39ea8c63bf8e7a847063c70998d01ee8aa/SEC_rho36_23Aug21.ipynb}{Sage notebook} \cite{sagemath}.
\end{proof}

\appendix

\vfill
 \section{Bounds for \texorpdfstring{$\rho_{m,d}(p)$}{rho_md(p)} via computer search}
\label{appx:code}

The lower bounds for $\rho_{3,6}$ and $\rho_{5,5}$ produced by Corollary \ref{cor:lower_bound}, discussed in Examples \ref{ex:m=3} and \ref{ex:m=5}, were limited by the performance of Proposition \ref{prop:bound_matrix} for primes $p \equiv 1 \pmod{m}$ such that $p < 4g^2$. As noted in Remarks \ref{rem:crudeness} and \ref{rem:no_simple_roots}, the proof of Proposition \ref{prop:bound_matrix} likely leaves out many liftable points, including those given by roots of $f(x,z)$ of multiplicity 1. Here we discuss how to use a computer search to improve our lower bounds of $\rho_{m,d}(p)$, an implementation in the case of $\rho_{3,6}(13)$, and how this approach is used in the exact determination of $\sigma_1(p)$ for small primes $p$ in \S \ref{sec:small_primes}.

Suppose $p \nmid m$. Using a computer algebra system it is straightforward to enumerate all binary degree $d$ forms $\overline{f}(x,z)$ over $\F_p$ and for each such $f$, determine whether 
\begin{itemize}
	\item there exists a root $\overline{f}(x_0, z_0) =0$ of multiplicity 1, or
	\item there exists $[x_0 : z_0]$ such that $\overline{f}(x_0, z_0) \in \left(\F_p^\times\right)^m$.
\end{itemize}
In either case, for any $f(x,z) \in \Z_p[x,z]$ such that $f \equiv \overline{f} \pmod{p}$, Hensel's lemma (Theorem \ref{thm:hensel}) ensures $C_f(\Q_p) \neq \emptyset$.

Na\"{i}vely, this amounts to enumerating $p^{d+1}$ polynomials, which quickly becomes prohibitively time consuming. To mitigate this, we first recognize that $C_f$ has a smooth point if and only if $C_{u^mf}$ does for $u \in \F_p^\times$. This corresponds to the change of variables $y \mapsto \frac{y}{u}$. Thus we may assume that the leading coefficient, $c_d$, of $f(x,z)$ is either 0 or equal to one of the representatives of the three cosets in $\F_p^\times / \left( \F_p^\times \right)^m$. This cuts down the running time by about a factor of $p$, as we need only enumerate the coefficients $c_0, \ldots, c_{d-1}$.

To further improve the running time by a factor of $p$, we ran these searches for fixed values of the constant term $c_0$ in parallel. To avoid having to run $p$ such programs, which again becomes cumbersome for large $p$, we observe that for a generator $a \in \F_p^\times$, the change of variables $z \mapsto az$ transforms the constant term by a factor of $a^d$, without affecting the leading term. Thus we may assume $c_0$ to be either 0 or equal to one of the representatives of the $\gcd(d, p-1)$ cosets in $\F_p^\times / \left(\F_p^\times\right)^d$. In particular, $\gcd(d, p-1)$ is bounded by $d$, so the number of parallel computations needed is bounded as $p$ grows.

We implemented the strategy above in Magma \cite{magma} for $(m,d) = (3,6)$ to obtain better bounds for $\rho_{3,6}(p)$ for the seven primes $p$ such that $p \equiv 1 \pmod{3}$ and $p \leq 61$. \href{https://github.com/c-keyes/Density-of-locally-soluble-SECs/blob/f492b080352291c758e10fe9f82a49618e7e095b/CountForms.m}{See here} for the code, which consists of a file \texttt{CountForms.m} containing the necessary routines; namely, the procedure \texttt{count_sextic_forms($p, c_0$)} counts binary sextic forms $\overline{f}(x,z)$ which, after the aforementioned changes of variables, have specified coefficient $c_0$ and such that $y^3 = \overline{f}(x,z)$ has a smooth point. It is convenient to also keep track of whether or not the forms satisfy condition $(*)$, whether $y^3 - \overline{f}(x,z)$ is absolutely irreducible, or both, to give lower bounds for $\rho^*$, $\sigma_1$, and $\sigma_1^*$.

To illustrate this procedure, the output in the $p = 13$ case is tabulated below in Table \ref{tab:p=13_data}. Notice the symmetry present in the table; the $c_0$ and $-c_0$ rows are identical. This is the result of the facts that $\langle 2 \rangle = \F_{13}^\times$, and $2^6 \equiv -1 \pmod{13}$, and our observations above about the change of variables $z \mapsto 2z$. Considering only the presence of Hensel-liftable points and insoluble equations, these computations produce the following bounds.
\begin{align*}
  0.99851 \approx \frac{62655132}{62748517}  &\leq \rho(13) \ \leq \frac{4819929}{4826809} \approx 0.99857\\
  0.99735 \approx \frac{740621}{742586} &\leq \rho^*(13)  \leq \frac{370433}{371293} \approx 0.99768\\
  0.99837 \approx \frac{8605}{8619}  &\leq \sigma_1(13)  \leq \frac{43034}{43095} \approx 0.99858 \\
  0.99738 \approx \frac{105803}{106080}  &\leq \sigma_1^*(13) \leq \frac{26459}{26520} \approx 0.99769
\end{align*}

\begin{center}
\begin{table}[h]
\caption{Counts of binary sextic forms $f(x,z) \in \F_{13}[x,z]$ with smooth points for specified constant coefficient, using \texttt{count_sextic_forms($13, c_0$)}}
\label{tab:p=13_data}
\begin{tabular}{|c| c c c |}
	\hline $c_0$ & Hensel & insoluble & total  \\ \hline
	0  & 4825604 & 0     & 4826809 \\ 
	1  & 4814593 & 10608 & 4826809 \\
	2  & 4820270 & 5680  & 4826809 \\
	3  & 4820634 & 5364  & 4826809 \\
	4  & 4820619 & 5364  & 4826809 \\
	5  & 4813393 & 12024 & 4826809 \\
	6  & 4820255 & 5680  & 4826809 \\
	7  & 4820255 & 5680  & 4826809 \\
	8  & 4813393 & 12024 & 4826809 \\
	9  & 4820619 & 5364  & 4826809 \\
	10 & 4820634 & 5364  & 4826809 \\
	11 & 4820270 & 5680  & 4826809 \\
	12 & 4814593 & 10608 & 4826809 \\ \hline
	Totals & 62645132 & 89440 & 62748517 \\ \hline
\end{tabular}
\end{table}
\end{center}

To verify the data in Table \ref{tab:rho36_computer}, we repeat these computations considering only the Hensel--liftable and insoluble equations for the seven primes $p \equiv 1 \pmod{3}$ with $p \leq 61$. The resulting lower bounds are recorded in Table \ref{tab:rho36_magma_data} below along with the approximate runtime of an instance of \texttt{count_sextic_forms($p, c_0$)} on a server with four Intel Xeon E5-4627 CPUs, a total of 40 cores, and 1 TB of memory. As one expected, the complexity is about $O(p^5)$. Note in particular that for $p=61$, the computation reflects the improved Hasse--Weil bound \eqref{eq:hasse-weil_improvement}, which implied $\sigma_1 = \sigma_1^* = 1$ for $p=61$ in Proposition \ref{prop:sigma_1}.

\begin{center} \small
\begin{table}[h]
\label{tab:rho36_magma_data}
\caption{Lower bounds for $\rho$, $\rho^*$, $\sigma_1$, $\sigma_1^*$ for $p\equiv 1 \pmod{3}$ with $p \leq 61$}
\renewcommand{\arraystretch}{1.25}
\begin{tabular}{|c| c | c | c | c | c |}
	\hline $p$ & $\rho \geq $ & $\rho^* \geq$ & $\sigma_1 \geq$ & $\sigma_1^* \geq$ & runtime (s) \\ \hline
	7 & $\frac{810658}{823543} \approx 0.98435$ & $\frac{32731}{33614} \approx 0.97373$ & $\frac{7237}{7350} \approx 0.98463$ & $\frac{32731}{33600} \approx 0.97414$ & 18 \\ 
	13 & $\frac{62645132}{62748517} \approx 0.99835$ & $\frac{740621}{742586} \approx 0.99735$ & $\frac{8605}{8619} \approx 0.99837$ & $\frac{105803}{106080} \approx 0.99738$ & 419 \\ 
	19 & $\frac{893660256}{893871739} \approx 0.99976$ & $\frac{2475177}{2476099} \approx 0.99962$ & $\frac{522607}{522728} \approx 0.99976$ & $\frac{825059}{825360} \approx 0.99963$ & 2961 \\ 
	31 & $\frac{27512408250}{27512614111} \approx 0.99999$ & $\frac{28628820}{28629151} \approx 0.99998$ & $\frac{3697903}{3697928} \approx 0.99999$ & $\frac{477147}{477152} \approx 0.99998$ & 37161  \\ 
	37 & $\frac{94931742132}{94931877133} \approx 0.999998$ & $\frac{69343806}{69343957} \approx 0.999997$ & $\frac{937764}{937765}\approx 0.999998$ & $\frac{608279}{608280}\approx 0.999998$ & 90131 \\ 
	43 & $\frac{271818511748}{271818611107} \approx{0.9999996}$ & $\frac{294016723}{294016886}\approx 0.9999994$ & $\frac{41047793}{41047800} \approx 0.9999998$ & $\frac{3818399}{3818400} \approx 0.9999997$ & 194243\\ 
	61 & $\frac{3142742684700}{3142742836021} \approx 0.99999995$ & $\frac{13845840}{13845841} \approx 0.99999992$ & 1 & 1 & 1091730 \\ \hline
\end{tabular}
\end{table}
\end{center}

For $(m,d) = (5,5)$, a similar procedure using the above mentioned parallelization strategy was used to produce the data for $p \equiv 1 \pmod{5}$ with $p \leq 131$, tabulated in Table \ref{tab:rho55_computer} in Example \ref{ex:m=5}. For other $(m,d)$, this enumeration strategy could be useful in estimating $\rho_{m,d}(p)$ and offers an improvement on Proposition \ref{prop:bound_matrix}, at the cost of the time required.

	This enumeration strategy was also instrumental in determining the exact values of $\sigma_1$ and $\sigma_1^*$ for small primes $p$, i.e.\ the proof of Proposition \ref{prop:sigma1_small_p}. After enumerating all binary sextic forms $\overline{f}(x,z)$ with Hensel-liftable points, we keep track of the factorization type modulo $p$ --- namely the presence of multiple roots --- and determine the lifting probabilities using $\nu$ and $\pi$ (see Lemmas \ref{lem:nu}, \ref{lem:pi}).
	
	For example, when determining $\sigma_1(13)$, we find that 62644400 of the 62746320 forms $\overline{f}(x,z)$ produce $F(x,y,z)$ with a Hensel-liftable point and 88816 are insoluble. Of the remaining $\overline{f}(x,z)$, we find 10920 that have one double root, 2184 having two double roots, and no other factorization types occur. Thus
	\begin{equation}\label{eq:sigma1_13_example}
		\sigma_1(13) = \frac{1}{62746320} \left( 62644400 + 10920 \nu + 2184 (1 - (1-\nu)^2) \right) = \frac{5931415654903952}{5941011706232655},
	\end{equation}
	the value given in Proposition \ref{prop:sigma1_small_p}. This calculation is repeated for the primes $p=7, 19, 31, 37, 43$ and a similar philosophy is used for $p=3$ in Proposition \ref{prop:sigma_1_p=3}.

\section{Counting binary forms by factorization type}\label{appx:counting_forms}

\begin{lemma}\label{lem:form_counts} For $2 \leq d \leq 6$ let $N_{d,i}$ (resp.\ $N_{d,i}'$) denote the number of binary forms $f(x,z)$ over $\F_p$ up to scaling (resp.\ monic) having the factorization types specified by $i$ in the second column of the table below. For $N_{d,i}$ and $N_{d,i}'$ can be computed in terms of $p$ and are tabulated below.
	\begin{center} \small \renewcommand{\arraystretch}{1.2}
	\begin{tabular}{| c | l | c | c |}
		\hline $\boldsymbol{d}$ & \textbf{Fact.\ type} & $\boldsymbol{N_{d,i}}$ & $\boldsymbol{N_{d,i}'}$ \\ \hline 
		\multirow{3}*{$2$} & 0. No roots & $\frac{1}{2}  {\left(p - 1\right)} p$ & $\frac{1}{2}  {\left(p - 1\right)} p$ \\
		 & 1. $(1*)$ & $\frac{1}{2}  {\left(p + 1\right)} p$ & $\frac{1}{2}  {\left(p - 1\right)} p$ \\
		 & 2. $(1^2)$ & $p + 1$ & p\\ \hline 
		 
		 \multirow{3}*{$3$} & 0. No roots & $ \frac{1}{3}  {\left(p + 1\right)} {\left(p - 1\right)} p $ & $ \frac{1}{3}  {\left(p + 1\right)} {\left(p - 1\right)} p $ \\
		 & 1. $(1*)$ & $ \frac{1}{3}  {\left(2  p + 1\right)} {\left(p + 1\right)} p $ & $ \frac{2}{3}  {\left(p + 1\right)} {\left(p - 1\right)} p $ \\
		 & 2. $(1^3)$ & $ p + 1 $ & $ p $ \\ \hline
		 
		 \multirow{5}*{$4$} & 0. No roots & $ \frac{1}{8}  {\left(3  p^{2} + p + 2\right)} {\left(p - 1\right)} p $ & $ \frac{1}{8}  {\left(3  p^{2} + p + 2\right)} {\left(p - 1\right)} p $ \\
		 & 1. $(1*)$ & $ \frac{1}{8}  {\left(5  p^{2} + p + 2\right)} {\left(p + 1\right)} p $ & $ \frac{1}{8}  {\left(5  p^{2} + 3  p + 2\right)} {\left(p - 1\right)} p $ \\
		 & 2. $(1^2 2)$ & $ \frac{1}{2}  {\left(p + 1\right)} {\left(p - 1\right)} p $ & $ \frac{1}{2}  {\left(p - 1\right)} p^{2} $ \\
		 & 3. $(1^2 1^2)$ & $ \frac{1}{2}  {\left(p + 1\right)} p $ & $ \frac{1}{2}  {\left(p - 1\right)} p $ \\
		 & 4. $(1^4)$ & $ p + 1 $ & $ p $ \\ \hline
		 
		  \multirow{6}*{$5$} & 0. No roots & $ \frac{1}{30}  {\left(11  p^{2} - 5  p + 6\right)} {\left(p + 1\right)} {\left(p - 1\right)} p $ & $ \frac{1}{30}  {\left(11  p^{2} - 5  p + 6\right)} {\left(p + 1\right)} {\left(p - 1\right)} p $ \\
		 & 1. $(1*)$ & $ \frac{1}{30}  {\left(19  p^{3} + 6  p^{2} + 4  p + 1\right)} {\left(p + 1\right)} p $ & $ \frac{1}{30}  {\left(19  p^{3} + 14  p^{2} + 4  p - 6\right)} {\left(p - 1\right)} p $ \\
		 & 2. $(1^2 3)$ & $ \frac{1}{3}  {\left(p + 1\right)}^{2} {\left(p - 1\right)} p $ & $ \frac{1}{3}  {\left(p + 1\right)} {\left(p - 1\right)} p^{2} $ \\
		 & 3. $(1^3 2)$ & $ \frac{1}{2}  {\left(p + 1\right)} {\left(p - 1\right)} p $ & $  \frac{1}{2}  {\left(p - 1\right)} p^{2} $ \\
		 & 4. $(1^2 1^3)$ & $ {\left(p + 1\right)} p $ & $ {\left(p - 1\right)} p $ \\
		 & 5. $(1^5)$ & $ p+1 $ & $ p $ \\ \hline
		 
		 \multirow{10}*{$6$} & 0. No roots & $ \frac{1}{144}  {\left(53  p^{4} + 26  p^{3} + 19  p^{2} - 2  p + 24\right)} {\left(p - 1\right)} p $ & $ \frac{1}{144}  {\left(53  p^{4} + 26  p^{3} + 19  p^{2} - 2  p + 24\right)} {\left(p - 1\right)} p $ \\
		 & 1. $(1*)$ & $ \frac{1}{144}  {\left(91  p^{4} + 26  p^{3} + 23  p^{2} + 16  p - 12\right)} {\left(p + 1\right)} p $ & $ \frac{1}{144}  {\left(91  p^{3} - 27  p^{2} + 50  p - 48\right)} {\left(p + 1\right)} {\left(p - 1\right)} p $ \\
		 & 2. $(1^2 4)$ or $(1^2 2 2)$ & $ \frac{1}{8}  {\left(3  p^{2} + p + 2\right)} {\left(p + 1\right)} {\left(p - 1\right)} p $ & $ \frac{1}{8}  {\left(3  p^{2} + p + 2\right)} {\left(p - 1\right)} p^{2} $ \\
		 & 3. $(1^2 1^2 2)$ & $ \frac{1}{4}  {\left(p + 1\right)} {\left(p - 1\right)} p^{2} $ & $ \frac{1}{4}  {\left(p - 1\right)}^{2} p^{2} $ \\
		 & 4. $(1^2 1^2 1^2)$ & $ \frac{1}{6}  {\left(p + 1\right)} {\left(p - 1\right)} p $ & $ \frac{1}{6}  {\left(p - 1\right)} {\left(p - 2\right)} p $ \\
		 & 5. $(1^3 3)$ & $ \frac{1}{3}  {\left(p + 1\right)}^{2} {\left(p - 1\right)} p $ & $ \frac{1}{3}  {\left(p + 1\right)} {\left(p - 1\right)} p^{2} $ \\
		 & 6. $(1^31^3)$ & $ \frac{1}{2}  {\left(p + 1\right)} p $ & $ \frac{1}{2}  {\left(p - 1\right)} p $ \\
		 & 7. $(1^4 2)$ & $ \frac{1}{2}  {\left(p + 1\right)} {\left(p - 1\right)} p $ & $ \frac{1}{2}  {\left(p - 1\right)} p^{2} $ \\
		 & 8. $(1^2 1^4)$ & $ {\left(p + 1\right)} p $ & $ {\left(p - 1\right)} p $ \\
		 & 9. $(1^6)$ & $ p+1 $ & $ p $ \\ \hline
	\end{tabular}
	\end{center}
\end{lemma}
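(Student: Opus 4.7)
The plan is to use unique factorization in $\mathbb{F}_p[x,z]$, reducing the count of each factorization type to a combinatorial sum over multisets of closed points of $\mathbb{P}^1_{\mathbb{F}_p}$. Under the map $g(x) \leftrightarrow z^d g(x/z)$, monic binary forms of degree $d$ (those with $f(1,0)=1$) are in bijection with monic polynomials of degree $d$ in $\mathbb{F}_p[x]$, and a degree-$d$ binary form up to scaling corresponds to an effective divisor of degree $d$ on $\mathbb{P}^1_{\mathbb{F}_p}$. The closed points of $\mathbb{P}^1_{\mathbb{F}_p}$ of degree $n$ number $p+1$ when $n=1$ and $I_n(p) := \tfrac{1}{n}\sum_{e\mid n}\mu(n/e)p^e$ when $n\geq 2$; imposing monicity excludes the closed point $(1:0)$ cut out by $z$ and lowers the degree-$1$ count to $p$, leaving higher-degree counts unchanged. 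I will use $I_2(p) = \tfrac{p^2-p}{2}$, $I_3(p) = \tfrac{p^3-p}{3}$, $I_4(p) = \tfrac{p^4-p^2}{4}$, $I_5(p) = \tfrac{p^5-p}{5}$, and $I_6(p) = \tfrac{p^6-p^3-p^2+p}{6}$ throughout.

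For every factorization type in the table other than rows 0 and 1, the count is a straightforward product of these closed-point counts with (multiset) binomial coefficients. For instance, in the $d=6$ row (monic version): type $(1^2 1^2 1^2)$ gives $\binom{p}{3}$; type $(1^3 1^3)$ gives $\binom{p}{2}$; type $(1^2 1^4)$ gives $p(p-1)$; type $(1^3 3)$ gives $p\,I_3(p)$; type $(1^4 2)$ gives $p\,I_2(p)$; type $(1^2 1^2 2)$ gives $\binom{p}{2}I_2(p)$; and type $(1^6)$ gives $p$. The up-to-scaling counts are obtained by replacing $p$ by $p+1$ in the rational-linear part (e.g.\ $\binom{p+1}{3}$ in place of $\binom{p}{3}$). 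Analogous enumerations for $d = 2,3,4,5$ work the same way; expanding and simplifying will recover the displayed polynomials.

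The main obstacle will be the "no roots" row, since it aggregates all factorizations $d = n_1 + \cdots + n_r$ with every $n_i \geq 2$ and with arbitrary multiplicities. For $d=6$, summing over the partitions $(6)$, $(4,2)$, $(3,3)$, $(2,2,2)$ yields
\[
N'_{6,0} \;=\; I_6(p) + I_4(p)\,I_2(p) + \binom{I_3(p)+1}{2} + \binom{I_2(p)+2}{3},
\]
where the multiset binomials $\binom{I_n(p)+k-1}{k}$ count multisets of size $k$ of closed points of degree $n$. Substituting the explicit $I_n(p)$ and simplifying is mechanical but tedious, and should collapse to $\tfrac{(53p^4+26p^3+19p^2-2p+24)(p-1)p}{144}$. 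Row 2 for $d=6$ requires a small variant because the non-rational degree-4 complement can be an irreducible quartic, a product of two distinct irreducible quadratics, or a squared irreducible quadratic, so its monic count is $p\bigl(I_4(p) + \binom{I_2(p)}{2} + I_2(p)\bigr)$ and must simplify to $\tfrac{(3p^2+p+2)(p-1)p^2}{8}$. Shorter versions of this enumeration handle row 0 (and row 2 when applicable) for $d=2,3,4,5$.

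Finally, the "$(1*)$" row is recovered by subtracting the counts of all other rows from the grand total, using $\sum_i N'_{d,i} = p^d$ (monic) and $\sum_i N_{d,i} = \tfrac{p^{d+1}-1}{p-1}$ (up to scaling). This subtraction doubles as an internal consistency check. A further cross-check comes from writing each form up to scaling uniquely as $z^k f_0$ with $f_0$ monic of degree $d-k$, which produces a relation $N_{d,i} = \sum_k N'_{d-k,\iota(i,k)}$ in which $\iota(i,k)$ records how the factorization type changes after stripping $k$ factors of $z$. Verifying both the total sums and this monic-versus-up-to-scaling relation in each row should catch any arithmetic slip in the algebraic simplifications.
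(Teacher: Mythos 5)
Your proposal is correct, and it verifies numerically (e.g.\ for $d=6$, $p=2,3$ your partition formula for the no-roots row returns $16$ and $216$, matching $\tfrac{1}{144}(53p^4+26p^3+19p^2-2p+24)(p-1)p$), but it reorganizes the computation differently from the paper. The shared skeleton is the same: unique factorization in $\F_p[x,z]$, counting configurations of irreducible factors, with the multiple-root rows given by small products of binomials and lower-degree counts, and with one of the two aggregate rows obtained by subtracting from the total $p^d$ (monic) or $\sum_{j=0}^d p^j$ (up to scaling). The difference is in which aggregate row is counted directly: the paper attacks the $(1*)$ row head-on, by casework on the number of distinct simple roots (a sum of roughly a dozen terms of the form $\binom{p+1}{k}N_{d-k,j}$ for $d=6$), and then recovers the no-roots row as the complement; you instead count the no-roots row directly, as a sum over partitions of $d$ into parts $\geq 2$ weighted by multiset coefficients in the closed-point counts $I_n(p)=\tfrac1n\sum_{e\mid n}\mu(n/e)p^e$, and recover $(1*)$ by subtraction. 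Your route needs the explicit M\"obius formula for $I_n(p)$ (which the paper only invokes elsewhere, in its Lemma on upper bounds), whereas the paper bootstraps entirely from previously computed table entries; in exchange, your hard row has only four terms for $d=6$ instead of the paper's lengthy simple-root casework, so it is less error-prone, and your cross-checks (row sums and the $z^k f_0$ relation between the monic and up-to-scaling columns) play the same consistency role the paper's inductive structure does. One small point you handled correctly and should keep explicit in a write-up: the label $(1^2 4)$ or $(1^2 2 2)$ in the $d=6$ table really means ``double linear root times a quartic with no roots,'' so the squared irreducible quadratic case must be included, exactly as in your count $p\bigl(I_4(p)+\binom{I_2(p)}{2}+I_2(p)\bigr)$.
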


\begin{proof}
	This is an elementary computation, as noted in \cite[Lemma 2.3]{BCF_genus_one}, in which each subsequent row is obtained from the previous one. We give a proof for $d=6$ here, assuming the results in the previous rows of the table. To obtain the result for $d < 6$ --- or indeed any $d$ value if one is patient --- one can use the same idea.
	
	Let $f(x,z)$ be a degree $6$ binary form over $\F_p$, up to scaling. We first consider all cases in which $f$ has a multiple root but no simple root, which are precisely Types 2 -- 9 above. Several of these can be calculated via combinatorics alone, beginning with Case 9, where $f$ has a sextuple root, or factorization type $(1^6)$. There are exactly $p+1$ such roots, corresponding to the $p+1$ distinct linear factors up to scaling, so we have $p+1$ forms up to scaling. Case 8 is similar: to give a form of type $(1^2 1^4)$ up to scaling, it suffices to identify  a distinct linear factor for each root. Since the multiplicities are different, order matters, giving $(p+1)p$ possibilities. Types 4 and 6 ($(1^21^21^2)$ and $(1^31^3)$ respectively) are similar, but order does not matter, so  there are $\binom{p+1}{3}$ and $\binom{p+1}{2}$ possibilities, respectively.
	
	To deal with Type 7, $(1^4 2)$, we use the $d=2$ row in the table to determine how many binary quadratic forms there are up to scaling, and multiply this by $p+1$, the number of possible $1^4$ factors. Similar arguments work for $(1^3 3)$ and $(1^2 4)$, using the result for degrees 3 and 4.
	
	The case of $(1*)$ is the most involved, so we break the calculation down into cases based on the number of distinct simple roots $f$ has, i.e.\ the number of 1's appearing in its factorization type. If $f$ has 6 distinct simple roots, there are $\binom{p+1}{6}$ possibilities for $f$. It is not possible to have exactly 5 simple roots, so we move to the case of 4 distinct simple roots times a quadratic which has no double roots. There are $\binom{p+1}{4} N_{2,0}$ possibilities when the quadratic is irreducible and $\binom{p}{4} N_{2,2}$ when the quadratic has a double root. Note there is one fewer linear factor to choose the 4 simple roots from, since they must avoid the double root of the quadratic factor.
	
	Continuing along this line, we find
	\begin{align*}
		N_{6,1} = \binom{p+1}{6} &+ \binom{p+1}{4} N_{2,0} + \binom{p}{4} N_{2,2} + \binom{p+1}{3} N_{3,0} + \binom{p}{3} N_{3,2}  \\
		&+ \binom{p+1}{2} N_{4,0} + \binom{p}{2} (N_{4,2} + N_{4,4}) + \binom{p-1}{2} N_{4,3} \\
		&+ (p+1)N_{5,0} + p(N_{5,2} + N_{5,3} + N_{5,5})+ (p-1)N_{5,4},\end{align*}
	which may be computed via computer algebra software; \href{https://github.com/c-keyes/Density-of-locally-soluble-SECs/blob/bd6a8b39ea8c63bf8e7a847063c70998d01ee8aa/SEC_rho36_23Aug21.ipynb}{see here} for an implementation in Sage \cite{sagemath}. To conclude, we recognize that Types 1 -- 9 are precisely those $f$ possessing a root. Therefore we have
	\[N_{6,0} = \left(\sum_{i=0}^6 p^i \right) - \left(\sum_{j=1}^d N_{6,j}\right).\]
	
	The same strategies work for computing each $N_{6,i}'$ for the monic case. The only differences are that there are $p$ choices of linear factors, rather than $p+1$, due to the monicity assumption, and that the appropriate monic quantities $N_{d,i}'$ are used in place of $N_{d,i}$ throughout.	
\end{proof}

\begin{proof}[Proof of Lemma \ref{lem:etas}]
Let $\eta_{d,i}$ (resp.\ $\eta_{d,i}'$) denote the proportion of binary degree $d$ forms $f(x,z)$ over $\F_p$ up to scaling by $\F_p^\times$ (resp. $f(x,z)$ is monic) having a factorization type corresponding to $i$ in Lemma \ref{lem:form_counts}. By our earlier observations,
\[\eta_{d,i} = \frac{N_{d,i}}{\sum_{j=0}^d p^j}, \hspace{2cm} \eta_{d,i}' = \frac{N_{d,i}'}{p^d}. \]
These values are precisely those in Lemma \ref{lem:etas}.
\end{proof}

\begin{remark}
	There is no serious obstacle to extending Lemma \ref{lem:form_counts} and thus Lemma \ref{lem:etas} to higher degrees. In fact, one would likely have to do so in order to compute exact formulas for  $\rho_{m,d}$ when $d > 6$. 
\end{remark}

\vfill
\section{Explicit formulas for rational functions}
\label{appx:explicit_formulas}

\begin{align}
\label{eq:rho}
	\rho &= \begin{cases}
		\begin{minipage}{11cm}\tiny
			\begin{dmath*}  
				\left( 1296  p^{57} + 3888  p^{56} + 9072  p^{55} + 16848  p^{54} + 27648  p^{53} + 39744  p^{52} + 53136  p^{51} + 66483  p^{50} + 80019  p^{49} + 93141  p^{48} + 107469  p^{47} + 120357  p^{46} + 135567  p^{45} + 148347  p^{44} + 162918  p^{43} + 176004  p^{42} + 190278  p^{41} + 203459  p^{40} + 218272  p^{39} + 232083  p^{38} + 243639  p^{37} + 255267  p^{36} + 261719  p^{35} + 264925  p^{34} + 265302  p^{33} + 261540  p^{32} + 254790  p^{31} + 250736  p^{30} + 241384  p^{29} + 226503  p^{28} + 214137  p^{27} + 195273  p^{26} + 170793  p^{25} + 151839  p^{24} + 136215  p^{23} + 118998  p^{22} + 105228  p^{21} + 94860  p^{20} + 80471  p^{19} + 67048  p^{18} + 52623  p^{17} + 40617  p^{16} + 28773  p^{15} + 19247  p^{14} + 12109  p^{13} + 7614  p^{12} + 3420  p^{11} + 756  p^{10} - 2248  p^{9} - 4943  p^{8} - 6300  p^{7} - 6894  p^{6} - 5994  p^{5} - 2448  p^{4} - 648  p^{3} + 324  p^{2} + 1296  p + 1296 \right)
\Big/
\left( 1296  {\left(p^{12} - p^{11} + p^{9} - p^{8} + p^{6} - p^{4} + p^{3} - p + 1\right)} {\left(p^{8} - p^{6} + p^{4} - p^{2} + 1\right)} \denomtimes {\left(p^{6} + p^{5} + p^{4} + p^{3} + p^{2} + p + 1\right)} {\left(p^{4} + p^{3} + p^{2} + p + 1\right)}^{3} {\left(p^{4} - p^{3} + p^{2} - p + 1\right)} \denomtimes {\left(p^{2} + p + 1\right)} {\left(p^{2} + 1\right)} p^{11} \right), 
			\end{dmath*}
		\end{minipage} & p \equiv 1 \pmod{3}\\
		& \\[-2ex]
		\begin{minipage}{11cm} \tiny
			\begin{dmath*} 
				\left( 144  p^{57} + 432  p^{56} + 1008  p^{55} + 1872  p^{54} + 3168  p^{53} + 4608  p^{52} + 6336  p^{51} + 8011  p^{50} + 9803  p^{49} + 11357  p^{48} + 13061  p^{47} + 14525  p^{46} + 16295  p^{45} + 17875  p^{44} + 19654  p^{43} + 21212  p^{42} + 23030  p^{41} + 24563  p^{40} + 26320  p^{39} + 27771  p^{38} + 29711  p^{37} + 30859  p^{36} + 31135  p^{35} + 31525  p^{34} + 31510  p^{33} + 29436  p^{32} + 28502  p^{31} + 28616  p^{30} + 26856  p^{29} + 25087  p^{28} + 25057  p^{27} + 23041  p^{26} + 19921  p^{25} + 18119  p^{24} + 16287  p^{23} + 13798  p^{22} + 12140  p^{21} + 10844  p^{20} + 9191  p^{19} + 7480  p^{18} + 5839  p^{17} + 4265  p^{16} + 2909  p^{15} + 1943  p^{14} + 1109  p^{13} + 590  p^{12} + 604  p^{11} + 372  p^{10} - 144  p^{9} - 87  p^{8} - 84  p^{7} - 678  p^{6} - 618  p^{5} - 144  p^{4} - 168  p^{3} - 156  p^{2} + 144  p + 144 \right)
\Big/
\left( 144  {\left(p^{12} - p^{11} + p^{9} - p^{8} + p^{6} - p^{4} + p^{3} - p + 1\right)} {\left(p^{8} - p^{6} + p^{4} - p^{2} + 1\right)} \denomtimes {\left(p^{6} + p^{5} + p^{4} + p^{3} + p^{2} + p + 1\right)} {\left(p^{4} + p^{3} + p^{2} + p + 1\right)}^{3} {\left(p^{4} - p^{3} + p^{2} - p + 1\right)} \denomtimes {\left(p^{2} + p + 1\right)} {\left(p^{2} + 1\right)} p^{11} \right),
			\end{dmath*}
		\end{minipage}
		& p \equiv 2 \pmod{3} \end{cases}\\[2ex]	
		\label{eq:rho_star}
	\rho^* &= \begin{minipage}{14cm} \small
			\begin{dmath*}  
				\left( 72  p^{34} + 216  p^{33} + 432  p^{32} + 720  p^{31} + 1008  p^{30} + 1224  p^{29} + 1260  p^{28} + 1296  p^{27} + 1152  p^{26} + 1080  p^{25} + 1068  p^{24} + 1032  p^{23} + 1104  p^{22} + 1092  p^{21} + 1116  p^{20} + 1089  p^{19} + 1104  p^{18} + 1088  p^{17} + 1126  p^{16} + 1149  p^{15} + 1017  p^{14} + 906  p^{13} + 830  p^{12} + 634  p^{11} + 360  p^{10} + 441  p^{9} + 378  p^{8} + 194  p^{7} + 280  p^{6} + 327  p^{5} + 93  p^{4} + 36  p^{3} + 60  p^{2} - 36  p - 72 \right)
\Big/
\left( 72  {\left(p^{8} - p^{6} + p^{4} - p^{2} + 1\right)} {\left(p^{4} + p^{3} + p^{2} + p + 1\right)}^{3}  {\left(p^{4} - p^{3} + p^{2} - p + 1\right)} {\left(p^{2} + 1\right)} {\left(p + 1\right)} p^{7} \right)
			\end{dmath*}
		\end{minipage}\\[2ex]
		\label{eq:sigma_4}
	\sigma_4 &= \begin{minipage}{14cm} \small
			\begin{dmath*}  
				\left( 72  p^{29} + 180  p^{28} + 396  p^{27} + 684  p^{26} + 1044  p^{25} + 1392  p^{24} + 1608  p^{23} + 1824  p^{22} + 1848  p^{21} + 1872  p^{20} + 1845  p^{19} + 1860  p^{18} + 1844  p^{17} + 1882  p^{16} + 1905  p^{15} + 1845  p^{14} + 1878  p^{13} + 2018  p^{12} + 2110  p^{11} + 2124  p^{10} + 2349  p^{9} + 2214  p^{8} + 1850  p^{7} + 1504  p^{6} + 1119  p^{5} + 525  p^{4} + 216  p^{3} + 96  p^{2} - 36  p - 72 \right)
\Big/
\left( 72  {\left(p^{8} - p^{6} + p^{4} - p^{2} + 1\right)} {\left(p^{4} + p^{3} + p^{2} + p + 1\right)}^{3} {\left(p^{4} - p^{3} + p^{2} - p + 1\right)} {\left(p^{2} + p + 1\right)} {\left(p^{2} + 1\right)} p^{2} \right)
			\end{dmath*}
		\end{minipage}\\[2ex]
		\label{eq:sigma_4_star}
	\sigma_4^* &= \begin{minipage}{14cm} \small
			\begin{dmath*}  
				\left( 72  p^{29} + 108  p^{28} + 288  p^{27} + 432  p^{26} + 648  p^{25} + 852  p^{24} + 960  p^{23} + 1104  p^{22} + 1092  p^{21} + 1116  p^{20} + 1089  p^{19} + 1104  p^{18} + 1088  p^{17} + 1126  p^{16} + 1149  p^{15} + 1089  p^{14} + 1122  p^{13} + 1262  p^{12} + 1354  p^{11} + 1368  p^{10} + 1593  p^{9} + 1530  p^{8} + 1202  p^{7} + 1000  p^{6} + 759  p^{5} + 309  p^{4} + 108  p^{3} + 60  p^{2} - 36  p - 72 \right)
\Big/
\left( 72  {\left(p^{8} - p^{6} + p^{4} - p^{2} + 1\right)} {\left(p^{4} + p^{3} + p^{2} + p + 1\right)}^{3} {\left(p^{4} - p^{3} + p^{2} - p + 1\right)} {\left(p^{2} + 1\right)} {\left(p + 1\right)} p^{3} \right)
			\end{dmath*}
		\end{minipage}\\[2ex]
		\label{eq:sigma_5}
	\sigma_5 &= \begin{cases}
		\begin{minipage}{11cm} \tiny
			\begin{dmath*}  
				\left( 819  p^{50} + 2691  p^{49} + 6309  p^{48} + 12573  p^{47} + 21573  p^{46} + 32895  p^{45} + 45387  p^{44} + 59238  p^{43} + 73080  p^{42} + 86742  p^{41} + 100547  p^{40} + 114472  p^{39} + 128439  p^{38} + 141579  p^{37} + 157131  p^{36} + 169247  p^{35} + 184741  p^{34} + 203094  p^{33} + 219096  p^{32} + 237726  p^{31} + 261800  p^{30} + 276904  p^{29} + 283923  p^{28} + 291645  p^{27} + 286281  p^{26} + 267993  p^{25} + 254943  p^{24} + 240039  p^{23} + 222678  p^{22} + 208152  p^{21} + 198396  p^{20} + 183383  p^{19} + 170848  p^{18} + 156267  p^{17} + 142677  p^{16} + 128205  p^{15} + 115607  p^{14} + 101365  p^{13} + 86670  p^{12} + 73512  p^{11} + 57564  p^{10} + 39824  p^{9} + 25201  p^{8} + 13608  p^{7} + 2430  p^{6} - 2106  p^{5} - 864  p^{4} - 1080  p^{3} - 540  p^{2} + 1296  p + 1296 \right)
\Big/
\left( 1296  {\left(p^{12} - p^{11} + p^{9} - p^{8} + p^{6} - p^{4} + p^{3} - p + 1\right)} {\left(p^{8} - p^{6} + p^{4} - p^{2} + 1\right)} \denomtimes {\left(p^{6} + p^{5} + p^{4} + p^{3} + p^{2} + p + 1\right)} {\left(p^{4} + p^{3} + p^{2} + p + 1\right)}^{3} {\left(p^{4} - p^{3} + p^{2} - p + 1\right)} \denomtimes {\left(p^{2} + p + 1\right)} {\left(p^{2} + 1\right)} p^{4} \right), 
			\end{dmath*}
		\end{minipage} & p \equiv 1 \pmod{3}\\
		& \\[-2ex]
		\begin{minipage}{11cm} \tiny
			\begin{dmath*} 
				\left( 91  p^{50} + 299  p^{49} + 701  p^{48} + 1397  p^{47} + 2429  p^{46} + 3767  p^{45} + 5347  p^{44} + 6982  p^{43} + 8684  p^{42} + 10358  p^{41} + 12035  p^{40} + 13648  p^{39} + 15243  p^{38} + 17183  p^{37} + 18907  p^{36} + 19903  p^{35} + 21877  p^{34} + 23878  p^{33} + 24684  p^{32} + 26774  p^{31} + 30344  p^{30} + 31608  p^{29} + 32719  p^{28} + 34705  p^{27} + 34273  p^{26} + 31873  p^{25} + 30647  p^{24} + 28815  p^{23} + 26470  p^{22} + 24668  p^{21} + 23516  p^{20} + 21719  p^{19} + 20152  p^{18} + 18367  p^{17} + 16793  p^{16} + 15005  p^{15} + 13607  p^{14} + 11765  p^{13} + 10094  p^{12} + 8524  p^{11} + 6708  p^{10} + 4464  p^{9} + 3081  p^{8} + 1788  p^{7} + 330  p^{6} - 186  p^{5} - 168  p^{3} - 156  p^{2} + 144  p + 144 \right)
\Big/
\left( 144  {\left(p^{12} - p^{11} + p^{9} - p^{8} + p^{6} - p^{4} + p^{3} - p + 1\right)} {\left(p^{8} - p^{6} + p^{4} - p^{2} + 1\right)} \denomtimes {\left(p^{6} + p^{5} + p^{4} + p^{3} + p^{2} + p + 1\right)} {\left(p^{4} + p^{3} + p^{2} + p + 1\right)}^{3} {\left(p^{4} - p^{3} + p^{2} - p + 1\right)} \denomtimes {\left(p^{2} + p + 1\right)} {\left(p^{2} + 1\right)} p^{4} \right),
			\end{dmath*}
		\end{minipage}
		& p \equiv 2 \pmod{3} \end{cases}\\[2ex]
		\label{eq:mu_prime}
	\mu' &= \begin{cases}
		\begin{minipage}{11cm} \small
			\begin{dmath*}  
				\left( 45  p^{20} - 18  p^{19} + 27  p^{18} + 18  p^{17} - 36  p^{16} - 12  p^{15} + 12  p^{14} + 36  p^{12} - 27  p^{11} - 6  p^{10} + 5  p^{9} - 30  p^{8} + 69  p^{7} - 29  p^{6} - 39  p^{5} + 81  p^{4} - 120  p^{3} + 60  p^{2} + 108  p - 72 \right)
\Big/
72  p^{20}, 
			\end{dmath*}
		\end{minipage} & p \equiv 1 \pmod{3}\\
		& \\[-2ex]
		\begin{minipage}{11cm} \small
			\begin{dmath*} 
				\left( 5  p^{20} - 2  p^{19} + 3  p^{18} + 2  p^{17} - 4  p^{16} + 4  p^{15} - 4  p^{14} + 4  p^{12} - 3  p^{11} + 2  p^{10} - 3  p^{9} + 2  p^{8} + 5  p^{7} - 13  p^{6} + 9  p^{5} + 9  p^{4} - 24  p^{3} + 12  p^{2} + 12  p - 8 \right)
\Big/
 8  p^{20},
			\end{dmath*}
		\end{minipage}
		& p \equiv 2 \pmod{3} \end{cases}\\[2ex]
		\label{eq:lambda}
	\lambda &= \begin{minipage}{14cm} \small
			\begin{dmath*}  
				\left( 72  p^{28} + 144  p^{27} + 288  p^{26} + 504  p^{25} + 744  p^{24} + 888  p^{23} + 1068  p^{22} + 1092  p^{21} + 1116  p^{20} + 1089  p^{19} + 1104  p^{18} + 1088  p^{17} + 1126  p^{16} + 1149  p^{15} + 1089  p^{14} + 1122  p^{13} + 1262  p^{12} + 1354  p^{11} + 1368  p^{10} + 1665  p^{9} + 1566  p^{8} + 1346  p^{7} + 1144  p^{6} + 903  p^{5} + 417  p^{4} + 180  p^{3} + 96  p^{2} - 36  p - 72 \right)
\Big/
\left( 72  {\left(p^{8} - p^{6} + p^{4} - p^{2} + 1\right)} {\left(p^{4} + p^{3} + p^{2} + p + 1\right)}^{3} {\left(p^{4} - p^{3} + p^{2} - p + 1\right)} {\left(p^{2} + 1\right)} {\left(p + 1\right)} p^{2} \right)
			\end{dmath*}
		\end{minipage}\\[2ex]
		\label{eq:sigma_5_prime}
	\sigma_5' &= \begin{cases}
		\begin{minipage}{11cm} \tiny
			\begin{dmath*}  
				\left( 91  p^{30} + 246  p^{29} + 478  p^{28} + 850  p^{27} + 1262  p^{26} + 1680  p^{25} + 1902  p^{24} + 2202  p^{23} + 2242  p^{22} + 2271  p^{21} + 2243  p^{20} + 2270  p^{19} + 2214  p^{18} + 2185  p^{17} + 2299  p^{16} + 2142  p^{15} + 2228  p^{14} + 2570  p^{13} + 2512  p^{12} + 2701  p^{11} + 3300  p^{10} + 2984  p^{9} + 2348  p^{8} + 2323  p^{7} + 1363  p^{6} + 288  p^{5} + 186  p^{4} + 60  p^{3} - 264  p^{2} - 72  p + 144 \right)
\Big/
\left( 144  {\left(p^{8} - p^{6} + p^{4} - p^{2} + 1\right)} {\left(p^{4} + p^{3} + p^{2} + p + 1\right)}^{3} {\left(p^{4} - p^{3} + p^{2} - p + 1\right)} \denomtimes {\left(p^{2} + 1\right)} {\left(p + 1\right)} p^{3} \right), 
			\end{dmath*}
		\end{minipage} & p \equiv 1 \pmod{3}\\
		& \\[-2ex]
		\begin{minipage}{11cm} \tiny
			\begin{dmath*} 
				\left( 91  p^{30} + 246  p^{29} + 478  p^{28} + 850  p^{27} + 1294  p^{26} + 1792  p^{25} + 2206  p^{24} + 2410  p^{23} + 2578  p^{22} + 2671  p^{21} + 2635  p^{20} + 2574  p^{19} + 2590  p^{18} + 2769  p^{17} + 2667  p^{16} + 2286  p^{15} + 2580  p^{14} + 2826  p^{13} + 2160  p^{12} + 2781  p^{11} + 3852  p^{10} + 3096  p^{9} + 2628  p^{8} + 3195  p^{7} + 1827  p^{6} + 432  p^{5} + 522  p^{4} + 252  p^{3} - 360  p^{2} - 72  p + 144 \right)
\Big/
\left( 144  {\left(p^{8} - p^{6} + p^{4} - p^{2} + 1\right)} {\left(p^{4} + p^{3} + p^{2} + p + 1\right)}^{3} {\left(p^{4} - p^{3} + p^{2} - p + 1\right)} \denomtimes {\left(p^{2} + 1\right)} {\left(p + 1\right)} p^{3} \right),
			\end{dmath*}
		\end{minipage}
		& p \equiv 2 \pmod{3} \end{cases}\\[2ex]
		\label{eq:sigma_5_prime_prime}
	\sigma_5'' &= \begin{cases}
		\begin{minipage}{11cm} \tiny
			\begin{dmath*}  
				\left( 819  p^{43} + 2376  p^{42} + 4599  p^{41} + 7424  p^{40} + 11091  p^{39} + 14515  p^{38} + 16101  p^{37} + 19341  p^{36} + 19532  p^{35} + 19542  p^{34} + 20605  p^{33} + 21042  p^{32} + 21969  p^{31} + 25640  p^{30} + 27075  p^{29} + 25531  p^{28} + 26901  p^{27} + 24399  p^{26} + 18864  p^{25} + 19800  p^{24} + 18900  p^{23} + 16200  p^{22} + 14580  p^{21} + 14148  p^{20} + 8478  p^{19} + 6102  p^{18} + 3492  p^{17} + 1476  p^{16} + 378  p^{15} + 378  p^{14} - 324  p^{13} + 468  p^{12} + 180  p^{11} - 864  p^{10} + 594  p^{9} + 2052  p^{8} + 684  p^{7} + 3096  p^{6} + 4590  p^{5} + 1674  p^{4} + 648  p^{3} + 1080  p^{2} - 648  p - 1296 \right)
\Big/
\left( 1296  {\left(p^{8} - p^{6} + p^{4} - p^{2} + 1\right)} {\left(p^{4} + p^{3} + p^{2} + p + 1\right)}^{3} {\left(p^{4} - p^{3} + p^{2} - p + 1\right)} \denomtimes {\left(p^{2} + 1\right)} {\left(p + 1\right)} p^{16} \right), 
			\end{dmath*}
		\end{minipage} & p \equiv 1 \pmod{3}\\
		& \\[-2ex]
		\begin{minipage}{11cm} \tiny
			\begin{dmath*} 
				\left( 91  p^{43} + 300  p^{42} + 607  p^{41} + 1024  p^{40} + 1531  p^{39} + 1903  p^{38} + 2329  p^{37} + 2581  p^{36} + 2404  p^{35} + 2686  p^{34} + 2725  p^{33} + 2166  p^{32} + 2497  p^{31} + 3216  p^{30} + 2739  p^{29} + 2943  p^{28} + 3897  p^{27} + 3279  p^{26} + 2544  p^{25} + 2904  p^{24} + 2676  p^{23} + 1992  p^{22} + 1908  p^{21} + 1764  p^{20} + 1134  p^{19} + 630  p^{18} + 324  p^{17} + 180  p^{16} + 90  p^{15} - 54  p^{14} - 36  p^{13} + 180  p^{12} - 108  p^{11} - 288  p^{10} + 162  p^{9} + 180  p^{8} - 180  p^{7} + 360  p^{6} + 558  p^{5} + 90  p^{4} + 72  p^{3} + 216  p^{2} - 72  p - 144 \right)
\Big/
\left( 144  {\left(p^{8} - p^{6} + p^{4} - p^{2} + 1\right)} {\left(p^{4} + p^{3} + p^{2} + p + 1\right)}^{3} {\left(p^{4} - p^{3} + p^{2} - p + 1\right)} {\left(p^{2} + 1\right)}\denomtimes {\left(p + 1\right)} p^{16} \right),
			\end{dmath*}
		\end{minipage}
		& p \equiv 2 \pmod{3} \end{cases}
\end{align}

\subsection{\texorpdfstring{$\tau_i$}{tau} values (see Lemma \ref{lem:taus})}

\begin{align}
	\label{eq:tau_6}
	\tau_6 &= \begin{cases}
		\begin{minipage}{11cm} \small 
			\begin{dmath*}  
				 {\left(6  p^{4} + 3  p^{3} + 5  p^{2} + 4  p + 4\right)} {\left(3  p^{3} + p^{2} + 2  p + 2\right)} 
\Big/
 9  {\left(p^{4} + p^{3} + p^{2} + p + 1\right)}^{2} , 
			\end{dmath*}
		\end{minipage} & p \equiv 1 \pmod{3}\\
		& \\[-2ex]
		\begin{minipage}{11cm} \small 
			\begin{dmath*} 
				 {\left(6  p^{4} + 3  p^{3} + 3  p^{2} + 4  p + 4\right)} {\left(3  p^{2} + 2\right)} {\left(p + 1\right)}
\Big/
 9  {\left(p^{4} + p^{3} + p^{2} + p + 1\right)}^{2} ,
			\end{dmath*}
		\end{minipage}
		& p \equiv 2 \pmod{3} \end{cases}\\[2ex]
		\label{eq:tau_7}
	\tau_7 &= \begin{cases}
		\begin{minipage}{11cm} \small
			\begin{dmath*}  
				\left( 72  p^{16} - 48  p^{15} + 12  p^{14} + 36  p^{12} - 27  p^{11} - 6  p^{10} + 5  p^{9} - 30  p^{8} + 69  p^{7} - 29  p^{6} - 39  p^{5} + 81  p^{4} - 120  p^{3} + 60  p^{2} + 108  p - 72 \right)
\Big/
 72  p^{17} , 
			\end{dmath*}
		\end{minipage} & p \equiv 1 \pmod{3}\\
		& \\[-2ex]
		\begin{minipage}{11cm} \small
			\begin{dmath*} 
				\left( 8  p^{16} - 4  p^{14} + 4  p^{12} - 3  p^{11} + 2  p^{10} - 3  p^{9} + 2  p^{8} + 5  p^{7} - 13  p^{6} + 9  p^{5} + 9  p^{4} - 24  p^{3} + 12  p^{2} + 12  p - 8 \right)
\Big/
 8  p^{17} ,
			\end{dmath*}
		\end{minipage}
		& p \equiv 2 \pmod{3} \end{cases}\\[2ex]
		\label{eq:tau_8}
	\tau_8 &= \begin{cases}
		\begin{minipage}{11cm} \small
			\begin{dmath*}  
				\left( 144  p^{17} - 120  p^{16} + 60  p^{15} - 12  p^{14} + 36  p^{13} - 63  p^{12} + 21  p^{11} + 11  p^{10} - 35  p^{9} + 99  p^{8} - 98  p^{7} - 10  p^{6} + 120  p^{5} - 201  p^{4} + 180  p^{3} + 48  p^{2} - 180  p + 72 \right)
\Big/
72  p^{18} , 
			\end{dmath*}
		\end{minipage} & p \equiv 1 \pmod{3}\\
		& \\[-2ex]
		\begin{minipage}{11cm} \small
			\begin{dmath*} 
				\left( 16  p^{17} - 8  p^{16} - 4  p^{15} + 4  p^{14} + 4  p^{13} - 7  p^{12} + 5  p^{11} - 5  p^{10} + 5  p^{9} + 3  p^{8} - 18  p^{7} + 22  p^{6} - 33  p^{4} + 36  p^{3} - 20  p + 8 \right)
\Big/
8  p^{18} ,
			\end{dmath*}
		\end{minipage}
		& p \equiv 2 \pmod{3} \end{cases}\\[2ex]
		\label{eq:tau_9}
	\tau_9 &= \begin{cases}
		\begin{minipage}{11cm} \tiny
			\begin{dmath*}  
				\left( 144  p^{44} + 336  p^{43} + 600  p^{42} + 936  p^{41} + 1416  p^{40} + 1704  p^{39} + 1968  p^{38} + 2160  p^{37} + 2328  p^{36} + 2136  p^{35} + 2280  p^{34} + 2472  p^{33} + 2592  p^{32} + 2784  p^{31} + 3115  p^{30} + 3030  p^{29} + 2806  p^{28} + 2650  p^{27} + 2366  p^{26} + 2256  p^{25} + 1998  p^{24} + 1914  p^{23} + 1642  p^{22} + 1335  p^{21} + 827  p^{20} + 566  p^{19} + 246  p^{18} + 25  p^{17} - 29  p^{16} + 6  p^{15} - 52  p^{14} + 98  p^{13} - 80  p^{12} - 83  p^{11} + 276  p^{10} + 200  p^{9} + 20  p^{8} + 523  p^{7} + 259  p^{6} - 288  p^{5} - 54  p^{4} + 12  p^{3} - 264  p^{2} - 72  p + 144 \right)
\Big/
\left( 144  {\left(p^{8} - p^{6} + p^{4} - p^{2} + 1\right)} {\left(p^{4} + p^{3} + p^{2} + p + 1\right)}^{3}  {\left(p^{4} - p^{3} + p^{2} - p + 1\right)} \denomtimes {\left(p^{2} + 1\right)} {\left(p + 1\right)} p^{18} \right), 
			\end{dmath*}
		\end{minipage} & p \equiv 1 \pmod{3}\\
		& \\[-2ex]
		\begin{minipage}{11cm} \tiny
			\begin{dmath*} 
				\left( 144  p^{44} + 432  p^{43} + 792  p^{42} + 1224  p^{41} + 1800  p^{40} + 2184  p^{39} + 2352  p^{38} + 2640  p^{37} + 2712  p^{36} + 2424  p^{35} + 2472  p^{34} + 2664  p^{33} + 2592  p^{32} + 2880  p^{31} + 3403  p^{30} + 3414  p^{29} + 3286  p^{28} + 3226  p^{27} + 2878  p^{26} + 2656  p^{25} + 2494  p^{24} + 2122  p^{23} + 1786  p^{22} + 1447  p^{21} + 835  p^{20} + 390  p^{19} + 238  p^{18} + 129  p^{17} - 45  p^{16} - 138  p^{15} + 108  p^{14} + 162  p^{13} - 432  p^{12} - 99  p^{11} + 540  p^{10} - 72  p^{9} - 180  p^{8} + 819  p^{7} + 243  p^{6} - 432  p^{5} + 90  p^{4} + 108  p^{3} - 360  p^{2} - 72  p + 144 \right)
\Big/
\left( 144  {\left(p^{8} - p^{6} + p^{4} - p^{2} + 1\right)} {\left(p^{4} + p^{3} + p^{2} + p + 1\right)}^{3} {\left(p^{4} - p^{3} + p^{2} - p + 1\right)} \denomtimes {\left(p^{2} + 1\right)} {\left(p + 1\right)} p^{18} \right),
			\end{dmath*}
		\end{minipage}
		& p \equiv 2 \pmod{3} \end{cases}
\end{align}

\subsection{\texorpdfstring{$\theta_i$}{theta} values (see Lemma \ref{lem:thetas})} 

\begin{align}
	\label{eq:theta_3}
	\theta_3 &= \begin{cases}
		\begin{minipage}{11cm} \small
			\begin{dmath*}  
				 {\left(10  p^{3} - p^{2} + 3  p - 6\right)} {\left(2  p^{2} - 3  p + 3\right)} {\left(p + 2\right)} 
\Big/
36  p^{6} , 
			\end{dmath*}
		\end{minipage} & p \equiv 1 \pmod{3}\\
		& \\[-2ex]
		\begin{minipage}{11cm} \small
			\begin{dmath*} 
				 {\left(2  p^{3} + p^{2} + p - 2\right)} {\left(2  p^{2} - 3  p + 2\right)} {\left(p + 1\right)} 
\Big/
 4  p^{6} ,
			\end{dmath*}
		\end{minipage}
		& p \equiv 2 \pmod{3} \end{cases}\\[2ex]
		\label{eq:theta_4}
	\theta_4 &= \begin{cases}
		\begin{minipage}{11cm} \small
			\begin{dmath*}  
				{\left(76  p^{6} - 14  p^{5} + 43  p^{4} - 90  p^{3} + 21  p^{2} - 36  p + 36\right)} {\left(2  p^{2} - 3  p + 3\right)} {\left(p + 2\right)} 
\Big/
 216  p^{9} , 
			\end{dmath*}
		\end{minipage} & p \equiv 1 \pmod{3}\\
		& \\[-2ex]
		\begin{minipage}{11cm} \small
			\begin{dmath*} 
				 {\left(4  p^{6} + 2  p^{5} + 3  p^{4} - 2  p^{3} - 3  p^{2} - 4  p + 4\right)} {\left(2  p^{2} - 3  p + 2\right)} {\left(p + 1\right)}
\Big/
8  p^{9},
			\end{dmath*}
		\end{minipage}
		& p \equiv 2 \pmod{3} \end{cases}\\[2ex]
		\label{eq:theta_6}
	\theta_6 &= \begin{cases}
		\begin{minipage}{11cm} \small
			\begin{dmath*}  
				 {\left(5  p^{4} + 3  p^{3} + 6  p^{2} + 4  p + 4\right)} {\left(p^{4} + 3  p^{3} + 2  p + 2\right)} 
\Big/
 9  {\left(p^{4} + p^{3} + p^{2} + p + 1\right)}^{2} , 
			\end{dmath*}
		\end{minipage} & p \equiv 1 \pmod{3}\\
		& \\[-2ex]
		\begin{minipage}{11cm} \small
			\begin{dmath*} 
				{\left(3  p^{4} + 3  p^{3} + 6  p^{2} + 4  p + 4\right)} {\left(3  p^{3} + 2\right)} {\left(p + 1\right)}
\Big/
 9  {\left(p^{4} + p^{3} + p^{2} + p + 1\right)}^{2},
			\end{dmath*}
		\end{minipage}
		& p \equiv 2 \pmod{3} \end{cases}\\[2ex]
		\label{eq:theta_7}
	\theta_7 &= \begin{cases}
		\begin{minipage}{11cm} \small
			\begin{dmath*}  
				\left( 2  p^{8} + 4  p^{7} - 6  p^{6} + 3  p^{5} + 2  p^{4} - 4  p^{3} + 2  p^{2} + 9  p - 6 \right)
\Big/
 6  p^{8} , 
			\end{dmath*}
		\end{minipage} & p \equiv 1 \pmod{3}\\
		& \\[-2ex]
		\begin{minipage}{11cm} \small
			\begin{dmath*} 
				\left( 2  p^{8} - 2  p^{6} + p^{5} + 2  p^{4} - 4  p^{3} + 2  p^{2} + 3  p - 2 \right)
\Big/
2  p^{8} ,
			\end{dmath*}
		\end{minipage}
		& p \equiv 2 \pmod{3} \end{cases}\\[2ex]
		\label{eq:theta_8}
	\theta_8 &= \begin{cases}
		\begin{minipage}{11cm} \small
			\begin{dmath*}  
				\left( 20  p^{11} + 20  p^{10} - 40  p^{9} + 54  p^{8} - 37  p^{7} + 27  p^{6} + 10  p^{4} - 3  p^{3} + 21  p^{2} - 72  p + 36 \right)
\Big/
36  p^{11}, 
			\end{dmath*}
		\end{minipage} & p \equiv 1 \pmod{3}\\
		& \\[-2ex]
		\begin{minipage}{11cm} \small
			\begin{dmath*} 
				\left( 4  p^{11} - 2  p^{8} - p^{7} + 7  p^{6} - 4  p^{5} - 6  p^{4} + 13  p^{3} - 3  p^{2} - 8  p + 4 \right)
\Big/
4  p^{11} ,
			\end{dmath*}
		\end{minipage}
		& p \equiv 2 \pmod{3} \end{cases}\\[2ex]
		\label{eq:theta_9}
	\theta_9 &= \begin{cases}
		\begin{minipage}{11cm} \tiny
			\begin{dmath*}  
				\left( 432  p^{37} + 2160  p^{36} + 3888  p^{35} + 6264  p^{34} + 9720  p^{33} + 12528  p^{32} + 13392  p^{31} + 16848  p^{30} + 19440  p^{29} + 21168  p^{28} + 22842  p^{27} + 25920  p^{26} + 24948  p^{25} + 23004  p^{24} + 22356  p^{23} + 20907  p^{22} + 19548  p^{21} + 19179  p^{20} + 20276  p^{19} + 19569  p^{18} + 20185  p^{17} + 17433  p^{16} + 16929  p^{15} + 13646  p^{14} + 10200  p^{13} + 7753  p^{12} + 8118  p^{11} + 5301  p^{10} + 5336  p^{9} + 6501  p^{8} + 4741  p^{7} + 1665  p^{6} + 2547  p^{5} + 450  p^{4} - 882  p^{3} - 540  p^{2} + 108  p - 648 \right)
\Big/
\left( 1296  {\left(p^{8} - p^{6} + p^{4} - p^{2} + 1\right)} {\left(p^{4} + p^{3} + p^{2} + p + 1\right)}^{3} {\left(p^{4} - p^{3} + p^{2} - p + 1\right)} {\left(p^{2} + 1\right)} {\left(p + 1\right)} p^{10} \right), 
			\end{dmath*}
		\end{minipage} & p \equiv 1 \pmod{3}\\
		& \\[-2ex]
		\begin{minipage}{11cm} \tiny
			\begin{dmath*} 
				\left( 144  p^{37} + 432  p^{36} + 720  p^{35} + 1080  p^{34} + 1656  p^{33} + 1872  p^{32} + 1872  p^{31} + 2160  p^{30} + 2448  p^{29} + 2448  p^{28} + 2826  p^{27} + 3264  p^{26} + 3252  p^{25} + 3036  p^{24} + 2964  p^{23} + 2803  p^{22} + 2592  p^{21} + 2515  p^{20} + 2644  p^{19} + 2665  p^{18} + 2389  p^{17} + 2221  p^{16} + 2041  p^{15} + 1414  p^{14} + 976  p^{13} + 817  p^{12} + 474  p^{11} + 229  p^{10} + 480  p^{9} + 453  p^{8} + 297  p^{7} + 453  p^{6} + 387  p^{5} + 66  p^{4} + 30  p^{3} + 36  p^{2} - 84  p - 72 \right)
\Big/
\left( 144  {\left(p^{8} - p^{6} + p^{4} - p^{2} + 1\right)} {\left(p^{4} + p^{3} + p^{2} + p + 1\right)}^{3} {\left(p^{4} - p^{3} + p^{2} - p + 1\right)} {\left(p^{2} + 1\right)} {\left(p + 1\right)} p^{10} \right),
			\end{dmath*}
		\end{minipage}
		& p \equiv 2 \pmod{3} \end{cases}
\end{align}

\subsection{Small primes \texorpdfstring{$p$}{p} (see \S \ref{sec:small_primes})}

\begin{align}
	\label{eq:rho7} \rho(7) &= \frac{63104494755178622851603292623187277054743730183645677893972}{64083174787206696882429945655801281538844149896400159815375} \approx 0.98472 \\
	\label{eq:rho13} \rho(13) &= \frac{7877728357244577414025901931296747409682076255666526984515273526822853}{7890643570620106747776737292792780623510727026420779539893772399701475} \approx 0.99836\\
	\label{eq:rho19} \rho(19) &= \scalebox{0.85}{$ \dfrac{3122673715489206150449285868243361150392235799365815266879438393279346795671}{3123410013311365155035964479837966797560851333614271490136481337080636454180}$} \approx 0.99976\\
	\label{eq:rho31} \rho(31) &= \scalebox{0.8}{$ \dfrac{9196796457678318869139089936786462146535210039832850454297877482020635073857159758299}{9196865061587843544830989041473808798913128587425995645857828572610918436035833907250}$} \approx 0.999992 \\
	\label{eq:rho37} \rho(37) &= \scalebox{0.75}{$ \dfrac{171128647900820194784458101787952920169924464886519055453844647154184805036447476640345735119}{171128889636157060536894474187017088464271236509977199491208939449738127658679723715588944500}$} \approx 0.999998 \\
	\label{eq:rho43} \rho(43) &= \scalebox{0.7}{$ \dfrac{84000121343283090388653356431804100707331364779290664490547105768867844862712134447832720508750281}{84000151671513555191647712567596101710800846209116830568013729377404991150901973105093039939237500}$} \approx 0.9999996
\end{align}

\bibliographystyle{alpha}
\bibliography{sec_densities}

\end{document}